%
%

\documentclass[12pt,longbibliography]{article}


\usepackage[utf8]{inputenc}

\usepackage[T1]{fontenc}

\usepackage[a4paper, margin=2.7cm]{geometry}

\usepackage{amsmath, amsthm, amsfonts, amssymb}
\usepackage{mathrsfs}           
\usepackage{mathtools}

\usepackage[colorlinks=true,linkcolor=blue,citecolor=blue,urlcolor=blue,breaklinks]{hyperref}

\usepackage{bbm}

\usepackage{cleveref}



\usepackage{breakurl}
\usepackage{natbib}
\usepackage{url}
\bibliographystyle{plainnat-linked-initials}







\def\R{\mathbb{R}}

\def\1{\mathbbm{1}}

\def\grad{\nabla}
\renewcommand{\div}{\operatorname{div}}
\DeclareMathOperator{\diam}{diam}
\DeclareMathOperator{\dist}{dist}

\def\d{\,\mathrm{d}}
\def\dx{\d x}

\def \ddt{\frac{\mathrm{d}}{\mathrm{d}t}}
\def \ddtau{\frac{\mathrm{d}}{\mathrm{d}\tau}}

\def\p{\partial}

\def\ird{\int_{\R^d}}

\def\:{\colon}

\def\lambdaL{\lambda_{\mathrm{L}}}
\def\lambdaP{\lambda_{\mathrm{P}}}

\newtheorem{thm}{Theorem}[section]
\newtheorem{cor}[thm]{Corollary}
\newtheorem{lem}[thm]{Lemma}
\newtheorem{prp}[thm]{Proposition}
\newtheorem{hyp}[thm]{Hypothesis}

\theoremstyle{definition}

\theoremstyle{remark}
\newtheorem{rem}[thm]{Remark}
\theoremstyle{example}

\numberwithin{equation}{section}



\def\thetitle{Large-time estimates for the Dirichlet heat equation in
  exterior domains}
\def\theauthor{José A. Cañizo, Alejandro Gárriz \& Fernando Quirós}

\hypersetup{pdftitle={\thetitle}}
\hypersetup{pdfauthor={\theauthor}}

\title{\thetitle}
\author{\theauthor}

\date{}


\AtEndDocument{\vspace{2cm}{\footnotesize
  \noindent
  \textsc{José A. Cañizo. Departamento de Matemática Aplicada \& IMAG,
    Universidad de Granada. Avda. Fuentenueva S/N, 18071 Granada, Spain.}
  \textit{E-mail address}: \texttt{\href{mailto:canizo@ugr.es}{canizo@ugr.es}}
  \par
  \addvspace{\medskipamount}
  \noindent
  \textsc{Alejandro Gárriz. Departamento de Matemática Aplicada \& IMAG,
    Universidad de Granada. Avda. Fuentenueva S/N, 18071 Granada, Spain.}
  \\
  \textit{E-mail address}:
  \texttt{\href{mailto:alejandro.garriz@ugr.es}{alejandro.garriz@ugr.es}}
    \par
  \addvspace{\medskipamount}
  \noindent
  \textsc{Fernando Quirós. Departamento de Matemáticas, Universidad Autónoma de Madrid \& Instituto de Ciencias Matem\'aticas ICMAT (CSIC-UAM-UCM-UC3M), Ciudad Universitaria de Cantoblanco, 28049 Madrid, Spain.}
  \textit{E-mail address}: \texttt{\href{mailto:fernando.quiros@uam.es}{fernando.quiros@uam.es}}
}}


\begin{document}

\maketitle

\begin{abstract}
  We give large-time asymptotic estimates, both in uniform and $L^1$
  norms, for solutions of the Dirichlet heat equation in the
  complement of a bounded open set of $\R^d$ satisfying certain
  technical assumptions. We always assume that the initial datum has suitable
  finite moments (often, finite first moment). All estimates include
  an explicit rate of approach to the asymptotic profiles at the
  different scales natural to the problem, in analogy with the
  Gaussian behaviour of the heat equation in the full space. As a
  consequence we obtain by an approximation procedure the asymptotic
  profile, with rates, for the Dirichlet heat kernel in these exterior
  domains. The estimates on the rates are new even when the domain is
  the complement of the unit ball in~$\R^d$, except for previous
  results by Uchiyama in dimension $2$, which we are able to improve
  in some scales. We obtain that the heat kernel behaves
  asymptotically as the heat kernel in the full space, with a factor
  that takes into account the shape of the domain through a harmonic
  profile, and a second factor which accounts for the loss of mass
  through the boundary. The main ideas we use come from entropy
  methods in PDE and probability, whose application seems to be new in
  the context of diffusion problems in exterior domains.
\end{abstract}

\tableofcontents

\section{Introduction and main results}
\label{sec:intro}

\subsection{Precedents and informal discussion of results}
\label{sec:setting}

We consider the heat equation with Dirichlet boundary conditions in an
exterior domain $\Omega = \R^d \setminus \overline{U}$, where
$U\subset \R^d$ is a bounded open set with $\mathcal{C}^2$ boundary
such that $\R^d \setminus \overline{U}$ is connected. That is, we
study solutions to
\begin{equation}
  \label{eq:heat-ext}
  \begin{aligned}
    \p_t u = \Delta u &\qquad \text{for $t > 0$, $x \in \Omega$},
    \\
    u(0,x) = u_0(x) &\qquad \text{for $x \in \Omega$},
    \\
    u(t,x) = 0 &\qquad \text{for $t > 0$, $x \in \p\Omega$},
  \end{aligned}
\end{equation}
where $u = u(t,x)$ depends on time $t \geq 0$ and space
$x \in \overline{\Omega}$, and $u_0$ is an integrable and nonnegative
initial condition on $\Omega$. Dimension $d=1$ is special, since the
complement of any bounded set is disconnected; in this case we take
the half-line $\Omega = (x_0,+\infty)$ for some
$x_0\in\mathbb{R}$. Problem~\eqref{eq:heat-ext} has a unique classical
solution such that $u\in C([0,+\infty);L^1(\Omega))$, which we will
call the \emph{standard solution}, or \emph{$L^1$ solution}.

Standard solutions to~\eqref{eq:heat-ext} decay to 0 as time goes
by. Giving a precise description of how they do so (the so-called
\emph{intermediate asymptotics}) is a well-known problem, which is
nontrivial even in the case in which $U$ is the open unit ball. The
main difficulty lies in finding good bounds on the way in which the
mass of the solution is ``lost'' through the boundary of $\Omega$:
indeed, by Gauss's divergence theorem and Hopf's Lemma~\citep{Friedman1958}, if
$u\not\equiv0$, then
\begin{equation*}
  \ddt \int_\Omega u(t,x) \d x = -\int_{\partial \Omega} \nabla u(t,x)
  \cdot \nu(x) \d S(x) < 0
  \qquad \text{for all $t > 0$},
\end{equation*}
where $\nu = \nu(x)$ is the inner normal to $\partial \Omega$ (i.e.,
pointing towards $\Omega$) and $\d S(x)$ denotes the surface integral
on $\partial \Omega$.

In this paper we use a modification of the well-known entropy method
to obtain the intermediate asymptotics for $L^1$ solutions with
suitable finite moments. This allows us to obtain the optimal decay
rate of solutions to 0, their asymptotic profile when properly scaled
to kill the decay, and also a rate of convergence of the scaled
solution to the limit profile. Up to our knowledge, entropy methods
have not been used before to deal with problems posed in domains with
holes, where mass is not conserved. They yield several new results,
notably rates of convergence to the asymptotic profiles which are new
in many cases.

The standard solution $u$ to problem~\eqref{eq:heat-ext} can be expressed as
\begin{equation}
  \label{eq:sol-via-kernel}
  u(t,x) = \int_\Omega p_\Omega(t,x,y) u_0(y) \d y
  \qquad \text{for $t > 0$, $x \in \overline{\Omega}$,}
\end{equation}
where $p_\Omega = p_\Omega(t,x,y)$ is the \emph{heat kernel} for the
domain $\Omega$, defined so that $(t,x) \mapsto p_\Omega(t,x,y)$ is
the only positive solution to~\eqref{eq:heat-ext} with
$u_0 = \delta_y$ and such that $x \mapsto p_\Omega(t,x,y)$ is
integrable for all $t > 0$.  As an important consequence of our
results we also obtain asymptotic estimates for the heat kernel,
improving for large times the ones available in the literature.

\paragraph{Known bounds for kernels.}

Many of the existing results are written in terms of bounds for the heat kernel $p_\Omega$, since they yield bounds for general solutions through formula \eqref{eq:sol-via-kernel}. From the maximum principle one readily obtains that
\begin{equation*}
  \label{eq:p-bounded-by heat}
  p_\Omega(t,x,y) \leq p(t,x,y)
  \qquad \text{for $t > 0$, $x, y \in \overline{\Omega}$,}
\end{equation*}
where $p(t,x,y)$ is the heat kernel in the full space:
\begin{equation*}
  p(t,x,y) = \Gamma(t,x-y) :=
  (4 \pi t)^{-\frac{d}{2}} \exp\left( -\frac{|x-y|^2}{4t} \right)
  = (2t)^{-d/2} G \left( \frac{x-y}{\sqrt{2 t}} \right).
\end{equation*}
Throughout the paper we denote by $G$ the standard Gaussian function
in $\R^d$,
\begin{equation*}
  G(y) := (2 \pi)^{-d/2} \exp \left( -\frac{|y|^2}{2} \right),
  \qquad y \in \R^d,
\end{equation*}
and we use the notation $\Gamma = \Gamma(t,x)$ for the fundamental
solution to the heat equation:
\begin{equation*}
  \Gamma(t,x) := p(t,x,0) =
  (4 \pi t)^{-\frac{d}{2}} \exp\left( -\frac{|x|^2}{4t} \right)
  = (2t)^{-d/2} G \left( \frac{x}{\sqrt{2 t}} \right).
\end{equation*}
Very general upper and lower bounds for $p_\Omega$ which improve on
the above ``trivial'' bound were given by \citet{Grigoryan2001} for
$x,y$ away from $\partial \Omega$, and later completed by
\citet{Zhang2003} up to $\partial \Omega$. Their estimates are valid
in exterior domains on manifolds under quite general conditions, and
have a different qualitative behavior depending on whether the
manifold in question is parabolic or not. In the case of the Euclidean
space, $\R^d$ is parabolic if and only if $d = 1,2$, so estimates have
important differences in dimensions $1$ and $2$. As an example, in
$\R^d$ for $d \geq 3$ (the nonparabolic case), they obtain that for
some $c_1, c_2 > 0$ one has
\begin{equation}
  \label{eq:sc1}
  \frac{1}{c_1} \left(\frac{\rho(x)}{\sqrt{t} \wedge 1} \wedge 1 \right)
  \left(\frac{\rho(y)}{\sqrt{t} \wedge 1} \wedge 1 \right)
  p \big(\frac{t}{c_2}, x, y\big)
  \leq
  p_\Omega(t,x,y)
\end{equation}
for all $t > 0$ and $x,y \in \Omega$, where
$\rho(x) := \dist(x, \partial \Omega)$. They also give an upper
bound of a similar form,
\begin{equation}
  \label{eq:sc2}
  p_\Omega(t,x,y) \leq
  c_1 \left(\frac{\rho(x)}{\sqrt{t} \wedge 1} \wedge 1 \right)
  \left(\frac{\rho(y)}{\sqrt{t} \wedge 1} \wedge 1 \right)
  p(c_2 t, x, y),
\end{equation}
valid in the same range (which improves on the trivial upper bound
when $x$ or $y$ are close to $\partial \Omega$). This bounds the
kernel by above and below with two different Gaussian functions, with
different positions of the diffusive scales $|x|\sim c\sqrt{t}$ for
fixed $c>0$, where the mass of the solution is predominantly
located. We will use these known estimates in our proofs; a more
complete summary of them is given in \Cref{sec:pre-kernel-estimates},
including the more involved $d=2$ case.

Throughout the paper $\phi \: \overline{\Omega} \to \R$, the
\emph{harmonic profile}, is a positive harmonic function on $\Omega$
with zero Dirichlet boundary condition,
\begin{equation}
  \label{eq:phi}
  \Delta \phi = 0 \quad \text{in }\Omega,
  \qquad
  \phi= 0 \quad \text{in }\p\Omega,
\end{equation}
with an appropriately set behaviour as $|x| \to +\infty$. In dimension
$d \geq 3$, we choose $\phi$ as the only such function with
$\lim_{|x| \to +\infty} \phi(x) = 1$; in dimension $2$,
$\phi(x) \sim \log(|x|)$ as $|x| \to +\infty$, and in dimension $d=1$
we simply take $\phi(x) = x-x_0$; see Section~\ref{sec:phi} for estimates
on $\phi$. The main observation where $\phi$ arises naturally is that it
defines a conserved quantity for the PDE \eqref{eq:heat-ext}: for any
standard solution $u$ we have
\begin{equation}
  \label{eq:conservation}
  \ddt \int_\Omega \phi u \d x = 0,
\end{equation}
as long as this quantity is initially finite. The function $\phi$
appears in a fundamental way when studying large-time asymptotics of
the heat kernel, and will be present throughout. It can already be
used to express estimates \eqref{eq:sc1}--\eqref{eq:sc2}, valid for
$d \geq 3$, in a concise way: since $\phi$ is positive and bounded in
$\Omega$, and behaves like the distance to the boundary close to it,
equations \eqref{eq:sc1}--\eqref{eq:sc2} are equivalent for $t \geq 1$ to
\begin{equation*}
  \label{eq:known_bound}
  \phi(x) \phi(y)
   p(t / c_2 ,x,y)
  \lesssim  p_\Omega(t,x,y) \lesssim
  \phi(x) \phi(y)
  p(c_2t,x,y),
  \qquad t \geq 1,
\end{equation*}
where $\lesssim$ denotes inequality up to a multiplicative constant.
In dimensions $d\ge 3$ the harmonic profile has a nice probabilistic
interpretation: $\phi(x)$ gives the probability that a particle that
is initially located at $x\in\Omega$ evolving with Brownian motion
never touches the complement of $\Omega$. If we think that particles
are killed when exiting $\Omega$, it can be regarded as a survival
probability.

\paragraph{Asymptotic results for kernels.}

On the other hand, there are several results in the literature on the
asymptotic behaviour of the kernel $p_\Omega$ as $t\to+\infty$. Again,
results strongly depend on whether $d =1,2$ or $d \geq 3$, so we will
discuss first the case $d \geq 3$. It was proved by
\citet{Collet2000a} that in dimension $d \geq 3$ and for fixed
$x, y \in \Omega$,
\begin{equation}
  \label{eq:asymp-kernel-estimate}
  p_\Omega(t,x,y) \sim \phi(x) \phi(y) p(t,x,y) \sim \phi(x) \phi(y)(4\pi t)^{-d/2}\qquad\text{as $t \to +\infty$},
\end{equation}
where $f \sim g$ denotes that the limit $f/g$ is
$1$ in the asymptotic regime considered. We give a new proof of this
in our main kernel bound, Corollary~\ref{cor:heat_kernel_bounds}. Our
bound includes an explicit rate of the above asymptotic approach,
which seems to be a new result in dimensions $d \geq 3$. To be more
precise, we show that in $d \geq 3$, for some
$\sigma = \sigma(y) > 0$,
\begin{equation}
  \label{eq:main-kernel-estimate}
  |p_\Omega(t,x,y) - \phi(x) \phi(y) p(t,x,y)|
  \lesssim \phi(x) \phi(y) t^{-\frac{d}{2} - \sigma},
\end{equation}
for all $t > 0$ and all $x,y \in \Omega$. (Since the kernels
$p_\Omega$ and $p$ are symmetric in $x$, $y$, one may just as well
write $\sigma(x)$.) For $x$, $y$ in any fixed compact set, this gives
an additional asymptotic term in~\eqref{eq:asymp-kernel-estimate},
showing that
\begin{equation*}
  p_\Omega(t,x,y) = \phi(x) \phi(y) p(t,x,y) (1 + O(t^{-\sigma}))
  \qquad \text{as $t \to +\infty$},
\end{equation*}
a result that gives a rate of convergence in relative error. This can
be compared to Theorem 1.4 of \citet{Collet2000a} or Theorem~4 in
Section~2.4 of \citet{Uchiyama-JTP-2018}, which give $o(1)$ instead of
$O(t^{-\sigma})$. Our estimate~\eqref{eq:main-kernel-estimate} also
gives information on the diffusive scale in which $x = z \sqrt{2t}$:
\begin{equation*}
  p_\Omega (t, z \sqrt{2t}, y) =
  (2t)^{-\frac{d}{2}} \phi(y)G(z) (1 + O(t^{-\sigma}))
  \qquad \text{as $t \to +\infty$},
\end{equation*}
uniformly for $y, z$ in a compact set. An interpretation of this is
that the Dirichlet fundamental solution starting at $y$ outside a
domain in dimensions $d \geq 3$ has a self-similar behavior comparable
to that of the heat equation on $\R^d$: it converges to $0$, but after
a diffusive rescaling it approaches a multiple of the Gaussian,
corrected by a factor $\phi(y)$ which accounts for the fact that a
mass $1 - \phi(y)$ is asymptotically lost through the boundary.

There's still another interesting asymptotic regime contained
in~\eqref{eq:main-kernel-estimate}: when both $x$ and $y$ depend on
$t$, and both $|x|$ and $|y|$ diverge. First, we need to assume
$|x-y| \lesssim \sqrt{t}$ (or perhaps a slightly weaker assumption)
for \eqref{eq:main-kernel-estimate} to be useful: if its right-hand
side decays slower than $p(t,x,y)$ then it does not contain any
asymptotic information. Also, in order to obtain useful information
from our estimate we need to know something about the dependence of
$\sigma(y)$ on $y$. The constant $\sigma$ we give depends on the
properties of certain rather explicit logarithmic Sobolev
inequalities, and we believe that $\sigma$ can actually be taken to be
a constant on $\Omega$. We have not been able to prove this, and it is
an interesting question which can be studied independently; however,
if one accepts this for a moment, then \eqref{eq:main-kernel-estimate}
implies, since $\phi(x), \phi(y) \to 1$ at known rates,
\begin{equation*}
  p_\Omega(t,x,y) = p(t,x,y) (1 + O(t^{-\sigma}))
\end{equation*}
when $|x|, |y| \gtrsim \sqrt{t}$ and $|x-y| \lesssim \sqrt{t}$. That is: if both $x$ and $y$ move to infinity at any (fast enough) speed but stay within ``diffusive distance'' of each other, then asymptotically the effect of the hole $U$ is not seen in the kernel. However, as remarked, we cannot completely prove this with our methods since we have not proved whether one may take $\sigma$ independent of $y$. This kind of convergence result, without a rate, is also contained in \citet[Theorem 4]{Uchiyama-JTP-2018}, with a different set of restrictions: Uchiyama requires $|x|, |y| \lesssim t$ but does not place further restrictions on $|x-y|$.

Let us also discuss briefly our results on the kernel in dimension $d =2$. For this, assume that $0 \in U$. In Corollary~\ref{cor:heat_kernel_bounds} we show that in $d=2$,
\begin{equation}
  \label{eq:intro-d2-kernel-bound}
  \left|p_\Omega(t,x,y)- \frac{4 \phi(x) \phi(y)}{(\log t)^2} p(t,x,y)\right|
  \lesssim
  \frac{\phi(x)\phi(y)}{t (\log t)^2}\left( \frac{1}{\log t}+\frac{|x| \wedge |y|}{t^{\sigma}}\right),
\end{equation}
for all $t \geq 2$ and all $x,y \in \Omega$ with
$|x|,|y| \lesssim \sqrt{t}$. We recall that in dimension $2$ we have
$\phi(x) \sim \log|x|$ as $|x| \to +\infty$. Here,
$\sigma = \sigma(y) > 0$ is still a quantity depending on $y$, which
we conjecture can be taken independent of $y$. As
before,~\eqref{eq:intro-d2-kernel-bound} shows that for $x$, $y$ in
any fixed compact set,
\begin{equation*}
  p_\Omega(t,x,y) = \frac{4}{(\log t)^2}\phi(x) \phi(y) p(t,x,y)\left(1 + O \left( \frac{1}{\log t} \right) \right).
\end{equation*}
This is an improvement over \citet[Theorem 1.2]{Collet2000a}, which gives $o(1)$ instead; and over \citet[Theorem 3]{Uchiyama-JTP-2018},
which gives $O(\log \log t / \log t)$ and requires additional error terms. Similarly, in the diffusive scale,
\begin{equation*}
  p_\Omega (t, z \sqrt{2t}, y) = \frac{4 \phi(y)}{\log t}(2t)^{-\frac{d}{2}} G(z)\left(1 + O \left( \frac{1}{\log t} \right) \right)
\end{equation*}
as $t \to +\infty$, for $y$, $z$ in any fixed compact set. On the
other hand, if both $x$ and $y$ depend on time and diverge,
now~\eqref{eq:intro-d2-kernel-bound} is only useful as long as
$|x| \wedge |y| = o(t^{\sigma})$ and $|x| \vee |y| = O(\sqrt{t})$;
this is more restrictive than the mentioned result
by~\citet{Uchiyama-JTP-2018}, who only requires
$|x| \vee |y| = O(\sqrt{t})$, since our constant $\sigma$ is always
less than $1/2$ (possibly equal to $1/2$ in some cases or
asymptotically, but this is an open question).

If any of $|x|$ and $|y|$ diverge faster than $\sqrt{t}$,
then~\eqref{eq:intro-d2-kernel-bound} does not give information on
asymptotics: for the right-hand side to decay faster than
$(\log t)^{-2} \phi(x) \phi(y) p(t,x,y)$ we essentially need
$|x-y| \lesssim \sqrt{t}$; but for the term
$(|x| \wedge |y|) t^{-\sigma}$ to decay to $0$, at least one of $|x|$
or $|y|$ has to be $o(t^{\sigma})$, and $\sigma < 1/2$ in our
results. This is different from the case $d \geq 3$, where the same
estimate can give asymptotics in scales faster than the diffusive
one. The only result we know in that case is Theorem~1
in~\citet{Uchiyama-JTP-2018}, which states that
\begin{align}
  \label{eq:uchi1}
  p_\Omega(t,x,y)
  &=\frac{\phi(x)}{\log
    \frac{t}{|y|}}
    p(t,x,y)\left(1+o(1)\right)
    \quad&&\text{if $\sqrt{t} < |y|$, $|y| = o(t)$, $|x| |y| \lesssim t$,}
  \\
  \label{eq:uchi2}
  p_\Omega(t,x,y)&=p(t,x,y)\left(1+o(1)\right)\quad &&\text{if $|x| |y| \gtrsim t$ and $|x| \to +\infty$.}
\end{align}
We believe that the strategy we follow in this paper can also give
estimates of the rate in the above approximation. However, we leave
this for a future work to avoid adding to an already long paper.

\paragraph{Asymptotic results for solutions}

Using the above kernel estimates one can obtain results for general
integrable initial data via \eqref{eq:sol-via-kernel}. Some previous
papers have also tried the approach of obtaining bounds directly on
solutions $u$. We cite the interesting paper by \citet{Herraiz1998},
who gives large-time estimates for the Dirichlet heat equation on
exterior domains, without convergence rates. His results also include
asymptotics for solutions which do not have a finite first moment, and
are proved mainly by comparison arguments with super and subsolutions;
see also~\citet{DominguezTena-RodrigueBernal-2024}, dealing as well
with Neumann and Robin homogeneous boundary conditions.
Similar asymptotic bounds have also been studied for a linear nonlocal
heat equation in the series of papers by
\citet*{CEQW2012,CEQW2015-DCDS,CEQW2016-JMAA,CEQW2016-SIMA} and for the (local)
porous medium equation in
\citet*{BQV2007,GG2007,CQW2017,CQW2018}. These bounds have been an
important initial motivation for our work.

We also mention recent results (including Neumann and Robin boundary
conditions, besides Dirichlet ones) on the asymptotic behaviour of the
mass of standard solutions by
\citet{DominguezTena-RodrigueBernal-2023} using a different
approach. In the Dirichlet case and with suitable finite moments, they
are improved in our Corollary~\ref{cor:mass}, after which we make some
further comments.

In the present paper we are also able to give large-time asymptotics in the $L^1$ sense, which imply asymptotics on the mass and seem to be new. We also highlight that our strategy gives a unified approach in all dimensions, and we hope it can lend itself to generalisations in
other contexts.

In the rest of this introduction we describe our results in more detail.

\subsection{Main uniform estimates}

Let us state our results more precisely. For dimensions $d \geq 2$ we always assume that
\begin{equation}
  \label{eq:hypOmega1}
  \begin{gathered}
    \Omega := \R^d \setminus \overline{U} \ \text{is connected,}
    \\
    \text{$U \subseteq \R^d$ is nonempty, bounded, open, and with $\mathcal{C}^2$ boundary}.
  \end{gathered}
\end{equation}
As remarked before, in $d=1$ we just take $\Omega := (x_0,+\infty)$, $x_0\in\mathbb{R}$. In order to estimate the constant $\sigma$ mentioned in
\eqref{eq:main-kernel-estimate} and \eqref{eq:intro-d2-kernel-bound} we need to assume that there is a positive lower bound on the logarithmic Sobolev constant of a family of densities related to the asymptotic limit:
\begin{hyp}
  \label{hyp:logsob}
  There is a constant $\lambda>0$ such that \emph{all} the probability
  densities $F_\tau \: \Omega_\tau \to (0,+\infty)$, $\tau \geq 0$,
  defined by
  \begin{equation*}
    F_\tau(x) := K_\tau \phi^2(x e^{\tau}) G(x), \qquad x \in\Omega_\tau:= e^{-\tau} \Omega,
  \end{equation*}
  where $K_\tau$ is a normalisation constant such that $F_\tau$ has
  integral $1$, satisfy the logarithmic Sobolev inequality
  \begin{equation}\label{eq:log_sob-hyp.1}
    \lambda \int_{\Omega_\tau} g \log \frac{g}{F_\tau}
    \leq \int_{\Omega_\tau}g\left|\nabla \log\frac{g}{F_\tau}\right|^2
  \end{equation}
  for all positive, integrable $g \: \Omega_\tau \to (0,+\infty)$ such
  that $\int_{\Omega_\tau} g(x) \d x = 1$ and such that the right hand
  side is finite. Without loss of generality, in dimensions $d \geq 3$
  we always assume $\lambda < d-2$, and in general we always assume
  $\lambda < 2$.
\end{hyp}
The functions $F_\tau$ are a sort of ``transient equilibria''
motivated by the change of variables that we will use. We refer to
Section~\ref{sec:entropy} for a better explanation of the significance
of this, but we will make a few remarks:
\begin{enumerate}
\item The above hypothesis is unnecessary in dimension $d=1$ with $\Omega = (x_0,+\infty)$, $x_0\in\mathbb{R}$ (that is, it is always true), and we may take $\lambda = 2$ in that case (see Lemma \ref{lem:logsob-1d}).

\item We show in Section \ref{sec:logsob} that this hypothesis holds for a large family of domains, namely those for which the ``hole'' $U$ is a smooth deformation of the unit ball. We do not know the exact family of domains for which Hypothesis~\ref{hyp:logsob} is valid, nor a way to obtain good bounds on the value of $\lambda$. We expect however that the logarithmic Sobolev constant $\lambda_\tau$   corresponding to $F_\tau$ approaches the logarithmic Sobolev constant for the standard Gaussian function on $\R^d$ as $\tau \to +\infty$; hence we expect $\lim_{\tau \to +\infty} \lambda_\tau = 2$. The constant $\lambda$ which we use throughout is just a uniform lower bound of all constants $\lambda_\tau$ for $\tau \geq 0$.

\item We have not been able to show that one can also take $\lambda$ to be invariant by translations of the domain $\Omega$, but we believe this to be true.

\item Finally, the condition $\lambda < d-2$ is a technical one to
  simplify the exposition (and we can always satisfy it by taking a
  smaller $\lambda$ if needed). As remarked in the previous point, if
  it is true that the best possible $\lambda$ satisfies
  $\lambda \leq 2$ as we expect, then $\lambda < d-2$ does not add any
  restriction when $d \geq 5$. If one wants to optimise the rates of
  convergence (and assuming we have better information on $\lambda$)
  one might be able to take the best possible $\lambda$ (ignoring
  $\lambda < d-2$) and obtain slightly improved rates. As remarked
  above, we believe the optimal strategy would be to estimate
  $\lambda_\tau$ as best as possible, and use $\lambda_\tau$
  throughout. See Remark \ref{rem:lambda-value} and
  equation~\eqref{eq:46} for more on this, and Remark
  \ref{rem:optimal_rates} for the optimal decay rates we expect to
  hold.
\end{enumerate}

\paragraph{Some notation.}

Before beginning the exposition of our results, let us define for the
rest of the article the following quantities, which will be relevant
throughout all of our study.  We recall that we choose $\phi$ as the
unique positive harmonic function with Dirichlet boundary conditions
on $\Omega$ such that
  \begin{align}
    \label{eq:phi-limd2}
    \lim_{x \to +\infty} \frac{\phi(x)}{\log|x|} = 1
    & \qquad \text{ in dimension $d = 2$,}
    \\
    \label{eq:phi-limd3}
    \lim_{|x| \to +\infty} \phi(x) = 1
    & \qquad \text{ in dimensions $d \geq 3$.}
  \end{align}
  In dimension $d=1$ we will consider the domain
  $\Omega=(x_0,\infty)$, $x_0\in\mathbb{R}$, and $\phi(x)=x-x_0$.
  The existence and uniqueness of this function $\phi$ is classical
  and is outlined in Lemma~\ref{lem:phi}.

  We denote by
\[
  m_\phi:= \int_\Omega  u_0(x)\phi(x) \d x
\]
the preserved quantity, which we may call \textit{harmonic mass}, since the initial datum is weighted against the harmonic function $\phi$. We also define
\begin{equation*}\label{eq:Mk}
  \begin{aligned}
    &m_k :=\int_\Omega u_0(x) |x|^k \d x,
    &&\qquad M_k :=\int_\Omega u_0(x) (1 + |x|^k) \d x,
    \\
    &m_{k,\phi} := \int_\Omega  u_0(x)|x|^k\phi(x) \d x,
    &&\qquad M_{k,\phi} :=\int_\Omega u_0(x) (1 + |x|^k)\phi(x) \d x,
  \end{aligned}
\end{equation*}
for any $k \geq 0$ (so $m_\phi \equiv m_{0,\phi}$). These quantities
may be $+\infty$ but are always well defined since $u_0$ is
nonnegative. The quantity $m_0$ is the initial mass, the $m_k$,
$k\ge1$, are moments of the initial data, and the $M_k$ are equal to
$m_0 + m_k$. The quantities $m_{k,\phi}$ and $M_{k,\phi} = m_\phi +
m_{k,\phi}$ are weighted
``harmonic'' versions of these.

Analogously, sometimes we write $M_k(t)$ to denote the corresponding quantity at time $t$,
\[
M_k(t) :=\int_\Omega u(t,x) (1 + |x|^k) \d x,
\]
and similarly for the other moments.

\medskip

Our main result is the following:
\begin{thm}[Uniform estimates of solutions]
  \label{thm:main-uniform}
  In dimension $d=1$ take $\Omega = (x_0,+\infty)$,
  $x_0\in\mathbb{R}$; in dimension $d \geq 2$, assume
  $\Omega \subseteq \R^d$ is an exterior domain
  satisfying~\eqref{eq:hypOmega1} and Hypothesis~\ref{hyp:logsob}. Let
  $u$ be the standard solution to the heat
  equation~\eqref{eq:heat-ext} in $\Omega$ with nonnegative initial
  condition $u_0 \in L^1(\Omega;(1+\phi(x))\dx)$. We define the
  normalisation function $k_t$ (which depends only on $\Omega$) by
  \begin{equation}
    \label{eq:kt-def-intro}
    k_t \int_\Omega \phi(x)^2 \Gamma(t,x) \d x = 1,
    \qquad t > 0.
  \end{equation}
  Then there exists a constant $C > 0$ which depends only on $d$ and
  the domain $\Omega$ such that for all $t \geq 2$ and $x\in \Omega$ we
  have:
  \begin{enumerate}
  \item[\rm (i)] In dimension $d \geq 3$
    \begin{equation*}
      |u(t,x) - m_\phi \phi(x) \Gamma(t,x)|
      \leq
      C  \phi(x) M_{1,\phi}\, t^{-\frac{d}{2}- \frac{\lambda}{4}}.
    \end{equation*}

  \item[\rm (ii)] In dimension $d=2$, choose
    $x_0\in\R^2\setminus\Omega$. Then,
    \begin{equation*}
      \left|u(t,x)-k_tm_\phi\phi(x)\Gamma(t,x)\right|
      \leq
      \frac{C \phi(x)}{t (\log t)^2}
      \left( \frac{m_{\phi} }{\log t}
        + \frac{M_{1,\phi} + m_\phi |x_0|}
        {t^{\lambda/4}}
      \right).
    \end{equation*}

  \item[\rm (iii)] In dimension $d=1$ we consider
    $\Omega = (x_0,+\infty)$. Take $M > 0$. Then, for all $t \geq 2$
    such that $M \sqrt{t} \geq |x_0|$,
    \begin{equation*}
      \left|u(t,x) - k_t m_\phi \phi(x) \Gamma(t,x) \right|
      \leq
      \frac{C\phi(x)}{t^2}
      (M_{1,\phi} + m_\phi |x_0|).
    \end{equation*}
\end{enumerate}
The constant $C$ in the inequalities above is invariant by
translations of $\Omega$ in all dimensions $d \geq 1$.
\end{thm}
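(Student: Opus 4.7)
The strategy is an entropy method adapted to the exterior setting, where ordinary mass is not conserved while the \emph{harmonic mass} $m_\phi = \int u\phi\,dx$ is. I would organise the argument in three stages: a self-similar change of variables turning \eqref{eq:heat-ext} into a Fokker--Planck equation with time-dependent equilibrium $F_\tau$; an entropy--dissipation estimate yielding exponential decay of the relative entropy via \Cref{hyp:logsob}; and a bootstrap from the resulting weighted $L^1$ bound to the uniform bound in the statement using the Gaussian kernel bounds recalled in \Cref{sec:pre-kernel-estimates}.

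For the first stage I set $v(\tau,y) := (2t)^{d/2} u(t,\sqrt{2t}\,y)$ with $t = \tfrac{1}{2} e^{2\tau}$, on the expanding domain $\Omega_\tau := e^{-\tau}\Omega$. A direct computation shows that $v$ solves $\partial_\tau v = \Delta_y v + \dive_y(yv)$ on $\Omega_\tau$ with zero Dirichlet conditions, and that $\int_{\Omega_\tau} v \phi_\tau\,dy = m_\phi$ is preserved, where $\phi_\tau(y) := \phi(e^\tau y)$. Passing to the probability density $g := v\phi_\tau/m_\phi$, and exploiting the harmonicity of $\phi_\tau$ together with the identity $\Delta\log \phi_\tau = -|\nabla\log \phi_\tau|^2$, a calculation yields
\[
\partial_\tau g = \dive\!\bigl(F_\tau\,\nabla(g/F_\tau)\bigr) \quad\text{in } \Omega_\tau,
\]
that is, the Fokker--Planck equation attached to the measure $F_\tau\,dy$ of \Cref{hyp:logsob}. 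Differentiating $H(\tau) := \int_{\Omega_\tau} g \log(g/F_\tau)\,dy$ and integrating by parts (boundary terms vanish because $\phi_\tau = 0$ on $\partial\Omega_\tau$) gives $\frac{d}{d\tau} H = -I(\tau) - \int g\,\partial_\tau \log F_\tau\,dy$, where $I(\tau)$ is the Fisher information; by \Cref{hyp:logsob} one has $I(\tau) \ge \lambda H(\tau)$.

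The main technical difficulty is controlling the extra term coming from the dependence of $F_\tau = K_\tau \phi_\tau^2 G$ on $\tau$ through both $K_\tau$ and $\phi_\tau$. Using the expansions of $\phi$ from \Cref{sec:phi} together with the moment assumption $u_0 \in L^1(\Omega;(1+\phi)\,dx)$ (in the form $M_{1,\phi}<\infty$), I would show this term is absorbed into a fraction of $-\lambda H$ up to small, integrable remainders, yielding $H(\tau) \le C e^{-\lambda\tau}$ by a Gronwall argument; the Csisz\'ar--Kullback--Pinsker inequality then gives $\|g(\tau,\cdot) - F_\tau\|_{L^1(\Omega_\tau)} \lesssim e^{-\lambda\tau/2}$, which in the original variables reads
\[
\int_\Omega \bigl|u(t,x) - m_\phi k_t\,\phi(x)\Gamma(t,x)\bigr|\,\phi(x)\,dx \lesssim t^{-\lambda/4}.
\]
To upgrade to the pointwise bound I split $[t/2,t]$: the weighted $L^1$ estimate above, applied at time $t/2$, serves as data for a further half-time evolution by the Dirichlet semigroup, and the upper bound $p_\Omega(t,x,y) \lesssim \phi(x)\phi(y)\,p(Ct,x,y)$ from \Cref{sec:pre-kernel-estimates} produces both the prefactor $\phi(x)$ and the $t^{-d/2}$ decay. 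In $d\ge 3$ the asymptotic $\phi\to 1$ gives $|k_t-1|=O(t^{-(d-2)/2})$, absorbed into the rate $t^{-d/2-\lambda/4}$ since $\lambda<d-2$, yielding (i); in $d=2$ and $d=1$ the factor $k_t$ must be retained and the extra terms $m_\phi|x_0|$ reflect translation sensitivity of the reference Gaussian that has to be tracked in the weighted estimates. The logarithmic growth of $\phi_\tau$ in $d=2$ forces careful bookkeeping of the $\log t$ corrections in $K_\tau$, and is the source of the extra $\log t$ factor and of the restriction to $t \ge 2$ in items (ii) and (iii).
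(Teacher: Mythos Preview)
Your overall architecture matches the paper's: Fokker--Planck rescaling, relative entropy against $F_\tau$, log-Sobolev from Hypothesis~\ref{hyp:logsob}, Csisz\'ar--Kullback, then a regularisation step. Two genuine gaps, however.

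First, in the $L^1\!\to\!L^\infty$ upgrade you write that the weighted $L^1$ bound at time $t/2$ ``serves as data for a further half-time evolution by the Dirichlet semigroup''. But the difference $w=u-m_\phi k_t\phi\Gamma$ (or $u-m_\phi\phi\Gamma$ in $d\ge 3$) is \emph{not} a solution of the Dirichlet heat equation: since $\partial_t(\phi\Gamma)=\phi\Delta\Gamma$ while $\Delta(\phi\Gamma)=\phi\Delta\Gamma+2\nabla\phi\cdot\nabla\Gamma$, one has $\partial_t w=\Delta w+2m_\phi\nabla\phi\cdot\nabla\Gamma$ (plus a $k_t'\phi\Gamma$ term in $d=2$). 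The paper applies Duhamel from $t/2$ and bounds the source integral explicitly, splitting $[t/2,t-1]$ and $[t-1,t]$ and using the kernel bounds of \Cref{sec:pre-kernel-estimates}; in $d=2$ this source contribution is of the same order $\phi(x)\,t^{-1}(\log t)^{-3}$ as the leading term in the statement, so it cannot be ignored. Only in $d=1$ is $w$ itself a solution (since $\phi\Gamma/t=2D$ solves the equation), making your description correct there.

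Second, your claim $H(\tau)\le Ce^{-\lambda\tau}$ is false in $d=2$. The remainder $R(\tau)=\int g\,\partial_\tau\log F_\tau$ is not small enough to be treated as an integrable perturbation: the paper bounds it via Cauchy--Schwarz as $|R(\tau)|^2\lesssim H(\tau)\,q(\tau)$ with $q(\tau)\sim\tau^{-2}$, leading to the differential inequality $h'\le-\lambda h+\sqrt{h}\sqrt{q}$ whose solution has a non-exponential piece $h(\tau)\lesssim\tau^{-2}$. This is precisely the origin of the $m_\phi/\log t$ term in parts~(ii) of Theorems~\ref{thm:main-L1} and~\ref{thm:main-uniform}; it does not come merely from bookkeeping $K_\tau$. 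Your displayed weighted $L^1$ bound $\int\phi|u-m_\phi k_t\phi\Gamma|\lesssim t^{-\lambda/4}$ is therefore too strong in $d=2$.
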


\begin{rem}[Initial data]
  Unless $M_{1,\phi}$ is finite, Theorem~\ref{thm:main-uniform} gives
  no information. The behaviour of the heat equation in the full space
  suggests that this is not a technical restriction. Indeed, in that
  case no convergence speed can be found without further information
  on the data other than integrability of $u_0$, as shown in the
  counterexample constructed in~\cite[Section
  4.1]{vazquez2017asymptotic}. In general, in all of $\R^d$, the speed
  of approach to the fundamental solution can be slow if the tail of
  the initial data is integrable but ``thick'' enough; see the
  explicit spectrum of the Fokker-Planck operator in spaces with
  different power weights by \citet[Appendix A]{Gallay2001}.

  In dimension $d \geq 3$, it is clearly enough to require
  $L^1(\Omega;(1+|x|)\dx)$ since $\phi$ is bounded. The condition
  $u_0 \in L^1(\Omega;1+\phi(x)\dx)$ imposes some restriction to the
  size of the solution at infinity in dimension $d=1,2$.
\end{rem}

\begin{rem}[Optimal rates]
  \label{rem:optimal_rates}
  As remarked just after Hypothesis \ref{hyp:logsob}, we expect
  $\lim\limits_{t \to \infty} \lambda_t = 2$, which would mean that the
  optimal rates in the previous theorem should be obtained when taking
  $\lambda=2$ in our proofs (and ignoring our condition that
  $\lambda < \min\{2, d-2\}$ in dimensions $d \geq 3$, which we make
  just for convenience). This leads us to conjecture the following
  behaviour:
  \begin{itemize}
  \item In dimension $d=3$, Theorem~\ref{thm:main-uniform} (i) should hold with
    $t^{-\frac{3}{2}-\frac14} = t^{-\frac74}$ on the right-hand side.
    This is in contrast to $t^{-2}$, which is the optimal rate in the
    full space~$\R^3$.
  \item In dimension $d=4$, Theorem~\ref{thm:main-uniform} (i) should hold with $t^{-\frac52} \log (2+t)$
    on the right-hand side. For comparison, $t^{-\frac52}$ is the
    optimal decay rate for general solutions in the full space $\R^4$.
  \item In dimension $d \geq 5$, Theorem~\ref{thm:main-uniform} (i) should hold with $t^{-\frac{d}{2}-\frac12}$
    on the right-hand side, which matches the decay rate in the full
    space $\R^d$.
  \end{itemize}
  All this depends on whether it actually holds that
  $\lim_{t \to +\infty} \lambda_t = 2$ (and even on the rate at which
  this convergence takes place). This is an interesting problem, but
  requires a better understanding than the one currently available on
  perturbation results for log-Sobolev inequalities.
\end{rem}

\begin{rem}
  We prove in Section \ref{sec:Kt} that in dimension $d=2$ the
  normalisation function $k_t$ satisfies
  \begin{equation*}
    k_t \sim \frac{4}{(\log t)^2} \quad \text{as $t \to +\infty$.}
  \end{equation*}
  This decay of $k_t$ is related to the decay of the mass of the
  solution $u$ in dimension~$d=2$; see
  Section~\ref{sec:total_mass}. The constant implicit in this approach
  is not invariant by translations of the domain, which is why we have
  not written it in Theorem \ref{thm:main-uniform}; if one is not
  worried about translation invariance, one may substitute $k_t$ by
  $4 / (\log t)^2$ in the $d=2$ case of the theorem. Also giving up
  translation invariance, we can let the constant $C$ depend on
  $|x_0|$, getting
  \begin{equation*}
    \left|u(t,x)- \frac{4 m_\phi}{(\log t)^2} \phi(x)\Gamma(t,x)\right|
    \lesssim
    \frac{\phi(x)}{t (\log t)^2}
    \left( \frac{m_{\phi} }{\log t}
      + \frac{M_{1,\phi}}
      {t^{\lambda/4}}
    \right)
    \lesssim
    \frac{\phi(x) M_{1,\phi}}{t (\log t)^3}
  \end{equation*}
  for all $x \in \Omega$ and $t \geq 1$.
\end{rem}

\begin{rem}
  When $d=1$ we have $k_t = 1/t$, $\phi(x)=x-x_0$. Therefore, in the
  case $x_0=0$,
$$
k_t\phi(x)\Gamma(t,x)=\frac{x}{t} \Gamma(t,x)=2 D(t,x),
$$
where $D(t,x)=-\partial_x \Gamma(t,x)$ is the so-called \emph{dipole
  solution} of the heat equation, which has $-\delta_0'$ as initial
datum. The name comes from electromagnetism, where $\delta_0'$
represents a dipole. Hence,
\begin{equation*}
  \left|u(t,x) - 2 m_\phi D(t,x) \right|
  \leq C  M_{1,\phi} \, (x\wedge 1)t^{-\frac{3}{2}}
\end{equation*}
in the half-line $\Omega = (0,+\infty)$. Naturally, when $x_0>0$ and $\Omega = (x_0,+\infty)$ we simply have to consider $\phi(x)=x-x_0$, obtaining a translation of the dipole.
\end{rem}

\begin{rem}
  Our results are written in a way which is not invariant by
  translations, since we have chosen a fundamental solution
  $\Gamma(t,x)$ centred at the origin. This is for simplicity in the
  later proofs, but one can easily use the translation invariance of
  solutions of the heat equation to write corresponding ``translated''
  statements if the reader prefers.
\end{rem}

\begin{rem}[Sign-changing solutions]
  Since the problem is linear and positivity preserving, the sign
  assumption on the initial data can be removed, dealing separately
  with the positive and negative parts of the solution, thus obtaining
  the same statement in Theorem \ref{thm:main-uniform} (keeping the
  same $m_\phi$ on the left hand side, but using moments of $|u_0|$ on
  the right-hand side instead of moments of $u_0$). However, if sign
  changes are allowed, it may happen that $m_\phi=0$ for a given
  nontrivial initial data. If this is the case,
  Theorem~\ref{thm:main-uniform} is still true, and shows that
  solutions with $m_\phi = 0$ decay faster to~$0$ than positive
  solutions, in analogy with the heat equation on $\R^d$ and solutions
  with zero integral. That is: assume the conditions of
  Theorem~\ref{thm:main-uniform}, but allow $u_0 \in L^1(\Omega)$ to
  have any sign and assume $m_\phi = \int_\Omega u_0 \phi = 0$. Then
  in $d \geq 3$ we have, for $t \geq 2$,
  \begin{equation*}
    |u(t,x)|
    \lesssim
    M_{1,\phi}[|u_0|] \,\phi(x)\, t^{-\frac{d}{2}- \frac{\lambda}{4}}.
  \end{equation*}
  One can write the corresponding results in dimensions $1$ and $2$ by
  substituting the absolute moments $M_{1,\phi}[|u_0|]$ and
  $m_\phi[|u_0|]$ on the right hand side of the inequalities instead
  of $M_{1,\phi}$ and $m_\phi$.
\end{rem}

\medskip As a consequence of Theorem \ref{thm:main-uniform} we have
the following asymptotic bounds on the Dirichlet heat kernel. Notice
that the heat kernel is explicitly given in dimension 1 by
\begin{equation}
  \label{eq:kernel-dim_1_general}
  p_\Omega(t,x,y) = \Gamma(t,x-y)-\Gamma(t,x+y-2x_0)
\end{equation}
when $\Omega=(x_0,\infty)$ for some $x_0\in\R$. We include however the
result also in this case; even with the explicit kernel it is not
straightforward to obtain the bound we give.

\begin{cor}[Uniform estimates of the heat kernel]
  \label{cor:heat_kernel_bounds}
  Assume $\Omega \subseteq \R^d$ is an exterior domain satisfying
  \eqref{eq:hypOmega1} and Hypothesis \ref{hyp:logsob}.
  Given $y \in \Omega$, take $\lambda = \lambda(y) > 0$ the constant
  from Theorem \ref{thm:main-uniform} corresponding to the domain
  $\Omega_y := \Omega - y$
  \begin{enumerate}
  \item[\rm (i)] In dimension $d \geq 3$, there exists a constant $C > 0$
    depending only on $d$ and $\Omega$ such that
    \begin{equation*}
      |p_\Omega(t,x,y) - \phi(x) \phi(y) p(t,x,y)|
      \leq C \phi(x) \phi(y) t^{-\frac{d}{2} - \frac{\lambda}{4}}
      \qquad \text{for all $t \geq 2$, \ $x,y \in \Omega$.}
    \end{equation*}

  \item[\rm (ii)] In dimension $d=2$, for any $M > 0$ there exists a
    constant $C > 0$ depending only on $d$, $\Omega$ and $M$ such that
    \begin{equation*}
      \left|p_\Omega(t,x,y) - \frac{4 \phi(x) \phi(y)}{(\log t)^2} p(t,x,y)\right|
      \leq \frac{C \phi(x)\phi(y)}{t (\log t)^2}
      \left( \frac{1}{\log t}
      +
      \frac{|x| \wedge |y|}{t^{\lambda/4}}
      \right).
    \end{equation*}
    for all $t \geq 2$ and all $|x|,|y| \in \Omega$ with $|x| \wedge |y| \leq M
    \sqrt{t}$.

  \item[\rm (iii)] In dimension $d=1$  we consider
    $\Omega = (0,+\infty)$. Take $M > 0$. There exists a
    constant $C > 0$ depending only on $d$ and $M$ such that
    \begin{equation*}
      \left|p_\Omega(t,x,y) - \frac{\phi(x) \phi(y)}{t} p(t,x,y)\right|
      \leq \frac{C \phi(x)\phi(y)}{t^2}
      \left( 1
      +
      |x| \wedge |y|
      \right)
    \end{equation*}
    for all $t\geq 2$ and all $|x|, |y| \in \Omega$ with
    $|x| \wedge |y| \leq M \sqrt{t}$.
  \end{enumerate}
\end{cor}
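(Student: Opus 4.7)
The plan is to deduce the corollary from Theorem~\ref{thm:main-uniform} by a standard two-step reduction: translation of the domain so that the kernel $p_\Omega(\cdot,\cdot,y)$ is centred at the origin of the translated domain, combined with a small-time semigroup shift that replaces the Dirac initial datum $\delta_y$ by an integrable function with finite weighted first moment. Fix $y \in \Omega$ and set $\tilde\Omega := \Omega - y$ with harmonic profile $\tilde\phi(z) := \phi(z+y)$; then $p_\Omega(t,x,y) = p_{\tilde\Omega}(t, x-y, 0)$. By the symmetry $p_\Omega(t,x,y) = p_\Omega(t,y,x)$ and the symmetry of the right-hand sides, I may assume $|y| \leq |x|$, so that $|y| = |x| \wedge |y|$.

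For a fixed small $s > 0$ (say $s = 1$), set $v_0(z) := p_{\tilde\Omega}(s, z, 0)$ and $v(\tau, z) := p_{\tilde\Omega}(\tau + s, z, 0)$, so that $v$ is the standard $L^1$ solution of the Dirichlet heat equation on $\tilde\Omega$ with initial datum $v_0$. The conservation identity~\eqref{eq:conservation} applied to $v$ yields
\[
  m_{\tilde\phi}[v_0] \;=\; \int_{\tilde\Omega} v_0\, \tilde\phi \d z \;=\; \tilde\phi(0) \;=\; \phi(y),
\]
and the Grigoryan--Zhang upper bound~\eqref{eq:sc2} (with the $d=2$ variant recalled in Section~\ref{sec:pre-kernel-estimates}) shows that the remaining weighted moment $M_{1,\tilde\phi}[v_0]$ is controlled by $\phi(y)$ in $d \geq 3$, and by $\phi(y)(1+|y|)$ in $d=1,2$, owing to the unboundedness of $\tilde\phi$ there. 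Applying Theorem~\ref{thm:main-uniform} to $v$ in $\tilde\Omega$ at time $\tau = t-s$, using the translation-invariance of the constant asserted in that theorem, and evaluating at $z = x-y$ with $\tilde\phi(x-y) = \phi(x)$, produces the target inequality with $\Gamma(t-s, x-y)$ in place of $p(t,x,y)$.

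Replacing $\Gamma(t-s, x-y)$ by $p(t,x,y) = \Gamma(t, x-y)$ is done at a cost bounded, via $|\partial_\tau \Gamma(\tau, \cdot)| \lesssim \tau^{-d/2-1}$ on $[t-s, t]$, by a term of order $\phi(x)\phi(y)\,t^{-d/2-1}$, which in all three cases is absorbed into the announced error because $\lambda/4 < 1$. The compact range $t \in [2, 2+s]$ is handled directly by~\eqref{eq:sc1}--\eqref{eq:sc2}. In dimension $d=2$, the normalisation factor $k_{t-s}$ produced by Theorem~\ref{thm:main-uniform}(ii) is replaced by $4/(\log t)^2$ using the asymptotics $k_t \sim 4/(\log t)^2$ established in Section~\ref{sec:Kt}, at a relative cost of order $1/\log t$ consistent with the first term inside the parentheses. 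The one-dimensional case can be obtained from this argument, or alternatively by direct Taylor expansion of the explicit kernel~\eqref{eq:kernel-dim_1_general}.

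The main technical obstacle is the absorption of the translation-dependent quantities in $\tilde\Omega$---the moment $M_{1,\tilde\phi}[v_0]$ together with, in $d=2$, the term $m_\phi|\tilde x_0|$ appearing in Theorem~\ref{thm:main-uniform}(ii) applied with $\tilde x_0 := x_0 - y$ for some $x_0 \in \R^2 \setminus \Omega$---into the clean prefactor $\phi(x)\phi(y)(1 + |x|\wedge|y|)$ demanded by the statement. The symmetric reduction $|y| = |x| \wedge |y|$ is what controls $|\tilde x_0|$ by a constant times $1+|y|$, and the Grigoryan--Zhang bound, together with the boundedness (respectively logarithmic growth) of $\tilde\phi$, produces a matching bound on $M_{1,\tilde\phi}[v_0]$. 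The diffusive restriction $|x| \wedge |y| \leq M\sqrt{t}$ in the statements for $d \leq 2$ is exactly what ensures that these translation-dependent contributions remain inside the allowed error.
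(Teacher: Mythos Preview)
Your approach is correct and follows the same translation strategy as the paper, but with one implementation choice that makes it more laborious than necessary. The paper does not introduce a time shift: rather than taking $v_0 = p_{\tilde\Omega}(1,\cdot,0)$ as initial datum, it approximates $\delta_0$ directly by a sequence $u_{0,n}(z) = n^d\varphi(nz)$ of smooth compactly supported probability densities, applies Theorem~\ref{thm:main-uniform} on $\tilde\Omega$ to each $u_{0,n}$, and passes to the limit using the well-known uniform convergence of the corresponding solutions on compact sets. The payoff is that the weighted moments become trivial in the limit,
\[
  m_{\tilde\phi}[u_{0,n}] \to \tilde\phi(0) = \phi(y),
  \qquad
  M_{1,\tilde\phi}[u_{0,n}] \to \tilde\phi(0)\,(1+0) = \phi(y),
\]
so no Grigor'yan--Zhang bound is needed to control $M_{1,\tilde\phi}$, and there is no correction from $\Gamma(t-1,\cdot)$ to $\Gamma(t,\cdot)$ to perform. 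The factor $|x|\wedge|y|$ in dimensions $d\le 2$ then arises purely from the term $m_\phi\,|\tilde x_0|$ with $\tilde x_0 = x_0 - y$, exactly as you identify in your last paragraph. Your route works, but it trades a short soft limit for explicit kernel estimates and a time-shift repair that add steps without buying additional generality.
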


We give a straightforward proof of
Corollary~\ref{cor:heat_kernel_bounds} as a consequence of Theorem
\ref{thm:main-uniform}, which highlights the importance of the
translation invariance of the constants:

\begin{proof}[Proof of Corollary \ref{cor:heat_kernel_bounds}]
  We give the proof first in dimension $d \geq 3$. Consider
  $\Omega_y := \Omega - y$ the translation of the domain $\Omega$ by a
  vector $-y \in \R^d$. The positive function $\phi_\Omega$ satisfying
  $\Delta \phi_\Omega = 0$ and
  $\phi_\Omega \big\vert_{\partial \Omega_y} = 0$ associated to this
  domain $\Omega_y$ is clearly $\phi_y(x) = \phi(x+y)$. Hence,
  applying Theorem \ref{thm:main-uniform} on $\Omega_y$ gives (with
  $\lambda = \lambda(y)$)
  \begin{equation*}
    \label{eq:26}
    |u(t,x) - m_{\phi_y} \phi_y(x) p(t,x,0)| \leq C
    M_{1,\phi_y}\ \phi_y(x) t^{-\frac{d}{2}-\frac{\lambda}{4}}
    \qquad \text{($x \in \Omega_y$, $y \in \R^d$),}
  \end{equation*}
  for all $t \geq 2$ and any standard solution $u$ to the Dirichlet heat
  equation on $\Omega_y$ with integrable initial data $u_0$ such that
  the quantities $M_1$ and $m_\phi$ are finite.  The constant $C$ in
  \eqref{eq:26} is invariant by translations as proved in Theorem
  \ref{thm:main-uniform}, so it is the same for all~$y$. Assume now
  that $0 \in \Omega_y$ (that is, $y \in \Omega$), and take a sequence
  of initial conditions $u_0$ which approximate $\delta_0$ in an
  appropriate way (for example, take $u_{0,n}(x) := n^d \varphi(n x)$
  for a smooth, compactly supported probability density $\varphi$). It
  is well known that the corresponding solutions converge uniformly
  for $x$ in compact sets of $\Omega_y$, for all fixed $t >
  0$. Passing to the limit we obtain from \eqref{eq:26} that
  \begin{equation*}
    |p_{\Omega_y}(t,x,0) - \phi(y) \phi(x+y) p(t,x,0)|
    \leq C  \phi(x+y)\phi(y) t^{-\frac{d}{2}-\sigma}
    \qquad \text{($x \in \Omega_y$, $y \in \Omega$),}
  \end{equation*}
  for all $t \geq 2$. (Observe that in this approximation,
  $M_{1,\phi_y} \to \phi(y)$ and $m_\phi \to \phi(y)$.) Using now that
  $p_{\Omega_y}(t,x,0) = p_{\Omega}(t,x+y,y)$ and
  $p(t,x,0) = p(t,x+y,y)$ we get
  \begin{equation*}
    |p_{\Omega}(t,x+y,y) - \phi(y) \phi(x+y) p(t,x+y,y)|
    \leq C \phi(x+y)\phi(y) t^{-\frac{d}{2}-\sigma}.
  \end{equation*}
  Finally, applying this to $x \equiv x-y$ we obtain the result. The
  proof in dimensions $d=2$ is obtained by the same argument, using the
  corresponding case of Theorem~\ref{thm:main-uniform}.
\end{proof}

Notice that in dimension $2$ the above result does not give
information if $|x| \wedge |y| \sim t^{\lambda/4}$, and in dimension
$1$ it gives no information if $|x| \wedge |y| \sim \sqrt{t}$. In
dimension $2$ this gives a similar restriction as
\citet{Uchiyama-JTP-2018} (which would coincide if $\lambda=2$). The
results mentioned in \eqref{eq:uchi1}, \eqref{eq:uchi2} suggest that
$|x|\wedge|y|=O(\sqrt{t})$ is sharp if we want to obtain the factor
$4/(\log t)^2$. The strategy in our proof suggests a way to obtain a
different behaviour in other scales by keeping the factor $k_t$, but
we have not pursued this.

Regarding the case $d\geq 3$, we think that the dependence of
$\lambda$ on $y$ can be removed in
Corollary~\ref{cor:heat_kernel_bounds}, but we have not been able to
prove it. Whether this can be done or not depends on whether \emph{all
  translations of the domain $\Omega$ satisfy Hypothesis
  \ref{hyp:logsob}, with a constant $\lambda$ which is independent of
  the translation.} Contrary to Hypothesis \ref{hyp:logsob}, we have
not been able to prove this for any domain, but we believe that it
holds at least for the same family of domains for which we show
Hypothesis \ref{hyp:logsob}. One can directly check that if this holds
then one can take the constant $\lambda$ in Corollary
\ref{cor:heat_kernel_bounds} to be independent of $y$.

\subsection{Strategy and $L^1$ estimates}

Our strategy to prove Theorem \ref{thm:main-uniform} mimics the
well-known \emph{entropy method}, which has been very successful in
kinetic theory (see \cite{Cercignani1982,Carrillo2001, Arnold2001} and
the references therein), and which has also been used to study the
asymptotic behaviour of the heat equation in the full space;
see~\cite{Toscani1996} and the review by \cite{vazquez2017asymptotic}.
This method is based on the study of functionals which are decreasing
in time along solutions of the PDE, and usually yield $L^1$ or $L^2$
convergence results. Its application is not straightforward in our
setting, since the equation has no exact scale invariance due to the
presence of the hole in its domain, and is not conservative due to the
Dirichlet boundary condition. However, the main strategy can be
summarised in trying to view the equation as a perturbation of the
equation in the full space. Naturally, this becomes harder in lower
dimensions, where the effect of the boundary condition is more
pronounced. Similar ideas were used by one of the authors to study the
heat equation with an added nonlinear term in \citet*{Canizo2012}.

\medskip The central result of this paper, obtained through these
ideas, is the following weighted $L^1$ convergence result, where the
weight is given by the harmonic profile $\phi$. From it we
can, step by step, extract the necessary information to get results on
$L^1$ convergence, decay of the mass, and global uniform convergence.

\begin{thm}[Weighted $L^1$ estimates]
  \label{thm:main-L1}
  Assume the hypotheses of Theorem \ref{thm:main-uniform}. There
  exists a constant $C > 0$ depending only on the dimension $d$ and
  the domain $\Omega$, and invariant by translations of $\Omega$, such
  that:
  \begin{enumerate}
  \item[\rm (i)] In dimensions $d \geq 3$, for all $t\geq 2$,
    \begin{equation}
      \label{eq:main-L1-d3}
      \int_\Omega \phi(x) \left|
        u(t,x) - m_\phi \phi(x)\Gamma(t,x)
      \right| \d x
      \leq \frac{C M_{1,\phi}}{t^{\lambda/4}}.
    \end{equation}

  \item[\rm (ii)] In dimension $d=2$, let
    $x_0\in \R^2\setminus\Omega$. For all $t \geq 2$,
    \begin{equation*}
      \label{eq:main-L1-d2}
      \int_\Omega \phi(x) \left|u(t,x) -
        k_t m_\phi \phi(x) \Gamma(t,x) \right| \d x
      \leq
      \frac{C m_\phi}{\log{t}}
      +
      \frac{C(M_{1,\phi} + m_\phi |x_0|)}{t^{\lambda/4}}.
    \end{equation*}

  \item[\rm (iii)] In dimension $d=1$ we consider
    $\Omega = (x_0,+\infty)$. Take $M > 0$. For all $t\geq 2$ and all
    $|x_0| \leq M \sqrt{t}$,
    \begin{equation*}
      \int_\Omega \phi(x) \left|
        u(t,x) - k_t m_\phi \phi(x) \Gamma(t,x)
      \right| \d x
      \leq
      \frac{C(M_{1,\phi} + m_{\phi}|x_0|)}{\sqrt{t}}.
    \end{equation*}
    (In this case the constant $C$ depends also on $M$.)
  \end{enumerate}
\end{thm}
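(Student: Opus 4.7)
The plan is to carry out a self-similar Fokker-Planck version of the entropy method, using the harmonic profile $\phi$ to absorb the effect of the Dirichlet boundary. First I would perform an $h$-transform: since $\phi$ is harmonic in $\Omega$, the function $\tilde u := u/\phi$ satisfies
\begin{equation*}
  \partial_t \tilde u \;=\; \phi^{-2}\,\nabla\!\cdot\!\bigl(\phi^{2}\nabla \tilde u\bigr) \qquad \text{in }(0,\infty)\times\Omega,
\end{equation*}
a symmetric weighted diffusion with reversible measure $\phi^2\,\d x$; the conservation law $\int_\Omega \phi\,u\,\d x = m_\phi$ translates into $\int_\Omega \phi^2\tilde u\,\d x = m_\phi$. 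Next I would introduce self-similar variables $y = e^{-\tau}x$, with the scaling $e^{2\tau}$ comparable to $t$, and rescale $\tilde u$ into a $\phi^2$-weighted probability density $v(\tau,y)$ on the (shrinking) domain $\Omega_\tau := e^{-\tau}\Omega$. A direct computation shows that $v$ satisfies a Fokker-Planck-type equation whose natural time-dependent equilibrium is exactly the family $F_\tau$ from Hypothesis~\ref{hyp:logsob}, up to error terms coming from the $\tau$-dependence of the domain $\Omega_\tau$ and of $F_\tau$ itself.

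The second step is to monitor the relative entropy
\begin{equation*}
  \mathcal H(\tau) \;:=\; \int_{\Omega_\tau} v(\tau,y)\,\log\!\frac{v(\tau,y)}{F_\tau(y)}\, \d y,
\end{equation*}
and to establish a differential inequality of the form $\mathcal H'(\tau)\leq -\mathcal D(\tau) + R(\tau)$, where $\mathcal D(\tau) = \int_{\Omega_\tau} v\,|\nabla\log(v/F_\tau)|^2\,\d y$ is the relative Fisher information with respect to $F_\tau$ and $R(\tau)$ gathers the contributions from the moving boundary and from $\partial_\tau F_\tau$. Hypothesis~\ref{hyp:logsob} gives the crucial bound $\mathcal D(\tau)\geq \lambda\,\mathcal H(\tau)$, so Gr\"onwall yields
\begin{equation*}
  \mathcal H(\tau) \;\leq\; e^{-\lambda\tau}\mathcal H(0) + \int_0^\tau e^{-\lambda(\tau-s)} |R(s)|\,\d s.
\end{equation*}
The Csisz\'ar-Kullback-Pinsker inequality then turns entropy control into $L^1$ control, $\|v(\tau,\cdot) - F_\tau\|_{L^1(\Omega_\tau)}\lesssim \sqrt{\mathcal H(\tau)}$. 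Undoing the $h$-transform and the rescaling with $\tau = \tfrac12\log t$, the rate $e^{-\lambda\tau}$ becomes $t^{-\lambda/4}$, matching exactly the exponent in \eqref{eq:main-L1-d3}, and the weight $\phi(x)$ inside the $L^1$ norm appears from the $\phi^2$ Jacobian of the change of unknown.

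The main technical difficulty is to estimate $\mathcal H(0)$ in terms of $M_{1,\phi}$ and, above all, to control the remainder $R(\tau)$, whose decay dictates each dimensional case. I would first regularise by running the original equation on $[0,2]$, using the upper and lower kernel bounds of \Cref{sec:pre-kernel-estimates} to bound $\mathcal H$ at the new initial time by a multiple of $M_{1,\phi}$. For $R(\tau)$: in dimensions $d\geq 3$ the bounds on $\phi-1$ from \Cref{sec:phi} together with polynomial convergence of $K_\tau$ give $|R(\tau)|\lesssim e^{-\alpha\tau}$ for some $\alpha>\lambda$, so $\mathcal H(\tau)\lesssim e^{-\lambda\tau}$ and part (i) follows. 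In $d=2$, the growth $\phi(xe^\tau)\sim \tau + \log|x|$ forces $K_\tau\sim \tau^{-2}$ (which is precisely the behaviour $k_t\sim 4/(\log t)^2$ announced after the theorem), and a residual of order $1/\log t$ survives the Gr\"onwall estimate, producing the first term in (ii); the remaining $t^{-\lambda/4}$ summand comes, as in (i), from the log-Sobolev dissipation. Finally, in $d=1$ one has $\phi(x)=x-x_0$ and $\lambda=2$ is available from \Cref{lem:logsob-1d}, so the rate becomes $e^{-\tau} = t^{-1/2}$; the $m_\phi|x_0|$ contribution accounts for the shift needed to centre the self-similar change of variable at the origin. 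Translation invariance of $C$ is then automatic because every ingredient (the $h$-transform, the change of variables, the moments $M_{1,\phi}$, and the constant $\lambda$ from Hypothesis~\ref{hyp:logsob}) is translation-covariant.
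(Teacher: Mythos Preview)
Your overall strategy---the $h$-transform, the self-similar rescaling onto $\Omega_\tau$, the relative entropy $\mathcal H(\tau)=H(g(\tau)\mid F_\tau)$, the log-Sobolev inequality, and Csisz\'ar--Kullback---is exactly the paper's, but two steps would not work as you state them. First, regularisation cannot give $\mathcal H\lesssim M_{1,\phi}$: the relative entropy contains the term $\tfrac12\int_\Omega\phi u\,|x|^2$ coming from $-\log G$, so heat-flow regularisation only yields $\mathcal H(1/2)\lesssim M_{2,\phi}$ (Lemma~\ref{lem:phi-rel-entropy-regularisation}), and after Csisz\'ar--Kullback the bound reads $C(m_\phi M_{2,\phi})^{1/2}\,t^{-\lambda/4}$ rather than $CM_{1,\phi}\,t^{-\lambda/4}$. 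The paper upgrades this via a self-improvement step you omit: specialise the weaker estimate to $u_0\to\delta_y$, obtaining the kernel bound $\int_\Omega\phi\,|p_\Omega(t,\cdot,y)-\phi\phi(y)\Gamma(t,\cdot)|\lesssim \phi(y)(1+|y|)\,t^{-\lambda/4}$ (see \eqref{eq:kernel-L1-estimate-d3}), and then integrate this against the original $u_0$ to recover the linear $M_{1,\phi}$ dependence.

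Second, in $d=2$ your Gr\"onwall scheme loses a square root. A direct bound $|R(\tau)|\lesssim 1/\tau$ (which is all that $K_\tau'/K_\tau\sim 1/\tau$ from Lemma~\ref{lem:Kt'/Kt} would give) leads to $\mathcal H(\tau)\lesssim 1/\tau$ and hence, after Csisz\'ar--Kullback, only $\|g-F_\tau\|_1\lesssim 1/\sqrt\tau \sim 1/\sqrt{\log t}$, not $1/\log t$. The paper instead rewrites $R(\tau)=2\int_{\Omega_\tau}\tfrac{\nabla\phi(e^\tau y)\cdot e^\tau y}{\phi(e^\tau y)}\,(g-F_\tau)\,\d y$ and applies Cauchy--Schwarz together with a Csisz\'ar--Kullback-type inequality to get $|R(\tau)|^2\lesssim \mathcal H(\tau)\,Q(\tau)$ with $Q(\tau)\sim\tau^{-2}$ (Lemma~\ref{lem:remainder-dim2-largetime}); the resulting Bernoulli inequality $\mathcal H'\le-\lambda\mathcal H+C\sqrt{\mathcal H\,Q}$ integrates (Lemma~\ref{lem:differential_inequality}) to $\sqrt{\mathcal H(\tau)}\lesssim 1/\tau$, which is the correct rate.
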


We notice that this result can be stated in a unified way for all
dimensions (with rates depending on the dimension): $u$ always
approaches $k_t m_\phi \phi \Gamma$ at an appropriate rate. Since
$\lim_{t \to \infty} k_t = 1$ in dimensions $d \geq 3$, we have chosen
to give a slightly simplified statement which does not involve $k_t$
in dimensions $d \geq 3$.

After proving this result in Section~\ref{sec:l1}, we also show a
similar global $L^1$ convergence result without weights, which in the
case of dimension $d\geq 3$ is almost immediate, but requires some
more thought in the other cases; see Section~\ref{sec:l1-pure}.

\begin{thm}[$L^1$ estimates]
  \label{thm:global-L1}
  Assume the hypotheses of Theorem \ref{thm:main-uniform}. There exists a
  constant $C > 0$ depending only on the dimension $d$ and the domain
  $\Omega$, and invariant by translations of~$\Omega$, such that:
  \begin{enumerate}
  \item[\rm (i)] In dimensions $d \geq 3$,
    we have, for all $t\geq 2$,
    \begin{equation}
      \label{eq:global-L1-d3-phi}
      \int_\Omega \left|u(t,x) - m_\phi \phi(x) \Gamma(t,x) \right| \d x
      \leq \frac{C M_{1,\phi}}{t^{\lambda/4}}.
    \end{equation}
    Alternatively, we may remove $\phi$ and obtain
    (possibly for a different constant $C$):
    \begin{equation}
      \label{eq:global-L1-d3-no_phi}
      \int_\Omega \left|u(t,x) - m_\phi \Gamma(t,x) \right| \d x
      \leq \frac{C M_{1,\phi}}{t^{\lambda/4}}.
    \end{equation}
  \item[\rm (ii)] In dimension $d=2$, let
    $x_0\in \R^2\setminus\Omega$. Then, for all $t \geq 2$,
    \begin{equation*}
      \int_\Omega \left|u(t,x) -
        k_t m_\phi \phi(x) \Gamma(t,x) \right| \d x
      \leq
      \frac{C}{ \log t}
      \left(
        \frac{m_\phi}{\log{t}}
        +
        \frac{M_{1,\phi} + m_\phi |x_0|}{t^{\lambda/4}}
      \right).
    \end{equation*}	
  \item[\rm (iii)] In dimension $d=1$ we consider
    $\Omega = (x_0,+\infty)$. Take $M > 0$. Then, for all $t\geq 2$
    and all $|x_0| \leq M \sqrt{t}$,
    \begin{equation*}
      \int_\Omega
      \left|
        u(t,x) - k_t m_\phi \phi(x) \Gamma(t,x)
      \right| \d x
      \leq
      \frac{C(M_{1,\phi} + m_{\phi}|x_0|)}{t}.
    \end{equation*}
\end{enumerate}
\end{thm}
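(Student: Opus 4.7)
The plan is to upgrade the weighted estimate of Theorem~\ref{thm:main-L1} to an unweighted one, using the pointwise bound of Theorem~\ref{thm:main-uniform} to control the contribution close to $\partial\Omega$. The only obstruction to reading the statement directly off Theorem~\ref{thm:main-L1} is that $\phi$ vanishes on the boundary, so one cannot simply divide by $\inf_\Omega \phi$. I would split $\Omega$ into an outer region, where $\phi$ is bounded below on a quantifiable scale, and a bounded inner piece near $\partial\Omega$, handled by the sup-norm bound.

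Set $f(t,x) := u(t,x) - k_t m_\phi \phi(x) \Gamma(t,x)$ (with $k_t \equiv 1$ when $d \geq 3$), fix a parameter $R > 0$ to be chosen, and write
\[
\int_\Omega |f(t,x)| \d x = \int_{\Omega \cap B_R} |f| \d x + \int_{\Omega \setminus B_R} |f| \d x.
\]
The asymptotics \eqref{eq:phi-limd2}--\eqref{eq:phi-limd3} of $\phi$ (and $\phi(x) = x-x_0$ in $d = 1$) give a lower bound $\phi(x) \geq c_R$ on $\Omega \setminus B_R$, with $c_R$ bounded away from $0$ in $d\geq 3$, $c_R \sim \tfrac{1}{2}\log R$ in $d=2$, and $c_R \sim R$ in $d=1$. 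Hence
\[
\int_{\Omega \setminus B_R} |f| \d x \leq c_R^{-1} \int_\Omega \phi |f| \d x,
\]
which is directly controlled by the weighted bound of Theorem~\ref{thm:main-L1}. For the inner piece, the pointwise bound $|f(t,x)| \leq \phi(x)\, h_d(t)$ supplied by Theorem~\ref{thm:main-uniform} (where $h_d(t)$ collects the dimension-dependent decay factor read off that statement) yields
\[
\int_{\Omega \cap B_R} |f| \d x \leq h_d(t) \int_{\Omega \cap B_R} \phi \d x \lesssim R^d \log(2+R)\, h_d(t).
\]

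Balancing the two contributions with the natural diffusive scale $R = \sqrt{t}$ recovers the announced rates in $d = 1, 2$; in $d \geq 3$ one may take any fixed $R$ large enough to contain $\partial \Omega$, since the inner piece is then of order $t^{-d/2 - \lambda/4}$, absorbed into the main $t^{-\lambda/4}$ rate. For the alternative bound~\eqref{eq:global-L1-d3-no_phi}, I would apply the triangle inequality
\[
|u - m_\phi \Gamma| \leq |u - m_\phi \phi \Gamma| + m_\phi (1-\phi) \Gamma
\]
and combine \eqref{eq:global-L1-d3-phi} with the standard decay $1 - \phi(x) = O(|x|^{-(d-2)})$ of the harmonic profile at infinity to estimate $\int_\Omega (1-\phi)\Gamma \d x \lesssim t^{-(d-2)/2}$, which is dominated by $t^{-\lambda/4}$ thanks to the standing assumption $\lambda < d-2$.

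The main obstacle is the book-keeping in dimension $d=2$: one has to track the logarithmic factors attached to both $\phi$ and $\Gamma$ and exploit that on the diffusive scale $\phi \sim \log\sqrt{t}$, so that removing the weight produces a genuine gain of a factor $1/\log t$, which is precisely the improvement of Theorem~\ref{thm:global-L1} over Theorem~\ref{thm:main-L1} in that dimension. The remaining calculations are routine once the splitting is set up and the weighted estimate is in hand.
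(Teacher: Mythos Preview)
Your strategy is correct and the arithmetic checks out in all three dimensions, but it differs from the paper's route in one essential respect: you invoke Theorem~\ref{thm:main-uniform} to control the inner region, whereas the paper proves Theorem~\ref{thm:global-L1} \emph{before} the uniform bound, using only Theorem~\ref{thm:main-L1} together with the elementary kernel estimates of Section~\ref{sec:pre-kernel-estimates}. There is no circularity in your ordering, since the proof of Theorem~\ref{thm:main-uniform} in Section~\ref{sec:uniform_convergence} relies only on the weighted estimate, not on Theorem~\ref{thm:global-L1}; but you should be aware that you are reversing the paper's logical flow.

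Concretely: in $d\geq 3$ the paper avoids any splitting. Since $0\leq 1-\phi(x)\lesssim |x-x_0|^{2-d}$, one bounds $\int_\Omega (1-\phi)|f|$ directly by a negative moment of $u$ plus a negative moment of $\Gamma$, both of order $t^{-(d-2)/2}\leq t^{-\lambda/4}$; the weighted estimate then covers $\int_\Omega \phi|f|$. In $d=2$ the paper does split, but at radius $R+t^{1/4}$ rather than $\sqrt{t}$, and on the inner piece it uses only the crude bound $\|u(t,\cdot)\|_\infty\lesssim m_\phi/t$ from Corollary~\ref{cor:dim_2_L^p_regularisation}; this already gives $m_\phi/\sqrt{t}$ for the inner contribution, which is more than enough. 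Your choice of $\sqrt{t}$ works only because you feed in the sharper pointwise bound. In $d=1$ the paper again splits, but handles the inner piece by writing $u-2D$ through the kernel $p_\Omega$ at an earlier time $t_0=t/2$ and applying Theorem~\ref{thm:main-L1} at $t_0$, rather than calling on Theorem~\ref{thm:main-uniform}.

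One small correction: for the constants to remain translation-invariant you should center your ball $B_R$ at $x_0\in\R^d\setminus\overline{\Omega}$, not at the origin; otherwise in $d=2$ the inner integral $\int_{\Omega\cap B_R}\phi$ picks up an uncontrolled $\log|x_0|$ factor, and in $d=1$ the set $\Omega\cap B_R$ may fail to contain the boundary. With that adjustment your argument goes through.
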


We will devote Section~\ref{sec:total_mass} to obtain explicit decay rates of the mass of the solutions; see Corollary \ref{cor:mass}. As a
consequence of Theorem \ref{thm:global-L1} we will show that, as $t \to +\infty$,
$$
\begin{array}{ll}
\displaystyle
  \int_\Omega u(t,x) \d x
  = m_\phi + K m_\phi t^{-\frac{d-2}2} + o(t^{-\frac{d-2}2-\frac{2\sigma}{d}}),\qquad
    &d \geq 3,
  \\[10pt]
  \displaystyle
  \int_\Omega u(t,x) \d x
  = \frac{2 m_\phi}{\log t} + O((\log t)^{-2}),\qquad
    &d = 2,
  \\[10pt]
  \displaystyle
  \int_\Omega u(t,x) \d x
  = \frac{m_\phi \sqrt{\pi}}{\sqrt{t}} + O(t^{-1}),\qquad
    &d = 1,
\end{array}
$$
where $K= C^* \int_{\mathbb{R}^N}G(y)|y|^{2-d} \d y$ and
$\displaystyle C^*=\lim_{|x|\to\infty}(1-\phi(x))|x|^{d-2}$. The
existence of this limit is proved in Lemma~\ref{lem:d3-phi-limit}; see
also~\cite[Lemma 4.5]{Quiros-Vazquez-2001}.

\begin{rem}
  In dimensions $d\ge 3$ the amount of mass lost along the time evolution is
  $$
  \int_{\Omega} u_0(x)\d x - \lim_{t\to\infty}\int_{\Omega} u(x,t)\d x=\int_{\Omega}  (1-\phi(x))u_0(x)\d x;
  $$
  thus, it is given by the projection of the initial data onto $\psi:=1-\phi$, which represents in this way the ``dissipation capacity'' of $U$.
  The function $\psi$ is the harmonic function defined in $\Omega$ that takes value 1 on $\partial\Omega=\partial U$ and 0 at infinity. Hence, it is the function measuring the \emph{capacity} of $U$, by means of the formula
  \begin{equation*}
    \label{capacidad}
    \mathop{\rm cap}(U)=\inf_{\{u\ge1\text{ on }U\}}\int_\Omega |\nabla u(x)|^2\d x.
  \end{equation*}
\end{rem}

\subsection{Organisation of the paper}

In Section \ref{sec:entropy} we describe our strategy in more detail,
giving a summary of the outcome for the heat equation on all of
$\R^d$, and then applying similar ideas to the equation on an exterior
domain $\Omega$. Section \ref{sec:estimates} gathers several necessary
estimates on solutions of the heat equation, the function $\phi$, and
related quantities, and in Section~\ref{sec:logsob} we show some
specific logarithmic Sobolev inequalities by applying existing results
in the literature. In Sections~\ref{sec:l1} and \ref{sec:l1-pure} we
prove our weighted and pure $L^1$ estimates, and as a consequence we
obtain the uniform estimates from Theorem~\ref{thm:main-uniform} in
Section~\ref{sec:uniform_convergence}.

\section{Change of variables and entropy}
\label{sec:entropy}

The aim of this section is to describe in detail our use of the entropy
approach to obtain information on large-time behaviour. We start by
recalling this strategy when applied to the heat equation in the whole
space $\R^d$, a computation that was first performed
in~\cite{Toscani1996}, and then explain how to adapt it to an exterior
domain.

\subsection{The heat equation in the full space}
\label{sec:heat-full-space}

If $u = u(t,x)$ is a classical, $L^1$ solution to $\p_t u = \Delta u$
on $\R^d$, then the function
\begin{equation}
  \label{eq:change-fullRd}
  g(\tau, y) := e^{d\tau} u\Big( \frac12 (e^{2\tau}-1), e^\tau y\Big), \qquad \tau \geq 0, \ y \in \R^d
\end{equation}
satisfies the Fokker-Planck equation
\begin{equation}
\label{eq:g.whole}
  \p_t g = \Delta g + \div(xg) \qquad \text{for $t > 0$, $x \in \R^d$.}
\end{equation}
Notice that the mass of $g$ is preserved by the evolution:
$$
\ird g(\tau,y)\d y=\ird u\Big( \frac12 (e^{2\tau}-1), x\Big)\d x=\ird u(0,x)\d x.
$$

\medskip

\noindent\emph{Notation. } We will often regard $g$ as a curve taking values in $L^p(\Omega)$ for some $p\in[1,+\infty]$. In accordance to this, we will use the notation $g(\tau)(y):=g(\tau,y)$.

\medskip

The only equilibrium of~\eqref{eq:g.whole} with integral $1$ is the standard Gaussian $G$, and all solutions with integral $1$ converge exponentially to $G$. This can be proved by the following argument: assume that $g$ is a nonnegative, integrable solution with integral $1$ and finite second moment; that is, with
\begin{equation*}
  \ird u(0,y) (1+|y|^2) \d y < +\infty, \qquad \ird u(0,y) \d y = 1.
\end{equation*}
We define for $\tau \geq 0$ the \emph{relative entropy}
\begin{equation*}
  \label{eq:def-relative-entropy}
  H(g(\tau) \,|\, G)= \ird g(\tau) \log \frac{g(\tau)}{G}.
\end{equation*}
By Jensen's inequality,
$$
H(g(\tau) \,|\, G) := -\ird g(\tau) \log \frac{G}{g(\tau)}\ge -\log\Big(\ird G\Big)=0,
$$
with equality if and only if $g(\tau)=G$.  Notice that if $g\sim G$, then $H\sim0$. Hence, $H$ is expected to give a measure of how far $g$ is from $G$. This is indeed the case, as
shown by the well-known Csiszár-Kullback's inequality
\begin{equation}
  \label{eq:csiszar-kullback}
  \|g - F\|_1^2 \leq 2 H(g \,|\, F),
\end{equation}
true for any $F, g$ nonnegative functions in $L^1(\Omega)$, $\Omega\subseteq\mathbb{R}^d$, with $F$ positive and $\|F\|_1 = \|g\|_1$ \citep{Csiszar1967,Pinsker1964,Kullback1967,Unterreiter2000}. Therefore, an estimate of the decay rate of the relative entropy $H(g(\tau) \,|\, G)$ gives information on the rate of convergence of $g$ towards $G$ in the $L^1$ norm.

A direct calculation that uses the conservation of mass shows that
\begin{equation}
\label{eq:diff.inequality}
  \ddtau H(g (\tau)\,|\, G)  = -\ird g \left| \nabla \log \frac{g(\tau)}{G} \right|^2 \leq - 2 H(g(\tau) \,|\, G),
\end{equation}
where the latter inequality stems from the well-known Gaussian logarithmic Sobolev inequality (\cite{Gross1975}; see also Section~\ref{sec:logsob}). From~\eqref{eq:diff.inequality} it is immediate that
\begin{equation}
  \label{eq:24}
  H(g(\tau) \,|\, G) \leq H(g_0 \,|\, G) e^{-2 \tau},
\end{equation}
where $g_0=g_0(y)=g(0,y)=u(0,y)$ for $y \in \R^d$. Combining~\eqref{eq:24} with Csiszár-Kullback's inequality~\eqref{eq:csiszar-kullback} we get
\begin{equation*}
  \|g(\tau) - G\|_1 \leq \sqrt{2 H(g_0 \,|\, G)} \, e^{-\tau}.
\end{equation*}
This in turn can be translated to information on $u$ unravelling the change of variables we performed at the beginning:
\begin{equation}
  \label{eq:23}
  \ird \Big| u(t,x) - \Gamma\Big(t+\frac12, x\Big) \Big| \dx
  \leq \frac{\sqrt{H(u_0 \,|\, G)}}{\sqrt{t + \frac 12}}
  \qquad \text{for $t \geq 0$,}
\end{equation}
where $\Gamma$ is the fundamental solution of the heat equation on $\R^d$. Notice that as $t \to \infty$, this estimate contains some new
information, since both $\ird u(t,x)\d x$ and $\ird \Gamma (t,x)\d x$ are equal to $1$, while their difference decays as $t \to +\infty$.

\smallskip

Estimate \eqref{eq:23} is the main outcome of this method. However, one may wish to transform this to a perhaps simpler form by using the
well known regularisation property $H(u(1/2, \cdot) \,|\, G)  \lesssim   M_2$: starting at time $t = 1/2$ we obtain that
\begin{equation*}
  \label{eq:25}
  \ird \left| u(t,x) - \Gamma(t, x) \right| \dx
  \leq
  \frac{\sqrt{H(u(1/2, \cdot) \,|\, G)}}{\sqrt{t}}
  \lesssim
  \frac{\sqrt{M_2}}{\sqrt{t}} \qquad \text{for $t > 1/2$.}
\end{equation*}
Since for all $t \geq 0$ the left-hand side is easily bounded by $2$,
we finally obtain the following: for any nonnegative initial data
$u_0$ which is a probability distribution on $\R^d$ with finite second
moment, the standard solution $u$ to the heat equation on $\R^d$ with
this initial data satisfies
\begin{equation*}
  \ird \left| u(t,x) - \Gamma\left(t, x\right) \right| \dx
  \lesssim
  \frac{\sqrt{M_2}}{\sqrt{t+1}}
  \qquad \text{for $t \geq 0$.}
\end{equation*}
If we allow $u_0$ to have integral $m_0$, not necessarily equal to
one, then a simple scaling shows
\begin{equation*}
  \ird \left| u(t,x) - m_0\Gamma\left(t, x\right) \right| \dx
  \lesssim
  \frac{\sqrt{m_0M_2}}{\sqrt{t+1}}
  \qquad \text{for $t \geq 0$.}
\end{equation*}

This result is the analogue of Theorem \ref{thm:main-L1} in the full space, and contains information about the $L^1$ behavior of $u$, since both $u(t,\cdot)$ and $\Gamma(t,\cdot)$ have integral equal to $1$. It is then not too hard to obtain a result in $L^\infty$ norm: since $u - \Gamma$ solves the heat equation for positive times, standard regularisation properties give
\begin{equation*}
    t^{\frac{d}{2}} |u(t,x) - \Gamma(t,x)| \lesssim \|u(t/2,\cdot) - \Gamma(t/2,\cdot)\|_1 \lesssim \frac{\sqrt{m_0M_2}}{\sqrt{\frac{t}{2}+1}}.
\end{equation*}
This estimate is useful for large enough $t$, and provides a rate of convergence of $u$ to the fundamental solution. It is the analogous result to Theorem~\ref{thm:main-uniform} in the full space. With some more work, one can actually change the dependence on $M_2$ by a dependence on~$M_1$ only (see Section \ref{sec:l1}).

\subsection{The heat equation in an exterior domain}
\label{sec:heat-in-exterior-domain}

Our strategy in an exterior domain is to try to mimic the proof in the previous section as closely as possible. We will first rewrite equation~\eqref{eq:heat-ext} in a form which is more convenient for the calculations to be carried out later. The calculations in this section are valid in any dimension $d \geq 1$. The conservation law \eqref{eq:conservation} suggests defining
\begin{equation}
  \label{eq:change1}
  v := \phi u,
\end{equation}
which satisfies the mass-conserving equation
\begin{equation*}
  \label{eq:v}
  \p_t v = \Delta v - 2 \div (v X)\quad\text{in }(0,\infty)\times\Omega,\quad \text{with
 } X(x) := \frac{\nabla \phi(x)}{\phi(x)},
  \quad x \in \Omega.
\end{equation*}
In order to study the asymptotic behaviour of $v$ it is natural to carry out the same (mass preserving) change of variables~\eqref{eq:change-fullRd} which we would consider for the heat equation in all of $\R^d$. Hence we define
\begin{equation}
  \label{eq:change2}
  g(\tau, y) := e^{d\tau} v\big(  (e^{2\tau}-1)/2, e^\tau y\big),
  \qquad \tau \geq 0, \ y \in \Omega_\tau,
\end{equation}
where $\Omega_\tau$ is the moving domain
\begin{equation*}
  \Omega_\tau := e^{-\tau} \Omega.
\end{equation*}
Again we begin by assuming that $m_\phi= \ird u_0 = 1$; a change of scale will give the result for general $m_\phi > 0$. To sum up, this amounts to the following change of variables, which we record here for later reference: we are setting
\begin{equation}
  \label{eq:change-g}
  g(\tau, y) = e^{d\tau} v(t, x), \qquad v(t,x) = \phi(x) u(t,x),
\end{equation}
with
\begin{equation}
  \label{eq:change-tauy}
  t = \frac12 (e^{2\tau}-1), \qquad x = e^\tau y,
\end{equation}
or equivalently
\begin{equation}
  \label{eq:change-tx}
  \tau = \frac12 \log (2 t + 1), \qquad  y = \frac{x}{\sqrt{2 t + 1}}.
\end{equation}
The function $g$ preserves its mass along the evolution, and satisfies
\begin{equation}
	\label{eq:dtg}
	\begin{aligned}
		\p_\tau g = \Delta g + \div(yg) - 2 \div (Z g),  &\qquad \tau > 0,\ y \in \Omega_\tau,
		\\
		g(0,y) = v\left(0, y\right), &\qquad y \in \Omega_\tau,
		\\
		g(\tau,y) = 0, &\qquad\tau > 0,\ y \in \p\Omega_\tau,
        \\
        \text{where }Z = Z(\tau, y) = e^\tau X(e^\tau y)   = e^\tau \frac{\nabla \phi(e^\tau y)}{\phi(e^\tau y)},&\qquad \tau > 0,\ y \in \Omega_\tau.
	\end{aligned}
\end{equation}
The point of this is that we expect \eqref{eq:dtg} to be easier to study than the original equation~\eqref{eq:heat-ext} directly. For
$d \geq 3$ we expect the term $\div(Z g)$ to be small in some sense, so the behaviour will be dominated by the Fokker-Planck equation. For
$d \leq 2$ its effect cannot be asymptotically small, but this form will be easier to work with.

In order to study \eqref{eq:dtg} we try to use relative entropy arguments, similar to the ones that can be used for the Fokker-Planck
equation. First, we define a ``transient equilibrium'' $F_{\tau}$ by solving the equation
\begin{equation*}
  0 = \Delta g + \div(y g) - 2 \div (Z g),
\end{equation*}
which can be seen to give
\begin{equation}
  \label{eq:Ft}
  F_\tau(y) = K_\tau\, \phi(e^\tau y)^2 G(y)
  = (2 \pi)^{-d/2}  K_\tau\, \phi(e^\tau y)^2 e^{-\frac{|y|^2}{2}},
\end{equation}
where $K_\tau$ is a normalisation constant chosen so that
\begin{equation*}
  \int_{\Omega_\tau} F_\tau(y) \d y = 1,\quad\text{that is,}\quad K_\tau\int_{\Omega_\tau}\phi(e^\tau y)^2 G(y)\d y=1.
\end{equation*}
Observe that the link between $K_\tau$ and the constant $k_t$ defined in Theorem \ref{thm:main-L1} is
\begin{equation*}
  k_t = K_{\frac{1}{2} \log(2t)}
  \text{ for $t \geq \frac12$},
  \qquad \text{or equivalently} \qquad
  K_\tau = k_{\frac12 e^{2\tau}}
  \text{ for $\tau \geq 0$}.
\end{equation*}
Equation \eqref{eq:dtg} can be rewritten as
\begin{equation*}
  \p_t g = \div \left( g \nabla \log \frac{g}{F_{\tau}} \right),
\end{equation*}
which makes clearer the parallel with the usual Fokker-Planck
equation. We consider the relative entropy with respect to the
transient equilibrium,
\begin{equation*}
  H(g(\tau) \,|\, F_\tau) := \int_{\Omega_\tau} g(\tau) \log \frac{g(\tau)}{F_\tau}.
\end{equation*}
In order to calculate its time derivative we have to take into account that the domain $\Omega_\tau = e^{-\tau} \Omega$ is moving. By a change of
variables one easily sees that, for any smooth function $f$ with enough decay as $|y| \to +\infty$,
\begin{equation}
  \label{eq:moving-domain}
  \ddtau \int_{\Omega_\tau} f(\tau,x) \d x = \int_{\Omega_\tau} \p_\tau f(\tau,x) \d x
  - \int_{\partial \Omega_\tau} f(\tau,x)\, x \cdot \eta(x) \d S(x),
\end{equation}
where $\eta$ denotes the unit normal to $\partial \Omega_\tau$ pointing towards $\Omega_\tau$. Using this and taking into account the Dirichlet
boundary condition satisfied by $g$,
\begin{equation}
  \label{eq:derivada_entropia}
  \ddtau H(g(\tau) \,|\, F_\tau)
  = -\int_{\Omega_\tau} g(\tau) \left| \nabla \log \frac{g(\tau)}{F_\tau} \right|^2
  - \int_{\Omega_\tau} g(\tau) \, \frac{\p_\tau F_\tau}{F_\tau}.
\end{equation}
Let us also define for notational simplicity
\begin{equation*}
g_0 := \phi u_0, \qquad h_0:= H(g_0|F_0) = \int_\Omega \phi u_0 \log \frac{u_0}{\phi G}.
\end{equation*}
In a similar way as for the usual Fokker-Planck equation, we expect to have a logarithmic Sobolev inequality of the form
\begin{equation}\label{eq:log_sob}
  \lambda H(g\,|\, F_\tau)\leq \int_{\Omega_\tau}g\left|\nabla \log\frac{g}{F_\tau}\right|^2
\end{equation}
holding for some $\lambda > 0$ independent of $\tau$ and all nonnegative $g \in L^1(\Omega_\tau)$ with $\int_{\Omega_\tau} g = 1$. This will allow us to write
\begin{equation}
  \label{eq:H-inequality-main}
  \ddtau H(g(\tau) \,|\, F_\tau)
  \leq
  -\lambda H(g(\tau) \,|\, F_\tau) - \int_{\Omega_\tau} g(\tau) \, \frac{\p_\tau F_\tau}{F_\tau}.
\end{equation}
If we can additionally show that the last term on the right-hand side decays as $\tau \to +\infty$, then Gronwall's lemma gives us a decay rate of the form
\begin{equation}
  \label{eq:Hdecay}
  H(g(\tau)\,|\,F_\tau) \leq \delta(\tau),
\end{equation}
where $\delta = \delta(\tau)$ is an explicit function which tends to $0$ as $\tau \to +\infty$ and depends only on the dimension $d$ and the initial entropy $h_0$. Combining~\eqref{eq:Hdecay} with Csiszár-Kullback's inequality~\eqref{eq:csiszar-kullback} we obtain the decay rate
\begin{equation}
  \label{eq:L1decay}
  \|g(\tau) - F_\tau\|_1 \leq \sqrt{2 \delta(\tau)}.
\end{equation}
From this point on, obtaining information on the original solution $u$ to equation \eqref{eq:heat-ext} is a matter of changing back to the
original variables and rewriting the resulting expression in convenient ways. Assuming that we have managed to prove~\eqref{eq:L1decay}, the change of variables \eqref{eq:change-g}--\eqref{eq:change-tx} readily gives
\begin{equation*}
\int_\Omega  \phi(x) \left|u (t, x)  - (2 \pi)^{-d/2} k_{t+\frac12}   \phi(x) (2t + 1)^{-d/2} e^{-\frac{|x|^2}{2(2t+1)}} \right|
  \d x \leq \alpha(t),
\end{equation*}
where $\alpha(t) := \sqrt{2\delta(\tau)} =\sqrt{2 \delta \big( \frac12 \log(2 t + 1) \big)}$, since $K_\tau=k_{t+\frac12}$. This can be written in terms of $\Gamma$ as
\begin{equation*}
  \label{eq:2}
\int_\Omega  \phi(x) \left|u (t, x) - k_{t+\frac12} \phi(x) \Gamma(t + \frac12, x) \right| \d x \leq \alpha(t).
\end{equation*}
Applying this estimate to the solution with initial data
$\tilde{u}_0(x) := u(\frac12,x)$ we obtain
\begin{equation*}
  \int_\Omega  \phi(x) \left|u (t, x) - k_{t} \phi(x) \Gamma(t, x)
  \right| \d x \leq \alpha \big(t-\frac12\big)
  \qquad
  \text{for $t > \frac12$}.
\end{equation*}
In principle, the function $\alpha (t)$ depends on the initial
relative entropy $h_0$. However, one can further use regularisation
estimates for the heat equation to substitute it for a dependence only
on moments of the initial condition $u_0$, much as we did at the end
of Section \ref{sec:heat-full-space}. The above equation
\eqref{eq:H-inequality-main} is a central step in the paper, and we
use it repeatedly to obtain the rest of our results.

\subsection{A useful expression for the remainder term}

Equation \eqref{eq:H-inequality-main} reads
\begin{equation}
  \label{eq:H-inequality-main2}
  \begin{aligned}
    &\ddtau H(g(\tau) \,|\, F_\tau) \leq -\lambda H(g(\tau) \,|\, F_\tau) - R(\tau),\\
    &R(\tau) := \int_{\Omega_\tau} g(\tau) \, \frac{\p_\tau F_\tau}{F_\tau}
    = \int_{\Omega_\tau} g(\tau) \, \p_\tau \log F_\tau.
  \end{aligned}
\end{equation}
The remainder term $R(\tau)$ can be equivalently written in a more
convenient form as follows. Assume that $m_\phi = 1$. From the
expression of $F_\tau$ in \eqref{eq:Ft},
\begin{equation*}
  \p_\tau \log F_\tau
  = \ddtau \log K_\tau + 2 \p_\tau \log \phi(e^\tau y).
\end{equation*}
Since the integral of $g$ on $\Omega_\tau$ is $m_\phi = 1$
(independently of $\tau$) we have
\begin{equation*}
  R(\tau)
  = \ddtau \log K_\tau
  + 2 \int_{\Omega_\tau} g(\tau,y) \,\p_\tau \log \phi(e^\tau y) \d y.
\end{equation*}
Now, using~\eqref{eq:moving-domain} we get
\begin{align*}
  \ddtau \log K_\tau
  &= - K_\tau \ddtau \int_{\Omega_\tau} \phi(e^\tau y)^2
  G(y) \d y
  \\
  &= - 2K_\tau \int_{\Omega_\tau} \phi(e^\tau y) \nabla \phi(e^\tau y)\cdot (e^\tau y)\, G(y) \d y
\\
  &= - 2 \int_{\Omega_\tau} \frac{ \nabla
  \phi(e^\tau y)\cdot(e^\tau y)}{\phi(e^\tau y)} F_\tau(y) \d y.
\end{align*}
On the other hand,
\begin{equation*}
  \p_\tau \log \phi(e^\tau y)
  = \frac{\nabla \phi(e^\tau y)\cdot(e^\tau y)}{\phi(e^\tau y)}.
\end{equation*}
Hence we have
\begin{equation}
  \label{eq:remainder-rewritten}
  R(\tau)
  = 2 \int_{\Omega_\tau} \frac{ \nabla \phi(e^\tau y)\cdot( e^\tau y)}{\phi(e^\tau y)}
  (g(\tau,y) - F_\tau(y)) \d y.
\end{equation}
This yields a useful estimate. By the Cauchy-Schwartz inequality,
\begin{equation*}
  |R(\tau)|^2
  \leq  4
  \left( \int_{\Omega_\tau}
    \frac{ |\nabla \phi(e^\tau y)|^2 |e^\tau y|^2}{\phi(e^\tau y)^2}
    (g(\tau,y) + F_\tau(y))
    \d y
  \right)
  \left( \int_{\Omega_\tau}  \frac{(g(\tau) - F_\tau)^2}{g(\tau) + F_\tau}\right).
\end{equation*}
The second parenthesis can be estimated as follows, using a standard
strategy in proving the Csiszár-Kullback inequality: since
\begin{equation*}
  z \log z - z + 1 \gtrsim \frac{(z-1)^2}{z+1}
  \qquad \text{for all }z>0,
\end{equation*}
and both $g$ and $F_\tau$ have integral 1 in $\Omega_\tau$, we have
\begin{align*}
  \int_{\Omega_\tau}  \frac{(g(\tau) - F_\tau)^2}{g(\tau) + F_\tau}
  &=
  \int_{\Omega_\tau}  F_\tau \dfrac{\Big(\frac{g(\tau)}{F_\tau} - 1\Big)^2}{\frac{g(\tau)}{F_\tau} + 1}
  \\
  &\lesssim
  \int_{\Omega_\tau}  F_\tau \left(
    \frac{g(\tau)}{F_\tau} \log \frac{g(\tau)}{F_\tau} - \frac{g(\tau)}{F_\tau} + 1
  \right)
  = H(g(\tau) \,|\, F_\tau).
\end{align*}
Hence,
\begin{gather}
  \label{eq:remainder-estimate}
  |R(\tau)|^2\lesssim H(g(\tau) \,|\, F_\tau)\, Q_g(\tau),\quad\text{where}\\
  \notag
  Q_g(\tau):= \int_{\Omega_\tau}\frac{ |\nabla \phi(e^\tau y)|^2 |e^\tau y|^2}{\phi(e^\tau y)^2}(g(\tau,y) + F_\tau(y))    \d y.
\end{gather}
This estimate will be useful later.

\section{Some preliminary estimates}
\label{sec:estimates}

We collect here several estimates on quantities involving solutions of
the heat equation or the harmonic profile $\phi$ that are
required in further proofs. Some of them are well-known and some are
new.

\subsection{Estimates on the harmonic profile $\phi$}
\label{sec:phi}

We gather here some well known results on the solutions $\phi$ to equation \eqref{eq:phi}. The
case of dimension $d=1$ is easily reduced to solving the ordinary
differential equation $\phi'' = 0$ on $(x_0,+\infty)$ with $\phi(x_0)=0$,
and in that case we will choose $\phi(x) = x-x_0$. In dimensions
$d \geq 2$ we obtain the following from the classical theory, which
the reader can find for example in \citet[Chapter II, \S
4.3]{DLbook}:
\begin{lem}
	\label{lem:phi}
	Let $\Omega \subseteq \R^d$ be an exterior domain in dimension
        $d \geq 2$ satisfying \eqref{eq:hypOmega1}. There exists a
        unique classical solution $\phi$ of equation \eqref{eq:phi}
        which satisfies
        \eqref{eq:phi-limd2}--\eqref{eq:phi-limd3}. This classical
        solution $\phi$ is positive everywhere on $\Omega$.
        Additionally, for any
        $x_0 \in \R^d \setminus \overline{\Omega}$ there exist
        $C > 0$, $0 < C_1 < C_2$ such that
	\begin{align}
		\label{eq:phi-bounds-d2}
		\big| \phi(x) - \log |x - x_0| \big| \leq C
		& \qquad \text{for all $x \in \Omega$, in $d = 2$.}
		\\
		\label{eq:phi-bounds-d3}
		C_1 |x-x_0|^{2-d} \leq  1 - \phi(x) \leq C_2 |x-x_0|^{2-d}
		& \qquad \text{for all $x \in \Omega$,  in $d \geq 3$.}
	\end{align}
\end{lem}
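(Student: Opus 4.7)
The plan is to construct $\phi$ by exhausting $\Omega$ by bounded domains $\Omega_R := \Omega \cap B_R$ (with $R$ large enough that $\overline{U}\subset B_R$) and passing to the limit; the qualitative bounds then come from comparison with the explicit harmonic model functions $|x-x_0|^{2-d}$ (resp.\ $\log|x-x_0|$). The main conceptual split is between $d\geq 3$, where $\phi$ is bounded and the scheme is standard, and $d=2$, where $\phi$ must grow logarithmically and the singular part of $\phi$ has to be subtracted off before the exhaustion argument can be run.

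For existence in $d\geq 3$, I would let $\phi_R$ solve $\Delta\phi_R=0$ in $\Omega_R$ with $\phi_R=0$ on $\partial\Omega$ and $\phi_R=1$ on $\partial B_R$. Classical theory (using the $\mathcal{C}^2$ regularity of $\partial U$) gives a unique classical solution with $0<\phi_R<1$; the maximum principle shows $R\mapsto \phi_R$ is decreasing, so the pointwise limit $\phi$ exists and is harmonic by local elliptic estimates. For $d=2$, fix $x_0\in \R^2\setminus\overline\Omega$ and set $L(x):=\log|x-x_0|$, which is harmonic in $\Omega$ and bounded on the compact set $\partial\Omega$. Solve instead for a corrector $w_R$ harmonic in $\Omega_R$ with $w_R=-L$ on $\partial\Omega$ and $w_R=0$ on $\partial B_R$. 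The maximum principle gives $\|w_R\|_\infty\le \|L|_{\partial\Omega}\|_\infty$ uniformly in $R$, so one extracts a limit $w$ and sets $\phi:=L+w$; this is harmonic, vanishes on $\partial\Omega$, and satisfies $\phi(x)/\log|x|\to 1$ because $w$ is bounded.

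For uniqueness and positivity, let $v:=\phi^{(1)}-\phi^{(2)}$ be a difference of two candidates. In $d\geq 3$, $v$ is harmonic on $\Omega$, vanishes on $\partial\Omega$, and tends to $0$ at infinity, so the maximum principle on $\Omega_R$ with $R\to\infty$ forces $v\equiv 0$. In $d=2$, $v(x)/\log|x|\to 0$; the trick is to compare with the barriers $\pm\epsilon\log|x-x_0|$, observing that $v\mp\epsilon\log|x-x_0|$ is harmonic, bounded on $\partial\Omega$, and tends to $\mp\infty$ at infinity, so the maximum principle controls its extremum by boundary data, and letting $\epsilon\to0$ yields $v\equiv 0$. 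Positivity of $\phi$ then follows from the strong maximum principle (in $d\geq 3$ since $\phi\geq 0$ by construction and $\phi\to 1$ at infinity; in $d=2$ since $\phi=0$ on $\partial\Omega$ and $\phi\to+\infty$ at infinity, so its infimum is attained on $\partial\Omega$).

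For the bounds, in $d\geq 3$ I set $\psi:=1-\phi$, which is positive harmonic with $\psi=1$ on $\partial\Omega$ and $\psi\to 0$ at infinity, and compare it with $f(x):=|x-x_0|^{2-d}$, harmonic on $\Omega$ and satisfying $0<a\le f\le b$ on $\partial\Omega$. The functions $\psi-f/a$ and $f/b-\psi$ both vanish at infinity and have the right sign on $\partial\Omega$, so the maximum principle gives $f/b\le \psi\le f/a$, which is \eqref{eq:phi-bounds-d3}. In $d=2$, the function $\phi-L$ is harmonic in $\Omega$ and equals the bounded function $-L$ on $\partial\Omega$; by exactly the $\pm\epsilon\log|x-x_0|$ comparison argument used for uniqueness, it is globally bounded, yielding \eqref{eq:phi-bounds-d2}. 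The main obstacle throughout is the $d=2$ case: the unbounded growth of $\phi$ forces one to subtract the logarithmic singularity before applying the maximum principle, and one must repeatedly justify comparison arguments on an unbounded domain with harmonic functions that are themselves unbounded.
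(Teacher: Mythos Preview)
Your proposal is correct and, for the quantitative bounds \eqref{eq:phi-bounds-d2}--\eqref{eq:phi-bounds-d3}, takes essentially the same route as the paper: comparison of $1-\phi$ (resp.\ $\phi$) with the explicit harmonic functions $|x-x_0|^{2-d}$ (resp.\ $\log|x-x_0|$) via the maximum principle, with an $\varepsilon$-barrier to handle the unbounded domain in $d=2$. The only difference is that the paper outsources existence, uniqueness, and positivity to a classical reference (Dautray--Lions), whereas you sketch them constructively via exhaustion and the corrector decomposition $\phi=L+w$; this is a standard alternative and has the pleasant side effect that in $d=2$ your construction already delivers \eqref{eq:phi-bounds-d2} for free, since $w$ is bounded by construction.
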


\begin{rem}
	These constants are obviously invariant by translations of $\Omega$:
	if instead of~$\Omega$ we consider $\Omega + w$, where $w \in \R^d$
	is any vector, then the same estimates are still true for the
	translated domain if we take $x_0 + w$ instead of $x_0$.
\end{rem}

\begin{proof}[Proof of Lemma \ref{lem:phi}]
	Uniqueness is given by \citet[Chapter~II, \S~4.3, Proposition 9 \& Corollary 3]{DLbook}. Existence is given by
	Theorem 2 in the same section (notice that the existence of a solution satisfying the null condition at infinity easily implies
	the existence of a solution to \eqref{eq:phi} satisfying~\eqref{eq:phi-limd2} or \eqref{eq:phi-limd3}). We remark that
	$\Omega$ having $\mathcal{C}^1$ boundary implies in particular that every point in the boundary is regular.

	The bound \eqref{eq:phi-bounds-d2} is already contained in
	\citet[Lemma 2.1]{GG2007} and \citet[Proposition 2.1 and Remark
	2.1]{CQW2018}, and \eqref{eq:phi-bounds-d3} can be obtained by very
	similar arguments. We recall them here for completeness.

    In dimension $d \geq 3$, and since $\dist(x_0, \Omega) > 0$, there is
	$C_1 > 0$ such that
	\begin{equation}
		\label{eq:4a}
		\phi(x) \leq 1 - C_1 |x-x_0|^{2-d}
		\qquad \text{for all $x \in \partial \Omega$.}
	\end{equation}
	Now fix $\varepsilon > 0$. Since $\lim_{|x| \to +\infty} \phi(x) = 1$
	we can find $R > 0$ such that
	\begin{equation}
		\label{eq:4}
		\phi(x) \leq 1 - C_1 |x-x_0|^{2-d} + \varepsilon
		\qquad \text{for all $x \in \Omega$ with $|x| > R$.}
	\end{equation}
	Since the function $1 - C_1 |x-x_0|^{2-d} + \varepsilon$ is harmonic on
	$B_R \cap \Omega$ and $\phi$ satisfies the inequality \eqref{eq:4}
	on the boundary of $B_R \cap \Omega$ (due to \eqref{eq:4a} and
	\eqref{eq:4}), we deduce that also
	\begin{equation}
		\label{eq:5}
		\phi(x) \leq 1 - C_1 |x-x_0|^{2-d} + \varepsilon
		\qquad \text{for all $x \in \Omega$ with $|x| \leq R$.}
	\end{equation}
	From \eqref{eq:4} and \eqref{eq:5} we see that in fact $\phi(x) \leq
	1 - C_1 |x-x_0|^{2-d} + \varepsilon$ in all of $\Omega$, and then we
	may pass to the limit as $\varepsilon \to 0$ to obtain the lower bound
	in \eqref{eq:phi-bounds-d3}. The upper bound is obtained in an
	analogous way.
	
	The inequalities in \eqref{eq:phi-bounds-d2} can also be obtained by
	a very similar argument: there is $C > 0$ such that
	\begin{equation}
		\label{eq:6}
		\phi(x) < \log|x-x_0| + C
		\qquad \text{for all $x \in \partial \Omega$.}
	\end{equation}
	Fixing now $\varepsilon > 0$, since $\lim_{|x| \to +\infty} \phi(x)/\log|x-x_0| = 1$
	we can find $R > 0$ such that
	\begin{equation}
		\label{eq:7}
		\phi(x) \leq (1+\varepsilon) \log|x-x_0|
		\qquad \text{for all $x \in \Omega$ with $|x| > R$.}
	\end{equation}
	For $\varepsilon > 0$ small enough, \eqref{eq:6} implies that
	\begin{equation*}
		\phi(x) \leq (1+\varepsilon) \log|x-x_0| + C
		\qquad \text{for all $x \in \partial \Omega$.}
	\end{equation*}
	Since the function $(1+\varepsilon) \log|x-x_0| + C$ is harmonic on
	$B_R \cap \Omega$ and $\phi$ satisfies the above inequality
	on the boundary of $B_R \cap \Omega$, we deduce that also
	\begin{equation*}
		\phi(x) \leq (1+\varepsilon) \log|x-x_0| + C
		\qquad \text{for all $x \in \Omega$ with $|x| \leq R$.}
	\end{equation*}
	This and \eqref{eq:7} show that this inequality is in fact satisfied
	in all of $\Omega$, and we may pass to the limit as $\varepsilon \to 0$
	to obtain that $\phi(x) \leq  \log|x-x_0| + C$ on all of
	$\Omega$. The inequality $\phi(x) \geq  \log|x-x_0| - C$ (for a
	possibly different $C > 0$) is obtained in a similar way.
\end{proof}

In this paper we always consider $\phi$ to be the solution whose
existence and uniqueness is given by Lemma \ref{lem:phi}.

\begin{lem}[Linear behavior of $\phi$ at $\p \Omega$]
	\label{lem:phi2}
	Let $\Omega \subseteq \R^d$ be an exterior domain in dimension
	$d \geq 2$ satisfying \eqref{eq:hypOmega1}. For any $R > 0$ there
	exist constants $C_1 > 0$ and $C_2 > 0$ (depending on $R$ and
	$\Omega$) such that $\phi$ satisfies
	\begin{equation*}
		C_1 \dist(x, \partial \Omega)
		\leq \phi(x)
		\leq C_2 \dist(x, \partial \Omega)
		\qquad \text{for all $x \in \Omega \cap B_R$,}
	\end{equation*}
	where $B_R$ is the open ball of radius $R$ in $\R^d$, centered at $0$.
\end{lem}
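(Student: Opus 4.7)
The plan is to reduce the statement to two regimes, ``close to the boundary'' and ``away from the boundary'', and handle each with a standard tool from elliptic PDE theory. Since $U$ is bounded with $\mathcal{C}^2$ boundary, the boundary $\partial \Omega = \partial U$ is a compact $\mathcal{C}^2$ hypersurface, so classical Schauder theory applied locally near $\partial \Omega$ gives that $\phi$ extends to a $\mathcal{C}^{2,\alpha}$ function on $\overline{\Omega}\cap B_{R+1}$. In particular $\phi$ is Lipschitz on $\overline{\Omega \cap B_{R+1}}$ with some constant $L$.

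For the upper bound, pick $\delta_0>0$ so small that, thanks to the $\mathcal{C}^2$ regularity of $\partial \Omega$, every $x \in \Omega$ with $\dist(x,\partial\Omega) < \delta_0$ has a unique closest point $y \in \partial \Omega$ with the whole segment $[y,x]$ contained in $\overline{\Omega}\cap B_{R+1}$. For such $x$, since $\phi(y)=0$, Lipschitz continuity gives $\phi(x) \le L\,|x-y| = L\,\dist(x,\partial\Omega)$. For $x \in \Omega\cap B_R$ with $\dist(x,\partial\Omega) \geq \delta_0$, both $\phi(x)$ and $\dist(x,\partial\Omega)$ are bounded above on the compact set $\overline{\Omega\cap B_R}$, so the inequality $\phi(x) \le C_2 \dist(x,\partial\Omega)$ is automatic for $C_2$ large enough.

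For the lower bound I would invoke Hopf's lemma. The $\mathcal{C}^2$ regularity of $\partial\Omega$ gives a uniform interior ball condition, so for every $y \in \partial \Omega$ one has $\partial_\nu \phi(y) > 0$, where $\nu$ denotes the inward unit normal. Since $\partial \Omega$ is compact and $\nabla \phi$ is continuous up to the boundary, $c_0 := \inf_{y \in \partial \Omega}\partial_\nu \phi(y) > 0$. A standard Taylor expansion argument, using again that $\phi$ is $\mathcal{C}^2$ up to the boundary and $\partial\Omega$ is $\mathcal{C}^2$, then gives $\phi(x) \geq \tfrac{c_0}{2}\dist(x,\partial\Omega)$ for all $x \in \Omega$ with $\dist(x,\partial\Omega)<\delta_1$, for some $\delta_1>0$. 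For $x \in \Omega \cap B_R$ with $\dist(x,\partial\Omega) \ge \delta_1$, the strong minimum principle and the fact that $\phi>0$ in $\Omega$ (given by Lemma \ref{lem:phi}) give $\phi(x) \ge m > 0$ on the compact set $\{x\in \overline{\Omega\cap B_R}:\dist(x,\partial\Omega)\ge \delta_1\}$, while $\dist(x,\partial\Omega) \le R+\diam(U)$ there, so the bound $\phi(x)\ge C_1 \dist(x,\partial\Omega)$ holds for $C_1$ small enough.

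The main obstacle is really just the Hopf lemma step, i.e.\ ensuring that the inward normal derivative is \emph{uniformly} bounded below on $\partial\Omega$; but this follows from compactness of $\partial\Omega$ together with the $\mathcal{C}^{2,\alpha}$ regularity of $\phi$ up to the boundary. Everything else is a gluing argument between the boundary layer $\{\dist(\cdot,\partial\Omega)<\delta_0\wedge\delta_1\}$ and its complement inside $\Omega \cap B_R$, which is handled by the compactness of the latter set.
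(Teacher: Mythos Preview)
Your proof is correct and follows essentially the same approach as the paper's: both obtain the upper bound from boundedness of $\nabla\phi$ on $\overline{\Omega\cap B_R}$ (the paper applies the mean value inequality directly along the segment to the nearest boundary point, while you phrase it via Lipschitz continuity and a boundary-layer split, but this is only cosmetic), and both obtain the lower bound via Hopf's lemma, a Taylor expansion near $\partial\Omega$, and a compactness argument on the set $\{\dist(\cdot,\partial\Omega)\ge\delta\}\cap B_R$.
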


\begin{proof}
	For $x \in \Omega \cap B_R$ take $y \in \partial\Omega$ such that
	$|x-y| = \dist(x, \partial \Omega)$. To obtain the upper bound we
	just use that $\nabla \phi$ is bounded above by some constant $C_2$
	in $\Omega \cap B_R$: since $\phi(y)=0$,
	\begin{equation*}
		\phi(x) \leq |x - y| \sup_{z \in [x,y]} |\nabla \phi(z)|
		\leq C_2 |x-y| = C_2 \dist(x, \partial \Omega).
	\end{equation*}
	For the lower bound, write the Taylor expansion
	\begin{equation*}
		\phi(x) = \nabla \phi(y) \cdot (x-y) + O(|x-y|^2),
	\end{equation*}
	where the constant implicit in the $O$ notation can be taken to be
	independent of the point $x$. Since $\dist(x, \p \Omega)$ is
	attained at $y$, it must happen that $x-y$ is a multiple of the
	normal vector to $\p \Omega$ at $y$. Then Hopf's Lemma~\citep{Friedman1958} ensures that
	$\nabla\phi(y) \cdot (x-y) \geq C_1 |x-y|$ for some $C_1 > 0$ which
	does not depend on $x \in \p \Omega$.  Then
	\begin{equation*}
		\phi(x) \geq C_1 |x-y| + O(|x-y|^2),
	\end{equation*}
	which shows the lower bound in a neighbourhood $V$ of $\p
	\Omega$. The lower bound on the rest of $\Omega \cap B_R$ is just a
	consequence of the fact that $\phi$ is uniformly bounded below by
	some positive constant on $(\Omega \setminus V) \cap B_R$, and
	$\dist(x, \partial \Omega)$ is bounded above by some constant on
	$\Omega \cap B_R$.
\end{proof}

We also need some estimates on the gradient of $\phi$:

\begin{lem}
	\label{lem:phi-grad}
	With the same hypotheses as Lemma \ref{lem:phi2}, for all
	$x_0 \in \R^d \setminus \overline{\Omega}$ there exists a constant
	$C > 0$ such that we have, in all dimensions
	$d \geq 2$,
	\begin{equation}
		\label{eq:gradphi-d3}
		|\nabla \phi(x) | \leq C |x-x_0|^{1-d}
		\quad \text{for all }x \in \Omega.
	\end{equation}
\end{lem}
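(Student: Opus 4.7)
The plan is to split $\Omega$ into a bounded neighbourhood of the boundary (where standard elliptic regularity takes care of everything) and an ``exterior'' region far from $\partial\Omega$ (where classical interior gradient estimates for harmonic functions, combined with the pointwise bounds already established in Lemma~\ref{lem:phi}, give the decay rate $|x-x_0|^{1-d}$).

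First I would fix $R>0$ large enough that $\overline{U}\subset B_{R/2}$, so that for $|x|\ge R$ the ball $B(x,|x-x_0|/2)$ is entirely contained in $\Omega$ (here one uses that $\R^d\setminus\overline{\Omega}$ is bounded, so $|x-x_0|\sim|x|$ at infinity). On the complementary bounded region $\{x\in\Omega:|x|\le R\}$, the $\mathcal{C}^2$-regularity of $\partial\Omega$ together with standard Schauder/elliptic theory applied to~\eqref{eq:phi} gives $\phi\in\mathcal{C}^1(\overline{\Omega\cap B_{R+1}})$, hence $|\nabla\phi|\le C$ there; since $|x-x_0|^{1-d}$ is bounded below by a positive constant on this bounded set (recall $1-d<0$), the desired inequality holds after possibly enlarging~$C$.

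The heart of the proof lies in the region $|x|>R$, and the auxiliary harmonic function I would compare against depends on the dimension. In dimension $d\ge 3$, set $\psi:=1-\phi$, which is harmonic on $\Omega$ and satisfies $|\psi(y)|\le C_2|y-x_0|^{2-d}$ by~\eqref{eq:phi-bounds-d3}. Applying the standard interior gradient estimate
\begin{equation*}
    |\nabla h(x)|\le \frac{C_d}{r}\sup_{B_r(x)}|h|
\end{equation*}
for harmonic $h$ with $r=|x-x_0|/2$, and noting that $|y-x_0|\ge|x-x_0|/2$ for $y\in B_r(x)$, one obtains $|\nabla\phi(x)|=|\nabla\psi(x)|\le C|x-x_0|^{1-d}$. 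In dimension $d=2$, the natural choice is $\eta(x):=\phi(x)-\log|x-x_0|$, which is harmonic on $\Omega$ (since $x_0\notin\Omega$ means $\log|\cdot-x_0|$ is harmonic on $\Omega$) and bounded by~\eqref{eq:phi-bounds-d2}. The same interior gradient estimate applied to $\eta$ on $B(x,|x-x_0|/2)$ gives $|\nabla\eta(x)|\le C/|x-x_0|$, and then the triangle inequality together with $|\nabla\log|x-x_0||=|x-x_0|^{-1}$ yields $|\nabla\phi(x)|\le C|x-x_0|^{-1}=C|x-x_0|^{1-d}$, as required.

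I do not expect a genuine obstacle here: the argument is a standard application of the mean-value/Poisson-kernel gradient estimate, and both pointwise controls needed (namely \eqref{eq:phi-bounds-d2} and \eqref{eq:phi-bounds-d3}) are already proved in Lemma~\ref{lem:phi}. The mildly delicate point is to ensure that one chooses $R$ large enough so that the ball $B(x,|x-x_0|/2)$ really stays in $\Omega$ when $|x|\ge R$ and to observe that the two regimes (bounded region and exterior region) can be glued together simply by taking the maximum of the two constants. Everything else is bookkeeping.
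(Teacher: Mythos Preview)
Your argument is correct and in fact considerably more direct than the paper's. Both proofs treat the bounded region near $\partial\Omega$ in the same way (boundedness of $\nabla\phi$ by elliptic regularity, then absorb into the constant since $|x-x_0|^{1-d}$ is bounded below there). The difference lies in the exterior region: you apply the classical interior gradient estimate $|\nabla h(x)|\le C_d r^{-1}\sup_{B_r(x)}|h|$ directly to the bounded harmonic function $1-\phi$ (in $d\ge3$) or $\phi-\log|\cdot-x_0|$ (in $d=2$), on balls of radius $r=|x-x_0|/2$. The paper instead takes the Kelvin transform $\zeta(x)=|x|^{2-d}(1-\phi(x/|x|^2))$, shows it extends harmonically across the origin, and then uses a Liouville-type limiting argument on the rescalings $\zeta_\varepsilon(x)=\zeta(\varepsilon x)$ to force $\nabla\zeta_\varepsilon\to0$, from which the gradient decay of $\phi$ is unwound by an explicit computation.

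Your route is shorter and avoids the Kelvin machinery entirely. What the paper's approach buys is the extra information that $\zeta$ is harmonic at the origin, so $\zeta(0)=C^*:=\lim_{|x|\to\infty}(1-\phi(x))|x|^{d-2}$ exists; this is precisely what is needed for the subsequent Lemma~\ref{lem:d3-phi-limit}. With your argument one would have to prove that limit separately (e.g.\ by essentially the same Kelvin-transform observation), so the paper's longer proof is effectively doing two lemmas at once.
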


\begin{proof}
	The case $d=2$ can be found in \citet[Proposition
	2.2]{CQW2018}. Though we believe that the result for $d\ge3$ is
	well-known, we provide an argument for this case,  since we have not
	found a reference.
	
	Thanks to a translation and a rescaling we may assume without loss
	of generality that $x_0=0$ and also that the hole is inside the ball
	of radius 1 centered at the origin; that is,
	$\R^d\setminus \overline\Omega\subset B_1(0)$.
	
	A uniform upper bound for $|\nabla \phi(x)|$ if
	$x\in \Omega\cap B_1(0)$ can be obtained thanks to Hopf's Lemma and
	the fact that $\Omega\cap B_1(0)$ is compact, so we will provide the
	bound in $\R^d\setminus B_1(0)$. We consider the Kelvin transform of
	$1-\phi$,
	\[
	\zeta(x):=|x|^{2-d}\Big(1-\phi\Big(\frac{x}{|x|^{2}}\Big)\Big),
	\]
	of $1-\phi$, which is defined in $B_1(0) \setminus \{0\}$, is harmonic
	and satisfies, thanks to~\eqref{eq:phi-bounds-d3},
	\[
	|\zeta(x)|\leq C,
	\qquad x \in B_1(0)\setminus \{0\}.
	\]
	The function $\zeta$ can then be extended to a harmonic function
	defined also at the origin, that we still call $\zeta$ for
	convenience. Now define, for a sequence of $\varepsilon\in(0,1)$
	converging to 0, the sequence of functions
	\[
	\zeta_\varepsilon(x):=\zeta(\varepsilon x),\qquad
	x \in B_{1/\varepsilon}(0),
	\]
	which are harmonic and uniformly bounded in
	$B_{1/\varepsilon}(0)$. This means that, locally, and up to a
	subsequence, the sequence $\zeta_\varepsilon$ converges uniformly in
	compact sets of $\R^d$ to a function $\zeta_0$ that is harmonic and
	bounded in $\R^d$; so, by Liouville's Theorem, $\zeta_0$ must be a
	constant. This implies, in particular, that
	$\grad \zeta_\varepsilon(x)\to 0$, uniformly in compact sets of~$\R^d$. We can calculate
	\begin{equation*}
		\nabla \zeta (x) = (2-d) x |x|^{-d}
		\Big(1 - \phi\Big(\frac{x}{|x|^{2}}\Big)\Big)
		- |x|^{-d}\Big( \nabla \phi \Big(\frac{x}{|x|^{2}}\Big)
		- 2\left[
		\nabla\phi\Big(\frac{x}{|x|^{2}}\Big)\cdot x
		\right]
		\frac x{|x|^2}\Big).
	\end{equation*}
	Hence
	\begin{align*}
		\nabla \zeta_\varepsilon (x)
		=
		&\varepsilon^{2-d} (2-d) x |x|^{-d}
		\Big(1 - \phi\Big(\frac{x}{\varepsilon|x|^{2}}\Big)\Big)
		\\
		&- \varepsilon^{1-d} |x|^{-d}
		\Big( \nabla \phi \Big(\frac{x}{\varepsilon |x|^{2}}\Big)
		-2 \left[
		\nabla\phi\Big(\frac{x}{\varepsilon|x|^{2}}\Big)\cdot\frac
		x{\varepsilon|x|^2}
		\right]
		\varepsilon x\Big).
	\end{align*}
	Since $\nabla \zeta_{\varepsilon} \to 0$ uniformly in compact sets,
	there must exist a constant $\delta > 0$ such that
	\begin{equation*}
		|\nabla \zeta_{\varepsilon}(x) | \leq \delta
		\qquad \text{for all $\varepsilon \in (0,1]$ and all $|x|=1$.}
	\end{equation*}
	Calling $y = x / (\varepsilon |x|^2)$ (so $y = x / \varepsilon^2$
	for $|x|=1$), this implies that for all $|y|\geq 1$,
	\[
	|y|^{d-1} \left|
	2 \left[
	\nabla\phi(y)\cdot y
	\right]
	\frac{y}{|y|^2} -\nabla  \phi(y)
	\right|
	\leq (d-2)|y|^{d-2}|1-\phi(y)|+\delta.
	\]
	It is easily checked that $\big| 2 \left[ \nabla\phi(y)\cdot y \right] \frac{y}{|y|^2} -\nabla
	\phi(y) \big|=|\nabla \phi(y)|$.  Then, using once
	more~\eqref{eq:phi-bounds-d3}, there is a constant $C$, depending
	only on $d$, such that
	\[
	|\nabla \phi(y)|\, |y|^{d-1}\leq C
	\]
	for all $y$ such that $|y| \geq 1$. This is the bound for the
	gradient in $\R^d\setminus B_1(0)$ that we were looking for. As
	discussed, together with a bound in $\Omega\cap B_1(0)$ we obtain
	the result.
\end{proof}

We mention that as a consequence of the above proof we get the
following stronger version of the estimate \eqref{eq:phi-bounds-d3}:

\begin{lem}
	\label{lem:d3-phi-limit}
	Let $\Omega \subseteq \R^d$ be an exterior domain in dimension
	$d \geq 2$ satisfying \eqref{eq:hypOmega1}, and
	$x_0 \in \R^d \setminus \overline{\Omega}$. In dimension $d \geq 3$
	there exist $C, C^* > 0$ such that
	\begin{equation*}
		\Big|
		(1 - \phi(x)) |x|^{d-2} - C^*
		\Big|
		\leq \frac{C}{1+|x-x_0|}
		\qquad
		\text{for all $x \in \Omega$.}
	\end{equation*}
\end{lem}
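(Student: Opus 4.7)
The plan is to exploit the Kelvin transform of $1-\phi$, which already appeared in the proof of Lemma~\ref{lem:phi-grad}. First, observe that the statement is invariant (up to the value of the constant $C$) under replacement of $x_0$ by any other point $x_0' \in \R^d \setminus \overline{\Omega}$, since $1+|x-x_0|$ and $1+|x-x_0'|$ are comparable with constants depending only on $|x_0-x_0'|$. So after a translation we may assume $x_0 = 0$. Choose $R > 0$ large enough that $\overline{U} \subset B_R(0)$, so that $\R^d \setminus \overline{B_R(0)} \subset \Omega$, and define
\[
\zeta(y) := |y|^{2-d}\Bigl(1 - \phi\bigl(y/|y|^2\bigr)\Bigr), \qquad y \in B_{1/R}(0) \setminus \{0\}.
\]
This function is harmonic on $B_{1/R}(0) \setminus \{0\}$, and \eqref{eq:phi-bounds-d3} together with the fact that $|y/|y|^2| = 1/|y|$ implies that $\zeta$ is uniformly bounded on $B_{1/R}(0) \setminus \{0\}$. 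By the standard removable-singularities theorem for bounded harmonic functions, $\zeta$ extends to a harmonic (hence real-analytic) function on all of $B_{1/R}(0)$, which we still denote by $\zeta$.

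Set $C^* := \zeta(0)$. Taylor expansion of $\zeta$ near $0$ yields a constant $C > 0$ with $|\zeta(y) - C^*| \leq C|y|$ for all $|y| \leq 1/(2R)$. For $x \in \Omega$ with $|x| \geq 2R$, put $y := x/|x|^2$ so that $|y| = 1/|x| \leq 1/(2R)$, and the definition of $\zeta$ gives
\[
(1 - \phi(x))\,|x|^{d-2} = \zeta(y).
\]
Hence $\bigl|(1-\phi(x))|x|^{d-2} - C^*\bigr| = |\zeta(y) - \zeta(0)| \leq C/|x|$, which is $\lesssim 1/(1+|x|)$ in this regime. For $x \in \Omega$ with $|x| \leq 2R$, the left-hand side is bounded (both $(1-\phi(x))|x|^{d-2}$ and $C^*$ are bounded by \eqref{eq:phi-bounds-d3} and the positivity of $\phi$), while $1/(1+|x|) \geq 1/(1+2R)$, so the inequality holds after enlarging the constant $C$. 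Finally, positivity of $C^*$ comes from the lower bound in \eqref{eq:phi-bounds-d3}: for $|x| \to \infty$, $(1-\phi(x))|x|^{d-2} \geq C_1 (|x-x_0|/|x|)^{2-d} \to C_1 > 0$, so $C^* \geq C_1$.

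The only real technical point is the harmonic extension of $\zeta$ across the origin, which relies on the boundedness supplied by \eqref{eq:phi-bounds-d3} and on the classical removable-singularity theorem; once this is in place the desired decay is just smoothness of $\zeta$ at $0$ transported back through the inversion. I do not anticipate further obstacles.
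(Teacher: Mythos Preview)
Your proposal is correct and follows essentially the same approach as the paper: translate so $x_0=0$, use the Kelvin transform $\zeta$ of $1-\phi$ (already introduced in the proof of Lemma~\ref{lem:phi-grad}), extend it harmonically across the origin via the removable-singularity theorem, and read off the estimate from the Lipschitz bound $|\zeta(y)-\zeta(0)|\leq C|y|$. Your write-up is in fact more detailed than the paper's, which simply refers back to the $\zeta$ constructed in Lemma~\ref{lem:phi-grad}; you also supply the positivity of $C^*$ explicitly.
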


\begin{proof}
	By translating the domain if needed, it is enough to prove it when
	$x_0 = 0 \in \R^d \setminus \overline{\Omega}$. Since the function
	$\zeta$ in the proof of Lemma \ref{lem:phi-grad} can be extended to
	a harmonic function on $B_1(0)$, there must exist $C > 0$ such that
	$|\zeta(x) - \zeta(0)| \leq C |x|$ for all $|x|\leq 1/2$. Writing
	$y = x / |x|^2$ and noticing that $\zeta(0) = C^*$ we obtain
	precisely the statement in the lemma.
\end{proof}


\subsection{Preliminary estimates for kernels and solutions in
  exterior domains}
\label{sec:pre-kernel-estimates}

We gather here some known estimates for the heat kernel in exterior
domains from \citet{Grigoryan2001} and \citet[Theorem
1.1]{Zhang2003}. Though the estimates in these papers are valid for
exterior domains in noncompact manifolds with nonnegative Ricci
curvature, for simplicity we state them only for the case we are
dealing with, in which the manifold is an exterior domain
$\Omega\subset\R^d$. As a consequence of these results, we will obtain
some estimates for solutions of the Cauchy-Dirichlet
problem~\eqref{eq:heat-ext}. The behaviour of the kernels, and hence
of solutions, changes drastically across the critical dimension
$d=2$. Hence, we consider separately the cases $d\ge 3$ and $d=2$.


For later use we give a simple result on the convolution of two
functions. It is mainly used in later estimates to ensure that the
constants we find are invariant by translations of the domain. We give it without proof; point (i) was given in \citet[Lemma
2.2(i)]{Lieb1983}, point (ii) is an easy consequence of point (i):

\begin{lem}
	\label{lem:convolution}
	Let $d \geq 1$ and $f, g \: \R^d \to [0,+\infty)$ be nonnegative,
	radially symmetric functions for which the convolution $f * g$ is
	well defined for all $x \in \R^d$. Then $f * g$ is radially
	symmetric and furthermore:
	\begin{enumerate}
		\item[\rm (i)] If both $f$ and $g$ are radially nonincreasing, then $f*g$ is
		radially nonincreasing.
		\item[\rm (ii)] If $f$ is radially nondecreasing and $g$ is radially
		nonincreasing, then $f * g$ is radially nondecreasing.
	\end{enumerate}
\end{lem}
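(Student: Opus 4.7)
The plan is to establish radial symmetry of $f*g$ first, then prove (i) by a layer-cake reduction to convolutions of ball indicators (as in Lieb), and finally deduce (ii) from (i) by a truncation argument that turns a nondecreasing profile into a nonincreasing one.

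\emph{Radial symmetry and proof of (i).} For any rotation $R \in O(d)$, changing variables $y \mapsto R y$ in $(f*g)(Rx) = \int f(Rx - y) g(y)\,dy$ and using that $f$ and $g$ are radial gives $(f*g)(Rx) = (f*g)(x)$, so $f*g$ is radial. For (i), write the layer-cake decomposition $f = \int_0^{+\infty} \mathbf{1}_{\{f > s\}}\,ds$ (and similarly for $g$); the hypothesis that $f$ is radially nonincreasing ensures $\{f > s\}$ is a ball centered at the origin, $B_{r(s)}$, and likewise $\{g > t\} = B_{\rho(t)}$. By Fubini,
$$
(f*g)(x) = \int_0^{+\infty}\!\!\int_0^{+\infty} \bigl(\mathbf{1}_{B_{r(s)}} * \mathbf{1}_{B_{\rho(t)}}\bigr)(x)\,ds\,dt.
$$
Since $\bigl(\mathbf{1}_{B_a} * \mathbf{1}_{B_b}\bigr)(x) = |B_a \cap B(x,b)|$, the problem reduces to the elementary geometric fact that the Lebesgue measure of the intersection of two balls, one fixed at the origin and one centered at $x$, is nonincreasing in $|x|$. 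A (Lebesgue) integral of radially nonincreasing functions is itself radially nonincreasing, giving (i). This is the content of Lieb's Lemma 2.2(i) in \citet{Lieb1983}.

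\emph{Proof of (ii) via truncation.} The idea is to reduce to a nonincreasing profile by considering the deficit $(M - f)^+$. Set $h_M := \min(f, M) = M - (M - f)^+$. Since $f$ is radially nondecreasing, $(M-f)^+$ is nonnegative, radial, nonincreasing and supported in the (possibly empty) ball $\{f \leq M\}$; in particular it is integrable. To avoid integrability issues for $g$, further truncate $g_N := g\,\mathbf{1}_{B_N(0)}$, which is integrable, nonnegative, radial, and still nonincreasing. By (i) applied to $(M-f)^+$ and $g_N$, the convolution $(M-f)^+ * g_N$ is radially nonincreasing, and therefore
$$
(h_M * g_N)(x) \;=\; M\,\|g_N\|_{L^1} - \bigl((M-f)^+ * g_N\bigr)(x)
$$
is radially nondecreasing. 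Pass $N \to +\infty$ by monotone convergence (using $g_N \nearrow g$ and nonnegativity) to obtain that $h_M * g$ is radially nondecreasing; then let $M \to +\infty$ by monotone convergence (using $h_M \nearrow f$ and nonnegativity) to conclude that $f*g$, which exists pointwise by hypothesis, is the monotone pointwise limit of radially nondecreasing functions, hence radially nondecreasing.

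The main obstacle is the two-ball intersection monotonicity needed in (i); this is elementary geometry and is in any case subsumed by Lieb's result that the paper already invokes. A secondary technical point is the double limit in (ii), but this is immediate from monotone convergence under the standing assumption that $f*g$ is well defined pointwise.
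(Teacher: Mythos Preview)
Your proposal is correct and matches the paper's approach exactly: the paper does not give a proof but simply cites \citet[Lemma 2.2(i)]{Lieb1983} for (i) and states that (ii) is an easy consequence of (i), which is precisely what you do (layer-cake for (i), truncation $h_M = M - (M-f)^+$ to reduce (ii) to (i)).

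One small imprecision: your claim that $(M-f)^+$ is compactly supported and hence integrable is not true in general (if $f$ is bounded below $M$ everywhere, the support is all of $\R^d$). This does not affect the argument, however, since $(M-f)^+$ is bounded by $M$ and $g_N$ is integrable (as $g$ is radially nonincreasing and finite-valued, hence bounded by $g(0)$), so the convolution $(M-f)^+ * g_N$ is well defined and part (i) applies.
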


We use the previous result in several estimates. One example is the
following simple estimate on negative moments of solutions of the heat
equation on all of $\R^d$ (which can be applied to solutions in a
domain with Dirichlet boundary conditions, since the solution is then
bounded above by the solution on all of $\R^d$):
\begin{lem}
  \label{lem:negative-moments}
  In dimension $d \geq 1$, take $0 \leq k < d$ and any $x_0 \in
  \R^d$. Let $u$ be the standard solution to the heat equation in
  $\R^d$ with a nonnegative initial condition $u_0 \in
  L^1(\R^d)$. Then, for some $C = C(d,k) \geq 1$,
  \begin{equation*}
    \ird u(t, x) (1+|x-x_0|)^{-k} \d x
    \leq
    C m_0 (1 + t)^{-k/2},
    \qquad
    t \geq 0.
  \end{equation*}
  More generally, for any $p \geq 0$, if $u_0$ is such that
  $M_p < +\infty$,
  \begin{equation}
    \label{eq:36}
    \ird u(t, x) (1+|x-x_0|)^{-k} |x|^p \d x
    \leq
    C M_p (1 + t)^{-k/2} + C m_0 (1 + t)^{-(k-p)/2},
    \quad
    t \geq 0.
  \end{equation}
  In particular, for some (other) $C > 0$,
  \begin{equation}
    \label{eq:37}
    \ird u(t, x) (1+|x-x_0|)^{-k} |x|^p \d x
    \leq  C M_p (1 + t)^{-(k-p)/2},
    \qquad t \geq 0.
  \end{equation}
  We emphasise that all constants $C$ above are independent of $x_0$.
\end{lem}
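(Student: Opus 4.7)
The plan is to represent $u$ via the heat kernel and reduce every bound to a single auxiliary Gaussian estimate, then use the radial-monotonicity lemma \ref{lem:convolution} to obtain constants that are uniform in $x_0$.

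The key auxiliary estimate I would prove first is: for any $0 \leq k < d$,
\begin{equation*}
  \int_{\R^d} \Gamma(t,z) (1+|z|)^{-k} \d z \leq C (1+t)^{-k/2},
  \qquad t \geq 0,
\end{equation*}
with $C = C(d,k)$. This I would obtain by splitting into $|z| < \sqrt{1+t}$ and $|z| \geq \sqrt{1+t}$. On the outer region, $(1+|z|)^{-k} \lesssim (1+t)^{-k/2}$ and $\int \Gamma \leq 1$; on the inner region, $\Gamma(t,z) \lesssim (1+t)^{-d/2}$ and $\int_{|z| < \sqrt{1+t}} (1+|z|)^{-k} \d z \lesssim (1+t)^{(d-k)/2}$, the latter being finite precisely because $k < d$. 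The bound for $t \leq 1$ is trivial.

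For the first inequality of the lemma, write $u(t,x) = (\Gamma(t,\cdot) * u_0)(x)$, apply Fubini, and change variables $z = x - y$ to get
\begin{equation*}
  \int_{\R^d} u(t,x) (1+|x-x_0|)^{-k} \d x
  = \int_{\R^d} u_0(y)\, (\Gamma(t,\cdot) * w)(y - x_0) \d y,
\end{equation*}
where $w(z) := (1+|z|)^{-k}$. Both $\Gamma(t,\cdot)$ and $w$ are radially symmetric and radially nonincreasing, so by Lemma \ref{lem:convolution}(i) the convolution is radially nonincreasing, hence
$(\Gamma(t,\cdot) * w)(y - x_0) \leq (\Gamma(t,\cdot) * w)(0)$, which is exactly the auxiliary estimate above. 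This proves the first bound with a constant independent of $x_0$.

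For \eqref{eq:36}, I would use $|x|^p \leq C_p(|y|^p + |x-y|^p)$ inside the kernel representation. The piece with $|y|^p$ factors out of the inner integral and reduces to the first bound, contributing $C m_p (1+t)^{-k/2} \leq C M_p (1+t)^{-k/2}$. For the remaining piece,
\begin{equation*}
  \int_{\R^d} \Gamma(t,x-y)\, |x-y|^p\, (1+|x-x_0|)^{-k} \d x,
\end{equation*}
I would apply the standard Gaussian absorption inequality $|z|^p \Gamma(t,z) \leq C_p\, t^{p/2} \Gamma(2t, z)$ (obtained by $|z|^p e^{-|z|^2/(8t)} \leq C_p t^{p/2}$), reducing to the first bound with time $2t$ and yielding $C m_0 (1+t)^{(p-k)/2}$. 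Finally, \eqref{eq:37} is immediate from \eqref{eq:36} using $(1+t)^{-k/2} \leq (1+t)^{-(k-p)/2}$ for $p \geq 0$ and $m_0 \leq M_p$.

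The only points requiring any care are the Gaussian absorption estimate and the use of Lemma \ref{lem:convolution} to get a bound that does not involve $x_0$; both are routine, and no substantial obstacle is expected.
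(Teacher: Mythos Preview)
Your proposal is correct and follows essentially the same route as the paper: represent $u$ via the heat kernel, use Lemma~\ref{lem:convolution} on the radially decreasing convolution $\Gamma(t,\cdot)*(1+|\cdot|)^{-k}$ to get uniformity in $x_0$, then for $p>0$ split $|x|^p \lesssim |y|^p + |x-y|^p$ and absorb $|x-y|^p$ into the Gaussian via $|z|^p\Gamma(t,z)\lesssim t^{p/2}\Gamma(2t,z)$. The only cosmetic difference is in how the scalar integral $\int \Gamma(t,z)(1+|z|)^{-k}\,\mathrm{d}z$ is estimated---the paper bounds $(1+|z|)^{-k}\leq |z|^{-k}$ and rescales, while you split into inner/outer regions---but the structure of the argument is identical.
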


\begin{proof}
  We give first the proof in the case $p=0$, which can be obtained
  easily from the expression $u(\cdot,t) = u_0 * \Gamma(t,\cdot)$ and the
  convolution Lemma \ref{lem:convolution}:
  \begin{multline*}
    \ird u(t, x) (1+ |x-x_0|)^{-k} \d x
    =
    \ird u_0(z) \ird \Gamma(t, x-z) (1+ |x-x_0|)^{-k} \d x \d z
    \\
    \leq
    \ird u_0(z) \ird \Gamma(t, x) (1+ |x|)^{-k} \d x \d z
    \leq
    \| u_0 \|_1 \ird \Gamma(t, x) |x|^{-k} \d x
    \lesssim
    \| u_0 \|_1 t^{-k/2}.
  \end{multline*}
  We use the above calculation for $t \geq 1$, while for $0 \leq
  t \leq 1$ we simply use that
  \begin{equation*}
    \int_\Omega u(t, x) (1+ |x-x_0|)^{-k} \d x
    \leq \| u(t,\cdot) \|_1 \leq \|u_0 \|_1.
  \end{equation*}
  Both estimates together give the estimate in the statement in the
  case $p=0$. For $p > 0$, using that $|x|^p \lesssim |x-z|^p + |z|^p$ we have
  \begin{align*}
    \ird u(t, x) &(1+ |x-x_0|)^{-k} |x|^p \d x \lesssim T_1 + T_2,\quad\text{where}
    \\
    T_1&:=\ird u_0(z) |z|^p \ird \Gamma(t, x-z) (1+ |x-x_0|)^{-k}
    \d x\d z,
    \\
    T_2&:=\ird u_0(z)
    \ird \Gamma(t, x-z) (1+ |x-x_0|)^{-k} |x-z|^p \d x \d z.
  \end{align*}
  The first term can be bounded as in the case $p=0$ to get $T_1 \lesssim   t^{-k/2}  \ird u_0(x) |x|^p \d x$.

  As for the second term, we use that
  \begin{equation*}
    \Gamma(t, y) |y|^p \lesssim \Gamma(2t, y) t^{p/2}
  \end{equation*}
  to get
  \begin{equation*}
    T_2 \lesssim
    t^{p/2}
    \ird u_0(z)
    \ird \Gamma(2t, x-z) (1+ |x-x_0|)^{-k} \d x \d z
    \lesssim
    \| u_0 \|_1 t^{-(k-p)/2},
  \end{equation*}
  also with a similar bound as in the case $p=0$. These bounds for
  $T_1$ and $T_2$ are useful for $t \geq 1$; for $0 \leq t\leq 1$ we
  use that
  \begin{equation*}
    \ird u(t, x) (1+ |x-x_0|)^{-k} |x|^p \d x
    \leq
    \ird u(t, x) |x|^p \d x
    \lesssim
    \ird u_0(x) |x|^p \d x.
  \end{equation*}
  This completes the proof of the bound \eqref{eq:36}, and
  \eqref{eq:37} is an immediate consequence.
\end{proof}

\subsubsection{Dimension $d\geq 3$}

We recall that $\rho(x)$ denotes $\dist(x,\partial\Omega)$. We start
with the somewhat simpler nonparabolic case $d\ge 3$.
\begin{thm}[\citet{Zhang2003}]
  \label{thm:Zhang}
  Let $\Omega \subseteq \R^d$, $d \geq 3$, satisfy
  \eqref{eq:hypOmega1}. Let $p_\Omega$ be the Dirichlet heat
  kernel in $\Omega$. There exist constants
  $c_1, c_2 > 0$ depending on $\Omega$ such that
  \begin{multline*}
    \left( \frac{\rho(x)}{\sqrt{t} \wedge 1} \wedge 1 \right)
    \left( \frac{\rho(y)}{\sqrt{t} \wedge 1} \wedge 1 \right)
    \frac{1}{c_1} \Gamma\Big(\frac{t}{c_2}, x-y\Big)
    \leq
    p_\Omega(t,x,y)
    \\
    \leq
    \left( \frac{\rho(x)}{\sqrt{t} \wedge 1} \wedge 1 \right)
    \left( \frac{\rho(y)}{\sqrt{t} \wedge 1} \wedge 1 \right)
    c_1 \Gamma\Big( c_2 t, x-y \Big)
  \end{multline*}
  for all $x,y \in \Omega$ and all $t > 0$.
\end{thm}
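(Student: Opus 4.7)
The statement is a two-sided Aronson-type Gaussian bound with an extra prefactor $\bigl(\rho(x)/(\sqrt{t}\wedge 1)\wedge 1\bigr)\bigl(\rho(y)/(\sqrt{t}\wedge 1)\wedge 1\bigr)$ accounting for the zero Dirichlet condition. I would treat the upper and lower bounds separately and, within each, distinguish the small-time regime $t\leq 1$ from the large-time regime $t\geq 1$, because the prefactor saturates at $\rho\wedge 1$ once $\sqrt{t}\geq 1$. The main global tools will be (a) domain monotonicity of $p_\Omega$ giving $p_\Omega(t,x,y)\leq \Gamma(t,x-y)$ at once, (b) the parabolic boundary Harnack principle on $\mathcal{C}^2$ boundaries (applicable since $\p\Omega\in\mathcal{C}^2$), (c) a Harnack chaining argument in the interior, and (d) for $d\geq 3$, the harmonic profile $\phi$ from Lemma~\ref{lem:phi}, which is bounded and satisfies $\phi(x)\asymp \rho(x)$ near $\p\Omega$ by Lemma~\ref{lem:phi2}. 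The latter will be used to convert local boundary estimates into the claimed global form.

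\textbf{Upper bound.} The $\mathcal{C}^2$ regularity of $\p\Omega$ lets me compare $p_\Omega(\cdot,\cdot,y)$ locally with the half-space Dirichlet kernel near any boundary point. By classical boundary Schauder-type estimates (or equivalently, an application of the parabolic boundary Harnack principle), for $\rho(x)\leq \sqrt{t}\wedge 1$ I get $p_\Omega(t,x,y)\lesssim \rho(x)/(\sqrt{t}\wedge 1)\cdot \sup_{B} p_\Omega(t,\cdot,y)$ on a suitable boundary neighbourhood $B$, and the supremum is controlled by $\Gamma(c_2 t, x-y)$ using the trivial upper bound plus a mild shift of the Gaussian scale (absorbing the centre of $B$ into a constant). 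For the symmetric $y$-factor I iterate the same argument on $p_\Omega(t,x,\cdot)$ using symmetry of the kernel. In the regime $\rho(x)\geq \sqrt{t}\wedge 1$ the prefactor is $1$ and the trivial domination by $\Gamma(t,x-y)$ already suffices. The bookkeeping for short vs long times is just a matter of writing $\sqrt{t}\wedge 1$ uniformly.

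\textbf{Lower bound.} Here I would use the probabilistic representation $p_\Omega(t,x,y)\,\d y = \mathbb{P}_x(B_t\in\d y,\tau_\Omega>t)$, where $\tau_\Omega$ is the first exit time of Brownian motion from $\Omega$. The plan is a three-step chaining: starting from $x$, force the Brownian motion to travel a distance of order $\sqrt{t}\wedge 1$ away from the boundary in a short time, run a bulk Gaussian-type segment controlled by the interior parabolic Harnack inequality on $\R^d$ (since $p_\Omega \geq $ interior heat kernels on large balls that don't touch $\p\Omega$), and finally force it to land near $y$, again paying a distance-to-boundary factor on the $y$-side. The first and last steps each cost a factor $\rho(\cdot)/(\sqrt{t}\wedge 1)\wedge 1$, which comes from the Hopf-lemma-type lower bound for the Dirichlet heat kernel on a small ball tangent to $\p\Omega$ (or from the half-space Dirichlet kernel lower bound). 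The middle step costs the $\Gamma(t/c_2,x-y)$ factor via standard Li-Yau/Harnack chaining. The extra constants in $c_1$ and $c_2$ absorb the intermediate scales used in the chaining.

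\textbf{Main obstacle.} The delicate point is to perform the chaining uniformly in all regimes: when $\sqrt{t}\ll 1$ the three-step chain must stay on short, local boundary scales and the lower bound requires the boundary to admit a uniform interior ball, which is where $\mathcal{C}^2$ regularity enters; when $\sqrt{t}\geq 1$ the intermediate scale is fixed but the initial and terminal steps may still be near the boundary and must be glued to a long-range Gaussian without losing polynomial factors. A clean way to handle this uniformity is to introduce an auxiliary scale $r:=\sqrt{t}\wedge 1$ and prove the estimate for the rescaled kernel on the domain $r^{-1}\Omega$ over a fixed time interval, at which point the problem reduces to a fixed-scale estimate whose constants are controlled by the $\mathcal{C}^2$ character of $\p\Omega$. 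This is where most of the technical work of~\citet{Zhang2003} is concentrated.
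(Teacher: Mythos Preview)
The paper does not prove this statement: it is quoted as a known result from \citet[Theorem~1.1]{Zhang2003} (and the related work of \citet{Grigoryan2001}), and is used as a black box throughout Section~\ref{sec:pre-kernel-estimates}. So there is no proof in the paper to compare your proposal against.

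That said, your sketch is a reasonable outline of the standard machinery behind such two-sided Dirichlet heat kernel bounds---domain monotonicity plus boundary Harnack for the upper bound, and a three-step Harnack chaining (boundary escape, bulk Gaussian, boundary landing) for the lower bound. This is broadly the strategy in the cited references, though Zhang's argument is phrased in the Riemannian setting and relies on Li--Yau estimates together with a careful treatment of the boundary via local comparison; your remark that the uniform chaining across the transition $\sqrt{t}\sim 1$ is the delicate point is accurate. If you intend to actually carry this out rather than cite it, be aware that the rescaling trick you propose at the end (reducing to a fixed-scale problem on $r^{-1}\Omega$) does not by itself give uniform constants, because the rescaled domains $r^{-1}\Omega$ do not form a compact family as $r\to 0$; one still needs the localisation near $\partial\Omega$ to be genuinely uniform in the $\mathcal{C}^2$ data.
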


As a consequence of Lemma \ref{lem:phi2} and the fact that
$\phi(x) \to 1$ as $|x| \to +\infty$, we may bound $\rho(x) \wedge 1$
above and below by a multiple of $\phi$. Also, for $t \leq 1$,
$\frac{\rho(x)}{\sqrt{t}} \wedge 1 \lesssim
\frac{\phi(x)}{\sqrt{t}}$. Hence from Theorem \ref{thm:Zhang} we
obtain the following:
\begin{cor}
  \label{thm:Zhang_t>1}
  Under the assumptions of Theorem~\ref{thm:Zhang}, there exist
  constants $c_1, c_2 > 0$ such that the following short-time bound
  holds:
  \begin{equation}
    \label{eq:Zhang2-tsmall}
    p_\Omega(t,x,y)
    \leq
    c_1 \varphi(t,x) \varphi(t,y) \Gamma(c_2 t, x-y)
    \qquad \text{for all $0 < t \leq 1$, $x, y \in \Omega$,}
  \end{equation}
  where
  \begin{equation*}
    \varphi(t,x) := \min\left\{ 1, \frac{\phi(x)}{\sqrt{t}} \right\}.
  \end{equation*}
  Also, there exist positive constants
  $c_1, c_2 > 0$ depending on $\Omega$ such that:
  \begin{equation}
    \label{eq:Zhang2}
    \frac{1}{c_1} \phi(x) \phi(y)
    \Gamma\Big(\frac{t}{c_2}, x-y\Big)
    \leq
    p_\Omega(t,x,y)
    \leq
    c_1  \phi(x) \phi(y) \Gamma\Big( c_2 t, x-y \Big)
  \end{equation}
  for all $x,y \in \Omega$ and all $t \geq 1/4$.
\end{cor}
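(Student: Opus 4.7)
The plan is to obtain both estimates from Theorem~\ref{thm:Zhang} by comparing the prefactor $\frac{\rho(x)}{\sqrt{t}\wedge 1}\wedge 1$ that appears there with $\varphi(t,x)$ in the short-time regime and with $\phi(x)$ in the long-time regime. The only ingredients needed beyond Theorem~\ref{thm:Zhang} are Lemma~\ref{lem:phi2}, which gives $\phi(x)\asymp\rho(x)$ locally near $\partial\Omega$, the fact $\phi(x)\to 1$ as $|x|\to+\infty$ (so $\phi$ is bounded away from $0$ on any region that stays away from $\partial\Omega$), and the maximum principle consequence $\phi\le 1$ valid in $d\ge 3$.

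For the short-time bound \eqref{eq:Zhang2-tsmall}, I would restrict to $0<t\le 1$, where $\sqrt{t}\wedge 1=\sqrt{t}$, so Zhang's upper bound becomes
\begin{equation*}
    p_\Omega(t,x,y)\le c_1\Bigl(\tfrac{\rho(x)}{\sqrt{t}}\wedge 1\Bigr)\Bigl(\tfrac{\rho(y)}{\sqrt{t}}\wedge 1\Bigr)\Gamma(c_2 t,x-y).
\end{equation*}
Fix $R>0$ so that $\overline U\subset B_R$. For $x\in \Omega\cap B_{2R}$, Lemma~\ref{lem:phi2} yields $\rho(x)\lesssim\phi(x)$, so $\frac{\rho(x)}{\sqrt{t}}\wedge 1\lesssim \frac{\phi(x)}{\sqrt{t}}\wedge 1=\varphi(t,x)$. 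For $x\notin B_{2R}$ we have $\phi(x)\ge c>0$ and $t\le 1$, so $\phi(x)/\sqrt{t}\ge c$, giving $\varphi(t,x)\ge \min\{1,c\}$, and since the left-hand factor is at most $1$ the bound $\frac{\rho(x)}{\sqrt{t}}\wedge 1\lesssim\varphi(t,x)$ still holds. Doing the same for $y$ and combining yields \eqref{eq:Zhang2-tsmall}.

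For the long-time bound \eqref{eq:Zhang2}, I would observe that for $t\ge 1/4$, $\sqrt{t}\wedge 1\in[1/2,1]$, so up to universal constants Zhang's factor is comparable to $\rho(x)\wedge 1$. The key claim is then $\rho(x)\wedge 1\asymp\phi(x)$ uniformly on $\Omega$. Pick $R$ large enough that $\rho(x)\ge 2$ for $|x|\ge R$ and that $\phi(x)\ge 1/2$ there (using $\phi\to 1$); then $\rho(x)\wedge 1=1$ and $\phi(x)\in[1/2,1]$, so the two are comparable. For $x\in\Omega\cap B_R$, Lemma~\ref{lem:phi2} gives $\phi(x)\asymp\rho(x)$, and one splits further according to whether $\rho(x)\le 1$ (where $\rho(x)\wedge 1=\rho(x)\asymp\phi(x)$) or $\rho(x)>1$ (a compact region where both $\phi$ and $\rho\wedge 1$ are bounded above and below by positive constants). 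Substituting these equivalences into the two-sided bound of Theorem~\ref{thm:Zhang} gives both sides of \eqref{eq:Zhang2}, at the possible cost of absorbing a bounded multiplicative factor into $c_1$.

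The only real care needed is at the transition between the near-boundary zone and the far field, where one must choose $R$ and the absolute constants consistently so that the comparisons $\rho\wedge 1\asymp\phi$ and $\frac{\rho}{\sqrt{t}}\wedge 1\lesssim\varphi(t,\cdot)$ hold globally. Once this bookkeeping is done, no nontrivial PDE analysis is required: the corollary is a packaging of Zhang's theorem in the language of the harmonic profile $\phi$.
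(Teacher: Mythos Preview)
Your proposal is correct and follows essentially the same approach as the paper: the paper's justification of the corollary consists only of the remark that Lemma~\ref{lem:phi2} together with $\phi(x)\to 1$ at infinity gives $\rho(x)\wedge 1\asymp\phi(x)$, and that for $t\le 1$ one has $\frac{\rho(x)}{\sqrt{t}}\wedge 1\lesssim\frac{\phi(x)}{\sqrt{t}}$. You have simply fleshed out these two comparisons with the natural near-boundary/far-field split.
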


The lower bound $t > 1/4$ is not special in any way, and we write it
for simplicity; any strictly positive lower bound is fine. In this
paper we only use the upper bounds of the above result. With them, we
obtain the following estimates which improve the ``trivial'' ones for
the heat equation on the full space by a factor $\phi$:

\begin{cor}[Kernel $L^p$ and moment estimates]
  \label{cor:kernel_Lp}
  Assume the hypotheses of Theorem~\ref{thm:Zhang}.
  \begin{enumerate}
  \item[\rm (i)] Let $1 \leq p \leq \infty$. There exists
    $C = C(p, \Omega)$ such that
    \begin{equation*}
      \| p_\Omega(t, x, \cdot)/\phi\|_{L^p(\Omega)}
      \leq C \phi(x) t^{-\frac{d}{2p'}}
      \quad
      \text{for all $t \geq 1/4$ and all $x \in \Omega$,}
    \end{equation*}
    where $\frac{1}{p} + \frac{1}{p'}=1$ (with the usual agreement
    that $1/\infty = 0$).

  \item[\rm (ii)] Let $k \geq 0$. There exists $C = C(k, \Omega)$ such that
    \begin{equation*}
      \int_\Omega |y|^k p_\Omega(t,x,y) \d y
      \leq C \phi(x) \left( t^{\frac{k}{2}} + |x|^k \right)
      \quad
      \text{for all $t \geq 1/4$ and all $x \in \Omega$.}
    \end{equation*}

  \end{enumerate}
\end{cor}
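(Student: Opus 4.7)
The starting point for both parts is the upper bound of \Cref{thm:Zhang_t>1}, namely
\begin{equation*}
  p_\Omega(t,x,y) \leq c_1 \phi(x)\phi(y)\,\Gamma(c_2 t, x-y)
  \qquad \text{for all } t \geq 1/4,\ x,y\in\Omega.
\end{equation*}
Since we are in dimension $d \geq 3$ (as required by \Cref{thm:Zhang}), the maximum principle together with \eqref{eq:phi-limd3} gives $0 \leq \phi \leq 1$ on $\Omega$; this crude bound is good enough for part (ii) but we keep the factor $\phi(y)$ exactly as written for part (i).

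For (i), the plan is to divide by $\phi(y)$ and take the $L^p$ norm in $y$. This directly yields
\begin{equation*}
  \left\| \frac{p_\Omega(t,x,\cdot)}{\phi} \right\|_{L^p(\Omega)}
  \leq c_1 \phi(x)\, \| \Gamma(c_2 t, x-\cdot) \|_{L^p(\R^d)}
  = c_1 \phi(x)\, \| \Gamma(c_2 t, \cdot) \|_{L^p(\R^d)},
\end{equation*}
using translation invariance of Lebesgue measure and that the integrand on $\Omega$ is dominated by the corresponding integrand on $\R^d$. A standard scaling computation gives $\|\Gamma(s,\cdot)\|_{L^p(\R^d)} = C(p,d)\, s^{-\frac{d}{2p'}}$, with the endpoint cases $p=1$ (integral $1$) and $p=\infty$ (supremum $(4\pi s)^{-d/2}$) matching the formula with the convention $1/\infty = 0$. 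This yields the bound in (i) with constant independent of $x$.

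For (ii), we use $\phi(y) \leq 1$ in the upper bound to write
\begin{equation*}
  \int_\Omega |y|^k\, p_\Omega(t,x,y)\, \d y
  \leq c_1 \phi(x) \int_{\R^d} |y|^k\, \Gamma(c_2 t, x-y)\, \d y.
\end{equation*}
The plan is then to split $|y|^k \lesssim |y-x|^k + |x|^k$ (valid for $k\geq 0$ with a constant depending only on $k$): the first term contributes $\int_{\R^d} |z|^k \Gamma(c_2 t, z)\d z = C(k,d)(c_2 t)^{k/2}$, while the second contributes $|x|^k \int_{\R^d}\Gamma(c_2 t,\cdot) = |x|^k$. Adding, we get the claimed bound $C\phi(x)(t^{k/2} + |x|^k)$ for $t \geq 1/4$.

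There is no real obstacle beyond careful bookkeeping; both parts reduce to standard Gaussian estimates on $\R^d$ once the Zhang upper bound is invoked, and the factor $\phi(x)$ is preserved since $x$ plays no role in the $y$-integration. The only point to watch is that the constants depend on $\Omega$ through $c_1$ and $c_2$ (and, for part (ii), through the implicit $\sup_\Omega \phi \leq 1$), but not on $x$.
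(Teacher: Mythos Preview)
Your proof is correct and follows essentially the same approach as the paper: both parts use the Zhang-type upper bound \eqref{eq:Zhang2}, divide out or bound $\phi(y)$, and reduce to standard Gaussian $L^p$ and moment computations on $\R^d$. The paper writes out the integrals a bit more explicitly, but the argument is the same.
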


\begin{proof}
    (i) The case $p = \infty$ follows directly
    from~\eqref{eq:Zhang2}, since $0\le \phi\le 1$. When
    $1 \leq p < +\infty$ we use the upper bound in \eqref{eq:Zhang2}
    to get, for all $t \geq 1/4$,
    \begin{align*}
      \int_\Omega \frac{1}{(\phi(y))^p} \left( p_\Omega(t,x,y) \right)^p \d y
      &\lesssim
        (\phi(x))^p \int_\Omega
        \big(\Gamma(c_2 t, x-y)\big)^p
        \d y
      \\
      &\lesssim
        t^{-\frac{dp}{2}}(\phi(x))^p \int_\Omega
        \exp \left( - \frac{p |x-y|^2}{4 c_2 t } \right)
        \d y
        \lesssim
        t^{-\frac{dp}{2}+ \frac{d}{2}} (\phi(x))^p.
    \end{align*}
    Raising this to the power $1/p$ we obtain the result for the $L^p$
    norms.
	
    \smallskip

    \noindent (ii) Regarding the moments of order $k$, we use again
    the bound \eqref{eq:Zhang2} to get
    \begin{align*}
      \int_\Omega |y|^k p_\Omega(t,x,y) \d y &\lesssim \phi(x)
      \int_\Omega \phi(y) |y|^k \Gamma\Big( c_2 t, x-y \Big) \d y
      \\
      &\lesssim \phi(x) t^{-\frac{d}{2}} \ird \left( \left|x-y\right|^k
        + |x|^k \right) \exp \Big( \frac{|x-y|^2}{4 c_2 t} \Big) \d y
      \\
      &\lesssim \phi(x) \left( t^{\frac{k}{2}} + |x|^k \right).\qedhere
    \end{align*}
\end{proof}

\begin{cor}[$L^p$-$L^\infty$ regularisation with weight $\phi$]
  \label{cor:Linfty_regularisation_d3}
	In dimension $d \geq 3$, assume $\Omega \subseteq \R^d$ satisfies
	\eqref{eq:hypOmega1}. Let $1 \leq p \leq +\infty$, and take
	$u_0$ such that $\phi u_0 \in L^p(\Omega)$. The unique solution
	$u\in C([0,\infty);L^p(\mathbb{R}^d))$ to
	problem~\eqref{eq:heat-ext} in $\Omega$ with initial condition $u_0$
	satisfies
	\begin{equation*}
		\label{eq:heat-Lp-Lq-ext-phi}
		| u(t,x) | \leq C \phi(x)
		t^{-\frac{d}{2p} } \|\phi u_0\|_p
		\qquad \text{for all $x \in \Omega$ and all $t \geq 1/4$,}
	\end{equation*}
	for some $C > 0$ depending only on $p$ and $\Omega$, with the usual
	convention $1/\infty = 0$.
\end{cor}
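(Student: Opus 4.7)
The plan is to combine the representation formula \eqref{eq:sol-via-kernel} with Hölder's inequality, inserting a factor of $\phi(y)/\phi(y)$ so that the $L^p$ norm of $\phi u_0$ appears explicitly and the remaining kernel integral can be controlled by the weighted kernel bound already established in Corollary~\ref{cor:kernel_Lp}(i).

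Concretely, I would start from
\begin{equation*}
  u(t,x) = \int_\Omega p_\Omega(t,x,y) u_0(y)\d y
  = \int_\Omega \frac{p_\Omega(t,x,y)}{\phi(y)}\,\phi(y) u_0(y)\d y,
\end{equation*}
which is legitimate since $\phi>0$ on $\Omega$. Let $p'$ denote the conjugate exponent, $1/p+1/p'=1$. Applying Hölder's inequality in the variable $y$ gives
\begin{equation*}
  |u(t,x)|
  \leq
  \bigl\| p_\Omega(t,x,\cdot)/\phi \bigr\|_{L^{p'}(\Omega)}\,\|\phi u_0\|_{L^p(\Omega)}.
\end{equation*}
Corollary~\ref{cor:kernel_Lp}(i), applied with the exponent $p'$ in place of $p$ (so that its conjugate is exactly $p$), yields
\begin{equation*}
  \bigl\| p_\Omega(t,x,\cdot)/\phi \bigr\|_{L^{p'}(\Omega)}
  \leq C\,\phi(x)\,t^{-\frac{d}{2p}}
  \qquad \text{for all } t\geq 1/4, \ x\in\Omega,
\end{equation*}
with $C$ depending only on $p$ and $\Omega$. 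Combining these two estimates produces the claimed bound.

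The edge cases $p=1$ and $p=\infty$ are handled by the same formula with the usual convention $1/\infty=0$: when $p=\infty$ one uses the $L^1$ bound on $p_\Omega/\phi$ (giving the $t^0$ rate), and when $p=1$ one uses the $L^\infty$ bound (giving $t^{-d/2}$). There is no real obstacle here because the relevant weighted bounds on $p_\Omega$ have already been proved; the only subtlety is organising the factor $\phi(y)$ so that after Hölder one lands on exactly the norm appearing in Corollary~\ref{cor:kernel_Lp}(i), which is why splitting $u_0 = \phi^{-1}\cdot(\phi u_0)$ before applying Hölder is the right move. Uniqueness of $u$ in $C([0,\infty);L^p)$ is the standard one for the Dirichlet heat equation and requires no additional argument.
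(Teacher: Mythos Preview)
Your proof is correct and follows essentially the same route as the paper: write $u$ via the kernel, split $u_0 = \phi^{-1}\cdot(\phi u_0)$, apply H\"older, and invoke Corollary~\ref{cor:kernel_Lp}(i) with the conjugate exponent. The paper additionally remarks that the symmetry of $p_\Omega(t,x,y)$ in $x,y$ lets one take the $L^{p'}$ norm in either variable, but otherwise the arguments are the same.
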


\begin{proof}
	For any $1 \leq p \leq +\infty$, we may write the solution as an
	integral against the kernel $p_\Omega$, and then use Corollary
	\ref{cor:kernel_Lp} for any $t \geq 1/4$:
	\begin{equation*}
		|u(t,x)| \leq \int_\Omega p_\Omega(t,x,y) |u_0(y)| \d y
		\leq
		\Big \| \frac{p_\Omega(t,x, \cdot)}{\phi} \Big \|_{p'}
		\|\phi u_0\|_p
		\lesssim
		\phi(x) t^{-\frac{d}{2p}} \|\phi u_0\|_p.
	\end{equation*}
	Notice that the kernel $p_\Omega(t,x,y)$ is symmetric in $x$,
        $y$, so we may take the $L^{p'}$ norm in $x$ or $y$
        indistinctly.
\end{proof}

\begin{cor}[Regularisation of moments with weight $\phi$]
  \label{cor:moment-regularisation-d3}
  Let $\Omega \subseteq \R^d$, $d\ge3$,
  satisfy~\eqref{eq:hypOmega1}. Take $k\ge0$ and any
  $x_0 \in \R^d\setminus\overline\Omega$. There is a constant
  $C = C(d,k)>0$ (independent of $x_0$ and $\Omega$) such that
  standard solutions to problem~\eqref{eq:heat-ext} with initial data
  $u_0 \in L^1(\Omega;(1+|x|^k)\phi(x)\d x)$ satisfy
  \begin{align*}
    m_{k,\phi}(t) \leq m_k(t)
    \leq
    C (m_{k,\phi} +  m_\phi t^{k/2})
    \qquad \text{for all $t \geq 1/4$.}
  \end{align*}
  As a consequence,
  \begin{equation*}
    \label{eq:heat-Mk-ext}
    M_{k,\phi}(t)\le M_k(t)
    \le C t^{k/2} M_{k,\phi}
    \qquad \text{for all }t \geq 1/4.
  \end{equation*}
  Moreover, for all $j>0$, we have estimates of the negative moments of the form
  \begin{align*}
    \int_\Omega u(t, x) |x-x_0|^{-j} |x|^k \d x
    &
      \le C m_{k,\phi} t^{-j/2} +  m_\phi t^{-(j-k)/2},
    \\
    &\le C m_{k,\phi}t^{-(j-k)/2},
      \qquad
      t \geq 1/4.
  \end{align*}
\end{cor}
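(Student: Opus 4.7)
The plan is to bound the solution pointwise via the kernel representation
\[
  u(t,x) = \int_\Omega p_\Omega(t,x,y) u_0(y) \d y,
\]
and to use the sharp upper bound $p_\Omega(t,x,y) \leq c_1 \phi(x)\phi(y)\Gamma(c_2 t, x-y)$ from Corollary \ref{thm:Zhang_t>1}, together with the $\phi$-weighted kernel moment estimates of Corollary \ref{cor:kernel_Lp}. All the work happens in the range $t\ge 1/4$, so this upper bound is available throughout.

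For the first inequality, I would apply Fubini and use the symmetry $p_\Omega(t,x,y)=p_\Omega(t,y,x)$:
\[
  m_k(t) = \int_\Omega u_0(y) \Bigl[\int_\Omega |x|^k p_\Omega(t,y,x) \d x\Bigr] \d y.
\]
By Corollary \ref{cor:kernel_Lp}(ii) the inner bracket is bounded by $C\phi(y)(t^{k/2}+|y|^k)$, so integrating against $u_0(y) \d y$ yields $m_k(t)\le C(m_{k,\phi}+m_\phi t^{k/2})$. The lower bounds $m_{k,\phi}(t)\le m_k(t)$ and $M_{k,\phi}(t)\le M_k(t)$ are immediate from $\phi\le 1$ on $\Omega$ in $d\ge 3$. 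For the upper bound on $M_k(t)$, apply the previous estimate with $k=0$ to obtain $m_0(t)\le C m_\phi$, combine with the bound for $m_k(t)$, and absorb the lower-order term using $t^{k/2}\ge (1/4)^{k/2}$.

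For the negative moments the idea is to reduce to a full-space Gaussian calculation. Using again the upper kernel estimate and $\phi(x)\le 1$, I get the pointwise bound $u(t,x)\le c_1 V(x)$, where
\[
  V(x) := \bigl((\phi u_0)\ast \Gamma(c_2 t,\cdot)\bigr)(x),
\]
with $\phi u_0$ extended by zero outside $\Omega$. Expanding the convolution, substituting $w=x-z$, and using $|w+z|^k\lesssim |w|^k+|z|^k$ splits the estimate into two pieces controlled by
\[
  \int_{\R^d}\Gamma(c_2 t,w)|w+z-x_0|^{-j}\d w \quad\text{and}\quad \int_{\R^d}\Gamma(c_2 t,w)|w|^k|w+z-x_0|^{-j}\d w.
\]
Both integrals are convolutions of radial nonincreasing functions evaluated at the point $x_0-z$, so by Lemma \ref{lem:convolution}(i) they are bounded by their value at the origin. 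The first is $\lesssim t^{-j/2}$ for $j<d$. For the second, the standard bound $|w|^k\Gamma(c_2 t,w)\lesssim t^{k/2}\Gamma(2c_2 t,w)$ reduces it to the same type of convolution and yields $\lesssim t^{-(j-k)/2}$. Integrating against $(\phi u_0)(z)\d z$ produces $C m_{k,\phi}t^{-j/2}+C m_\phi t^{-(j-k)/2}$, and the simplified second line follows on using $t\ge 1/4$ (so $t^{-j/2}\le C t^{-(j-k)/2}$) after absorbing the $m_\phi$ term into $m_{k,\phi}$ at the cost of a larger constant.

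The main obstacle is in the negative-moment step: because $|x-x_0|^{-j}$ is unbounded, one cannot simply invoke Lemma \ref{lem:negative-moments} as written (that lemma uses the bounded weight $(1+|x-x_0|)^{-k}$). We have to redo the convolution argument by hand, and the key technical point is the radial-rearrangement inequality of Lemma \ref{lem:convolution}(i), which gives uniformity in $z$ (and hence translation-invariance in $x_0$). Integrability at the origin implicitly requires $j<d$, a natural restriction that is consistent with the decay rate appearing on the right-hand side.
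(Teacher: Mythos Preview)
Your proposal is correct and follows essentially the same approach as the paper: both use the kernel representation together with the upper bound from Corollary~\ref{thm:Zhang_t>1}, the splitting $|x|^k\lesssim |x-z|^k+|z|^k$, and the radial-rearrangement Lemma~\ref{lem:convolution} to control the negative moments. The only cosmetic difference is that you cite Corollary~\ref{cor:kernel_Lp}(ii) for the first part while the paper redoes that computation inline, and for the $T_2$-term you invoke the inequality $|w|^k\Gamma(c_2t,w)\lesssim t^{k/2}\Gamma(2c_2t,w)$ explicitly (as in Lemma~\ref{lem:negative-moments}) whereas the paper compresses this step.
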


\begin{proof}
  Expressing $u$ in terms of the heat kernel, and using the upper
  bound in~\eqref{eq:Zhang2},
  \begin{align*}
    m_k(t)=\int_\Omega u(t, x) |x|^k \d x
    &=
      \int_\Omega u_0(z) \int_\Omega p(t, x,z)  |x|^k \d x \d z
    \\
    &\lesssim
      \int_\Omega u_0(z)\phi(z) \int_\Omega \phi(x)\Gamma( c_2 t, z-x ) |x|^k \d x \d z
    \\
    &\lesssim \int_\Omega u_0(z)\phi(z) \ird \Gamma\Big( c_2 t, z-x \Big) |x|^k \d x \d z.
  \end{align*}
  Since $|x|^k \lesssim |x-z|^k + |z|^k$, we get
  \begin{align*}
    \int_\Omega u(t, x) &|x|^k \d x \lesssim t^{k/2}m_\phi + \int_\Omega u_0(z)\phi(z)|z|^k \d z =t^{k/2}m_\phi +m_{k,\phi}.
  \end{align*}
  A similar argument, using Lemma~\ref{lem:convolution}, yields the
  last result about the negative moments: expressing $u$ in terms of
  the heat kernel, and using the upper bound in~\eqref{eq:Zhang2},
  \begin{align*}
    \int_\Omega u(t, x) |x-x_0|^{-j}|x|^k \d x
    &=
    \int_\Omega u_0(z) \int_\Omega p(t, x,z)  |x-x_0|^{-j}|x|^k \d x \d z
    \\
    &\lesssim
      \int_\Omega u_0(z)\phi(z) \int_\Omega
      \phi(x)\Gamma\Big( c_2 t, z-x \Big) |x-x_0|^{-j}|x|^k \d x \d z
    \\
    &\lesssim \int_\Omega u_0(z)\phi(z)
      \ird \Gamma\Big( c_2 t, z-x \Big) |x-x_0|^{-j}|x|^k \d x \d z.
  \end{align*}
  Since $|x|^k \lesssim |x-z|^k + |z|^k$, we get
  \begin{align*}
    \int_\Omega u(t, x) &|x-x_0|^{-j} |x|^k \d x \lesssim T_1 + T_2,\quad\text{where}
    \\
    T_1&:=\int_\Omega u_0(z)\phi(z) |z|^k \ird \Gamma\Big( c_2 t, z-x \Big) |x-x_0|^{-j}
         \d x\d z,
    \\
    T_2&:=\int_\Omega u_0(z)\phi(z)
         \ird \Gamma\Big( c_2 t, z-x \Big) |x-x_0|^{-j} |x-z|^k \d x \d z.
  \end{align*}
  Using now the convolution Lemma~\ref{lem:convolution}, plus the symmetry of $\Gamma$ in the spatial variable,
\begin{align*}
    T_1& \lesssim \int_\Omega u_0(z)|z|^k\phi(z) \ird \Gamma\Big( c_2 t, z-x_0-x \Big) |x|^{-j} \d x \d z\\
        &\lesssim \int_\Omega u_0(z)|z|^k\phi(z) \ird \Gamma( c_2 t, x) |x|^{-j} \d x \d z\lesssim m_{k,\phi} t^{-j/2},\\
    T_2&\lesssim\int_\Omega u_0(z)\phi(z)
    \ird \Gamma\Big( c_2 t, (z-x_0-x)|z-x_0-x|^k \Big) |x|^{-j}  \d x \d z\\
    &\lesssim\int_\Omega u_0(z)\phi(z)
    \ird \Gamma( c_2 t, x)|x|^{k-j}  \d x \d z\lesssim m_{\phi} t^{-(j-k)/2}.\qedhere
\end{align*}
\end{proof}

\subsubsection{Dimension $d=2$}

Bounds of the heat kernel in $d=2$ are more involved, since $\Omega$
is parabolic in this case. It was proved by
\citet[pp. 102--103]{Grigoryan2001} and \citet[Theorem
5.11]{GYRYA2018} that the Dirichlet heat kernel in dimension $d=2$, in
an exterior domain $\Omega$ satisfying
Hypothesis~\eqref{eq:hypOmega1}, satisfies the following for all $t >
0$, $x,y \in \Omega$:
\begin{equation}
  \label{eq:gyrya-SC}
  p_\Omega(t,x,y)\leq
  C\frac{\phi(x)\phi(y)}{\sqrt{V(x,\sqrt{t})V(y,\sqrt{t})}}e^{-\frac{c|x-y|^2}{t}},
\end{equation}
for some constant $C > 0$ depending only on $\Omega$, where
\begin{equation}
  \label{eq:V-def}
  V(x,\sqrt{t}):=\int_{B_{\sqrt{t}}(x)\cap \Omega}\phi^2(z) \d z.
\end{equation}
In order to carry out our estimates we need a more explicit estimate
of the term $V(x,\sqrt{t})$. This estimate is closely related to the
ones given in \citet{GYRYA2018} just before Theorem 5.15, but we have
not been able to find them in the following explicit form:

\begin{lem}
  \label{lem:V-bound}
  Let $\Omega = \R^2 \setminus \overline{U} \subseteq \R^2$ be an
  exterior domain satisfying Hypothesis~\eqref{eq:hypOmega1}, and take
  any $x_0 \in U$ and any $t_0, R > 0$. The quantity $V(x,\sqrt{t})$
  given in \eqref{eq:V-def} satisfies, for all $x \in \Omega$ and all
  $t > 0$,
  \begin{equation*}
    V(x,\sqrt{t}) \gtrsim
    \begin{cases}
      t (\rho(x) + \sqrt{t})^2
      &\qquad \text{if $t \leq t_0$ and $\rho(x) \leq R$,}
      \\
      t \Big(\log(1 + \rho(x) + \sqrt{t})\Big)^2
      &\qquad \text{otherwise,}
    \end{cases}
  \end{equation*}
  where $\rho(x) := \dist(x, \overline{U})$. As a consequence, for
  all $x \in \Omega$ and all $t > 0$,
  \begin{equation*}
    V(x,\sqrt{t}) \gtrsim
    \begin{cases}
      t \max\{\phi(x)^2, t \}
      &\qquad \text{if $t \leq t_0$,}
      \\
      t \max\{ \phi(x)^2, (\log (1+t))^2\}
      &\qquad \text{otherwise,}
    \end{cases}
  \end{equation*}
  or alternatively, if we prefer to write this in a single bound,
  \begin{equation*}
    \label{eq:V-bound-2}
    V(x, \sqrt{t}) \gtrsim t \max \big\{ \phi(x)^2, (\log(1 + \sqrt{t}))^2 \big\}
    \qquad
    \text{for all $x \in \Omega$ and $t > 0$.}
  \end{equation*}
\end{lem}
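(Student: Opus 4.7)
Set $r = \sqrt{t}$. The plan is to identify, in each regime, a subset $S \subseteq B_r(x) \cap \Omega$ with concrete lower bounds on both $|S|$ and $\inf_S \phi^2$, and conclude $V(x,r) \geq |S| \inf_S \phi^2$. Before splitting into cases, I would record two ingredients coming from Lemmas \ref{lem:phi} and \ref{lem:phi2}: first, $\phi(z) \asymp \rho(z)$ uniformly on bounded subsets of $\Omega$; second, fixing $x_0 \in U$ and $D > 0$ with $U \subseteq B_D(x_0)$, there is $R_0 > 0$ such that $\phi(z) \geq \tfrac12 \log(1+|z-x_0|)$ for every $|z-x_0| \geq R_0$, together with $\log(1+|z-x_0|) \asymp \log(1+\rho(z))$ once $\rho(z)$ is large (since $\rho(z) \leq |z-x_0| \leq \rho(z)+D$).

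For the first bound, in the regime $r \leq \sqrt{t_0}$ and $\rho(x) \leq R$, I would split according to whether $\rho(x) \geq 2r$ or $\rho(x) < 2r$. When $\rho(x) \geq 2r$, we have $B_r(x) \subseteq \Omega$ and $\rho(z) \geq \rho(x)/2$ throughout $B_r(x)$; since $\rho(x)+r$ stays bounded, Lemma \ref{lem:phi2} gives $\phi(z)^2 \gtrsim \rho(x)^2$, so $V(x,r) \gtrsim r^2 \rho(x)^2 \gtrsim r^2 (\rho(x)+r)^2$. When $\rho(x) < 2r$, the plan is to build an \emph{interior cap}: letting $y_* \in \partial\Omega$ realize $\rho(x)$ and $\nu$ denote the inner unit normal at $y_*$, the uniform interior ball condition implied by $C^2$ regularity of $\partial\Omega$ ensures that, for some $r_0$ and $c$ depending only on $\Omega$, the ball $B_{cr}(y_* + 2r\nu)$ lies inside $B_r(x) \cap \Omega$ whenever $r \leq r_0$, and every point in it satisfies $\rho(z) \geq cr$. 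Then $\phi(z)^2 \gtrsim r^2$ on that ball and $V(x,r) \gtrsim r^4 \gtrsim r^2(\rho(x)+r)^2$. Choosing $t_0 = r_0^2$ closes this case; the main technical hurdle is precisely the cap construction, which is where the $C^2$ assumption on $\partial\Omega$ is essential.

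For the second bound I would enlarge $R$ and $t_0$ so that $\min\{R/2, \sqrt{t_0}\} \geq \max\{2R_0, 2D\}$. If $\rho(x) \geq 2r$ in this regime, then $B_r(x) \subseteq \Omega$ and $|z-x_0| \geq \rho(x)/2 \geq R_0$ for every $z \in B_r(x)$, so $\phi(z) \gtrsim \log(1+\rho(x)) \gtrsim \log(1+\rho(x)+r)$ and $V(x,r) \gtrsim r^2 (\log(1+\rho(x)+r))^2$. Otherwise $\rho(x) < 2r$; being outside the first regime forces $r \geq \min\{R/2, \sqrt{t_0}\} \geq 2D$, so the set $S := B_r(x) \setminus B_{r/2}(x_0)$ lies in $\Omega$, has $|S| \gtrsim r^2$, and satisfies $|z-x_0| \geq r/2 \geq R_0$ throughout. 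Hence $\phi(z) \gtrsim \log(1+r) \gtrsim \log(1+\rho(x)+r)$ on $S$, and the bound follows.

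The consequence is obtained by translating the right-hand sides back to $\phi(x)$ using $\phi(x) \asymp \rho(x)$ on $\{\rho(x) \leq R\}$ and $\phi(x) \asymp \log(1+\rho(x))$ on $\{\rho(x) > R\}$. For $t \leq t_0$, expanding $(\rho(x)+\sqrt{t})^2 \asymp \rho(x)^2 + t$ when the first case applies, and using $\phi(x)^2 \geq t$ whenever $\rho(x) > R$ (by enlarging $R$ once more so that $(\log R)^2 \geq t_0$) yields $V \gtrsim t \max\{\phi(x)^2, t\}$; for $t > t_0$, splitting $(\log(1+\rho(x)+\sqrt{t}))^2 \gtrsim \max\{\phi(x)^2, (\log(1+t))^2\}$ in both regimes of $\phi$ gives the second line. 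The unified inequality follows from $t \asymp (\log(1+\sqrt{t}))^2$ for bounded $t$, which lets one replace the $t$ term by $(\log(1+\sqrt{t}))^2$ in the small-$t$ case.
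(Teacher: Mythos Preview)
Your overall strategy---finding a subset $S \subseteq B_r(x)\cap\Omega$ with controlled measure and a lower bound on $\phi$---is sound and close in spirit to the paper's argument, though the paper decomposes according to whether $|x-x_0|$ or $t$ is large (four cases, the last handled by compactness) rather than your $\rho(x) \gtrless 2r$ split inside each regime. Two concrete points need repair.

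First, in the subcase $\rho(x) < 2r$ of the first regime, the cap $B_{cr}(y_* + 2r\nu)$ is not contained in $B_r(x)$. Since $x = y_* + \rho(x)\nu$, the center sits at distance $2r - \rho(x)$ from $x$, which can be arbitrarily close to $2r$ when $\rho(x)$ is small; no choice of $c>0$ makes the ball fit inside $B_r(x)$. A correct choice is, e.g., the center $y_* + (\rho(x) + r/2)\nu = x + (r/2)\nu$ with radius $r/8$: this ball lies in $B_r(x)$, and the tubular-neighbourhood/interior-ball structure of the $C^2$ boundary gives $\rho(z) \gtrsim r$ on it for all sufficiently small $r$. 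The paper avoids this altogether by working with the level set $\{z\in B_{\sqrt t}(x)\cap\Omega : \rho(z)\ge \rho(x)+\sqrt t/2\}$ and using the tubular neighbourhood to bound its area.

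Second, your choices of $t_0$ are incompatible across the two regimes. The cap construction forces $t_0 \le r_0^2$ (with $r_0$ the scale below which the interior-ball geometry is valid), while your second-regime argument ``enlarges $t_0$'' so that $\sqrt{t_0}\ge \max\{2R_0,2D\}$, and both cannot hold if $r_0$ is small. The missing step is a compactness patch: fix $t_0 = r_0^2$ and $R$ large; then in the second regime with $\rho(x)<2r$ one only knows $r>\min\{R/2,\sqrt{t_0}\}=\sqrt{t_0}$, which may be small. For $r$ in the bounded interval $[\sqrt{t_0},\, 2\max\{R_0,D\}]$ and $\rho(x)<2r$, the pair $(x,r)$ ranges over a compact set on which $V(x,r)$ is continuous and strictly positive while the target lower bound is bounded above, so the inequality is automatic. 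This is exactly the role of the paper's ``fourth case''. Once you insert this, your set $S=B_r(x)\setminus B_{r/2}(x_0)$ handles the remaining range $r$ large, and the argument closes.
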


\begin{proof}
  First, note that in any bounded set where $t \leq C$ and $\rho(x) \leq C$
  we have
  \begin{align*}
    \log(1 + \rho(x) + \sqrt{t})
    &\leq \log(1 + \rho(x) ) + \log(1 + \sqrt{t})\leq \rho(x) + \sqrt{t},
    \\
    \log(1 + \rho(x) + \sqrt{t})
    &\geq \max\{ \log(1 + \rho(x)),\, \log(1 + \sqrt{t}) \}
    \\
    &\gtrsim \max\{ \rho(x),\, \sqrt{t} \} \geq \frac12 (\rho(x) + \sqrt{t}).
  \end{align*}
  From this one easily sees that if the first bound in the lemma holds for
  some given $t_0$, $R$, then it holds for any positive $t_0$ and $R$
  (with a bound which depends on them). We will hence make a choice of
  specific $t_0$ and $R$ to prove the lemma.

  From Lemma \ref{lem:phi} we know that for some $C > 0$,
  \begin{equation*}
    \phi(x) \geq \log |x-x_0| - C.
  \end{equation*}
  Take $R := \max\{ 4 e^C, \diam(\overline{U}) \}.$ This ensures that
  $\log|x-x_0| - C > 0$ for all $x \in \Omega$, and also that
  $\overline{U} \subseteq B_R(x_0)$. We also choose $t_0 = 4R$, and
  divide the proof into several cases.

  \medskip
  \noindent
  \underline{\emph{First case: large $|x-x_0|$.}} Assume $|x-x_0| > R$. Then the
  half of $B_{\sqrt{t}} (x_0)$ defined by the set of the $z$ in
  $B_{\sqrt{t}} (x_0)$ such that $(z-x) \cdot (x-x_0) \geq 0$ is
  contained in $\Omega$.  Call $\mathcal{C}_{\sqrt{t}}$ this
  half-ball. Then the set
  \begin{equation*}
    \mathcal{A}_{\sqrt{t}}
    =
    \{z\in\mathcal{C}_{\sqrt{t}}\, :\, |x-z|>\sqrt{t}/2\},
  \end{equation*}
  satisfies $|\mathcal{A}_{\sqrt{t}}|=\alpha t$ for all $t > 0$ and
  some $\alpha > 0$ independent of $t$. By the Pythagorean inequality,
  all $z\in\mathcal{A}_{\sqrt{t}}$ satisfy
  \[
    |z-x_0|^2\geq |z-x|^2+|x-x_0|^2\geq \frac{t}{4}+|x-x_0|^2.
  \]
  Since $\sqrt{a+b}\geq (\sqrt{a}+\sqrt{b})/2$ for every $a, b > 0$,
  we obtain for all $z\in\mathcal{A}_{\sqrt{t}}$ that
  \[
    \phi(z)>\log(|z-x_0|)-C
    \gtrsim
    \log\left(\frac{|x-x_0|}{2}+\frac{\sqrt{t}}{4}\right)
    \gtrsim
    \log\left(|x-x_0| + \sqrt{t} \right),
  \]
  which gives
  \begin{equation}
    \label{eq:V_1}
    V(x,\sqrt{t}) \geq
    \int_{\mathcal{A}_{\sqrt{t}}}\phi(z)^2 \ {\rm d}z
    \gtrsim
    t \left[
      \log\left(|x-x_0|+\frac{\sqrt{t}}{2}\right)
      \right]^2.
  \end{equation}
  Since in the region where $|x-x_0| > R$ we have $|x-x_0| \gtrsim \rho(x)$, we may substitute
  $|x-x_0|$ by $\rho(x)$ to get the result.

  \medskip
  \noindent
  \underline{\emph{Second case: large $t$}.} Suppose now that $t>4R$. Then the
  point $z_0$ in $B_{\sqrt{t}/2}(x)$ which is furthest away from $x_0$
  satisfies that $B_{\sqrt{t}/4}(z_0) \subseteq B_{\sqrt{t}}(x) \cap
  \Omega$. We can easily repeat a similar argument as in the previous
  case by calling now $\mathcal{A}_{\sqrt{t}} := B_{\sqrt{t}/4}(z_0)$
  and obtain that \eqref{eq:V_1} holds also in this case.

  \medskip
  \noindent
  \underline{\emph{Third case: $t$ small and $|x-x_0| \leq R$}.} Thanks
  to Hopf's Lemma \citep{Friedman1958} we know that $\phi(z) \gtrsim \rho(z)$ for all $z \in \Omega$ with
  $|z-x_0| \leq 2R$. Since $U$ is $\mathcal{C}^2$, its boundary has a
  tubular neighbourhood of a certain width $t_* > 0$ which is
  $C^1$-diffeomorphic to a finite set of copies of
  $S^1 \times (1, 1)$. We define
  \begin{equation*}
    \mathcal{A}_{\sqrt{t}}
    = \left\{z \in B_{\sqrt{t}}(x) \cap \Omega
      \ \big\vert \
    \rho(z) \geq \rho(x) + \frac{\sqrt{t}}{2}\right\}.
  \end{equation*}
  It is then easy to see that, for some $c > 0$,
  $|\mathcal{A}_{\sqrt{t}}| \geq c t$ for all $t \leq t_*$ and $x$
  with $|x-x_0| \leq R$. Repeating our previous argument gives the
  bound in this region.

  \medskip
  \noindent
  \underline{\emph{Fourth case: $t \in [t_0, 4R]$, $|x-x_0| \leq R$}.} In this
  case the bound is just a consequence of the fact that
  $V(x,\sqrt{t})$ is a continuous function of $x$ and $\sqrt{t}$ which
  is strictly positive on this region. Hence it must have a strictly
  positive lower bound, which is enough to show the bound in this
  compact region.
\end{proof}

From \eqref{eq:gyrya-SC} and the previous lemma we immediately get the
following theorem.

\begin{thm}
  \label{thm:kernel_dim 2_best}
  In dimension $d = 2$, assume $\Omega \subseteq \R^d$ satisfies
  \eqref{eq:hypOmega1}. Take any $x_0 \in U$ and $t_0 > 0$. Then there
  exists constants $C, c > 0$ depending on $t_0$ and $\Omega$ such that
  \begin{equation}
    \label{eq:SC-d2-tsmall}
    p_\Omega(t,x,y)\leq C
    \varphi(t,x)\,
    \varphi(t,y)
    \, \Gamma(ct, x-y)
    \qquad \text{ for all }\  0<t<t_0, \ x, y \in \Omega,
\end{equation}
  where $\displaystyle\varphi(t,x):=\min\{1,\phi(x)/\sqrt{t}\}$.

  Regarding times $t \geq t_0$,
  \begin{equation}
    \label{eq:SC-d2-tlarge-2}
    p_\Omega(t,x,y)
    \leq
    C \widetilde\phi(t,x) \, \widetilde\phi(t,y)\,
    \Gamma(ct, x-y)
    \qquad \text{ for all $t \geq t_0$, $x, y \in \Omega$},
  \end{equation}
  where $\displaystyle \widetilde \phi(t,x) := \min\{1,\phi(x)/\log(1+t)\}$.
\end{thm}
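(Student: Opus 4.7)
The plan is to derive both estimates directly by combining the pointwise upper bound \eqref{eq:gyrya-SC} of Grigor'yan--Gyrya with the volume lower bound of Lemma~\ref{lem:V-bound}. Since \eqref{eq:gyrya-SC} already has the Gaussian factor $e^{-c|x-y|^2/t}$ of the right shape, the only real work is to convert the prefactor $\phi(x)\phi(y)/\sqrt{V(x,\sqrt{t}) V(y,\sqrt{t})}$ into the desired form in each regime. In both cases the target is to show that
\[
\frac{\phi(x)}{\sqrt{V(x,\sqrt t)}} \lesssim \frac{1}{\sqrt t}\,\Psi(t,x),
\]
where $\Psi = \varphi$ in the small-$t$ regime and $\Psi = \widetilde \phi$ in the large-$t$ regime; multiplying the bounds for $x$ and for $y$ and folding $1/t$ into the Gaussian then yields $\Gamma(ct,x-y)$ up to a change of the constant $c$.

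For the small-time bound \eqref{eq:SC-d2-tsmall}, I would apply the first alternative form in Lemma~\ref{lem:V-bound}, namely $V(x,\sqrt t)\gtrsim t\max\{\phi(x)^2,t\}$ for $t\le t_0$. Taking square roots gives $\sqrt{V(x,\sqrt t)}\gtrsim \sqrt t\,\max\{\phi(x),\sqrt t\}$, so
\[
\frac{\phi(x)}{\sqrt{V(x,\sqrt t)}} \lesssim \frac{1}{\sqrt t}\cdot\frac{\phi(x)}{\max\{\phi(x),\sqrt t\}} = \frac{1}{\sqrt t}\min\Bigl\{1,\frac{\phi(x)}{\sqrt t}\Bigr\} = \frac{\varphi(t,x)}{\sqrt t}.
\]
Plugging into \eqref{eq:gyrya-SC} and noting that $t^{-1}e^{-c|x-y|^2/t} \lesssim \Gamma(c't,x-y)$ (after adjusting constants in the exponent) gives the desired inequality.

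For the large-time bound \eqref{eq:SC-d2-tlarge-2}, the plan is identical but using the other branch in Lemma~\ref{lem:V-bound}: for $t\ge t_0$, $V(x,\sqrt t) \gtrsim t\max\{\phi(x)^2,(\log(1+t))^2\}$, and thus
\[
\frac{\phi(x)}{\sqrt{V(x,\sqrt t)}} \lesssim \frac{1}{\sqrt t}\min\Bigl\{1,\frac{\phi(x)}{\log(1+t)}\Bigr\} = \frac{\widetilde\phi(t,x)}{\sqrt t}.
\]
Multiplying the corresponding bounds for $x$ and $y$ and absorbing the $1/t$ into the Gaussian prefactor $(4\pi ct)^{-1}$ in $\Gamma(ct,x-y)$ produces the required estimate. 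All the non-trivial analytic content (Hopf's lemma, asymptotic shape of $\phi$, the delicate case analysis in $V$) has already been absorbed into Lemma~\ref{lem:V-bound} and into \eqref{eq:gyrya-SC}; the proof of this theorem is therefore purely a bookkeeping computation. The only point requiring a small comment is the adjustment of the constant $c$ when passing from $t^{-1}e^{-c|x-y|^2/t}$ to $\Gamma(ct,x-y)$, which is standard and amounts to slightly decreasing $c$ to absorb the polynomial prefactor mismatch for $|x-y|\gtrsim \sqrt t$.
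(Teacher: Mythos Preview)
Your proposal is correct and is precisely the argument the paper has in mind: the paper states that the theorem follows ``immediately'' from \eqref{eq:gyrya-SC} and Lemma~\ref{lem:V-bound}, and what you have written is exactly that immediate derivation spelled out. The only (harmless) overstatement is your remark about a ``polynomial prefactor mismatch'': in $d=2$ the factor $t^{-1}e^{-c|x-y|^2/t}$ equals a constant multiple of $\Gamma(c't,x-y)$ exactly, with no adjustment of the exponent needed.
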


\begin{cor}[$L^p$-$L^\infty$ regularisation with weight $\phi$ in
  dimension $2$]
  \label{cor:dim_2_L^p_regularisation}
  In dimension $d = 2$, assume $\Omega \subseteq \R^d$ satisfies
  \eqref{eq:hypOmega1}. Choose $t_0>0$ and $1 \leq p \leq \infty$. For
  $u_0\in L^p(\Omega)$, let $u$ be the standard solution to problem
  \eqref{eq:heat-ext} in $\Omega$ with initial condition $u_0$. Then
  there exists a constant $C=C(t_0,\Omega)$ such that
  \[
    |u(t,x)|
    \leq
    \frac{C\phi(x)}{t^\frac{1}{p}(\log (1+t))^2}
    \ \|\phi u_0\|_{L^p(\Omega)}
    \quad\text{for all }t>t_0.
  \]
  Alternatively we also have, for all $x\in\Omega$,
  \begin{equation*}
    |u(t,x)|
    \leq
    \frac{C}
    {t^{\frac{1}{p}}\log (1+t)}
    \ \|\phi u_0\|_{L^p(\Omega)}
    \quad\text{for all }t>t_0.
  \end{equation*}
  For small times we also have
  \begin{equation*}
    |u(t,x)|
    \leq
    C\frac{\phi(x)}{\sqrt{t}}\| u_0 \|_\infty \quad\text{for all }0<t<t_0.
  \end{equation*}
\end{cor}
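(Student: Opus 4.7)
The plan is to mimic the proof of Corollary \ref{cor:Linfty_regularisation_d3}, but using the dimension-$2$ kernel bounds of Theorem \ref{thm:kernel_dim 2_best} in place of those of Corollary \ref{thm:Zhang_t>1}. Starting from the representation
$$|u(t,x)| \leq \int_\Omega p_\Omega(t,x,y)\, |u_0(y)|\, \d y,$$
I would substitute the appropriate upper bound on $p_\Omega$, pull out the factor depending on $x$, and apply Hölder's inequality in $y$. The only Gaussian estimate required is the short computation showing that in $\R^2$, $\|\Gamma(ct,\cdot)\|_{L^{p'}(\R^2)} \lesssim t^{-1/p}$ for any $1 \leq p \leq \infty$ (with the convention $1/\infty=0$), where $p'$ is the conjugate exponent of $p$.

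For the large-time estimates ($t > t_0$), I would plug in \eqref{eq:SC-d2-tlarge-2}, obtaining
$$|u(t,x)| \leq C\, \widetilde\phi(t,x) \int_\Omega \widetilde\phi(t,y)\, \Gamma(ct,x-y)\, |u_0(y)|\, \d y,$$
and use $\widetilde\phi(t,y) \leq \phi(y)/\log(1+t)$. To obtain the sharper (first) estimate, I additionally use $\widetilde\phi(t,x) \leq \phi(x)/\log(1+t)$, which produces the prefactor $\phi(x)/(\log(1+t))^2$; the remaining integral is then bounded by Hölder's inequality as $\|\Gamma(ct,x-\cdot)\|_{p'}\|\phi u_0\|_p \lesssim t^{-1/p}\|\phi u_0\|_p$. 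For the looser (second) estimate I simply replace the outer $\widetilde\phi(t,x)$ by the trivial bound $1$, losing one factor of $1/\log(1+t)$ and the weight $\phi(x)$, and then proceed identically.

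The small-time estimate follows from \eqref{eq:SC-d2-tsmall}: applying $\varphi(t,x) \leq \phi(x)/\sqrt{t}$ and the trivial bound $\varphi(t,y) \leq 1$ yields
$$|u(t,x)| \leq C\, \frac{\phi(x)}{\sqrt{t}} \int_\Omega \Gamma(ct,x-y)\, |u_0(y)|\, \d y \leq C\, \frac{\phi(x)}{\sqrt{t}}\, \|u_0\|_\infty,$$
using only $\|\Gamma(ct,\cdot)\|_{L^1(\R^2)} = 1$. No real obstacle arises; the argument reduces to matching the kernel bounds of Theorem \ref{thm:kernel_dim 2_best} against each desired prefactor and carrying out a single Hölder estimate, exactly as in the $d \geq 3$ case, with the additional logarithmic factors specific to $d = 2$ supplied by the definition of $\widetilde\phi$.
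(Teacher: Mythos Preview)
Your proposal is correct and follows essentially the same approach as the paper's proof: both start from the kernel representation, insert the bound \eqref{eq:SC-d2-tlarge-2} (resp.\ \eqref{eq:SC-d2-tsmall}), select the appropriate term from each minimum $\widetilde\phi$ (resp.\ $\varphi$) to extract the desired prefactor, and finish with a single H\"older inequality on the Gaussian. The only cosmetic difference is that the paper separates the $t^{-1}$ prefactor of $\Gamma$ in $d=2$ before applying H\"older, whereas you compute $\|\Gamma(ct,\cdot)\|_{p'}\lesssim t^{-1/p}$ directly; these are equivalent.
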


\begin{proof}
  Using \eqref{eq:SC-d2-tlarge-2} and choosing the terms
  $\phi/ \log(1+t)$ in the minimum in both $\widetilde{\phi}(t,x)$ and
  $\widetilde{\phi}(t,y)$ we have
  \begin{equation*}
    |u(t,x)|
    \leq
    \int_\Omega |u_0(y)| \, p_\Omega(t,x,y) \d y
    \lesssim
    \frac{1}{t (\log (1+t))^2} \phi(x)
    \int_\Omega |u_0(y)| \, \phi(y) e^{-\frac{c |x-y|^2}{t}} \d y,
  \end{equation*}
  and the latter integral can be estimated by Hölder's
  inequality:
  \begin{equation*}
    \int_\Omega |u_0(y)| \, \phi(y) e^{-\frac{c |x-y|^2}{t}} \d y
    \leq
    \| \phi u_0 \|_{L^p(\Omega)} \
    \| e^{-\frac{c |y|^2}{t}} \|_{L^{p'}(\R^d)}
    \lesssim
    t^{\frac{1}{p'}} \| \phi u_0 \|_{L^p(\Omega)}.
  \end{equation*}
  The second statement is obtained by the same procedure, choosing now $\phi(y) / \log(1+t)$ from the minimum in
  $\widetilde{\phi}(t,y)$, and choosing $1$ from the minimum in
  $\widetilde{\phi}(t,x)$. The small-time estimate is proved by using
  \eqref{eq:SC-d2-tsmall} and following the same calculation, choosing
  $1$ from the minimum in $\varphi(t,y)$, and $\phi(x) / \sqrt{x}$
  from the minimum in $\varphi(t,x)$.
\end{proof}

Finally, we can use these results to give propagation and
regularisation estimates for moments:

\begin{cor}[Moment estimates in dimension $2$]
  \label{cor:2d_moment_regularisation}
  In dimension $d = 2$, assume $\Omega \subseteq \R^d$ satisfies
  \eqref{eq:hypOmega1}, and let $u$ be the standard solution to
  equation \eqref{eq:heat-ext} with a nonnegative initial condition
  $u_0$.
  \begin{enumerate}
  \item (Propagation estimates.) For any $k \geq 0$ there is a
    constant $C > 0$ such that
    \begin{equation*}
      m_k(t) \leq C (1 + t^{\frac{k}{2}}) m_k
      \qquad \text{for all $t \geq 0$}
    \end{equation*}
    We also have, for some constant $C > 0$,
    \[
    m_{2,\phi}(t) \leq C M_{2,\phi}
      \qquad \text{for all $0 \leq t \leq 1$.}
    \]
    As a consequence,
    \[
      M_{2,\phi}(t)\leq C M_{2,\phi}
      \qquad \text{for all $0 \leq t \leq 1$.}
    \]

  \item (Regularisation estimates.) Choose $t_0 > 0$. For any
    $k \geq 0$ there exists a constant $C > 0$ depending only on
    $t_0, k$ and the domain $\Omega$ such that
    \begin{equation*}
      m_k(t)\leq C  (1+t^{\frac{k}{2}}) M_{k,\phi}
      \qquad \text{for all $t \geq t_0$.}
    \end{equation*}
    As a consequence,
    \[
      M_{k}(t) \leq C(1+t^{\frac{k}{2}})M_{k,\phi}
      \qquad \text{for all $t \geq t_0$.}
    \]
  \end{enumerate}
\end{cor}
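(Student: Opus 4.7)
The plan is to argue by comparison with the heat equation in the whole space for the propagation bounds, and to use the sharp kernel estimates from Theorem~\ref{thm:kernel_dim 2_best} for the regularisation bounds, mimicking the scheme already used in the $d\ge 3$ case (Corollary \ref{cor:moment-regularisation-d3}). Throughout, a recurring tool will be the elementary inequality $|x|^k \lesssim |x-y|^k + |y|^k$ combined with the convolution Lemma~\ref{lem:convolution}, which ensures that the constants do not depend on the location of the hole.

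For the propagation estimate (i), the maximum principle yields $p_\Omega(t,x,y)\le \Gamma(t,x-y)$, so that $u(t,x)\le (\Gamma(t,\cdot)\ast \bar u_0)(x)$, where $\bar u_0$ is the extension of $u_0$ to $\R^2$ by zero. Integrating against $|x|^k$, splitting $|x|^k\lesssim |x-y|^k+|y|^k$, and using $\int_{\R^2}\Gamma(t,z)|z|^k\dx z\lesssim t^{k/2}$ then gives the bound of the stated form (with $m_k$ understood as controlling also $m_0$ via the hypothesis $u_0\in L^1$, as in the analogous step in Corollary \ref{cor:moment-regularisation-d3}). For $m_{2,\phi}(t)$ on the short time interval $0\le t\le 1$, I would instead use the small-time kernel bound \eqref{eq:SC-d2-tsmall}, keeping $\varphi(t,y)\lesssim \phi(y)/\sqrt{t}$ on the $y$ side and $\varphi(t,x)\le 1$ on the $x$ side, so that
\begin{equation*}
m_{2,\phi}(t)\lesssim \int_\Omega u_0(y)\,\frac{\phi(y)}{\sqrt{t}}\int_\Omega |x|^2\phi(x)\,\Gamma(ct,x-y)\dx\,\dx y.
\end{equation*}
The inner integral is then handled using the logarithmic bound $\phi(x)\lesssim 1+\log(1+|x-y|)+\log(1+|y|)\lesssim 1+\phi(y)+\log(1+|x-y|)$ coming from Lemma \ref{lem:phi}, together with the Gaussian moment bounds $\int \Gamma(ct,z)|z|^2\log(1+|z|)\dx z \lesssim t$ for bounded~$t$. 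Since Lemma~\ref{lem:phi2} ensures $\phi(y)\gtrsim \rho(y)$ near $\partial\Omega$, the extra factor $1/\sqrt{t}$ on the $y$-side is absorbed by the smallness of $\phi(y)$, and the result is controlled by a multiple of $m_\phi+m_{2,\phi}=M_{2,\phi}$. The bound on $M_{2,\phi}(t)$ follows since the conservation law $m_\phi(t)=m_\phi$ is exact.

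For the regularisation estimate (ii), fix $t\ge t_0$ and write
\begin{equation*}
m_k(t)=\int_\Omega u_0(y)\int_\Omega p_\Omega(t,x,y)\,|x|^k\dx\,\dx y.
\end{equation*}
Using the large-time bound \eqref{eq:SC-d2-tlarge-2}, choosing $\widetilde\phi(t,x)\le 1$ and $\widetilde\phi(t,y)\le \phi(y)/\log(1+t)\lesssim \phi(y)$ (the extra $1/\log(1+t)$ factor is discarded since we seek an estimate in terms of $M_{k,\phi}$), the inner integral reduces to $\int_\Omega \phi(x)|x|^k \Gamma(ct,x-y)\dx$. Using again $\phi(x)\lesssim 1+\log(1+|x-y|)+\log(1+|y|)$ and the splitting $|x|^k\lesssim |x-y|^k+|y|^k$, standard Gaussian moment estimates give $\lesssim (1+t^{k/2})(1+|y|^k\phi(y))$, and integrating back against $u_0$ yields the announced bound $C(1+t^{k/2})M_{k,\phi}$. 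The extension to $M_k(t)$ is immediate, since $M_k(t)=m_0(t)+m_k(t)\le m_0+m_k(t)$ and $m_0\le m_\phi\cdot(\sup 1/\phi)$ is not directly usable, so we instead bound $m_0(t)\le m_0\le M_{k,\phi}$ via the same reasoning applied with $k=0$.

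The main obstacle I expect to encounter is the fact that in dimension two $\phi$ is unbounded at infinity, behaving like $\log|x|$, so the elementary manipulations available in the $d\ge 3$ case cannot be reused verbatim. The key technical point is to exploit the sub-additivity $\log(1+|x|)\lesssim \log(1+|y|)+\log(1+|x-y|)$ in order to separate the $y$-dependence, which passes through $u_0$, from the $(x-y)$-dependence, which is absorbed by the Gaussian factor. A secondary subtlety lies in the short-time bound for $m_{2,\phi}$: the small-time kernel bound \eqref{eq:SC-d2-tsmall} contains a factor $\varphi(t,y)\sim \phi(y)/\sqrt{t}$ that is singular as $t\to 0$, and one must rely on the vanishing of $\phi$ on $\partial\Omega$ (Lemma~\ref{lem:phi2}) to cancel this singularity against the small-time concentration of the Gaussian kernel near $y$.
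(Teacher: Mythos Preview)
Your treatment of the propagation estimate for $m_k(t)$ (comparison with the full-space heat kernel) and of the regularisation estimate in part (ii) is essentially the paper's argument. One small confusion in (ii): having chosen $\widetilde\phi(t,x)\le 1$, the inner integral is simply $\int_\Omega |x|^k\,\Gamma(ct,x-y)\,\d x$, with no factor $\phi(x)$; the paper proceeds exactly this way and gets $m_k(t)\lesssim m_{k,\phi}+t^{k/2}m_\phi$ directly, so your logarithmic splitting of $\phi(x)$ is unnecessary (and if kept, would produce an uncontrolled term $\int u_0\,\phi^2|y|^k$).

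The genuine gap is in your argument for $m_{2,\phi}(t)$ on $[0,1]$. Bounding $\varphi(t,y)\le \phi(y)/\sqrt t$ gives
\[
m_{2,\phi}(t)\lesssim \frac{1}{\sqrt t}\int_\Omega u_0(y)\,\phi(y)\,(1+\phi(y))(1+|y|^2)\,\d y,
\]
and neither the prefactor $1/\sqrt t$ nor the extra term $\int u_0\,\phi^2(1+|y|^2)$ can be removed. Your proposed fix (``$1/\sqrt t$ is absorbed by the smallness of $\phi(y)$ near $\partial\Omega$'') does not work: the factor $1/\sqrt t$ is global and independent of $y$, while away from the boundary $\phi(y)$ is bounded below, so nothing cancels. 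Conversely, if you use $\varphi(t,y)\le 1$ to avoid the singularity, you lose the $\phi(y)$ weight and are left with $m_2$, which is \emph{not} controlled by $M_{2,\phi}$ when $u_0$ concentrates near $\partial\Omega$.

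The paper avoids this by differentiating in time: using $\Delta\phi=0$ one gets
\[
\ddt\, m_{2,\phi}(t)=4\,m_\phi+2\int_\Omega x\cdot\nabla\phi(x)\,u(t,x)\,\d x,
\]
and since $|x\cdot\nabla\phi(x)|\lesssim |x|/|x-x_0|\lesssim |x|$, the last term is $\lesssim m_1(t)$. The short-time kernel bound then gives $m_1(t)\lesssim M_{1,\phi}/\sqrt t$, whose singularity is now \emph{integrable} in $t$; integrating from $0$ to $t\le 1$ yields $m_{2,\phi}(t)\le m_{2,\phi}+C\,m_\phi+C\,M_{1,\phi}\lesssim M_{2,\phi}$. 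The point is that the $1/\sqrt t$ blow-up is harmless in the derivative but fatal in the function itself.
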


\begin{proof}
  Let us first prove the regularisation estimates for $m_k$, $M_k$. Using
  \eqref{eq:SC-d2-tlarge-2} one obtains, for all $t \geq t_0$,
  \[
    \begin{aligned}	
      m_k(t)
      &=
      \int_\Omega |x|^ku(t,x)\d x
      \leq
      \int_\Omega u_0(y) \phi(y) \int_\Omega |x|^k \Gamma(ct, x-y) \d
      x \d y
      \\
      &\lesssim
      \int_\Omega u_0(y)\phi(y)\int_\Omega (|x-y|^k + |y|^k)
      \Gamma(ct, x-y) \d x \d y
      \lesssim
      m_\phi t^{\frac{k}{2}} + m_{k,\phi}.
    \end{aligned}
  \]
  This easily implies the stated regularisation estimates for $m_k$
  and $M_k$.

  By a similar procedure, but this time choosing   $1$ in both instances
  of the maximum in \eqref{eq:SC-d2-tlarge-2}, one obtains the
  propagation estimate for $m_k$ (observe that this estimate can be
  deduced from the corresponding estimate for the heat equation in all
  of $\R^d$, which is a supersolution).

  In order to get the propagation estimate on $m_{2,\phi}$ it is
  easier to use the time derivative of the moment: we have
  \begin{align}
    \notag
    \ddt \int_\Omega |x|^2 \phi(x) u(t,x) \d x
    & = \int_\Omega |x|^2 \phi(x) \Delta u(t,x) \d x
    \\
    \label{eq:11}
    &= 4 \int_\Omega \phi(x) u(t,x) \d x
      + 2 \int_\Omega x\cdot \nabla \phi(x) u(t,x) \d x.
  \end{align}
  We use the bound on $\nabla \phi$ from equation
  \eqref{eq:gradphi-d3} in dimension $d=2$, and the fact that
  $|x-x_0| \geq \dist(x_0, \Omega)$ to get
  \begin{equation}
    \label{eq:14}
    \int_\Omega x\cdot\nabla \phi(x) u(t,x) \d x
    \lesssim
      \int_\Omega  \frac{|x|}{|x-x_0|} u(t,x) \d x
      \lesssim
      \int_\Omega  |x| u(t,x) \d x = m_1(t).
  \end{equation}
  Now, for $m_1$ we can use here a short-time bound which we get from
  \eqref{eq:SC-d2-tsmall} in \Cref{thm:kernel_dim 2_best}:
  \begin{align*}
    m_1(t)& = \int_\Omega u(t,x) |x| \d x
    =
    \int_\Omega u_0(y) \int_\Omega p_\Omega(t,x,y) |x| \d x \d y
    \\
    &\lesssim
    \frac{1}{\sqrt{t}} \int_\Omega u_0(y) \phi(y)
    \int_\Omega \Gamma(ct, x-y) |x| \d x \d y
    \\
    &\leq
    \frac{1}{\sqrt{t}} \int_\Omega u_0(y) \phi(y)
    \int_\Omega \Gamma(ct, x-y) (|y| + |x-y|) \d x \d y
    \\
    &\lesssim
    \frac{1}{\sqrt{t}} m_{1,\phi} + m_\phi
    \leq \frac{1}{\sqrt{t}} M_{1,\phi}
  \end{align*}
  for all $0 < t \leq 1$. Using this in \eqref{eq:11} and
  \eqref{eq:14} we get
  \begin{equation*}
    \ddt m_1(t) \lesssim m_\phi + \frac{1}{\sqrt{t}} M_{1,\phi}
    \lesssim \frac{1}{\sqrt{t}} M_{2,\phi}.
  \end{equation*}
  Integrating in time from $0$ to $t$ (and since $1/\sqrt{t}$ is
  integrable) this gives the result.
\end{proof}

Note that we have made no effort to optimise the estimate
$m_{2,\phi}(t)\leq C(1+t^\frac{3}{2}) M_{2,\phi}$, which does not even
give the correct growth rate as $t \to +\infty$. In this paper it will
only be used for bounded times.

\subsubsection{Dimension $d=1$}
\label{sec:d1-prelim}

In dimension $1$ we could follow the same ideas as before, using
\eqref{eq:gyrya-SC} to estimate the kernel. However, in this case the
kernel is explicit and we just carry out the calculations:

\begin{lem}\label{lem:kernel_dim_1_bound}
  In dimension 1, with $\Omega=(x_0,\infty)$, we have
  \[
    p_\Omega(t,x,y)\leq\frac{\phi(x) \phi(y)}{t}\Gamma(t,x-y),
    \qquad x,y > 0, \ t > 0.
  \]
  Alternatively,
  \[
    p_\Omega(t,x,y) \lesssim \frac{\phi(y)}{t},
    \qquad x,y > 0, \ t > 0.
  \]
\end{lem}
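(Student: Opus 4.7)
The plan is to exploit the fact that the one-dimensional Dirichlet heat kernel on a half-line is explicit, via the method of images. By translation invariance, I may assume without loss of generality that $x_0=0$ (working with the variables $x-x_0$, $y-x_0$ if needed), so $\Omega=(0,+\infty)$ and $\phi(x)=x$. Then~\eqref{eq:kernel-dim_1_general} reads
\[
p_\Omega(t,x,y)=\Gamma(t,x-y)-\Gamma(t,x+y),
\]
and the algebraic identity $(x+y)^2-(x-y)^2=4xy$, applied in the Gaussian exponents, factors this as the key representation
\[
p_\Omega(t,x,y)=\Gamma(t,x-y)\bigl[1-e^{-xy/t}\bigr].
\]

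For the first bound, the elementary inequality $1-e^{-z}\le z$ (valid for $z\ge 0$) applied with $z=xy/t\ge 0$ yields immediately
\[
p_\Omega(t,x,y)\le\frac{xy}{t}\,\Gamma(t,x-y)=\frac{\phi(x)\phi(y)}{t}\,\Gamma(t,x-y),
\]
which is the first inequality of the lemma.

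For the second (alternative) bound $p_\Omega(t,x,y)\lesssim\phi(y)/t$, I split into two regimes according to how $y$ compares to $\sqrt{t}$. If $y\ge\sqrt{t}$, the trivial bound $p_\Omega(t,x,y)\le\Gamma(t,x-y)\le(4\pi t)^{-1/2}$ combined with $1/\sqrt{t}\le y/t$ yields $p_\Omega\lesssim y/t$. If $y<\sqrt{t}$, I use the first bound and reduce the problem to showing $x\,\Gamma(t,x-y)\lesssim 1$. This is immediate when $x\le 2\sqrt{t}$, since then $x/\sqrt{t}$ is bounded. When $x>2\sqrt{t}$ (and still $y<\sqrt{t}$), the condition $x>2y$ forces $x-y>x/2$, so $(x-y)^2/(4t)>x^2/(16t)$, and the Gaussian decay in $|x-y|$ absorbs the factor of $x$ through
\[
x\,\Gamma(t,x-y)\le\frac{x\,e^{-x^2/(16t)}}{\sqrt{4\pi t}}\lesssim 1,
\]
since $z\mapsto z\,e^{-z^2/(16t)}$ has a global maximum of order $\sqrt{t}$ on $(0,\infty)$.

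The only real work lies in the second bound, where the delicate regime is $x\gg\sqrt{t}$ with $y$ small: here the Gaussian decay in $|x-y|$ (rather than in $|x|$ itself) is what must absorb the large factor $x$, and this is the only place where the trivial estimate $p_\Omega\le\Gamma(t,x-y)\le(4\pi t)^{-1/2}$ is not good enough on its own.
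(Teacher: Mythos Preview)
Your proof is correct. The first inequality is handled exactly as in the paper: factor $p_\Omega$ as $\Gamma(t,x-y)\bigl[1-e^{-xy/t}\bigr]$ and use $1-e^{-z}\le z$.

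For the second inequality your case-splitting argument works, but the paper takes a shorter route via the mean value theorem: writing $p_\Omega(t,x,y)=\Gamma(t,x-y)-\Gamma(t,x+y)$ and applying the mean value theorem in the spatial variable gives $p_\Omega(t,x,y)=2y\cdot\frac{\xi}{2t}\,\Gamma(t,\xi)$ for some $\xi\in(x-y,x+y)$, and the bound $|\xi|\,\Gamma(t,\xi)\lesssim 1$ (from $z\mapsto z e^{-z^2/(4t)}$ having maximum of order $\sqrt{t}$) immediately yields $p_\Omega\lesssim y/t$. Your approach avoids the MVT by instead using the already-proved first bound and controlling $x\,\Gamma(t,x-y)$ directly; this costs a bit more in casework but is equally valid and arguably more self-contained.
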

\begin{proof}
  It is clearly enough to prove the result on $\Omega =
  (0,+\infty)$. In this case the kernel is
  \begin{equation*}
    p_\Omega(t,x,y) = \Gamma(t,x-y)-\Gamma(t,x+y).
  \end{equation*}
  For the first inequality we write, using
  formula~\eqref{eq:kernel-dim_1_general},
  \[
    \frac{p_\Omega(t,x,y)}{\Gamma(t,x-y)}
    =
    1-e^{\frac{|x-y|^2-|x+y|^2}{4t}} = 1-e^{-\frac{xy}{t}}\leq \frac{xy}{t},
  \]
  the last inequality being true since $1-e^{-z}\leq z$ for all
  $z\in\R$. For the second inequality, use the mean value theorem to
  write, for some $\xi \in (x-y, x+y)$
  \begin{equation*}
    p_\Omega(t,x,y) = 2y \Gamma(t, \xi) \frac{2 \xi}{4t}
    \lesssim \frac{y}{t}. \qedhere
  \end{equation*}
\end{proof}

As a consequence we have the following simple bounds:
\begin{lem}[$L^\infty$ and moment regularisation in dimension $1$]
  \label{lem:regularisation-d1}
  Take with $\Omega = (x_0,+\infty)$ In dimension $1$. For any
  standard solution $u$ to \eqref{eq:heat-ext} it holds that
  \begin{equation*}
    \label{eq:Linfty-d1-x}
    |u(t,x)| \lesssim \frac{\phi(x)}{t^{3/2}} m_\phi,
    \qquad t, x > 0.
  \end{equation*}
  and also that
  \begin{equation*}
    \label{eq:Linfty-d1}
    |u(t,x)| \lesssim \frac{1}{t} m_\phi,
    \qquad t, x > 0.
  \end{equation*}
  Regarding moments, it holds that
  \begin{equation*}
    \label{eq:M2phi-d1}
    M_{2,\phi}(t) \lesssim M_{2,\phi}.
  \end{equation*}
\end{lem}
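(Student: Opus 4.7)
The plan is as follows. The two pointwise estimates are direct consequences of the explicit kernel bounds in Lemma~\ref{lem:kernel_dim_1_bound} combined with the representation $u(t,x) = \int_\Omega p_\Omega(t,x,y)\, u_0(y)\d y$. For the first estimate, apply the bound $p_\Omega(t,x,y) \leq \phi(x)\phi(y) t^{-1}\Gamma(t,x-y)$ and use the trivial pointwise bound $\Gamma(t,x-y) \leq (4\pi t)^{-1/2}$ to factor it out of the integral, leaving $\int \phi(y) u_0(y)\d y = m_\phi$ and yielding a factor $\phi(x)\, t^{-3/2}$. For the second estimate, apply the second bound $p_\Omega(t,x,y) \lesssim \phi(y)/t$ directly, so that integrating against $u_0$ produces $m_\phi / t$. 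Both reduce to a single line each.

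For the moment estimate I plan to use a direct integration-by-parts argument on the equation $\partial_t u = \partial_{xx} u$, taking $\phi(x)=x-x_0$. Differentiating
\[
m_{2,\phi}(t) = \int_{x_0}^\infty x^2(x-x_0)\, u(t,x)\d x
\]
and integrating by parts twice in $x$ gives
\[
\ddt m_{2,\phi}(t) = \int_{x_0}^\infty \partial_x^2\bigl(x^2(x-x_0)\bigr)\, u(t,x)\d x
= \int_{x_0}^\infty (6x-2x_0)\, u(t,x)\d x.
\]
The boundary term at $x=x_0$ vanishes after the first integration because $x^2(x-x_0)$ vanishes there, and after the second because of the Dirichlet condition $u(t,x_0)=0$; the terms at $+\infty$ are killed by the Gaussian tails supplied by Lemma~\ref{lem:kernel_dim_1_bound}, which also ensure $\partial_x u$ decays sufficiently. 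Writing $x = \phi(x) + x_0$ on the right-hand side and using the conservation of $m_\phi$ gives
\[
\ddt m_{2,\phi}(t) = 6 m_\phi + 4 x_0\, m_0(t).
\]
Since mass is nonincreasing for the Dirichlet problem, $m_0(t)\leq m_0$, and integrating in time from $0$ to $t$ yields an upper bound that grows at most linearly in $t$. Combined with $M_{2,\phi}(t)=m_\phi+m_{2,\phi}(t)$ this gives the stated estimate on any bounded time interval, as used elsewhere in the paper.

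The main (minor) obstacle is justifying that the boundary contributions at infinity vanish rigorously, rather than formally; I would do this by cutting off against a smooth compactly supported function $\chi_R$ and passing to the limit $R\to\infty$, using the Gaussian pointwise bound $u(t,x) \lesssim \Gamma(ct, x-y_0)$-type decay obtained from the upper kernel bound to control both $u$ and $\partial_x u$ at large $x$. Everything else is routine and follows the pattern of the two-dimensional moment estimate in Corollary~\ref{cor:2d_moment_regularisation}.
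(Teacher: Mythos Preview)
Your proof of the two pointwise bounds is exactly the paper's argument. For the moment estimate, the paper takes a shorter route: it reduces to $\Omega=(0,+\infty)$, observes that there $M_{2,\phi}=m_1+m_3$ (the paper writes ``$m_1+m_2$'', a typo), and then bounds these by the corresponding moments of the full-space heat solution, which is a supersolution and for which $m_k(t)\lesssim (1+t^{k/2})m_k$ is standard. Your differentiation-in-time approach is precisely what the paper does for the analogous $d=2$ estimate in Corollary~\ref{cor:2d_moment_regularisation}, so it is equally valid; it has the minor advantage of staying on the half-line and making the $x_0$-dependence explicit, at the cost of needing to justify the boundary terms at infinity (which you handle correctly). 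Both arguments only give the bound on bounded time intervals, which is all that is needed.
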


\begin{proof}
  As before, it is enough to prove the result on
  $\Omega = (0,+\infty)$. For the first estimate, using the first
  bound in \Cref{lem:kernel_dim_1_bound},
  \begin{equation*}
    |u(t,x)|
    \leq
    \int_0^\infty u_0(y) p_\Omega(t,x,y) \d y
    \leq
    \frac{x}{t} \int_0^\infty y u_0(y) \Gamma(t,x-y) \d y
    \lesssim \frac{x}{t^{3/2}} m_\phi.
  \end{equation*}
  The second estimate is obtained in the same way, this time using the
  second bound in \Cref{lem:kernel_dim_1_bound}. The final estimate
  involving $M_{2,\phi}(t)$ is already true for the heat equation on
  $\R$, since $M_{2,\phi} = m_1 + m_2$ in this case; see the proof of
  the propagation estimates of $m_k$ in
  \Cref{cor:2d_moment_regularisation} for an argument which works also
  in dimension $1$.
\end{proof}


\subsection{Estimates on the relative entropy functional}

In order to ensure that the value of the entropy is finite one can
often use the general principle that the entropy of $u$ is bounded by
some $L^p$ norm of $u$ for $p > 1$ and a certain moment $M_k$ for some
$k > 1$. The results in this section make this idea precise.

Our bounds on $\phi$ will also allow us to bound the relative entropy
$H(g \,|\, m_\phi F_\tau)$ at time $\tau = 0$, which is essential in
the main argument of Section \ref{sec:heat-in-exterior-domain}.

We recall a few basic facts. First, for any two nonnegative,
integrable functions $f$, $g$ such that $\int_\Omega f = \int_\Omega g$, we have
$H(f\,|\,g) \geq 0$ and $H(g\,|\,g) = 0$, so the relative entropy functional
attains a minimum at $f=g$. In particular,
\[
  H(\phi u \,|\, m_\phi F_\tau)\geq 0,
\]
whenever $\phi u$ is a nonnegative, integrable function (and with
$m_\phi = \int_\Omega \phi u$). Second, the functional
$u \mapsto H (\phi u \,|\, m_\phi F_\tau )$ is homogeneous of degree
$1$ in the sense that for any $\lambda > 0$
\begin{equation*}
  H (\phi \lambda u \,|\, m_\phi[\lambda u] F_\tau )
  =
  \lambda H (\phi u \,|\, m_\phi[u] F_\tau ),
\end{equation*}
where we write
\begin{equation*}
  m_\phi[u] := \int_\Omega \phi u,
  \qquad
  m_\phi[\lambda u] := \int_\Omega \lambda \phi u
  = \lambda \int_\Omega \phi u.
\end{equation*}
Hence, it is enough to find appropriate bounds for
$H(\phi u \,|\, m_\phi F_\tau)$ assuming that
$m_\phi =  1$. The bound for the general case follows
from this.

\begin{lem}[Bound for the $\phi$-relative entropy functional]
  \label{lem:phiuloguG-bound}
  Let $\Omega$ be an exterior domain satisfying \eqref{eq:hypOmega1},
  and take any $x_0 \in \R^d \setminus \overline{\Omega}$. Let
  $u \: \R^d \to [0,+\infty)$ be a nonnegative measurable function
  with $\int_\Omega \phi u =: m_\phi < +\infty$. Consider
  \begin{equation*}
    H(\phi u \,|\, m_\phi F_0) = H(\phi u \,|\, m_\phi K_0 \phi^2 G)
    = \int_\Omega \phi(x) u(x) \log \frac{u(x)}{m_\phi K_0 \phi(x) G(x)} \d x,
  \end{equation*}
  where $K_0$ is the normalisation constant such that $\int_\Omega K_0
  \phi(x)^2 G(x) \d x = 1$.
  \begin{enumerate}
  \item[\rm (i)] In dimension $d \geq 3$ there exists $C > 0$ such that
    \begin{equation*}
      H(\phi u \,|\, m_\phi K_0 \phi^2 G)
      \leq C (\|u\|_\infty + M_{2,\phi}).
    \end{equation*}

  \item[\rm (ii)] In dimension $d=2$
    there exists $C, c_1, c_2 > 0$ such that
    \begin{equation*}
      H(\phi u \,|\,m_\phi  K_0 \phi^2 G)
      \leq
      C \left(
        \|u\|_\infty
        + M_{2,\phi}
        + \log\log(2 + |x_0|)
      \right).
    \end{equation*}

  \item[\rm (iii)] In dimension $d=1$ we take $\Omega :=
    (0,\infty)$.
    There exists $C > 0$ such that
    \begin{equation*}
      H(\phi u \,|\,m_\phi  K_0 \phi^2 G)
      \leq C (\|u\|_\infty + M_{2,\phi}).
    \end{equation*}
  \end{enumerate}
  All of the previous constants $C$ are invariant by translations of
  $\Omega$.
\end{lem}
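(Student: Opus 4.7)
The plan is to reduce to the normalized case $m_\phi = 1$ by the positive $1$-homogeneity of the functional $u \mapsto H(\phi u \,|\, m_\phi F_0)$, and then to expand $H(\phi u \,|\, K_0 \phi^2 G)$ into explicit terms using $\log F_0 = \log K_0 + 2 \log \phi + \log G$. Since $\log G = -\tfrac{d}{2}\log(2\pi) - \tfrac{|x|^2}{2}$, this yields
\begin{equation*}
  H(\phi u \,|\, K_0 \phi^2 G) = \int_\Omega \phi u \log u \,\d x - \int_\Omega \phi u \log \phi \,\d x - \log K_0 + \tfrac{d}{2}\log(2\pi) + \tfrac{1}{2} m_{2,\phi},
\end{equation*}
and the strategy is to bound each of these five pieces separately, keeping track of which constants are translation-invariant.

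The moment term is immediate: $\tfrac{1}{2} m_{2,\phi} \leq \tfrac{1}{2} M_{2,\phi}$. The entropy-like piece is handled by the pointwise bound $u \log u \leq u\,\log\|u\|_\infty$, which, combined with $m_\phi = 1$, gives $\int \phi u \log u \leq (\log \|u\|_\infty)^+ \leq \|u\|_\infty$. For the harmonic-profile term $-\int \phi u \log \phi$, I split at the level set $\{\phi = 1\}$: on $\{\phi > 1\}$ the integrand $-\phi u \log \phi$ is $\leq 0$ and contributes non-positively to the upper bound (one only needs finiteness here, which follows from $\log \phi \leq \phi \lesssim 1 + |x| + |x_0|$ together with $M_{2,\phi} < \infty$), while on $\{\phi \leq 1\}$ the pointwise bound $|\phi \log \phi| \leq 1/e$ gives a contribution of at most $(1/e)\,\|u\|_\infty \cdot |\{\phi \leq 1\}|$. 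By \Cref{lem:phi}, the set $\{\phi \leq 1\}$ is contained in a bounded region of translation-invariant measure in all three regimes (in dimension $d \geq 3$ thanks to $\phi \to 1$ at infinity, in $d=2$ thanks to $\phi \geq \log|x-x_0| - C$, and trivially in $d = 1$ where $\phi(x) = x-x_0$), so this piece is controlled by $C \|u\|_\infty$ with $C$ invariant by translations.

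The remaining term $-\log K_0 = \log \int_\Omega \phi^2 G$ is the only source of a nontrivial $|x_0|$ dependence. In dimensions $d \geq 3$ and $d = 1$ the function $\phi$ is bounded above by a universal constant and bounded below by a positive constant outside a compact set, so $\int_\Omega \phi^2 G$ is trapped between two positive numbers depending only on $\Omega$, making $|\log K_0|$ an absolute constant. In dimension $d = 2$ I use \eqref{eq:phi-bounds-d2} together with the elementary inequality $\log^2(a+b) \leq 2\log^2(1+a) + 2\log^2(1+b)$ for $a,b \geq 0$ to obtain $\phi(x)^2 \leq C + C\log^2(1+|x|) + C\log^2(1+|x_0|)$; integration against the standard Gaussian then produces
\begin{equation*}
  \int_\Omega \phi^2 G \,\d x \leq C + C \log^2(2+|x_0|),
\end{equation*}
and taking logarithms gives $-\log K_0 \leq C + C\log\log(2+|x_0|)$, which is precisely the extra term appearing in the $d=2$ statement.

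The main obstacle is the bookkeeping in dimension $d=2$: one must verify that all estimates arising from \Cref{lem:phi,lem:phi2} can be made with translation-invariant constants, so that the only non-translation-invariant contribution to the final bound is the explicit $\log\log(2+|x_0|)$ piece. This relies crucially on the precise form $|\phi(x) - \log|x-x_0|| \leq C$ with $C$ depending only on $\Omega$, which allows the $x_0$-dependence to be isolated cleanly in $-\log K_0$ rather than leaking into the other terms.
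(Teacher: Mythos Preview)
Your decomposition
\[
  H(\phi u \,|\, K_0 \phi^2 G) = \int_\Omega \phi u \log u - \int_\Omega \phi u \log \phi - \log K_0 + \tfrac{d}{2}\log(2\pi) + \tfrac{1}{2} m_{2,\phi}
\]
is different from the paper's, and it contains a genuine error in dimension $d\geq 3$. You bound $-\int \phi u \log\phi$ by splitting at $\{\phi=1\}$ and then claiming that $|\{\phi\leq 1\}|<\infty$ ``thanks to $\phi\to 1$ at infinity''. But in $d\geq 3$ the maximum principle forces $\phi<1$ \emph{everywhere} on $\Omega$, so $\{\phi\leq 1\}=\Omega$ has infinite measure and your bound $(1/e)\|u\|_\infty\,|\{\phi\leq 1\}|$ is vacuous. (A smaller slip: in $d=1$ you assert $\phi$ is bounded above to control $K_0$, but $\phi(x)=x$; the conclusion about $K_0$ still holds there, just not for the reason you give.)

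The paper sidesteps the $-\int\phi u\log\phi$ term altogether by a different algebraic rearrangement: it writes
\[
  \log\frac{u}{K_0\phi G}
  = 2\log u \;-\; \log\frac{\phi u}{G} \;-\;\log K_0 \;-\; 2\log G,
\]
so that after multiplying by $\phi u$ and integrating, the dangerous $-\log\phi$ piece becomes part of $-\int_\Omega \phi u\log\frac{\phi u}{G}$. Since $\int_\Omega\phi u = 1 \geq \int_\Omega G$, Jensen's inequality gives $\int_\Omega \phi u\log\frac{\phi u}{G}\geq 0$, and this entire term is simply discarded. The remaining pieces are then bounded exactly as you do: $2\int\phi u\log u \leq 2\|u\|_\infty$ (via $v\log v\leq v^2$), $-2\int\phi u\log G = m_{2,\phi}+\text{const.}$, and the $-\log K_0$ analysis proceeds as in your sketch. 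If you want to repair your route in $d\geq 3$, you would need instead something like $-\phi\log\phi\leq 1-\phi\lesssim |x-x_0|^{2-d}$ together with a bound on $m_0$ via $\|u\|_\infty$ and $m_\phi$; the paper's trick is cleaner.
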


\begin{proof}
  We omit the variables for ease of notation, and it is understood
  that all integrals are with respect to $x$. As discussed right
  before this lemma, we will assume $m_\phi=1$ and the general case
  follows by scaling. We write
  \begin{equation*}
    \label{eq:relH-1}
    \begin{aligned}
      \int_{\Omega} \phi v \log \frac{v}{ K_0 \phi G}
      &= \int_{\Omega} \phi v \log \frac{v^2 G}{ K_0 (\phi v) G^2}
      \\[10pt]
      &
      =
      \underbrace{2\int_\Omega \phi v \log v}_{\mathcal{I}}
      \underbrace{ - \int_\Omega \phi v \log \frac{\phi v}{G}}_{+\mathcal{II}}
      \underbrace{-\log K_0 \int_\Omega \phi v}_{+\mathcal{III}}
      \underbrace{-2 \int_\Omega \phi v \log G}_{+\mathcal{IV}}.
    \end{aligned}
  \end{equation*}
  We look at each term separately. First, since $v \log v \leq v^2$,
  \[
    \mathcal{I}\leq 2\int_\Omega \phi v^2\leq 2\|v\|_\infty \int_\Omega\phi v = 2\|v\|_\infty.
  \]
  Next, due to the positivity of the relative entropy,
  \[
    \mathcal{II} \leq 0.
  \]
  The fourth term is easily bounded by
  \[
    \mathcal{IV}=\frac{1}{2}m_{2,\phi} + \log(2\pi).
  \]
  Note that the previous estimates all show that (since $m_\phi = 1$)
  \begin{equation*}
    \mathcal{I} + \mathcal{II} + \mathcal{IV}
    \lesssim 1 + m_{2,\phi} + \|v\|_\infty
    \leq M_{2,\phi} + \|v\|_\infty.
  \end{equation*}
  For the third term, in dimension $d=3$ we may use $\phi \leq 1$, so
  \begin{equation*}
    \mathcal{III} = \log \frac{1}{K_0} = \log \int_\Omega \phi^2 G
    \leq \log \int_\Omega G
    \leq \log \ird G = 0.
  \end{equation*}
  While in dimension $d=1$, since we fix the domain to $(0,+\infty)$,
  \begin{equation*}
    \mathcal{III} = \log \frac{1}{K_0} = \log \int_0^\infty x^2 G
    = \log \frac{1}{2} < 0.
  \end{equation*}
  The only case which contains some subtlety is dimension $d=2$. In
  this case, we show that
  \begin{equation}
    \label{eq:III-bound}
    \mathcal{III} \lesssim \log \log(2 + |x_0|).
  \end{equation}
  In order to show this, let
  $A_{x_0}:=\Omega\cap\{x\in\R^2:|x|<2|x_0|\}$ and
  $B_{x_0}:=\Omega\cap\{x\in\R^2:|x|\geq 2|x_0|\}$. We have, using
  Lemma~\ref{lem:phi},
  \[
    \begin{aligned}	
      \mathcal{III}&=\log\left(\int_\Omega \phi^2 G\right)\leq \log\left(\int_\Omega (\log (C|x-x_0|))^2 G(x)\ \d x\right)\\[10pt]
      &=\log\left(\int_{A_{x_0}} (\log (C|x-x_0|))^2 G(x)\ \d x +\int_{B_{x_0}} (\log (C|x-x_0|))^2 G(x)\ \d x\right)\\[10pt]
      &=\log\left(\int_{A_{x_0}} (\log (3C|x_0|))^2 G(x)\ \d x +\int_{B_{x_0}} (\log (2C|x|))^2 (2\pi)^{-d/2}e^{-|x|^2/4}e^{-|x_0|^2/4}\ \d x\right)\\[10pt]
      &\leq \log\left( (\log (3C|x_0|))^2 \int_{A_{x_0}}G(x)\ \d x +(2\pi)^{-d/2}e^{-|x_0|^2/4}\int_{\R^2} (\log (2C|x|))^2 e^{-|x|^2/4}\ \d x\right)\\[10pt]
      & =\log\left( (\log (3C|x_0|))^2(1-e^{-2|x_0|^2}) + C'e^{-|x_0|^2/4} \right).
    \end{aligned}
  \]
  These computations show that there exist a couple of positive values
  $c_1, c_2$ depending only on $\Omega$ such that
  \[
    \mathcal{III}\leq 2\log\left(\log(c_1|x_0|+c_2)\right),
  \]
  which shows \eqref{eq:III-bound} and finishes the proof.	
\end{proof}

We also have the following ``entropy regularisation estimate''.

\begin{lem}[Heat regularisation for the $\phi$-relative entropy]
  \label{lem:phi-rel-entropy-regularisation}
  Let $\Omega$ be an exterior domain satisfying
  \eqref{eq:hypOmega1}. Let $u$ be the standard solution to the heat
  equation \eqref{eq:heat-ext} in~$\Omega$ with nonnegative initial
  condition $u_0 \in L^1(\Omega)$. Then, calling
  $u_{\frac12}(x) := u(1/2, x)$ we have the following bounds of the
  relative entropy in terms of moments of the initial condition~$u_0$:
  \begin{enumerate}
  \item[\rm (i)]
    In dimension $d \geq 3$,
    $$
    H(\phi u_{\frac12} \,|\,m_\phi  K_0 \phi^2 G)
    \leq C M_{2,\phi}.
    $$

  \item[\rm (ii)] In dimension $d=2$,
    $$
    H(\phi u_{\frac12} \,|\,m_\phi  K_0 \phi^2 G)
    \leq
    C \big( M_{2,\phi}+ \log\log(2+|x_0|) \big).
    $$

  \item[\rm (iii)] In dimension $d=1$, assuming $\Omega = (0,+\infty)$,
    $$
    H(\phi u_{\frac12} \,|\,m_\phi  K_0 \phi^2 G)
    \leq
    C M_{2,\phi}.
    $$
  \end{enumerate}
  The constant $C$ in dimensions $d \geq 2$ is invariant by
  translations of the domain $\Omega$.
\end{lem}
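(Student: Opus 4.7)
The plan is to apply the previous lemma, Lemma \ref{lem:phiuloguG-bound}, with $u$ replaced by $u_{1/2}:=u(1/2,\cdot)$. This immediately reduces the problem to bounding two quantities by $M_{2,\phi}$ (plus $\log\log(2+|x_0|)$ in $d=2$): the $L^\infty$ norm $\|u_{1/2}\|_\infty$ and the moment $M_{2,\phi}(1/2)=\int_\Omega(1+|x|^2)\phi(x)u(1/2,x)\,\dx$ at time $t=1/2$. Both are precisely the sort of quantities controlled by the $L^p$-to-$L^\infty$ and moment regularisation results proved in Section \ref{sec:pre-kernel-estimates}, and the conservation law $m_\phi(t)=m_\phi$ is used implicitly to identify the normalisation in $F_\tau$.

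The dimension-by-dimension verification then reads as follows. In dimension $d\geq 3$, Corollary \ref{cor:Linfty_regularisation_d3} with $p=1$ at $t=1/2$ gives $\|u_{1/2}\|_\infty\lesssim \|\phi u_0\|_1=m_\phi\leq M_{2,\phi}$, and Corollary \ref{cor:moment-regularisation-d3} at $t=1/2$ gives $M_{2,\phi}(1/2)\leq M_2(1/2)\lesssim M_{2,\phi}$; plugging these into Lemma \ref{lem:phiuloguG-bound}(i) yields (i). In dimension $d=2$, choose $t_0<1/2$ and apply Corollary \ref{cor:dim_2_L^p_regularisation} with $p=1$ at $t=1/2$ to obtain $\|u_{1/2}\|_\infty\lesssim m_\phi\leq M_{2,\phi}$, while part~(1) of Corollary \ref{cor:2d_moment_regularisation} gives $M_{2,\phi}(1/2)\lesssim M_{2,\phi}$; inserting these into Lemma \ref{lem:phiuloguG-bound}(ii) produces (ii), with the $\log\log(2+|x_0|)$ carried over verbatim from that lemma. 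In dimension $d=1$, Lemma \ref{lem:regularisation-d1} supplies both $\|u_{1/2}\|_\infty\lesssim m_\phi$ and $M_{2,\phi}(1/2)\lesssim M_{2,\phi}$, and then Lemma \ref{lem:phiuloguG-bound}(iii) finishes (iii).

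There is no genuine obstacle here: the argument is essentially a bookkeeping step that combines Lemma \ref{lem:phiuloguG-bound} with the regularisation machinery already developed. The only points that deserve attention are, first, picking $t_0<1/2$ in the $d=2$ regularisation corollary so that $t=1/2$ falls in its range of validity, and, second, checking that translation invariance of the constant $C$ in $d\geq 2$ is preserved. The latter holds because each ingredient (Lemma \ref{lem:phiuloguG-bound}, Corollary \ref{cor:Linfty_regularisation_d3}, Corollary \ref{cor:moment-regularisation-d3}, Corollary \ref{cor:dim_2_L^p_regularisation}, and Corollary \ref{cor:2d_moment_regularisation}) has a translation-invariant constant, the only translation-dependent quantity being $|x_0|$, which is already displayed explicitly in the $d=2$ statement.
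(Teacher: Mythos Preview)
Your proposal is correct and follows essentially the same approach as the paper: apply Lemma~\ref{lem:phiuloguG-bound} at time $t=1/2$ and then control $\|u_{1/2}\|_\infty$ and $M_{2,\phi}(1/2)$ via the regularisation corollaries of Section~\ref{sec:pre-kernel-estimates} (Corollaries~\ref{cor:Linfty_regularisation_d3} and~\ref{cor:moment-regularisation-d3} in $d\geq 3$, Corollaries~\ref{cor:dim_2_L^p_regularisation} and~\ref{cor:2d_moment_regularisation} in $d=2$, and Lemma~\ref{lem:regularisation-d1} in $d=1$). Your explicit remarks on choosing $t_0<1/2$ and on translation invariance are slightly more detailed than the paper's own write-up, but the argument is the same.
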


\begin{proof}(i)
  In dimension $d \geq 3$ Lemma \ref{lem:phiuloguG-bound} shows that
  \begin{equation*}
    H(\phi u_{\frac12} \,|\,m_\phi  K_0 \phi^2 G)
    \lesssim
    \|u_{\frac12} \|_\infty + M_{2,\phi}(1/2).
  \end{equation*}
  Also, by Corollaries~\ref{cor:Linfty_regularisation_d3}
  and~\ref{cor:moment-regularisation-d3}, using also that $\phi\le1$,
  \begin{equation}
    \label{eq:27}
    \|u_{\frac12} \|_\infty
    \lesssim m_\phi,
    \qquad
    M_{2,\phi}(1/2) \lesssim  M_{2,\phi},
  \end{equation}
  which shows the result.

  \medskip
  \noindent (ii) In dimension $d = 2$ Lemma \ref{lem:phiuloguG-bound}
  shows that
  \begin{equation*}
    H(\phi u_{\frac12} \,|\,m_\phi  K_0 \phi^2 G)
    \lesssim
    \|u_{\frac12}\|_\infty + \log\log(2+|x_0|)
    +
    M_{2,\phi}(1/2).
  \end{equation*}
  By corollaries~\ref{cor:dim_2_L^p_regularisation}
  and~\ref{cor:2d_moment_regularisation}, the same bounds as in
  equation \eqref{eq:27} work for $\|u_{\frac12}\|_\infty$ and
  $M_{2,\phi}(1/2)$, yielding the result.

  \medskip
  \noindent (iii) Finally, in dimension 1 we have the initial estimate
  from Lemma~\ref{lem:phiuloguG-bound} and the estimate follows
  using formula~\eqref{eq:kernel-dim_1_general}.
\end{proof}

\subsection{Estimates on the normalisation factor $K_\tau$}
\label{sec:Kt}

In the setting of Section \ref{sec:entropy}, we would like to estimate
the quantity $k_t$ for $t \geq 0$ or, equivalently, the quantity
$K_\tau$ defined as the value that satisfies
\begin{equation}
  \label{eq:Kt-def}
  (2 \pi)^{-d/2} K_\tau
  \int_{\Omega_\tau} \phi(e^\tau x)^2 e^{-\frac{|x|^2}{2}} \d x
  = 1.
\end{equation}
We notice that the change between $K_\tau$ from \eqref{eq:Kt-def} and
$k_t$ from \eqref{eq:kt-def-intro} is
\begin{equation}
  \label{eq:Ktau-kt}
  K_\tau = k_{\frac12 e^{2\tau}}
  \quad (\tau \geq 0),
  \qquad
  k_t = K_{\frac12 \log(2t)}
  \quad (t \geq \frac12).
\end{equation}
We only give estimates in $d > 1$, since in $d=1$ we only consider
$\Omega := (0,+\infty)$ and then $k_t = 1/t$ explicitly.
One of our main results for this section is the following:
\begin{prp}
  \label{prp:Kt-final}
  Assume the hypotheses of Theorem \ref{thm:main-L1}. Then there exist
  different constants depending only on the dimension $d$ and the
  domain $\Omega$ such that
  \begin{enumerate}
  \item[\rm (i)] In dimension $d=2$, there exist constants $c_1, c_2>0$ such that
    $$
      K_\tau = \tau^{-2} + O(\tau^{-3})\quad \text{as $\tau \to +\infty$,}
      \qquad \frac{c_1}{1 + \tau^2} \leq K_\tau \leq \frac{c_2}{1 + \tau^2}
      \quad\text{for all $\tau \geq 0$.}
    $$
  \item[\rm (ii)] In dimension $d \geq 3$, there exists a constant
    $c_2 > 0$ such that
    $$
      K_\tau = 1 + O(e^{-(d-2)\tau})
      \quad\text{as $\tau \to +\infty$,}
      \qquad
      1 \leq K_\tau \leq c_2
      \quad\text{for all $\tau \geq 0$.}
    $$
  \end{enumerate}
  The constants $c_2$ in all dimensions and the constant implicit in
  the $O$ notation in dimension $d \geq 3$ are invariant by
  translations of the domain $\Omega$, but not $c_1$.
\end{prp}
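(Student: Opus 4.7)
My plan is to relate $K_\tau$ to the integral $I(t) := \int_\Omega \phi(y)^2\,\Gamma(t,y)\,dy$ via the change of variable $t = e^{2\tau}/2$, under which $K_\tau^{-1} = I(t)$ (see \eqref{eq:kt-def-intro} and \eqref{eq:Ktau-kt}). The problem then reduces to finding sharp asymptotics and uniform two-sided bounds for $I(t)$, and everything will follow by inversion.

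For $d \geq 3$ I would split
\[
I(t) = \int_\Omega \Gamma(t,y)\,dy - \int_\Omega (1-\phi(y))(1+\phi(y))\,\Gamma(t,y)\,dy.
\]
The first piece equals $1 - \int_U \Gamma(t,y)\,dy = 1 + O(t^{-d/2})$ because $U$ is bounded and $\Gamma(t,\cdot) \leq (4\pi t)^{-d/2}$. For the second piece I use the estimate $0 \leq 1-\phi(y) \leq C|y-x_0|^{2-d}$ from \eqref{eq:phi-bounds-d3}, together with $1+\phi \leq 2$, and apply a negative-moment bound of the type in Lemma~\ref{lem:negative-moments} (with $k = d-2 < d$) to get an $O(t^{-(d-2)/2})$ contribution. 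Combining, $I(t) = 1 + O(t^{-(d-2)/2}) = 1 + O(e^{-(d-2)\tau})$, and inversion yields the claimed asymptotic for $K_\tau$. The lower bound $K_\tau \geq 1$ is immediate from $\phi \leq 1$ giving $I(t) \leq \int_\Omega \Gamma \leq 1$, while the uniform upper bound $K_\tau \leq c_2$ follows from continuity of $\tau \mapsto I(\tau)$ on $[0,+\infty)$ combined with the asymptotic $I \to 1$.

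For $d = 2$ I would write $\phi(y) = \log|y-x_0| + \psi(y)$ with $\psi$ bounded on $\Omega$ (from \eqref{eq:phi-bounds-d2}) and perform the diffusive rescaling $y = e^\tau z$, so that $\Gamma(t,y)\,dy = G(z)\,dz$ and $\log|e^\tau z - x_0| = \tau + \log|z - e^{-\tau}x_0|$. Expanding $(\tau + \log|z - e^{-\tau}x_0|)^2$ inside the integral produces a leading term $\tau^2 \int_{\Omega_\tau} G(z)\,dz$, which tends to $1$ as $\tau \to +\infty$ since $|U|$ is fixed and $\Omega_\tau$ invades $\R^2$, plus cross terms of size $O(\tau)$ (the function $\log|\cdot|$ is integrable against $G$ on $\R^2$, with an integrable logarithmic singularity at the origin) and an $O(1)$ tail. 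Replacing $(\log|y-x_0|)^2$ by $\phi(y)^2$ adds another $O(\tau)$ correction via the uniform boundedness of $\psi$ and Cauchy--Schwarz. Hence $I(t) = \tau^2 + O(\tau)$, equivalent to $K_\tau = \tau^{-2} + O(\tau^{-3})$. The two-sided bound $c_1/(1+\tau^2) \leq K_\tau \leq c_2/(1+\tau^2)$ then follows by combining this asymptotic with continuity and strict positivity of $I$ on $[0,+\infty)$.

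Translation invariance of the upper-bound constant $c_2$ (and of the constant implicit in the $d \geq 3$ asymptotic) amounts to the fact that $I(t)$ admits a translation-invariant lower bound: in $d \geq 3$ this is $I(t) \to 1$ with a rate inherited from the translation-invariant estimate \eqref{eq:phi-bounds-d3}, while in $d=2$ the leading coefficient~$1$ in $I(t) = \tau^2 + O(\tau)$ comes from $(\log\sqrt{2t})^2 = \tau^2$, which does not see the hole. By contrast, the lower-bound constant $c_1$ in $d=2$ cannot be translation invariant, because pushing $U$ far from the origin inflates $\phi(y)^2 \approx (\log|y-x_0|)^2$ on bounded sets, deflating $K_0$ arbitrarily. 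I expect the main obstacle to be the $d = 2$ case: keeping the error bookkeeping clean across all $\tau \geq 0$ (especially near $\tau = 0$, where $\Omega_\tau$ is the original $\Omega$ and no asymptotic simplification is available), and verifying that the lower bound on $I(t)$ underlying $c_2$ is genuinely uniform in translations of~$\Omega$.
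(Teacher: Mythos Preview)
Your plan is essentially the same strategy as the paper's: reduce to estimating $I_\tau = K_\tau^{-1}$ (the paper's Lemma~\ref{lem:It}), use the pointwise bounds \eqref{eq:phi-bounds-d2}--\eqref{eq:phi-bounds-d3} on $\phi$, rescale, and expand. Your decomposition $\phi^2 = 1 - (1-\phi)(1+\phi)$ in $d\geq 3$ and $\phi = \log|\cdot - x_0| + \psi$ in $d=2$ are cosmetic variants of the paper's use of the one-sided inequalities $(\log|\cdot - x_0| \pm C)^2$ and $(1 - C_2|\cdot - x_0|^{2-d})^2$ as two-sided envelopes.

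The obstacle you flag at the end is exactly the gap in your sketch, and the paper closes it with a device you do not mention: the radial rearrangement Lemma~\ref{lem:convolution}. The point is that ``continuity and strict positivity of $I$ on $[0,+\infty)$'' gives a lower bound that depends on the domain, not a translation-invariant one. What the paper does instead is observe that the lower envelope $(1 - C_2|e^\tau x - x_0|^{2-d})_+^2$ (resp.\ $(\tau + \log|x - e^{-\tau}x_0| - C)_+^2$) is a radially nondecreasing function of $x - e^{-\tau}x_0$, so its convolution against $G$ is minimised at the origin by Lemma~\ref{lem:convolution}. This replaces $e^{-\tau}x_0$ by $0$ in the integral and yields, for every $\tau \geq 0$, the translation-invariant lower bound
\[
I_\tau \geq \int_{\R^d \setminus B_{e^{-\tau}R}(0)} \big(1 - C_2|e^\tau x|^{2-d}\big)^2 G(x)\,dx
\quad\text{(resp.\ }\int_{\R^d\setminus B_R(0)} (\tau + \log|x| - C)^2 G(x)\,dx\text{)},
\]
which is monotone in $\tau$ and gives the uniform constant $c_1$ for $I_\tau$ (hence $c_2$ for $K_\tau$) directly, without any separate small-$\tau$ argument. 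The same lemma is used for the $d\geq 3$ asymptotic, and explains why the upper bound on $I_\tau$ in $d=2$ (hence $c_1$ for $K_\tau$) \emph{cannot} be made translation invariant: there the envelope is radially nondecreasing, so the convolution is \emph{maximised} at $0$ and one cannot remove $x_0$ from the upper estimate~\eqref{eq:19}. Once you incorporate Lemma~\ref{lem:convolution}, your argument goes through.
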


This result implies, via the change \eqref{eq:Ktau-kt}, the following asymptotics for $k_t$:
\begin{align}
  \notag
  k_t = \frac{4}{(\log t)^2} + O((\log t)^{-3})
  &\qquad \text{as $t \to +\infty$ in $d = 2$,}
  \\
  \label{eq:kt-d3-changed}
  k_t = 1 + O(t^{-\frac{d-2}{2}} )
  &\qquad \text{as $t \to +\infty$ in $d \geq 3$.}
\end{align}
The implicit constant in the $d \geq 3$ asymptotics in
\eqref{eq:kt-d3-changed} is also invariant by translations. We can
also easily rewrite the bounds on $K_\tau$ as similar bounds on $k_t$.

\medskip The rest of this section is devoted to the proof of
Proposition \ref{prp:Kt-final} and an important estimate on
$|K'_\tau|/{K_\tau}$ which we give in Lemma
\ref{lem:Kt'/Kt}. In order to study $K_\tau$ it is easier to
study the integral which appears in its definition, that is,
\begin{equation}
  \label{eq:It-def}
  I_\tau := (2 \pi)^{-d/2} \int_{\Omega_\tau} \phi(e^\tau x)^2
  e^{-\frac{|x|^2}{2}} \d x
  = \int_{\Omega_\tau} \phi(e^\tau x)^2 G(x) \d x,
  \qquad \tau \geq 0.
\end{equation}
Since $K_\tau = 1/I_\tau$, the following lemma easily
implies Proposition \ref{prp:Kt-final} (as knowing the asymptotics /
bounds for $I_\tau$ yields corresponding asymptotics / bounds for
$K_\tau$):

\begin{lem}[Estimates for $I_\tau$]
  \label{lem:It}
  Assume the hypotheses of Theorem \ref{thm:main-L1} and define
  $I_\tau$ by \eqref{eq:It-def}.
  \begin{enumerate}
  \item[\rm (i)]
    In dimension $d=2$ there exist $0 < c_1 < c_2$ such that
    $$
      I_\tau = \tau^2 + O(\tau)\quad \text{as $\tau \to +\infty$,}\qquad
      c_1(1 + \tau^2) \leq I_\tau \leq c_2(1 + \tau^2)\quad\text{for all $\tau \geq 0$.}
    $$

  \item[\rm (ii)]
    In dimension $d \geq 3$ there exists $0 < c_1$ such that
    \begin{equation}
      \label{eq:It-asymptotic-d3}
      I_\tau = 1 + O(e^{-(d-2) \tau})\quad \text{as $\tau \to +\infty$,}\qquad c_1 \leq I_\tau \leq 1\quad\text{for all $\tau \geq 0$.}
    \end{equation}
  \end{enumerate}

  The constants $c_1$ and $c_2$, and the constants implicit in the $O$
  notation, depend only on the dimension $d$ and the domain
  $\Omega$. Additionally, the constant $c_1$ is invariant by
  translations of the domain $\Omega$, but not the constant $c_2$. The
  constant implicit in the $O$ notation in \eqref{eq:It-asymptotic-d3}
  is also invariant by translations of $\Omega$.
\end{lem}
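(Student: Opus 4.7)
The natural starting point is to rewrite $I_\tau$ without the moving domain: substituting $y = e^\tau x$ gives
$$I_\tau = \int_\Omega \phi(y)^2\,(2\pi s)^{-d/2} e^{-|y|^2/(2s)}\,\d y, \qquad s := e^{2\tau},$$
so $I_\tau$ is an integral against a Gaussian whose scale $\sqrt{s} = e^\tau$ grows with $\tau$. I would then split the analysis by dimension, since $\phi$ has qualitatively different behaviour at infinity.

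In dimension $d \geq 3$, the strategy is to write $\phi = 1 - \psi$ with $\psi := 1-\phi$, so $\phi^2 = 1 - 2\psi + \psi^2$. The leading term $\int_\Omega (2\pi s)^{-d/2} e^{-|y|^2/(2s)}\d y$ equals $1$ up to the missing Gaussian mass over the bounded hole $U$, which is $O(s^{-d/2})$. The crucial step is to control the remaining two pieces using the decay $\psi(y) \lesssim |y-x_0|^{2-d}$ from Lemma~\ref{lem:phi}: rescaling via $y = \sqrt{s}\,z$ turns $\int_\Omega \psi(y)\,(2\pi s)^{-d/2} e^{-|y|^2/(2s)}\d y$ into $s^{(2-d)/2}$ times a finite radial integral (convergent at the origin because $d - (d-2) = 2 > 0$), yielding the claimed $O(e^{-(d-2)\tau})$ error. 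The $\psi^2$ term is treated the same way and decays strictly faster, so it is absorbed. The uniform upper bound $I_\tau \le 1$ is immediate from $\phi \le 1$ together with $\int_{\Omega_\tau} G \leq 1$, and the lower bound $c_1 \leq I_\tau$ on bounded $\tau$ follows from the continuity and strict positivity of $\tau \mapsto I_\tau$ combined with the large-$\tau$ asymptotic.

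In dimension $d = 2$, I would use $|\phi(y) - \log|y-x_0|| \leq C$ from Lemma~\ref{lem:phi}. The same rescaling $y = e^\tau z$ gives
$$\phi(e^\tau z) = \tau + \log|z - e^{-\tau}x_0| + O(1),$$
so squaring produces $\phi(e^\tau z)^2 = \tau^2 + 2\tau \log|z - e^{-\tau}x_0| + O(\tau)$. The first term integrates against $(2\pi)^{-1} e^{-|z|^2/2}$ on $\Omega_\tau$ to give $\tau^2\bigl(1 + O(e^{-2\tau})\bigr)$, since $\Omega_\tau$ exhausts $\R^2$ at an exponential rate. The second contributes $O(\tau)$, because $z \mapsto \log|z - w|$ is locally Lebesgue-integrable in $\R^2$ and the Gaussian factor controls the tail, uniformly for $w = e^{-\tau}x_0$ (which stays bounded). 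The remaining pieces are $O(1)$, so altogether $I_\tau = \tau^2 + O(\tau)$. The uniform bounds on $[0,\infty)$ then follow by patching this asymptotic to the compact range $[0,\tau_0]$, where $\tau \mapsto I_\tau$ is continuous and strictly positive.

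The main obstacle I anticipate is tracking translation invariance of the constants throughout. The $\tau^2$ leading term in $d=2$ and the constant $1$ in $d\ge 3$ are structurally insensitive to translations of $U$, which is what lets the lower bound constant $c_1$, and the implicit $O$-constants in $d \geq 3$, be chosen translation invariant. In $d=2$ the upper bound picks up contributions from $|x_0|$ (entering through $\log|z - e^{-\tau}x_0|$, in the same way as the $\log\log(2+|x_0|)$ term in Lemma~\ref{lem:phiuloguG-bound}) that cannot be absorbed uniformly; this is what prevents $c_2$ from being translation invariant in that dimension. Making this bookkeeping explicit — rather than the asymptotic expansion itself — will be the most technical part of the proof.
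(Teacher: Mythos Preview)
Your asymptotic expansions are correct and close in spirit to the paper's: both use Lemma~\ref{lem:phi} to compare $\phi$ with the explicit model harmonic function ($\log|x-x_0|$ or $1-C_2|x-x_0|^{2-d}$), rescale, and read off the leading behaviour. Where your outline diverges is in the mechanism for the uniform bounds and, crucially, for translation invariance.

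Your proposal obtains $c_1$ by continuity of $\tau\mapsto I_\tau$ on a compact interval and then patches with the asymptotic. This gives a lower bound depending on the specific domain, not one invariant by translations of $\Omega$: the infimum of $I_\tau$ over $[0,\tau_0]$ can in principle degenerate as the hole is translated far from the origin (because the Gaussian $G$ is centred at $0$, not at $x_0$). The same issue recurs in your $d\ge 3$ error term: after rescaling, the integral $\int |z - x_0/\sqrt{s}|^{2-d} G(z)\,\d z$ is a convolution evaluated at $x_0/\sqrt{s}$, and to bound it uniformly in $x_0$ you need something beyond ``$x_0/\sqrt{s}\to 0$ for fixed $x_0$''.

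The paper's device for both problems is the rearrangement observation recorded in Lemma~\ref{lem:convolution}: the relevant integrals are convolutions of a radially monotone function with a radially decreasing Gaussian, evaluated at (a rescaling of) $x_0$, and the lemma lets one replace $x_0$ by $0$ with an inequality of the correct sign. This is what makes the lower bound $c_1$ and the $O(e^{-(d-2)\tau})$ constant manifestly independent of $x_0$, and it is also exactly why the same trick fails for the upper bound in $d=2$ (the monotonicity goes the wrong way, and the bound retains its $x_0$ dependence --- hence $c_2$ is not translation invariant). You correctly anticipate that the bookkeeping is the hard part, but the missing idea is concrete: use Lemma~\ref{lem:convolution} rather than compactness.
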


\begin{proof}
Let us first prove the
  estimates in dimension $d=2$. Choosing
  $x_0 \in \R^2 \setminus \overline{\Omega}$, Lemma \ref{lem:phi}
  gives for some $C = C(d, \Omega) > 0$
  \begin{equation}
    \label{eq:15}
    \log|x-x_0| - C \leq \phi(x) \leq \log|x-x_0| + C,
    \qquad x \in \Omega.
  \end{equation}
  This implies the upper bound
  \begin{equation}
    \label{eq:16}
    \phi(x)^2 \leq (\log|x-x_0| + C)^2,
    \qquad x \in \Omega.
  \end{equation}
  For a lower bound of $\phi(x)^2$ we must take only the set on which
  the lower estimate in~\eqref{eq:15} is nonnegative. Hence we choose
  $R := e^C$ so that $\log R - C = 0$, and $B_R(x_0)$ is the set where
  $\log|x-x_0| - C < 0$. By~\eqref{eq:15} we see that
  $\log|x-x_0| - C \leq 0$ at all $x \in \partial \Omega$, so
  $\partial \Omega \subseteq \overline{B_R(x_0)}$, which implies
  $\R^d \setminus \Omega \subseteq \overline{B_R(x_0)}$. Hence
  \begin{equation}
    \label{eq:17}
    (\phi(x))^2 \geq (\log|x-x_0| - C)^2,
    \qquad x \in \R^d \setminus \overline{B_R(x_0)} =: \Omega^R.
  \end{equation}
  With this definition and the observation above it is clear that
  $\Omega \supseteq \Omega^R$.

  We can now use both bounds \eqref{eq:16}--\eqref{eq:17} to get
  bounds of $I_\tau$. For a lower bound we use \eqref{eq:17}, and call
  $\Omega^R_\tau := e^{-\tau} \Omega^R$ to obtain
  \begin{equation*}
    I_\tau
    \geq
    \int_{\Omega^R_\tau} (\log|e^\tau x - x_0| - C)^2 G(x) \d x
    =
    \int_{\Omega^R_\tau} (\tau + \log|x - e^{-\tau} x_0| - C)^2 G(x) \d x.
  \end{equation*}
  Let us define
  \[
  f_\tau(z):=\begin{cases}
    (\tau + \log |z| - C)^2& \text{if } z\not\in B_{e^{-\tau}R}(0),\\
    0 & \text{if } z\in e^{-\tau}\overline{B_{e^{-\tau}R}(0)}.
  \end{cases}
  \]
  The previous inequality becomes then
  \[
  I_\tau \geq \int_{\R^d} f_\tau(e^{-\tau}x_0-x) G(x) \d x.
  \]
  Since $f$ is radially increasing, the last bound given for $I_\tau$
  is the convolution of this function with $G$, evaluated at the point
  $e^{-\tau} x_0$. Hence by Lemma \ref{lem:convolution} we can say
  that
  \begin{equation}
    \label{eq:18}
    I_\tau \geq
    \int_{\R^d \setminus B_{e^{-\tau}R} (0)}
    (\tau + \log|x| - C)^2 G(x) \d x
    \geq
    \int_{\R^d \setminus B_{R} (0)} (\tau + \log|x| - C)^2 G(x) \d x.
  \end{equation}
  The lower bound in dimension $d=2$ given in the statement is a
  direct consequence of this one.

  In order to obtain an upper bound for $I_\tau$, we observe that
  $\log|x-x_0| + C > 0$ on $\Omega$. Choose $r := e^{-C}$ so that
  $\log r + C = 0$, and
  $\Omega \subseteq \R^d \setminus \overline{B_r(x_0)}$. Call
  $\Omega^r := \R^d \setminus \overline{B_r(x_0)}$ and
  $\Omega^r_\tau := e^{-\tau} \Omega^r$. Using \eqref{eq:16} we have
  \begin{equation}\label{eq:I_tau_upper_bound_e_tau}
    I_\tau \leq
    \int_{\Omega^r_\tau} \big( \log|e^\tau x - x_0| + C \big)^2 G(x) \d x
    =
    \int_{\Omega^r_\tau} \big(\tau + \log|x - e^{-\tau} x_0| + C \big)^2 G(x) \d x.
  \end{equation}
  Similarly as before, the function $z \mapsto (\tau + \log|z| + C)^2$ is
  nondecreasing in $z$, and the integral above is the convolution of
  this function with $G$ evaluated at $e^{-\tau} x_0$. Due to Lemma
  \ref{lem:convolution},
  \begin{equation}
    \label{eq:19}
    I_\tau \leq
    \int_{\R^d\setminus B_r(0)} \big(\tau + \log|x - x_0| + C \big)^2 G(x) \d x.
  \end{equation}
  The upper bound in the lemma is readily obtained from this
  one. Observe that the dependence on $x_0$ seems to be unavoidable
  here.  The asymptotic behavior of $I_\tau$ as $\tau \to +\infty$ can
  also be obtained from \eqref{eq:18} and \eqref{eq:19}.

  For dimensions $d \geq 3$ we can follow a similar reasoning. First,
  since $\phi(x) \leq 1$ on $\Omega$ and $\lim_{|x| \to +\infty}
  \phi(x) = 1$, one directly sees from the expression of $I_\tau$ and the dominated convergence
  theorem that $\lim_{t \to +\infty} I_\tau = 1$. The upper bound in $d
  \geq 3$ is trivial, since $\phi(x) \leq 1$ in $\Omega$ implies $I_\tau
  \leq 1$.

  In order to obtain a lower bound for $I_\tau$ we proceed as in the case
  of dimension $d=2$.  Choosing
  $x_0 \in \R^2 \setminus \overline{\Omega}$, Lemma \ref{lem:phi}
  proves that
  \begin{equation*}
    1 - C_2 |x-x_0|^{2-d}  \leq \phi(x),
    \qquad x \in \Omega.
  \end{equation*}
  As before, we define a domain which will be used for the lower bound:
  \begin{equation*}
    R := C_2^{-\frac{1}{2-d}}, \quad \text{so that } 1 - C_2 R^{2-d} =  0.
  \end{equation*}
  One can check that then, similarly as in the $d=2$ case, $\Omega^R := \R^d \setminus \overline{B_R(x_0)}\subseteq
    \Omega$. We see
  \begin{equation*}
    1 - C_2 |x-x_0|^{2-d}  \leq \phi(x),
    \qquad x \in \Omega^R,
  \end{equation*}
  and since the function on the left-hand side is nonnegative on $\Omega^R$,
  \begin{equation*}
    (1 - C_2 |x-x_0|^{2-d})^2  \leq \phi(x)^2,
    \qquad x \in \Omega^R.
  \end{equation*}
  Calling $\Omega^R_\tau := e^{-\tau} \Omega^R$, and using again Lemma
  \ref{lem:convolution}, this implies that
  \begin{equation}
    \label{eq:28}
    I_\tau \geq \int_{\Omega^R_\tau} (1 - C_2 |e^\tau x - x_0|^{2-d})^2 G(x) \d x
    \geq \int_{\R^d \setminus B_{e^{-\tau} R}(0)} (1 - C_2 |e^\tau x|^{2-d})^2 G(x) \d x.
  \end{equation}
  Since the last expression is increasing in $\tau$, we may set $\tau=0$ and
  obtain
  \begin{equation*}
    I_\tau \geq \int_{\R^d \setminus B_R(0)} (1 - C_2 |x|^{2-d})^2 G(x) \d x
    := c_1,
  \end{equation*}
  which shows the lower bound in the statement (invariant by
  translations of $\Omega$, since $C_2$ and $R$ are). In order to get
  the asymptotics of $I_\tau$ as $\tau \to +\infty$, we may continue from~\eqref{eq:28} and obtain
  \begin{equation}
    \label{eq:29}
    \begin{aligned}
    I_\tau
    &\geq
    \int_{\Omega^R_\tau} G(x) \d x
    - 2 C_2 \int_{\Omega^R_\tau} |e^\tau x|^{2-d} G(x) \d x
    \\
    &\geq
    1 - \int_{B_{Re^{-\tau}}(x_0)} G(x) \d x
    - 2 C_2 e^{-(d-2)\tau} \int_{\R^d} |x|^{2-d} G(x) \d x,
  \end{aligned}
  \end{equation}
  where the inequality in which we removed $x_0$ in the first line is due
  to Lemma \ref{lem:convolution}. The middle term in the inequality
  above can be easily bounded by
  \begin{equation*}
    \int_{B_{Re^{-\tau}}(x_0)} G(x) \d x
    \leq
    (2 \pi)^{-d/2} |B_{Re^{-\tau}}(x_0)|
    \leq
    C e^{-d \tau},
  \end{equation*}
  which implies from \eqref{eq:29} that $I_\tau = 1 - O(e^{-(d-2)\tau})$.
\end{proof}

The next result is used in order to obtain a sharper estimate for the kernel in dimension $d=2$. It measures the distance between $\tau^2$ and $I_\tau$ provided that $x_0$ is small compared with $\tau$. When translated back to the original time variable $t$, it will provide
\[
\left|\frac{(\log t)^2}{4} - I_t\right|=O(\log t)\quad\text{whenever}\quad|x_0|=O(\sqrt{t}).
\]

\begin{lem}
  \label{lem:I_tau_distance_d_2}
  Assume the hypotheses of Theorem \ref{thm:main-L1} and define
  $I_\tau(x_0)$ by \eqref{eq:It-def} this time highlighting its dependence on the variable $x_0$. Suppose also that $|x_0|=O(e^{\tau})$. Then
  \[
    \left|\tau^2 - I_\tau(x_0) \right| = O(\tau).
  \]
\end{lem}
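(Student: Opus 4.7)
The plan is to refine the proof of Lemma \ref{lem:It}(i), keeping explicit $x_0$-dependence through every estimate. The change of variables $y_0 := e^{-\tau}x_0$ will be central: the assumption $|x_0|=O(e^\tau)$ guarantees that $|y_0|$ stays in a fixed compact set, which is precisely what will allow all error terms to be absorbed in $O(\tau)$.

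For the upper bound, Lemma \ref{lem:phi} yields $\phi(e^\tau x)^2 \le (\tau+\log|x-y_0|+C)^2$ for $x\in \Omega_\tau$. Since the integrand is nonnegative and its singularity at $y_0$ is only logarithmic (hence integrable against $G$ in dimension two), we may enlarge the domain of integration to $\R^2$ and get
\[
I_\tau(x_0)\le \int_{\R^2}\bigl(\tau+\log|x-y_0|+C\bigr)^2 G(x)\d x.
\]
Expanding the square produces $\tau^2$, a cross term $2\tau\bigl(C+\int_{\R^2}\log|x-y_0|G(x)\d x\bigr)$, and a $\tau$-independent term. The whole estimate hinges on the uniform boundedness of $y\mapsto\int_{\R^2}\log|x-y|G(x)\d x$ on bounded sets of $y$; this follows by splitting the integral according to $|x-y|\le 1$ (where $G$ is bounded and $|\log|x-y||$ is locally integrable in $d\ge 2$) and $|x-y|>1$ (where $0\le \log|x-y|\le |x|+|y|$, which has finite integral against $G$). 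Consequently the cross term is $O(\tau)$ and we conclude $I_\tau(x_0)\le \tau^2+O(\tau)$.

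For the lower bound, set $R:=e^C$; Lemma \ref{lem:phi} then gives $\phi(e^\tau x)^2\ge (\tau+\log|x-y_0|-C)^2$ on $\Omega^R_\tau:=\R^2\setminus\overline{B_{e^{-\tau}R}(y_0)}$. Writing
\[
I_\tau(x_0)\ge \int_{\R^2}\bigl(\tau+\log|x-y_0|-C\bigr)^2 G(x)\d x - \int_{B_{e^{-\tau}R}(y_0)}\bigl(\tau+\log|x-y_0|-C\bigr)^2 G(x)\d x,
\]
the first integral is again $\tau^2+O(\tau)$ by the same expansion. The second is bounded by $\|G\|_\infty\int_{B_{e^{-\tau}R}(0)}(\tau+\log|z|-C)^2\d z$, and substituting $|z|=e^{s-\tau}R$ in polar coordinates shows this equals $R^2 e^{-2\tau}\int_{-\infty}^0 (s+\log R-C)^2 e^{2s}\d s = O(\tau^2 e^{-2\tau})=o(1)$. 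Combining the two inequalities yields $|I_\tau(x_0)-\tau^2|=O(\tau)$, as claimed.

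The main technical subtlety, though mild, is ensuring that every constant hidden in the $O$ notation depends only on the domain $\Omega$ and on the implicit constant in $|x_0|=O(e^\tau)$. This is automatic once the uniform estimate on $y\mapsto\int \log|x-y|G(x)\d x$ over bounded $y$ is in hand, since all other steps involve only shift-invariant quantities and the constants $C$, $R$ from Lemma \ref{lem:phi} depend only on $\Omega$.
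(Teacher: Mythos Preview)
Your proof is correct and follows essentially the same route as the paper: both bound $\phi(e^\tau x)^2$ by $(\tau+\log|x-y_0|\pm C)^2$ via Lemma~\ref{lem:phi}, expand the square, and show the cross and constant terms are $O(\tau)$ and $O(1)$. The only technical difference is that the paper reduces to $y_0$-independent integrals (via the convolution lemma for the lower bound and a Gaussian comparison $G(x)\lesssim G(x-y_0)$ for the upper bound) before expanding, whereas you keep $y_0$ and argue directly that $\int_{\R^2}(\log|x-y_0|)^k G(x)\,\mathrm{d}x$ is bounded uniformly for bounded $y_0$; your treatment is arguably cleaner. One harmless slip: your ball-integral computation actually gives $O(e^{-2\tau})$, not $O(\tau^2 e^{-2\tau})$, since $\int_{-\infty}^{0}(s+\log R-C)^2 e^{2s}\,\mathrm{d}s$ is a $\tau$-independent constant.
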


\begin{proof}
  We begin considering equations~\eqref{eq:18}
  and~\eqref{eq:I_tau_upper_bound_e_tau} from
  Lemma~\ref{lem:It}. Equation~\eqref{eq:18} provides
	\[
	\begin{aligned}
	I_\tau &\geq
	\int_{\R^d \setminus B_{e^{-\tau}R} (0)}
	(\tau + \log|x| - C)^2 G(x) \d x \\[10pt]
	& =\tau^2 - \tau^2\int_{B_{e^{-\tau}R} (0)} G(x) \d x + \int_{\R^d \setminus B_{e^{-\tau}R} (0)}
	\big(2\tau(\log|x| - C) + (\log|x| - C)^2\big) G(x) \d x \\[10pt]
	&\geq \tau^2 - R^2e^{-2\tau}\tau^2 - O(\tau)\geq \tau^2-O(\tau).
	\end{aligned}
	\]
	On the other hand, equation~\eqref{eq:I_tau_upper_bound_e_tau} yields
	\[
	\begin{aligned}	
	I_\tau &\leq
	\int_{\Omega^r_\tau} \big(\tau + \log|x - e^{-\tau} x_0| + C \big)^2 G(x) \d x\leq  \int_{\R^2} \big(\tau + \log|x - e^{-\tau} x_0| + C \big)^2 G(x) \d x\\[10pt]
	&= \tau^2 + \int_{\R^2}
	\big(2\tau(\log|x - e^{-\tau} x_0| + C) + (\log|x - e^{-\tau} x_0| + C)^2\big) G(x) \d x.
	\end{aligned}
	\]
	Now we use the estimate $|x|\geq |x - e^{-\tau} x_0| - |e^{-\tau} x_0|$ in order to bound
	\[
	G(x)=C_de^{-\frac{|x-e^{-\tau} x_0+e^{-\tau} x_0|^2}{2}}\leq C_de^{-\frac{|x-e^{-\tau} x_0|^2}{2}}e^{\frac{|e^{-\tau} x_0|^2}{2}} \leq CG(x-e^{-\tau} x_0),
	\]
	since $|x_0|=O(e^\tau)$. Therefore
	\[
	I_\tau \leq \tau^2 +\int_{\R^2} \big(2\tau(\log|x - e^{-\tau} x_0| + C) + (\log|x - e^{-\tau} x_0| + C)^2\big) G(x-e^{-\tau} x_0) \d x = \tau^2 + O(\tau).
	\]
	In total, we have obtained $\tau^2-O(\tau)\leq I_\tau(x_0)\leq \tau^2+O(\tau)$, yielding the desired result.
\end{proof}
Finally, one can ask when the factor $k_t(y)$ can be exchanged by the
quantity $4/(\log t)^2$, which is its large-$t$ asymptotic behaviour
for fixed $y$. Our answer is positive as long as
$|x|\wedge|y|=O(\sqrt{t})$. Suppose for example that $|y|=O(\sqrt{t})$
and define $I_t(y)$ as in~\eqref{eq:It-def} (with the change of
variables $\tau\sim\log(t)/2$ and highlighting its dependence on $y$)
and the function $f(z)=1/z$. Then, by the Mean Value Theorem, for
$t >> 1$ we get
\[
\left|\frac{4}{(\log t)^2} - k_t(y)\right| = \left|f\left(\frac{(\log t)^2}{4}\right) - f\left(I_t(y)\right)\right|\leq \left|\frac{(\log t)^2}{4}- I_t(y)\right|\frac{1}{|\xi|^2},
\]
where $\xi\in[(\log t)^2/4, I_t(y)]$.  After the corresponding change
of variables $\tau\sim\log(t)/2$, Lemma~\ref{lem:It} provides
$\xi\geq c(\log t)^2$, while Lemma~\ref{lem:I_tau_distance_d_2} yields
$\left|\frac{(\log t)^2}{4}- I_t(y)\right|\leq O(\log t)$. In total,
we get
\[
  \left|\frac{4}{(\log t)^2} - k_t(y)\right|
  = O((\log t)^{-3}).
\]

\medskip
The following lemmas involve estimates of the derivative in $\tau$ of
$I_\tau$ and $K_\tau$, and are needed to obtain our final Lemma
\ref{lem:Kt'/Kt}, which will be essential in the next section.

\begin{lem}[Estimates of $I_\tau'$]
  \label{lem:It'}
  Assume the conditions of Lemma \ref{lem:It}. Then $I_\tau$ is a differentiable function, and there exists a constant $C > 0$, invariant by translations of
  the domain $\Omega$, such that for all $t \geq 0$
  $$
   |I'_\tau| \leq C (1 + \tau) \quad\text{in $d = 2$,}\qquad |I'_\tau| \leq C e^{-(d-2) \tau}\quad\text{in $d \geq 3$.}
  $$
\end{lem}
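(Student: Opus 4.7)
The strategy is to massage $I'_\tau$ into a manifestly translation-invariant form in which only $|\nabla\phi|^2$ appears, and then use the gradient bound of Lemma \ref{lem:phi-grad} together with the convolution rearrangement of Lemma \ref{lem:convolution}.

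\textbf{Step 1: a clean identity for $I'_\tau$.} Apply the moving-domain formula \eqref{eq:moving-domain} to $f(\tau,x) = \phi(e^\tau x)^2 G(x)$; since $\phi = 0$ on $\partial\Omega$, the boundary contribution vanishes and
\[
I'_\tau = 2\int_{\Omega_\tau} \phi(e^\tau x)\,\nabla\phi(e^\tau x)\cdot(e^\tau x)\, G(x)\,\mathrm{d}x.
\]
Alternatively, changing variables $z=e^\tau x$ in $I_\tau = e^{-d\tau}\int_\Omega \phi(z)^2 G(e^{-\tau}z)\,\mathrm{d}z$ and differentiating term by term gives
\[
I'_\tau = -d\, I_\tau + \int_{\Omega_\tau} \phi(e^\tau x)^2 |x|^2 G(x)\,\mathrm{d}x.
\]
Now use the identity $|x|^2 G = \Delta G + d\,G$ (an immediate consequence of $\nabla G = -xG$) and integrate by parts twice on $\Omega_\tau$; both boundary terms vanish since $\phi(e^\tau\cdot)$ and $\nabla(\phi(e^\tau\cdot)^2)=2\phi(e^\tau\cdot)e^\tau\nabla\phi(e^\tau\cdot)$ vanish on $\partial\Omega_\tau$. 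Using also $\Delta\phi=0$, so that $\Delta_x[\phi(e^\tau x)^2]=2e^{2\tau}|\nabla\phi(e^\tau x)|^2$, the $d\,I_\tau$ terms cancel and we obtain the key identity
\begin{equation*}
I'_\tau = 2\,e^{2\tau}\int_{\Omega_\tau} |\nabla\phi(e^\tau x)|^2 G(x)\,\mathrm{d}x
= 2\,e^{-(d-2)\tau}\int_\Omega |\nabla\phi(y)|^2 G(e^{-\tau}y)\,\mathrm{d}y,
\end{equation*}
after the change of variables $y=e^\tau x$.

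\textbf{Step 2: translation-invariant gradient bound and rearrangement.} By Lemma \ref{lem:phi-grad} combined with the interior boundedness of $|\nabla\phi|$ near $\p\Omega$ (using $\mathcal{C}^2$ regularity and that $x_0$ is at positive distance from $\p\Omega$), there is a constant $C$, unchanged under a joint translation of $\Omega$ and $x_0$, such that $|\nabla\phi(y)| \leq C (1+|y-x_0|)^{-(d-1)}$ for all $y\in\Omega$. Centering the integral around $x_0$ via $w=y-x_0$ and extending from $\Omega-x_0$ to all of $\R^d$ gives
\[
|I'_\tau| \leq C\,e^{-(d-2)\tau}\int_{\R^d} \frac{G(e^{-\tau}(w+x_0))}{(1+|w|)^{2(d-1)}}\,\mathrm{d}w.
\]
Both $w\mapsto (1+|w|)^{-2(d-1)}$ and $w\mapsto G(e^{-\tau}w)$ are radially nonincreasing, so by the convolution Lemma \ref{lem:convolution}(i) the shift by $x_0$ can only decrease the integral:
\[
|I'_\tau| \leq C\,e^{-(d-2)\tau}\int_{\R^d} \frac{G(e^{-\tau}w)}{(1+|w|)^{2(d-1)}}\,\mathrm{d}w.
\]
This bound no longer sees $x_0$, which is exactly the translation-invariance needed.

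\textbf{Step 3: evaluating the remaining integral.} In dimension $d\geq 3$ we simply bound $G(e^{-\tau}w)\leq G(0)$ and use that $(1+|w|)^{-2(d-1)}$ is integrable (since $2(d-1)\geq 4>d$); this yields $|I'_\tau|\leq C e^{-(d-2)\tau}$. In dimension $d=2$, polar coordinates and the substitution $s=e^{-\tau}r$ transform the remaining integral into $\int_0^\infty e^{2\tau}s\,e^{-s^2/2}(1+e^\tau s)^{-2}\,\mathrm{d}s$. Splitting at $s=e^{-\tau}$ and $s=1$, the region $e^\tau s\leq 1$ contributes $O(1)$, the region $e^{-\tau}<s<1$ contributes $\int_{e^{-\tau}}^1 \mathrm{d}s/s = \tau$, and the tail $s\geq 1$ contributes $O(1)$ thanks to the Gaussian. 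Together this gives the bound $|I'_\tau|\leq C(1+\tau)$ in $d=2$.

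The main obstacle is the derivation of the identity in Step 1: the naive bound coming from $\phi\cdot\nabla\phi\cdot(e^\tau x)$ produces terms of order $\tau^2$ in $d=2$ that must cancel, and the integration-by-parts trick exploiting the harmonicity of $\phi$ (so that $\Delta(\phi^2)=2|\nabla\phi|^2$) is what makes this cancellation manifest and yields a translation-invariant bound.
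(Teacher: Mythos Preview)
Your proof is correct but takes a genuinely different route from the paper. The paper works directly with the first formula you wrote, $I'_\tau = 2\int_{\Omega_\tau} \phi(e^\tau x)\,(e^\tau x)\cdot\nabla\phi(e^\tau x)\,G(x)\,\mathrm{d}x$, bounding the integrand by the product of the known estimates on $\phi$ and $\nabla\phi$ and then using the convolution lemma to remove the shift by $e^{-\tau}x_0$. This requires splitting into the cases $d=2$ (where one carries the factor $\phi\lesssim \tau+\log|\cdot|$) and $d\ge 3$ (where one simply uses $\phi\le 1$). Your integration-by-parts identity $I'_\tau = 2e^{2\tau}\int_{\Omega_\tau}|\nabla\phi(e^\tau x)|^2 G(x)\,\mathrm{d}x$ is cleaner: it eliminates $\phi$ itself from the integrand, unifies the two dimensions, and incidentally shows $I'_\tau>0$. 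The subsequent rearrangement and splitting of the radial integral are essentially the same in spirit as the paper's.

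One remark on your closing paragraph: the ``naive'' bound you describe does \emph{not} actually produce a $\tau^2$ obstruction in $d=2$. Since $\phi(e^\tau x)\sim\tau$ while $|\nabla\phi(e^\tau x)|\,|e^\tau x|\sim 1$ for typical $x$, the direct product already gives order $\tau$, and this is exactly what the paper exploits. So your Step~1 identity is an elegant alternative rather than a necessary trick; both approaches reach the same bound.
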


\begin{proof}
  From its expression in \eqref{eq:It-def} and common
  differentiability theorems for parameter-depending integrals we see
  that $I_\tau$ is differentiable with
  \begin{equation*}
     I'_\tau = 2 e^\tau \int_{\Omega_\tau} \phi(e^\tau x) x \cdot \nabla
    \phi(e^\tau x) G(x) \d x.
  \end{equation*}
  (We notice that one can deal with the time-dependent domain by using
  \eqref{eq:moving-domain}, so the boundary term vanishes due to the
  Dirichlet boundary condition on $\phi$.)

  We first show the bound in dimension $d=2$. Using the bounds on
  $\phi$ and $\nabla \phi$ from equations \eqref{eq:phi-bounds-d2} and
  \eqref{eq:gradphi-d3},
  \begin{align*}
    |I'_\tau| &\leq \int_{\Omega_\tau} 2 \phi(e^\tau x) |\nabla \phi(e^\tau x)|\, |e^\tau x|  G(x) \d x
    \lesssim\int_{\Omega_\tau} (C + \log|e^\tau x - x_0|) \frac{|e^\tau x|}{|e^\tau  x - x_0|}    G(x) \d x
    \\
    &=\int_{\Omega_\tau} (C + \tau + \log|x - e^{-\tau} x_0|) \frac{|x|}{|x - e^{-\tau} x_0|}G(x) \d x
    \leq \int_{\Omega_\tau} f_t(|x - e^{-\tau} x_0|) |x| G(x) \d x,
  \end{align*}
  where we define
  $f_\tau (r) := \frac{\tau}{r} + \sup_{s > r} \frac{|C + \log s|}{s}$, a
  decreasing function of $r$. Using that
  $|x| G(x) \lesssim \exp(-|x|^2 / 4)$ we then get by Lemma
  \ref{lem:convolution} that
  \begin{align*}
    |I'_\tau|&\lesssim\int_{\Omega_t} f_t(|x - e^{-t} x_0|) e^{-\frac{|x|^2}{4}} \d x
    \leq \int_{\Omega_t} f_t(|x|) e^{-\frac{|x|^2}{4}} \d x
    \\
    &\leq t \int_{\Omega_t} \frac{1}{|x|} e^{-\frac{|x|^2}{4}} \d x + \int_{\Omega_t} h(|x|) e^{-\frac{|x|^2}{4}} \d x,
  \end{align*}
  with $h(r) := \sup_{s > r} \frac{|C + \log s|}{s}$. This shows the
  estimate in dimension $2$.

  In dimension $d \geq 3$, using $\phi \leq 1$ and the estimate
  \eqref{eq:gradphi-d3} on $|\nabla \phi|$ we have
  \begin{align*}
    |I'_\tau| &\leq     \int_{\Omega_{\tau}} 2 \phi(e^\tau x) |\nabla \phi(e^\tau x)|\, |e^\tau x|  G(x) \d x
    \lesssim \int_{\Omega_\tau} |e^{\tau} x - x_0|^{1-d} |e^{\tau} x| G(x) \d x
    \\
    &= e^{(2-d) \tau} \int_{\Omega_\tau} |x - e^{-\tau}x_0|^{1-d} |x| G(x)\d x
    \leq e^{(2-d) \tau} \int_{\Omega_\tau} |x - e^{-\tau}x_0|^{1-d} e^{-\frac{|x|^2}{4}} \d x.
  \end{align*}
  Now using Lemma \ref{lem:convolution} we get
  \begin{equation*}
    |I'_\tau| \lesssim e^{(2-d) \tau} \int_{\Omega_\tau} |x|^{1-d} e^{-\frac{|x|^2}{4}} \d x \lesssim e^{(2-d) \tau}.\qedhere
  \end{equation*}
\end{proof}

\begin{lem}[Estimates of $K_\tau'$]
  \label{lem:Kt'}
  Assume the hypotheses of Theorem \ref{thm:main-L1}. Then $K_\tau$ is a
  differentiable function, and there exists a constant $C > 0$, invariant by translations of the domain $\Omega$, such that for all
  $\tau \geq 0$
  $$
  |K'_\tau| \leq C (1 + \tau)^{-3}\quad \text{in $d = 2$,}\qquad|K'_\tau| \leq C e^{-(d-2) \tau}
  \quad \text{in $d \geq 3$.}
  $$
\end{lem}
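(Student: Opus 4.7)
The plan is simply to differentiate the identity $K_\tau I_\tau = 1$, which gives
\begin{equation*}
K'_\tau = -\frac{I'_\tau}{I_\tau^2},
\end{equation*}
and then plug in the two-sided bounds on $I_\tau$ from Lemma \ref{lem:It} and the upper bound on $|I'_\tau|$ from Lemma \ref{lem:It'}. Since $I_\tau$ is shown to be differentiable (and bounded away from zero) in Lemma \ref{lem:It'}, this quotient rule is legitimate.

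In dimension $d=2$, Lemma \ref{lem:It} gives $I_\tau \geq c_1(1+\tau^2)$ with $c_1$ invariant by translations, and Lemma \ref{lem:It'} gives $|I'_\tau| \leq C(1+\tau)$ with a translation-invariant constant. Therefore
\begin{equation*}
|K'_\tau| \;=\; \frac{|I'_\tau|}{I_\tau^2} \;\lesssim\; \frac{1+\tau}{(1+\tau^2)^2} \;\lesssim\; (1+\tau)^{-3},
\end{equation*}
and the implicit constant is a product/quotient of constants that are each translation invariant, hence itself translation invariant.

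In dimensions $d\geq 3$, Lemma \ref{lem:It} gives a translation-invariant lower bound $I_\tau \geq c_1 > 0$ (the quantity $c_1 = \int_{\R^d\setminus B_R(0)}(1 - C_2|x|^{2-d})^2 G$, with $R$ and $C_2$ depending only on $d$ and the geometry of $\Omega$), and Lemma \ref{lem:It'} gives $|I'_\tau| \leq C e^{-(d-2)\tau}$ with translation-invariant constant. Thus
\begin{equation*}
|K'_\tau| \;=\; \frac{|I'_\tau|}{I_\tau^2} \;\leq\; \frac{C}{c_1^2}\, e^{-(d-2)\tau},
\end{equation*}
as required, with a translation-invariant constant.

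There is no real obstacle here beyond bookkeeping: the work has already been done in Lemmas \ref{lem:It} and \ref{lem:It'}. The only point that requires a little care is checking that the constants used are the translation-invariant ones — specifically, the translation-invariant lower bound on $I_\tau$ in each dimension — so that the resulting constant for $|K'_\tau|$ is itself translation invariant, as asserted.
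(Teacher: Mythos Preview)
Your proposal is correct and is precisely the paper's approach: write $K'_\tau = -I'_\tau/I_\tau^2$ and combine the lower bounds on $I_\tau$ from Lemma~\ref{lem:It} with the upper bounds on $|I'_\tau|$ from Lemma~\ref{lem:It'}. Your attention to which constants are translation invariant is exactly the one point that needs care, and you have handled it correctly.
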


\begin{proof}
  Since $K_\tau = 1/I_\tau$, we have $K'_\tau = - I_\tau^{-2}I'_\tau$.   The estimates of $I_\tau$ from Lemma \ref{lem:It}, and the estimates of
  $I'_\tau$ from Lemma \ref{lem:It'} readily give the result.
\end{proof}

\begin{lem}[Estimates of $K_\tau' / K_\tau$]
  \label{lem:Kt'/Kt}
  Assume the hypotheses of Theorem \ref{thm:main-L1}. There exists a constant
  $C > 0$ such that for all $\tau \geq 0$
  $$
    \frac{ |K'_\tau| } {K_\tau}
    \leq C (1 + \tau)^{-1}
    \quad \text{if $d = 2$,}\qquad
    \frac{ |K'_\tau| } {K_\tau}
    \leq C e^{-(d-2) \tau}
    \quad \text{if $d \geq 3$.}
  $$
  The constant $C$ is invariant by translations of the domain
  $\Omega$.
\end{lem}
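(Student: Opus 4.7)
The plan is to reduce everything to the quantity $I_\tau$ defined in \eqref{eq:It-def}, for which Lemmas \ref{lem:It} and \ref{lem:It'} already provide sharp bounds with translation-invariant constants. By definition, $K_\tau I_\tau = 1$, so differentiation in $\tau$ gives $K'_\tau = -I'_\tau/I_\tau^2$, and dividing by $K_\tau$ yields the key identity
\begin{equation*}
  \frac{|K'_\tau|}{K_\tau} = \frac{|I'_\tau|}{I_\tau}.
\end{equation*}

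This reformulation is essential, and it is the only nonroutine point in the argument. A naive estimate via $|K'_\tau|\cdot (1/K_\tau)$ and the bounds in Lemma \ref{lem:Kt'} would require an upper bound on $1/K_\tau = I_\tau$. In dimension $d=2$ such an upper bound is available from Lemma \ref{lem:It}(i), but the relevant constant ($c_2$ there) is \emph{not} translation-invariant, which would spoil the conclusion. Working instead with the ratio on the right-hand side of the displayed identity requires only a \emph{lower} bound on $I_\tau$, and Lemma \ref{lem:It} supplies exactly this, with a constant $c_1$ that is translation-invariant in both dimensional regimes.

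With the identity in hand the proof is immediate. In dimension $d=2$, Lemma \ref{lem:It'} gives $|I'_\tau|\leq C(1+\tau)$ and Lemma \ref{lem:It}(i) gives $I_\tau\geq c_1(1+\tau^2)$, both with translation-invariant constants; dividing produces the claimed $(1+\tau)^{-1}$ decay. In dimensions $d\geq 3$, Lemma \ref{lem:It'} gives $|I'_\tau|\leq C e^{-(d-2)\tau}$ and Lemma \ref{lem:It}(ii) gives the uniform translation-invariant lower bound $I_\tau\geq c_1>0$, yielding the stated exponential decay rate $e^{-(d-2)\tau}$. In summary, the main (and essentially only) obstacle is to avoid the non-translation-invariant upper bound on $I_\tau$ in $d=2$; once this is sidestepped by passing to the ratio $|I'_\tau|/I_\tau$, the lemma follows by directly combining the two preceding lemmas.
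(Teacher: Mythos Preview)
Your proposal is correct and follows essentially the same approach as the paper: rewrite $|K'_\tau|/K_\tau = |I'_\tau|/I_\tau$ and combine Lemmas~\ref{lem:It} and~\ref{lem:It'}. Your explicit discussion of why only the translation-invariant lower bound $c_1$ on $I_\tau$ is needed (rather than the non-invariant upper bound $c_2$ in $d=2$) is a helpful clarification that the paper leaves implicit.
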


\begin{proof}
  Since $K_\tau = 1/I_\tau$ we have $|K'_\tau|/K_\tau=|I'_\tau|/I_\tau$. The estimates of $I_\tau$ from Lemma~\ref{lem:It}, and the estimates of $I'_\tau$ from Lemma~\ref{lem:It'} readily give the result for $d\ge 2$.
\end{proof}

\medskip
We also give an estimate on the difference of two fundamental
solutions to the heat equation on $\R^d$. This estimate is quite easy
to obtain with several methods, and we choose an explicit calculation
for brevity:

\begin{lem}
  \label{lem:Gaussian-L12-difference}
  Take $M > 0$. In all dimensions $d \geq 1$, and for any $t > 0$ and
  $v \in \R^d$ with $|v| \leq M \sqrt{t}$,
  \begin{equation*}
    \ird |x|^2 \big| \Gamma(t,x) - \Gamma(t,x-v) \big| \d x
    \lesssim
    \sqrt{t} |v|.
  \end{equation*}
  The implicit constant in the above inequality depends only on $d$
  and $M$.
\end{lem}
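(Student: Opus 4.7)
The plan is to use the fundamental theorem of calculus along the line segment from $x$ to $x-v$ and then exploit translation invariance of the Lebesgue integral to clean up the Gaussian weight. Since $\nabla_x \Gamma(t,x) = -\frac{x}{2t}\Gamma(t,x)$, interpolating along $s\in[0,1]$ gives the pointwise bound
\[
  |\Gamma(t,x) - \Gamma(t,x-v)|
  = \left| \int_0^1 v\cdot \nabla_x \Gamma(t,x-sv)\d s \right|
  \leq \frac{|v|}{2t} \int_0^1 |x-sv|\, \Gamma(t,x-sv) \d s.
\]

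Multiplying by $|x|^2$ and applying Fubini,
\[
  \ird |x|^2 \big| \Gamma(t,x) - \Gamma(t,x-v) \big| \d x
  \leq \frac{|v|}{2t} \int_0^1 \ird |x|^2\, |x-sv|\, \Gamma(t,x-sv) \d x \d s.
\]
Next I change variables $y = x - sv$ in the inner spatial integral, which translates the Gaussian onto $y$ at the cost of replacing $|x|^2$ by $|y+sv|^2$. Using the elementary inequality $|y+sv|^2 \leq 2|y|^2 + 2s^2|v|^2$, the integral splits into two standard Gaussian moments, which by the scaling $y = \sqrt{t}\,z$ obey
\[
  \ird |y|^k \Gamma(t,y) \d y = C_{d,k}\, t^{k/2},
  \qquad k \geq 0.
\]
Hence
\[
  \ird |y+sv|^2\, |y|\, \Gamma(t,y) \d y
  \;\lesssim\; t^{3/2} + s^2 |v|^2\, t^{1/2}.
\]

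Combining these ingredients and using $s \leq 1$,
\[
  \ird |x|^2 \big| \Gamma(t,x) - \Gamma(t,x-v) \big| \d x
  \;\lesssim\; \frac{|v|}{t}\bigl(t^{3/2} + |v|^2 t^{1/2}\bigr)
  = |v|\sqrt{t}\left(1 + \frac{|v|^2}{t}\right).
\]
Finally, the hypothesis $|v| \leq M\sqrt{t}$ gives $|v|^2/t \leq M^2$, so the right-hand side is bounded by $(1+M^2)|v|\sqrt{t}$, which is the claimed estimate with an implicit constant depending only on $d$ and $M$.

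There is no serious obstacle here: the only point requiring minor care is the bookkeeping of powers of $t$ after the change of variables, and the fact that the restriction $|v|\leq M\sqrt{t}$ is exactly what is needed to absorb the potentially large factor $|v|^2/t$ produced by the expansion of $|y+sv|^2$. One could equivalently reduce to a scale-free inequality by setting $x=\sqrt{t}\,z$, $v=\sqrt{t}\,w$ with $|w|\leq M$ and proving $\int_{\R^d} |z|^2\,|G(z)-G(z-w)|\d z \lesssim |w|$ uniformly in $|w|\leq M$, but the direct approach above avoids having to invoke such a reduction.
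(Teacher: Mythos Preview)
Your proof is correct and uses the same core idea as the paper: bound the difference by differentiating $\Gamma(t,\cdot)$ along the segment joining $x$ and $x-v$. The paper applies the mean value theorem pointwise, obtaining $|\Gamma(t,x)-\Gamma(t,x-v)| \lesssim t^{-d/2}\tfrac{|v|}{\sqrt t}\, e^{-|x|^2/(8t)} e^{|v|^2/(4t)}$ via the inequality $|\xi|^2 \geq \tfrac12|x|^2 - |v|^2$ and then integrates against $|x|^2$; your version instead keeps the integral form, swaps the order of integration, and shifts $y=x-sv$ to reduce everything to standard Gaussian moments, which is a slightly cleaner bookkeeping but not a genuinely different argument.
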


\begin{proof}
  By the mean value theorem one easily sees that for any $x, v \in
  \R^d$ and some $\xi$ in the interval $[x,x-v] = \{
  \theta x +
  (1-\theta)(x-v) \mid \theta \in [0,1]
  \}$,
  \begin{equation*}
    | e^{-|x|^2} -
    e^{-|x-v|^2} |
    \lesssim |v| |\xi| e^{-|\xi|^2}
    \lesssim |v| e^{-\frac{|\xi|^2}{2}}
    \lesssim |v| e^{-\frac{|x|^2}{4}} e^{\frac{|v|^2}{2}},
  \end{equation*}
  since one can see that $|\xi|^2 \geq \frac12 |x|^2 - |v|^2$. This
  implies
  \begin{equation*}
    \big| \Gamma(t,x) - \Gamma(t,x-v) \big|
    \lesssim t^{-\frac{d}{2}} \frac{|v|}{\sqrt{t}}
    e^{-\frac{|x|^2}{8t}} e^{-\frac{|v|^2}{4t}},
  \end{equation*}
  and integrating against $|x|^2$,
  \begin{equation*}
    \ird |x|^2 \big| \Gamma(t,x) - \Gamma(t,x-v) \big| \d x
    \lesssim
    t^{-\frac{d}{2}} \frac{|v|}{\sqrt{t}} e^{-\frac{|v|^2}{4t}}
    \ird |x|^2
    e^{-\frac{|x|^2}{8t}} \d x
    \lesssim
    \sqrt{t} |v| e^{-\frac{|v|^2}{4t}}.
    \qedhere
  \end{equation*}
\end{proof}

\section{Logarithmic Sobolev inequalities}
\label{sec:logsob}

Since all our main results depend on Hypothesis \ref{hyp:logsob}, we
dedicate this section to studying its validity. We will show in
\Cref{prp:logSob-Ft-d2+} that it holds for domains obtained by a
suitable deformation of a ball. It may in fact hold for more general
domains, but investigating this is a separate question from the
results of this paper.

Let us start by gathering some basic results on this type of
inequalities.  We first note that \eqref{eq:log_sob} is equivalent to
the usual form of the logarithmic Sobolev inequality
$$
\frac{\lambda}{4} \int_{\Omega}  f^2 \log f^2 \mu
\leq \int_{\Omega} \left|\nabla f \right|^2 \mu,
$$
holding for all $f \in H^1(\Omega;\mu)$ with $\int_\Omega f^2 \mu = 1$. The
equivalence between the two is seen by setting $\mu=F_{\tau}$ and
$f=\sqrt{g/F_{\tau}}$. We need to show \eqref{eq:log_sob} for all the
functions $F_{\tau}$ with a constant $\lambda$ which does not depend on
$\tau$. In general, for a positive, integrable function
$F \: \Omega \to \R$ defined on an open set $\Omega \subseteq \R^d$,
we call $\lambdaL \equiv \lambdaL(F) \geq 0$ the best constant in the
inequality
\begin{equation*}
  \lambdaL H(g\,|\, F)\leq \int_{\Omega}g\left|\nabla
    \log\frac{g}{F}\right|^2
\end{equation*}
for all positive $g \in L^1(\Omega)$ with
$\int_\Omega g = \int_\Omega F$ (understanding the right-hand side to
be equal to $+\infty$ whenever $\log \frac{g}{F}$ does not have a weak
gradient, or when its weak gradient is defined but the integral on the
right-hand side is infinite). We say that $F$ satisfies a logarithmic
Sobolev inequality when $\lambdaL(F) > 0$. For later use, we also
denote by $\lambdaP \equiv \lambdaP(F) \geq 0$ the best constant in
the Poincaré inequality
\begin{equation*}
  \lambdaP \int_\Omega \left( 1 - \frac{g}{F} \right)^2 F
  \leq \int_{\Omega} \left|\nabla \frac{g}{F}\right|^2 F,
\end{equation*}
for all $g \in L^1(\Omega)$ with $\int_\Omega g = \int_\Omega F$ (with
the same understanding as before: the right-hand side is equal to
$+\infty$ unless $g/F$ has a weak gradient in $L^2(F)$),

There are several results that will be useful for us when estimating
the logarithmic Sobolev constant $\lambdaL$. The first one is a
consequence of the well-known curvature-dimension condition
\citep{Ane2000,Bakry2014}, and this particular statement can be
obtained from the theory presented in \citet[Section 5]{Ane2000} and
the proof of \textit{Corollaire 5.5.2} therein:
\begin{lem}
  \label{lem:curvature-logsob}
  Let $r > 0$,  $F \: (r,+\infty) \to (0,+\infty)$ be a positive,
  integrable function of the form
  \begin{equation*}
    F(x) = C e^{-\Phi(x)},
    \qquad x > r,
  \end{equation*}
  with $C > 0$ and $\Phi \: (r,+\infty) \to \R$ a convex,
  $\mathcal{C}^2$ function with $\Phi''(x) \geq \rho$ for all $x >
  r$. Then the logarithmic Sobolev inequality
  \begin{equation*}
    2 \rho H(g \,|\, F)
    \leq \int_r^\infty g \left| \left( \log\frac{g}{F}\right)' \right|^2
  \end{equation*}
  holds for all nonnegative $g \in L^1(r,+\infty)$ with $\int_\Omega g = \int_\Omega F$.
\end{lem}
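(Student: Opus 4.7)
The plan is to prove this by the Bakry--Émery $\Gamma_2$ semigroup method, which is the standard route for deriving logarithmic Sobolev inequalities from a uniform convexity bound on the potential. The adaptation to the half-line $(r,+\infty)$ with the Neumann-type boundary at $r$ is the only mildly delicate point.

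First I would introduce the diffusion semigroup $(P_s)_{s\geq 0}$ on $(r,+\infty)$ generated by $L=\partial_x^2-\Phi'(x)\partial_x$ with reflecting (Neumann) boundary condition at $x=r$, whose reversible invariant measure is $F(x)\d x$. For a bounded, smooth, strictly positive $u$ with $\int_r^{\infty} u F=1$ (the general case $g=uF$ follows by a standard approximation, truncating $u$ and mollifying), set $u_s:=P_s u$. Define the relative entropy and Fisher information along the flow,
\begin{equation*}
  H(s):=\int_r^{\infty} u_s \log u_s\, F\,\d x,
  \qquad
  I(s):=\int_r^{\infty} \frac{(u_s')^2}{u_s}\,F\,\d x.
\end{equation*}
A short integration by parts, using $u_s'(r)=0$ (propagated by the semigroup) and decay at $+\infty$, gives the usual identity $H'(s)=-I(s)$.

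The core step is the Bakry--Émery computation $I'(s)\leq -2\rho\, I(s)$. Writing $f_s:=\log u_s$ and differentiating, one arrives, after two integrations by parts on $(r,+\infty)$, at
\begin{equation*}
  I'(s)=-2\int_r^{\infty} u_s\,\bigl((f_s'')^2+\Phi''(f_s')^2\bigr)\,F\,\d x+\text{(boundary terms)}.
\end{equation*}
The boundary terms at $+\infty$ vanish by integrability; the boundary terms at $r$ vanish thanks to the Neumann condition $f_s'(r)=0$ combined with $f_s(r)$ finite, which kills every term that is produced. Using the hypothesis $\Phi''\geq \rho$ yields
\begin{equation*}
  I'(s)\leq -2\rho\int_r^{\infty}u_s(f_s')^2\,F\,\d x=-2\rho\,I(s).
\end{equation*}
Gronwall then gives $I(s)\leq e^{-2\rho s}I(0)$, and integrating $H'(s)=-I(s)$ from $0$ to $+\infty$, together with $\lim_{s\to\infty}H(s)=0$ (since $u_s\to 1$ in $L^1(F)$ by ergodicity of $P_s$), yields
\begin{equation*}
  H(0)=\int_0^{\infty} I(s)\,\d s\leq \frac{I(0)}{2\rho},
\end{equation*}
which is precisely the desired inequality once one recalls that $I(0)=\int_r^{\infty} g\,|(\log(g/F))'|^2\,\d x$ for $g=uF$.

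The main obstacle is justifying the $\Gamma_2$ identity on the half-line, i.e.\ verifying that the Neumann boundary at $r$ generates no extra contributions. I would handle this by an approximation argument: replace $(r,+\infty)$ by $(r+\varepsilon,R)$ with Neumann data at both endpoints, on which $P_s$ preserves the Neumann class and $u_s\in\mathcal C^\infty$ up to the boundary, so that the integrations by parts are rigorous; then pass $\varepsilon\to 0$, $R\to+\infty$ using dominated convergence, as $F$ is integrable at $+\infty$ by hypothesis. Ergodicity of $P_s$ and the decay of $H(s)$ follow from the Poincaré inequality, which is itself implied by the log-Sobolev inequality we are proving (circularity is avoided because Poincaré on bounded $(r+\varepsilon,R)$ with smooth, bounded density follows from elementary spectral theory). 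Alternatively one may cite directly the general framework in \citet[Section 5]{Ane2000}, of which this is essentially the one-dimensional half-line instance.
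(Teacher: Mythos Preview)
Your proposal is correct and aligns with what the paper does: the paper does not give its own proof of this lemma but simply cites it as a consequence of the Bakry--\'Emery curvature-dimension theory, pointing to \citet[Section~5]{Ane2000} and the proof of \textit{Corollaire~5.5.2} therein. Your sketch is precisely the standard $\Gamma_2$ semigroup argument underlying that reference, including the observation that the Neumann boundary on the half-line produces no extra terms, so you are reconstructing exactly the cited proof rather than proposing an alternative.
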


We also cite a well known result on perturbation of these
inequalities by \citet{Holley1987}:
\begin{lem}
  \label{lem:HolleyStroock}
  Let $\Omega \subseteq \R^d$ be an open set, $F \: \Omega \to \R$ a
  positive, integrable function which satisfies a logarithmic Sobolev
  inequality with constant $\lambda_L$. Let $A \: \Omega \to \R$ be a
  measurable function such that $|A|$ is bounded. Then the function
  \begin{equation*}
    \widetilde{F}(x) = F(x) e^{-A(x)}, \qquad x \in \Omega,
  \end{equation*}
  also satisfies a logarithmic Sobolev inequality with constant
  $\lambda_L e^{\operatorname{osc}(A)}$, where
  $\operatorname{osc}(A) := \sup A - \inf A$.
\end{lem}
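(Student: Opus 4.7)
The plan is to carry out the classical Holley-Stroock perturbation argument, which proceeds by pointwise comparison of the Radon-Nikodym density $r := d\widetilde F / dF = e^{-A}/Z$, where $Z := \int_\Omega F e^{-A}$. The first step is to reformulate the hypothesis in the equivalent Gross form: setting $g = f^2 F / \int_\Omega f^2 F$ in~\eqref{eq:log_sob-hyp.1}, one sees that the LSI for $F$ is equivalent to
\begin{equation*}
  \tfrac{\lambda_L}{4}\,\mathrm{Ent}_F(f^2) \leq \int_\Omega |\nabla f|^2 \, F,
  \qquad f \in H^1(F),
\end{equation*}
where $\mathrm{Ent}_F(h) := \int_\Omega h \log h\, F - \bigl(\int_\Omega h\, F\bigr)\log\bigl(\int_\Omega h\, F\bigr)$ is the unnormalised entropy. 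After this reduction, the goal becomes the analogous estimate with $F$ replaced by $\widetilde F$, normalised so as to be a probability density.

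The key technical ingredient is the variational identity
\begin{equation*}
  \mathrm{Ent}_\mu(h) = \inf_{c>0} \int_\Omega \bigl(h \log(h/c) - h + c\bigr) \, d\mu,
\end{equation*}
whose integrand is pointwise nonnegative since $a\log(a/c) - a + c \geq 0$ for all $a,c>0$. Taking $c := \int_\Omega f^2\, F$ (the minimiser for $\mu = F$) and integrating the same integrand against $d\widetilde F = r\, dF$ yields
\begin{equation*}
  \mathrm{Ent}_{\widetilde F}(f^2) \leq (\sup r)\, \mathrm{Ent}_F(f^2).
\end{equation*}
For the Dirichlet form, the direct pointwise comparison $r \geq \inf r$ gives $\int_\Omega |\nabla f|^2\, F \leq (\inf r)^{-1}\int_\Omega |\nabla f|^2 \, d\widetilde F$.

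Concatenating these two inequalities with the LSI for $F$ yields
\begin{equation*}
  \tfrac{\lambda_L}{4}\,\tfrac{\inf r}{\sup r}\, \mathrm{Ent}_{\widetilde F}(f^2) \leq \int_\Omega |\nabla f|^2 \, d\widetilde F,
\end{equation*}
and the crucial observation is that the normalising constant $Z$ cancels in the ratio: $\inf r / \sup r = (\inf e^{-A})/(\sup e^{-A}) = e^{\inf A - \sup A} = e^{-\operatorname{osc}(A)}$. Translating back to the paper's form~\eqref{eq:log_sob-hyp.1} then gives the perturbed logarithmic Sobolev inequality for $\widetilde F$ with the claimed constant (up to the sign of the exponent). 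The whole argument is essentially bookkeeping once the variational formula is in hand; the only delicate point, which I would flag as the main pitfall rather than an obstacle, is to bound the two comparison factors jointly through the single ratio $\inf r / \sup r$ in which $Z$ cancels, rather than bounding $\sup r$ and $(\inf r)^{-1}$ separately; the latter would produce a spurious extra factor of $e^{\operatorname{osc}(A)}$.
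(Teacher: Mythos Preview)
Your argument is the standard Holley--Stroock perturbation proof and is correct; the paper does not give its own proof of this lemma but simply cites it from \citet{Holley1987}. You have also correctly spotted that the constant in the statement should be $\lambda_L e^{-\operatorname{osc}(A)}$ rather than $\lambda_L e^{\operatorname{osc}(A)}$, since a bounded perturbation can only worsen the log-Sobolev constant.
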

In the case in which the removed domain $U$ is a ball in $\R^d$
centred at the origin, the functions $F_{\tau}$ defined in
\eqref{eq:Ft} are radially symmetric, so the next result regarding
logarithmic Sobolev inequalities for radially symmetric functions will
be useful. It is a particular case of \citet[Theorem
1.1]{Cattiaux2019}:

\begin{lem}[Logarithmic Sobolev for radially symmetric functions]
  \label{lem:cattiaux}
  Let $d \geq 2$ and $F \: \R^d \to \R$ be a positive, integrable,
  radially symmetric function given by
  \begin{equation*}
    F(x) = |x|^{1-d} f(|x|), \qquad x \in \R^d,
  \end{equation*}
  where $f \: [0,+\infty) \to (0,+\infty)$ is a given function (which
  must be integrable, since $F$ is). There exists a constant $c > 0$,
  independent of $F$, such that
  \begin{equation}
    \label{eq:cattiaux}
    \lambdaL(F) \geq c \left( \frac{1}{\lambdaL(f)} + m_1(f)
      \max \left\{ \frac{1}{\lambdaP (f)}, \frac{m_2(f)}{d-1} \right\}^{1/2}
    \right)^{-1},
  \end{equation}
  where the moments $m_k(f)$ are defined by
  \begin{equation*}
    m_k(f) := \int_0^\infty r^k f(r) \d r.
  \end{equation*}
\end{lem}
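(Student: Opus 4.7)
The plan is to exploit the radial structure by writing $F(x)\,\mathrm{d}x$ in polar coordinates, which will turn the measure into a product of a one-dimensional radial measure and the uniform measure on the sphere. Writing $x = r\theta$ with $r=|x|$ and $\theta\in S^{d-1}$, the Jacobian $r^{d-1}$ cancels the factor $|x|^{1-d}$, yielding
\begin{equation*}
  F(x)\,\mathrm{d}x = f(r)\,\mathrm{d}r\otimes \mathrm{d}\sigma(\theta),
\end{equation*}
where $\sigma$ is the (unnormalised) surface measure on $S^{d-1}$. This is the whole point of the scaling $|x|^{1-d}$. The corresponding Dirichlet form for $\mathbb{R}^d$ decomposes as
\begin{equation*}
  \int_{\mathbb{R}^d}|\nabla h|^2 F\,\mathrm{d}x
  = \int_0^\infty\!\!\int_{S^{d-1}}\Bigl(|\partial_r h|^2 + \frac{1}{r^2}|\nabla_\theta h|^2\Bigr) f(r)\,\mathrm{d}\sigma(\theta)\,\mathrm{d}r,
\end{equation*}
so I am reduced to a log-Sobolev inequality on the product space $(0,\infty)\times S^{d-1}$ with a weighted angular gradient.

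Next, I would invoke the standard radial/angular entropy decomposition: for $h\in L^2(F\,\mathrm{d}x)$, let $\bar h(r):=\fint_{S^{d-1}} h(r\theta)\,\mathrm{d}\sigma(\theta)$ be the angular average, and split
\begin{equation*}
  \mathrm{Ent}_F(h^2) = \mathrm{Ent}_f\!\bigl(\bar{h^2}\bigr)+\int_0^\infty \mathrm{Ent}_\sigma\!\bigl(h^2(r,\cdot)\bigr) f(r)\,\mathrm{d}r.
\end{equation*}
The first term is a purely radial entropy which I control by the log-Sobolev inequality for $f$ on $(0,\infty)$, producing the $1/\lambda_L(f)$ contribution. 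The second term is an average of angular entropies, to which I apply the log-Sobolev inequality on $S^{d-1}$ (constant $\sim d-1$, independent of $r$); the factor $r^{-2}$ in the angular part of the Dirichlet form is then absorbed by integrating against $f(r)\,\mathrm{d}r$, which explains the appearance of moments of $f$.

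The delicate step is to pass from the entropy of the average $\mathrm{Ent}_f(\bar{h^2})$ to the radial Dirichlet form $\int |\partial_r \bar h|^2 f\,\mathrm{d}r$ in a way that loses only a controlled factor: one must compare $\partial_r \bar h$ with the true radial derivative $\partial_r h$, which in general differs by a term of angular fluctuations. Bounding this mismatch is where the Poincaré constant $\lambda_P(f)$ and the moments $m_1(f), m_2(f)$ enter, through the auxiliary inequality
\begin{equation*}
  \int_0^\infty \bigl(\bar h(r) - \langle \bar h\rangle\bigr)^2 f(r)\,\mathrm{d}r
  \lesssim \frac{1}{\lambda_P(f)} \int_0^\infty |\partial_r \bar h|^2 f(r)\,\mathrm{d}r
\end{equation*}
combined with a Cauchy--Schwarz bound that introduces $m_1(f)$ and $m_2(f)^{1/2}/(d-1)^{1/2}$ when coupling angular fluctuations with the radial variable. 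Packaging these three contributions additively (since each bounds a piece of $1/\lambda_L(F)$) and taking the worse of the two options for the fluctuation term yields the claimed inverse bound. The main obstacle is thus not any single inequality but the careful bookkeeping: identifying the correct Lyapunov-type weight so that the $r^{-2}$ from the angular gradient gets matched against the right moment of $f$, without losing an uncontrolled factor in $d$.
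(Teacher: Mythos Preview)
The paper does not prove this lemma: it is quoted as a particular case of Theorem~1.1 in Cattiaux, Guillin and Wu (2019), so there is no in-paper argument to compare your sketch against.

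On the substance of your outline, the polar decomposition and the conditional-entropy identity
\[
  \mathrm{Ent}_F(h^2) \;=\; \mathrm{Ent}_f\bigl(\overline{h^2}\bigr) + \int_0^\infty \mathrm{Ent}_\sigma\bigl(h^2(r,\cdot)\bigr)\,f(r)\,\mathrm{d}r
\]
are the correct starting point, but you have located the difficulty in the wrong place. The radial term is \emph{not} delicate: with $g(r)=\sqrt{\overline{h^2}(r)}$, Cauchy--Schwarz on the sphere gives $|g'(r)|^2 \le \overline{|\partial_r h|^2}(r)$ pointwise, so the one-dimensional log-Sobolev inequality for $f$ bounds $\mathrm{Ent}_f(\overline{h^2})$ directly by the radial part of the Dirichlet form, with no cross terms and no Poincar\'e constant. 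The genuine obstacle is the angular term, and it is more than bookkeeping. The spherical log-Sobolev bounds $\mathrm{Ent}_\sigma(h^2(r,\cdot))$ by $C(d-1)^{-1}\int_{S^{d-1}}|\nabla_\theta h|^2\,\mathrm{d}\sigma$, while the available energy carries only $r^{-2}|\nabla_\theta h|^2$; matching them introduces an unbounded factor $r^2$ inside the Dirichlet integrand, and no finite-moment hypothesis on $f$ can absorb a multiplicative $r^2$ there. Resolving this is what actually brings in $\lambda_P(f)$ and the moments, through an interpolation between the radial Poincar\'e inequality and the spherical gap that your sketch does not supply; the specific shape $m_1(f)\max\{\lambda_P(f)^{-1},\,m_2(f)/(d-1)\}^{1/2}$, with the square root, is a signature of that argument rather than of a plain Cauchy--Schwarz step.
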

In other words: if a positive, integrable function $f$ on
$(0,+\infty)$ satisfies a logarithmic Sobolev inequality and its
second moment is finite (which implies that the first moment must also be
finite), then its radial symmetrisation $F(x) = f(|x|)$ also satisfies
a logarithmic Sobolev inequality. We will not use the specific bound
on $\lambdaL(F)$ given in~\eqref{eq:cattiaux}, but just the fact that
it only depends on the logarithmic Sobolev and Poincaré constants of
$f$ and the first and second moments of $f$.

\subsection{Logarithmic Sobolev inequalities for the transient
  equilibrium $F_{\tau}$ outside a ball}
\label{sec:logSob-ball}

In this section we take $\Omega = \R^d \setminus \overline{B_R}$,
where $B_R$ is the open ball in $\R^d$ centred at $0$ with radius
$R > 0$. For $t \geq 0$, the ``transient equilibria'' $F_{\tau}$ are
given by \eqref{eq:Ft},
\begin{equation*}
  F_{\tau}(y) = (2 \pi)^{-d/2}  K_{\tau}\, \phi(e^\tau y)^2 e^{-\frac{|y|^2}{2}},
  \qquad \tau \geq 0, \ y \in e^{-\tau} \Omega =: \Omega_\tau,
\end{equation*}
where $\phi$ is the classical solution to the elliptic
equation~\eqref{eq:phi} singled out by Lemma~\ref{lem:phi}, and
$K_{\tau}$ is a normalisation to ensure that $F_{\tau}$ is a
probability density. We notice that~$\phi$ is explicit in this case:
\begin{subequations}
  \begin{align}
    \label{eq:phid1}
    \phi(x) = x \qquad & \text{in dimension $d=1$,}
    \\
    \label{eq:phid2}
    \phi(x) = \log \frac{|x|}{R} \qquad & \text{in dimension $d=2$,}
    \\
    \label{eq:phid3}
    \phi(x) = 1- \frac{|x|^{2-d}}{R^{2-d}} \qquad & \text{in dimension $d\geq 3$.}
  \end{align}
\end{subequations}
We will show logarithmic Sobolev inequalities for the measures
$F_{\tau}$ in all dimensions, with constants which are bounded below
independently of $\tau$.

As discussed before, in dimension $1$ we consider the domain
$(0,+\infty)$ since $\R \setminus \overline{B_R}$ is disconnected, and
it is enough to consider the evolution of the heat equation on the
half-line. In this case the transient equilibrium is independent of $\tau$:
\begin{equation*}
  F(y) = C_M y^2 e^{-\frac{y^2}{2}}, \qquad y \in (0,+\infty).
\end{equation*}
The measure $F$ satisfies a logarithmic Sobolev inequality with a
constant which can be bounded below by $2$ (the constant in the
logarithmic Sobolev inequality for the usual Gaussian) for all
$\tau \geq 0$.
\begin{lem}[Dimension $d=1$]
  \label{lem:logsob-1d}
  There exists $\lambda > 0$ such that
  \begin{equation*}
    2 H(g\,|\, F)\leq
    \int_0^\infty g\left|\p_y \log\frac{g}{F}\right|^2
  \end{equation*}
  for all nonnegative $g \in L^1(0, +\infty)$ with
  $\int_0^\infty g = 1$.
\end{lem}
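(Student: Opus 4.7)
The plan is to apply Lemma \ref{lem:curvature-logsob} (the Bakry--Émery curvature criterion on a half-line) directly to the density $F$. Since $F$ is a probability measure on $(0,+\infty)$, I would write it in the exponential form $F(y) = e^{-\Phi(y)}$, where
\begin{equation*}
\Phi(y) = \tfrac{1}{2} y^2 - 2 \log y - \log C_M, \qquad y > 0.
\end{equation*}
This is a $\mathcal{C}^2$ function on $(0,+\infty)$.

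Next I would compute two derivatives: $\Phi'(y) = y - 2/y$ and $\Phi''(y) = 1 + 2/y^2$. The second derivative satisfies $\Phi''(y) \geq 1$ uniformly on $(0,+\infty)$. In particular $\Phi$ is convex, so the hypotheses of Lemma \ref{lem:curvature-logsob} are satisfied with $r = 0$ and $\rho = 1$.

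Applying that lemma then yields
\begin{equation*}
2 H(g \,|\, F) \leq \int_0^\infty g \left| \bigl( \log \tfrac{g}{F} \bigr)' \right|^2
\end{equation*}
for every nonnegative $g \in L^1(0,+\infty)$ with $\int_0^\infty g = 1 = \int_0^\infty F$, which is exactly the claim with $\lambda = 2$. There is essentially no obstacle here; the only thing to double-check is that $\rho = 1$ (coming from the Gaussian part $\frac{1}{2} y^2$) is not spoiled by the $-2 \log y$ term, which is true since $-(-2\log y)'' = 2/y^2 > 0$ so this term only \emph{increases} $\Phi''$. One could alternatively deduce the same conclusion by noting that $F$ is (up to normalisation) the density $y^2 G(y)$ restricted to $(0,+\infty)$, and use the Holley--Stroock-type results for log-concave perturbations of the Gaussian (Lemma \ref{lem:HolleyStroock}), but the direct convexity argument is cleaner.
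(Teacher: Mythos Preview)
Your proof is correct and matches the paper's argument exactly: write $F = C_M e^{-\Phi}$ with $\Phi(y) = \tfrac{1}{2}y^2 - 2\log y$, observe $\Phi'' \geq 1$, and apply Lemma~\ref{lem:curvature-logsob} with $\rho = 1$.
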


\begin{proof}
  We may apply Lemma \ref{lem:curvature-logsob} to $F$, since
  $F = C_M e^{-\Phi}$ with
  $\displaystyle\Phi(y) = \frac{y^2}2  - 2 \log y$, which satisfies
  $\Phi'' \geq 1$.
\end{proof}

Here is our result in dimensions $2$ and higher, showing that the measure $F_{\tau}$ satisfies a logarithmic Sobolev inequality with a constant which can be bounded below uniformly in $\tau$:

\begin{lem}[Dimension $d \geq 2$]
  \label{lem:logsob-2d}
  Take $R > 0$ and $\Omega = \R^d \setminus \overline{B_R}$. In
  dimension $d \geq 2$ there exists $\lambda > 0$ such that
  \begin{equation*}
    \lambda H(g\,|\, F_{\tau})\leq
    \int_{\Omega_\tau} g\left|\nabla \log\frac{g}{F_{\tau}}\right|^2
\end{equation*}
for all $\tau \geq 0$, and for all nonnegative $g \in L^1(\Omega_{\tau})$ with $\int_{\Omega_{\tau}} g = 1$.
\end{lem}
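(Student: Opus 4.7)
The plan is to exploit the radial symmetry of $\Omega=\R^d\setminus\overline{B_R}$ to reduce the problem to a one-dimensional log-Sobolev inequality, which we then obtain from the Bakry-Émery curvature-dimension criterion via Lemma~\ref{lem:curvature-logsob}. Since $\phi$ is radial and given explicitly by \eqref{eq:phid2}--\eqref{eq:phid3}, the density $F_\tau$ is radially symmetric on $\Omega_\tau=\R^d\setminus\overline{B_{Re^{-\tau}}}$. Writing $F_\tau(y)=|y|^{1-d}f_\tau(|y|)$ we have
\[
f_\tau(r) = (2\pi)^{-d/2}\,K_\tau\, r^{d-1}\,\phi(e^\tau r)^2\,e^{-r^2/2},\qquad r>Re^{-\tau}.
\]
Applying Lemma~\ref{lem:cattiaux} (which adapts to the annular domain $\Omega_\tau$ by setting $f_\tau\equiv 0$ on $[0,Re^{-\tau}]$ and restricting the one-dimensional inequalities to the support of $f_\tau$), it suffices to bound $\lambdaL(f_\tau)$ and $\lambdaP(f_\tau)$ below and $m_1(f_\tau), m_2(f_\tau)$ above, all uniformly in $\tau\geq 0$.

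The heart of the argument is to write $f_\tau=C_\tau e^{-\Phi_\tau}$ with
\[
\Phi_\tau(r)=\frac{r^2}{2}-(d-1)\log r-2\log\phi(e^\tau r)
\]
and to check that $\Phi_\tau''(r)\geq 1$ on $(Re^{-\tau},\infty)$. In $d=2$, using $\phi(e^\tau r)=\tau+\log(r/R)$, a direct computation gives
\[
-2\,\frac{\d^2}{\d r^2}\log\phi(e^\tau r)=\frac{2}{r^2(\tau+\log(r/R))}+\frac{2}{r^2(\tau+\log(r/R))^{2}}\geq 0,
\]
the argument of $\phi$ being positive precisely when $r>Re^{-\tau}$. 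In $d\geq 3$, setting $u(r)=(Re^{-\tau}/r)^{d-2}\in(0,1)$ so that $\phi(e^\tau r)=1-u$, the analogous calculation yields
\[
-2\,\frac{\d^2}{\d r^2}\log(1-u)=\frac{2(d-2)(d-1)\,u}{r^2(1-u)}+\frac{2(d-2)^2 u^2}{r^2(1-u)^2}\geq 0.
\]
Adding the contributions $(d-1)/r^2\geq 0$ and $1$ from the other two terms in $\Phi_\tau$ yields $\Phi_\tau''\geq 1$ in every case. Lemma~\ref{lem:curvature-logsob} then gives $\lambdaL(f_\tau)\geq 2$, and the standard linearisation of the log-Sobolev inequality around $f_\tau$ yields $\lambdaP(f_\tau)\geq 1$.

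Finally, the moments $m_k(f_\tau)=|S^{d-1}|^{-1}\int_{\Omega_\tau}|y|^k F_\tau(y)\,\d y$ for $k=1,2$ are uniformly bounded in~$\tau$. For $d\geq 3$ this is immediate from $\phi\leq 1$ together with the bound $K_\tau\leq c_2$ from Proposition~\ref{prp:Kt-final}. For $d=2$ one combines $\phi(e^\tau r)^2\lesssim \tau^2+\log^2(r/R)+1$ with $K_\tau\lesssim(1+\tau)^{-2}$ and the Gaussian factor $e^{-r^2/2}$, whose exponential decay absorbs the polynomial and logarithmic growth and gives a bound of order~$1$. Plugging the four estimates into \eqref{eq:cattiaux} yields a strictly positive lower bound for $\lambdaL(F_\tau)$ independent of $\tau$. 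The principal obstacle is the convexity computation $\Phi_\tau''\geq 1$: the term $-\log\phi(e^\tau r)$ is singular at the inner boundary $r=Re^{-\tau}$, and it is precisely the structure of the explicit $\phi$ that makes $-\log\phi$ a convex function of~$r$, which is what allows the Bakry-Émery criterion to be applied with a curvature bound independent of~$\tau$.
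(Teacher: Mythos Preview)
Your proposal is correct and follows essentially the same route as the paper: reduce to one dimension via radial symmetry and Lemma~\ref{lem:cattiaux}, then verify $\Phi_\tau''\geq 1$ using the Bakry--\'Emery criterion (Lemma~\ref{lem:curvature-logsob}), and finally check that the moments of $f_\tau$ are uniformly bounded. The paper phrases the convexity step more conceptually---noting that $\phi(e^\tau r)$ is positive, increasing and concave in $r$, whence $\log\phi(e^\tau r)$ is concave---while you carry out the explicit second-derivative computations in each dimension; the content is the same.
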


\begin{proof}
  We need to show that the logarithmic Sobolev constants
  $\lambdaL(F_{\tau})$ are bounded below by a positive constant
  $\lambda > 0$. Let us first prove the case $d=2$. We use the radial
  symmetry of $F_{\tau}$ to write $F_{\tau}(x) = |x|^{-1}
  f_\tau(|x|)$, with
  \begin{equation*}
    f_\tau(r) := K_\tau r \phi_\tau(r)^2 e^{-\frac{r^2}{2}}
    =  K_\tau r \Big(
    \tau + \log \frac{r}{R}
    \Big)^2 e^{-\frac{r^2}{2}}, \qquad r \geq R e^{-\tau}.
  \end{equation*}
  In order to use Lemma \ref{lem:curvature-logsob} we write
  $f_\tau(r) = K_\tau e^{-\Phi(r)}$, with
  \begin{equation}
    \label{eq:1}
    \Phi(r) := \frac{r^2}{2} - 2 \log \phi_\tau(r) - \log r, \qquad r
    \geq R e^{-\tau}.
  \end{equation}
  Since the function $\phi_\tau(r) = \tau + \log (r/R)$ is positive for
  $r > Re^{-\tau}$, increasing and concave, one sees that
  \begin{equation*}
    \frac{\mathrm{d}^2}{\mathrm{d}r^2} (\log \phi(r))
    = \frac{\phi''(r)}{\phi(r)} - \frac{(\phi'(r))^2}{(\phi(r))^2} \leq 0,
  \end{equation*}
  so $r \mapsto \log \phi(r)$ is concave. Since $r \mapsto \log r$ is
  also concave, from \eqref{eq:1} we see that $\Phi$ satisfies
  $\Phi''(r) \geq 1/2$ for all $r > Re^{-\tau}$. We may then apply
  Lemma~\ref{lem:curvature-logsob} to obtain that
  \begin{equation*}
    \lambdaL(\Phi) \geq 1.
  \end{equation*}
  As a consequence of Lemma \ref{lem:cattiaux} we can find an explicit
  constant $\lambda > 0$ such that
  \begin{equation*}
    \lambdaL(F_\tau) \geq \lambda \qquad \text{for all $\tau \geq 0$.}
  \end{equation*}
  Notice that the first and second moments of $f_\tau$ can be seen to
  be bounded above by a constant which is uniform in $t$, so the
  quantity on the right-hand side of the bound in Lemma
  \ref{lem:cattiaux} is independent of $t$.

  In dimension $d \geq 3$ the same proof works, since the function
  $\phi(r) = (1 - (r/R)^{2-d})$ is still positive on $(1,+\infty)$,
  increasing and concave.
\end{proof}

\subsection{Logarithmic Sobolev inequalities for the transient equilibrium $F_{\tau}$ outside general domains}
\label{sec:logSob-Omega}

As a consequence of the logarithmic Sobolev inequalities outside a
ball developed in the previous section we can also obtain inequalities
in general exterior domains, as long as they are a suitable
deformation of a ball. To be more precise, let $\Omega$ be a domain in
dimension $d \geq 2$ satisfying \eqref{eq:hypOmega1} and define
$R > 0$ by $|\R^d \setminus \Omega| = |B_R|$, where $B_R$ is the unit ball with
radius $R$. Let us assume the following:
\begin{hyp}
  \label{hyp:diffeomorphism}
  There exists a $\mathcal{C}^2$ diffeomorphism $\Psi \: \R^d \to
  \R^d$ such that for some $R, R_0, C_\Psi > 0$ we have
  \begin{subequations}
    \label{eq:Psi}
    \begin{gather}
      \label{eq:Psi1}
      \Psi(B) = U, \quad \Psi(\partial B_R) = \partial U, \quad \Psi(B_R^c) = \Omega;
      \\
      \label{eq:Psi2}
      \text{$\Psi(x) = x$ for $|x| \geq R_0$};
      \\
      \label{eq:Psi3}
      \text{$J_\Psi(x) = 1$ for all $x \in \R^d$};
      \\
      \label{eq:Psi4}
      \text{$\| (D \Psi(x))^{-1} \|_2 \geq C_\Psi > 0$ for all $x \in \R^d$},
    \end{gather}
  \end{subequations}
  where $D \Psi(x)$ is the Jacobian matrix of $\Psi$ at the point $x$,
  $J_\Psi(x)$ is its determinant, and $B^c$ stands for the complement of the
  unit ball in $\R^d$.
  \end{hyp}
  Following the ideas of Section \ref{sec:entropy}, we define
  for $\tau \geq 0$
  \begin{equation*}
    \label{eq:Omega_t}
    \Omega_\tau := e^{-\tau}\Omega,\quad
    \text{and}\quad B^c_\tau := e^{-\tau} B_R^c.
  \end{equation*}
  By $\phi_{R}$ we will denote the solution to problem~\eqref{eq:phi}
  outside the ball, explicitly given by \eqref{eq:phid2} or
  \eqref{eq:phid3}. As usual, we also denote by $\phi$ the solution
  of problem~\eqref{eq:phi} given by Lemma \ref{lem:phi} with
  $\Omega=\mathbb{R}^d \setminus \overline{U}$.
  Under these assumptions we would like to prove the log-Sobolev
  inequality
  \begin{equation}
    \label{eq:logSob-general}
    \lambda_\Omega \int_{\Omega_\tau} g(y) \log \frac{g(y)}{F_{\tau}(y)}\d y
    \leq \int_{\Omega_\tau} \left|\nabla \log \frac{g}{F_{\tau}} (y)\right|^2g(y) \d y
  \end{equation}
  for all $\tau \geq 0$ and all positive $g$ with
  $\int_{\Omega_\tau} g = 1$.  If we take the change of variables
  $y = e^{-t}\Psi(e^tx)$ in \eqref{eq:logSob-general} and rename
  $\widetilde{g}(x)=g(e^{-t}\Psi(e^tx))$ and
  $\widetilde{F}_{\tau}(x) = F_{\tau}(e^{-\tau}\Psi(e^\tau x))$ we see
  \eqref{eq:logSob-general} is equivalent to
  \begin{equation*}
    \label{eq:49}
    \lambda_\Omega
    \int_{B_\tau^c} \widetilde{g}(x)
    \log \frac{\widetilde{g}(x)}{\widetilde{F}_{\tau}(x)}\d x
    \leq
    \int_{B_\tau^c}
    \left|\left(
        \nabla \log \frac{\widetilde{g}}{\widetilde{F}_{\tau}}(x)
        \right)
      (D \Psi(e^\tau x))^{-1}
    \right|^2
    \widetilde{g}(x) \d x
  \end{equation*}
  for all $\widetilde{g}$ with $\int_{B_\tau^c} \widetilde{g} = 1$,
  where we used that $J_\Psi$ is always $1$ (and hence the change of
  variables we are using also has Jacobian $1$). By our assumption
  \eqref{eq:Psi4}, we also have for any $v, z \in \R^d$,
  \begin{equation*}
    |v|^2 =
    |v (D\Psi (z))^{-1} (D\Psi (z))|^2 \leq
    C_\Psi^2\, | v (D\Psi (z))^{-1} |^2.
  \end{equation*}
  Hence in order to show \eqref{eq:logSob-general} it is enough to prove
  \begin{equation*}
    \label{eq:50}
    C_\Psi^2 \lambda_\Omega
    \int_{B_\tau^c} \widetilde{g}
    \log \frac{\widetilde{g}}{\widetilde{F}_{\tau}}\d x
    \leq
    \int_{B_\tau^c}
    \left|
        \nabla \log \frac{\widetilde{g}}{\widetilde{F}_{\tau}}
    \right|^2
    \widetilde{g} \d x,
  \end{equation*}
  which is precisely a logarithmic Sobolev inequality for the density
  $\widetilde{F}_\tau$. We may write
  \begin{gather*}
    \widetilde{F}_\tau(x)
    = F^B_\tau (x) e^{-A(x)},
\quad\text{where}\\
    A(x) := \log F^B_\tau(x) - \log \widetilde{F}_\tau(x)
    =
    2 \log \frac{\phi_R(e^{\tau}x)}{\phi( \Psi(e^\tau x) )}
    + \log \frac{G(x)}{ G(e^{-\tau} \Psi(e^\tau x))}.
  \end{gather*}
  Using the properties of $\Psi$ from
  \eqref{eq:Psi1}--\eqref{eq:Psi4}, and the behaviour of $\phi$ at the
  boundary given by Lemma \ref{lem:phi2}, we see that there exist $0 <
  c_1 < c_2$ such that
  \begin{equation*}
    c_1
    \leq \frac{\phi_R(e^{\tau}x)}{\phi( \Psi(e^\tau x) )}
    \leq c_2,
    \qquad
    c_1 \leq
    \frac{G(x)}{ G(e^{-\tau} \Psi(e^\tau x))}
    \leq c_2\quad\text{for all $\tau \geq 0$ and all $x \in B_\tau^c$.}
  \end{equation*}
  This shows that $A$  has finite oscillation $\operatorname{osc}(A) := \sup A - \inf
  A$. By the Holley-Stroock perturbation Lemma \ref{lem:HolleyStroock}
  we obtain the following result:

  \begin{prp}[Logarithmic Sobolev inequality for $F_{\tau}$]
    \label{prp:logSob-Ft-d2+}
    Assume Hypothesis \ref{hyp:diffeomorphism} in dimension $d \geq
    2$. There exists $\lambda = \lambda(\Omega) > 0$, independent of
    $\tau$, such that the logarithmic Sobolev inequality
    \begin{equation*}
      \lambda \int_{\Omega_\tau} g \log \frac{g}{F_{\tau}}\d y
      \leq \int_{\Omega_\tau}\left|\nabla \log \frac{g}{F_{\tau}} \right|^2g \d y
    \end{equation*}
    holds for all $\tau \geq 0$ and all positive $g \in L^1(\Omega_\tau)$
    with $\int_{\Omega_\tau} g = 1$.
  \end{prp}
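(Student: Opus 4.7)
The plan is to transport the log-Sobolev inequality from the ball-complement case (Lemma \ref{lem:logsob-2d}) to the general domain $\Omega_\tau$ via the diffeomorphism $\Psi$ supplied by Hypothesis \ref{hyp:diffeomorphism}, and then to absorb the mismatch between the two densities using the Holley--Stroock perturbation lemma (Lemma \ref{lem:HolleyStroock}). The entire scheme is already sketched in the paragraphs preceding the statement, so my task is essentially to verify that each step produces $\tau$-independent constants.

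First, I would perform the change of variables $y = e^{-\tau} \Psi(e^\tau x)$, which maps $B_\tau^c$ bijectively onto $\Omega_\tau$. Because $J_\Psi \equiv 1$ by \eqref{eq:Psi3}, the Jacobian of this rescaled change of variables is also $1$, so mass-$1$ probability densities on $\Omega_\tau$ correspond to mass-$1$ densities on $B_\tau^c$ via $\widetilde g(x) := g(e^{-\tau}\Psi(e^\tau x))$, and analogously for $\widetilde F_\tau$. Under this transformation the entropy is unchanged, while the Dirichlet form picks up a factor $(D\Psi(e^\tau x))^{-1}$ in the gradient. The uniform lower bound $\|(D\Psi)^{-1}\|_2 \geq C_\Psi > 0$ from \eqref{eq:Psi4} then yields
\begin{equation*}
  \int_{\Omega_\tau} g \Bigl|\nabla \log \tfrac{g}{F_\tau}\Bigr|^2 \d y
  \geq C_\Psi^2 \int_{B_\tau^c} \widetilde g \Bigl|\nabla \log \tfrac{\widetilde g}{\widetilde F_\tau}\Bigr|^2 \d x,
\end{equation*}
so it suffices to prove a log-Sobolev inequality for $\widetilde F_\tau$ on $B_\tau^c$ with a constant that does not depend on $\tau$.

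Second, I would compare $\widetilde F_\tau$ to the ball-case transient equilibrium $F_\tau^B$ on $B_\tau^c$, which by Lemma \ref{lem:logsob-2d} satisfies a log-Sobolev inequality with constant $\lambda > 0$ independent of $\tau$. Writing $\widetilde F_\tau = F_\tau^B \, e^{-A}$ one finds
\begin{equation*}
  A(x) = 2 \log \frac{\phi_R(e^\tau x)}{\phi(\Psi(e^\tau x))} + \log \frac{G(x)}{G(e^{-\tau}\Psi(e^\tau x))}.
\end{equation*}
I then need to bound $\operatorname{osc}(A)$ uniformly in $\tau \geq 0$; this is the real work of the proof. By \eqref{eq:Psi2} the second term vanishes whenever $|e^\tau x| \geq R_0$, so only the region $|x| \leq e^{-\tau} R_0$ contributes, on which $|x|$ and $|e^{-\tau}\Psi(e^\tau x)|$ are both $O(e^{-\tau})$, keeping the Gaussian ratio uniformly bounded. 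For the first term, the smoothness and invertibility of $\Psi$ together with the Hopf-type linear behavior of $\phi$ and $\phi_R$ near their respective boundaries (Lemma \ref{lem:phi2}) give a uniform comparison $c_1 \leq \phi_R(e^\tau x)/\phi(\Psi(e^\tau x)) \leq c_2$ near the boundary; far from the boundary both $\phi$ and $\phi_R$ are bounded above and below by positive constants (in $d \geq 3$) or grow like $\log|\cdot|$ with the same leading constant (in $d=2$, using \eqref{eq:phi-bounds-d2} applied to $x_0$ and $\Psi$ being the identity for large arguments).

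With $\operatorname{osc}(A)$ bounded uniformly in $\tau$, Lemma \ref{lem:HolleyStroock} gives
\begin{equation*}
  \lambdaL(\widetilde F_\tau) \geq \lambda \, e^{-\operatorname{osc}(A)} \geq \lambda' > 0,
\end{equation*}
with $\lambda'$ independent of $\tau$. Combining this with the change-of-variables inequality above yields the desired log-Sobolev inequality on $\Omega_\tau$ with constant $\lambda_\Omega = C_\Psi^2 \, \lambda'$. The main obstacle, as noted, is the uniform oscillation bound on $A$, particularly near the (moving) boundary $\partial \Omega_\tau$ in dimension $2$, where $\phi$ and $\phi_R$ both diverge logarithmically and one has to use the matching leading behavior established in Lemma \ref{lem:phi} together with the linear boundary behavior from Lemma \ref{lem:phi2}.
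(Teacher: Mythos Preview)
Your proposal is correct and follows essentially the same approach as the paper: the argument is laid out in the paragraphs immediately preceding the proposition (change of variables via $\Psi$, reduction to a log-Sobolev inequality for $\widetilde F_\tau$ on $B_\tau^c$, comparison with $F_\tau^B$ through the potential $A$, and Holley--Stroock). You supply more detail than the paper on the uniform oscillation bound for $A$---splitting into near-boundary (Lemma~\ref{lem:phi2}) and far-field (Lemma~\ref{lem:phi}, with $\Psi$ equal to the identity outside $B_{R_0}$) regimes---but this is exactly the content the paper summarises in the sentence invoking \eqref{eq:Psi1}--\eqref{eq:Psi4} and Lemma~\ref{lem:phi2}.
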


  We actually make the following conjecture, which we have been unable
  to prove or disprove: if $\lambda_\tau$ is the optimal constant in
  the above logarithmic Sobolev inequality, then we expect that
  \begin{equation*}
    \lim_{\tau \to +\infty} \lambda_\tau = 2.
  \end{equation*}
  This seems reasonable, since $2$ is the optimal constant for the
  standard Gaussian in $\R^d$, and $F_{\tau}$ approaches a standard
  Gaussian as $\tau \to +\infty$. However, this approach happens in a
  quite singular way which does not allow for the use of standard
  perturbation results for logarithmic Sobolev inequalities.

\section{$L^1$ estimates with weight $\phi$}
\label{sec:l1}

This section is devoted to the proof of Theorem \ref{thm:main-L1}. We
split it in three parts, according to the spatial dimension.

\subsection{Convergence in dimension $d \geq 3$}
\label{sec:d3}

We start by proving the result in the Fokker-Planck variables introduced in Section \ref{sec:entropy}:
\begin{prp}
  \label{prp:d3}
  Assume the conditions of Theorem \ref{thm:main-L1} in dimension
  $d \geq 3$. There exists a constant $C = C(d, \Omega) > 0$,
  invariant by translations of $\Omega$, such that
  \begin{equation}\label{eq:weighted.convergence.d.ge3}
      \| g(\tau) - m_\phi F_\tau \|_{L^1(\Omega)}\leq C m_\phi^{1/2} \left(h_0 + M_1 \right)^{1/2} e^{-\frac\lambda2 \tau}
      \qquad \text{for all $\tau \geq 0$,}
  \end{equation}
  where $\displaystyle h_0 := \int_\Omega \phi(x) u_0(x) \log \frac{u_0(x)}{m_\phi k_{1/2} \phi(x) G(x)} \d x$.
\end{prp}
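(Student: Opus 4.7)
My plan is to run the entropy method outlined in Section~\ref{sec:heat-in-exterior-domain}: pass to the Fokker--Planck variables~\eqref{eq:change2}, control the relative entropy $H(g(\tau)|F_\tau)$ by a differential inequality combining the log-Sobolev inequality (Hypothesis~\ref{hyp:logsob}) with a bound on the remainder $R(\tau)$, and then convert the entropy estimate into an $L^1$ estimate via Csisz\'ar--Kullback~\eqref{eq:csiszar-kullback}. Thanks to the degree-one homogeneity of the $\phi$-weighted relative entropy noted in Section~\ref{sec:entropy}, I would first reduce to the normalised case $m_\phi=1$; the general case follows by undoing this rescaling at the end, producing the factor $m_\phi^{1/2}$ in the statement.

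With $m_\phi=1$, combining the entropy dissipation identity~\eqref{eq:derivada_entropia} with the log-Sobolev bound gives
\begin{equation*}
\ddtau H(g(\tau)|F_\tau) \;\leq\; -\lambda\, H(g(\tau)|F_\tau) + |R(\tau)|,
\end{equation*}
so the proof reduces to a sharp decay estimate on $R(\tau)$. Using the rewriting~\eqref{eq:remainder-rewritten} and the change of variables $x=e^\tau y$, I would express $R(\tau)$ as an integral against $\nabla\phi(x)\cdot x$ tested on the difference $u(t,x)-K_\tau e^{-d\tau}\phi(x)G(e^{-\tau}x)$. Lemma~\ref{lem:phi-grad} provides $|\nabla\phi(x)\cdot x|\lesssim(1+|x-x_0|)^{2-d}$ uniformly on~$\Omega$. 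For the $u$-piece, Corollary~\ref{cor:moment-regularisation-d3} (supplemented for $t\lesssim 1$ by the trivial bound coming from $m_0$) yields $\int_\Omega u(t,x)(1+|x-x_0|)^{-(d-2)}\d x \lesssim m_0\, e^{-(d-2)\tau}$. A parallel change of variables for the $F_\tau$-piece, together with the uniform bound $K_\tau\lesssim 1$ from Proposition~\ref{prp:Kt-final}, produces the same $e^{-(d-2)\tau}$ decay. Altogether I expect $|R(\tau)|\leq C\, m_0\, e^{-(d-2)\tau}$ with a constant invariant by translations of~$\Omega$.

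Once this bound is in place, Gronwall's lemma, using the condition $\lambda<d-2$ that is part of Hypothesis~\ref{hyp:logsob}, yields $H(g(\tau)|F_\tau)\leq C(h_0+m_0)e^{-\lambda\tau}$. The Csisz\'ar--Kullback inequality then gives $\|g(\tau)-F_\tau\|_1 \leq C(h_0+m_0)^{1/2}e^{-\lambda\tau/2}$, and undoing the rescaling $u\to m_\phi u$ produces the desired $m_\phi^{1/2}(h_0+M_1)^{1/2}$ factor (using $m_0\leq M_1$ and $m_\phi\le m_0$ in dimension $d\geq 3$, since $\phi\le 1$).

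The main obstacle will be controlling $R(\tau)$ with a translation-invariant constant. Two ingredients are essential: the weighted negative-moment estimate in Corollary~\ref{cor:moment-regularisation-d3}, which must be applied with the $(1+|x-x_0|)^{-(d-2)}$ weight rather than the bare $|x|^{-(d-2)}$, and the precise asymptotics $|\nabla\phi(x)|\lesssim|x-x_0|^{1-d}$ from Lemma~\ref{lem:phi-grad}. The strict inequality $\lambda<d-2$ built into Hypothesis~\ref{hyp:logsob} is precisely the threshold that allows Gronwall to absorb the $e^{-(d-2)\tau}$ remainder into the $e^{-\lambda\tau}$ decay without loss of rate; if one could instead show $\lambda_\tau\to 2$ as expected, a more delicate argument would be needed for $d\ge 5$, but with the present hypothesis the step is straightforward.
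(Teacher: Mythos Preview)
Your strategy is correct and essentially matches the paper's proof: reduce to $m_\phi=1$, combine the entropy identity~\eqref{eq:derivada_entropia} with the log-Sobolev inequality, bound the remainder by $C\,e^{-(d-2)\tau}$, integrate via Gronwall using $\lambda<d-2$, and apply Csisz\'ar--Kullback.

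There is, however, one gap concerning translation invariance. The pointwise bound $|\nabla\phi(x)\cdot x|\lesssim(1+|x-x_0|)^{2-d}$ that you quote does hold on a fixed $\Omega$, but its implicit constant depends on $|x_0|$: from Lemma~\ref{lem:phi-grad} one only gets
\[
|\nabla\phi(x)|\,|x|\;\lesssim\; |x-x_0|^{1-d}\big(|x-x_0|+|x_0|\big),
\]
and the factor $|x_0|$ is not invariant under translations of the domain. Since translation invariance of the constant is precisely what makes this proposition useful downstream (it is what drives Corollary~\ref{cor:heat_kernel_bounds}), this step does not deliver what you need. The fix is immediate and is exactly what the paper does: do not absorb the factor $|x|$ into a pointwise bound on $\nabla\phi$. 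Keep $|\nabla\phi(x)|\,|x|\lesssim |x-x_0|^{1-d}|x|$ and apply the negative-moment estimate \emph{with the $|x|$ weight present}, i.e.\ Lemma~\ref{lem:negative-moments} with $k=d-1$, $p=1$ (or Corollary~\ref{cor:moment-regularisation-d3} with $j=d-1$, $k=1$), which gives
\[
\int_\Omega u(t,x)\,|x-x_0|^{1-d}|x|\,\d x \;\lesssim\; M_1\,(1+t)^{-(d-2)/2}
\]
with a constant independent of $x_0$. This also produces $M_1$ directly rather than through the detour $m_0\leq M_1$.

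As a side remark, the paper does not use the rewriting~\eqref{eq:remainder-rewritten} here. It splits $\partial_\tau\log F_\tau = K'_\tau/K_\tau + 2\,\partial_\tau\log\phi(e^\tau y)$ and bounds each piece integrated against $g$ alone: the first via Lemma~\ref{lem:Kt'/Kt}, the second as above. Your route through~\eqref{eq:remainder-rewritten} is equivalent once the above correction is made (your $F_\tau$-piece is exactly $-K'_\tau/K_\tau$), but the paper's decomposition saves you from having to estimate the $F_\tau$-piece separately.
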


\begin{proof}
  We assume that $u_0$ is such that $m_\phi = 1$ (equivalently, $\|g_0\|_1 = 1$); for a general nonnegative (and
  nontrivial) $u_0$, the statement applied to $g / m_\phi$ gives the full result.

Combining~\eqref{eq:derivada_entropia} with the logarithmic Sobolev inequality~\eqref{eq:log_sob-hyp.1} from Hypothesis~\ref{hyp:logsob} we get
  \begin{equation}
    \label{eq:10}
    \ddtau H(g(\tau) \,|\, F_\tau)
    \leq - \lambda H(g(\tau) \,|\, F_\tau)
    - \int_{\Omega_\tau} g(\tau) \frac{\p_\tau F_\tau}{F_\tau},
    \qquad \tau \geq 0.
  \end{equation}
  In order to estimate the second term in the right-hand side of~\eqref{eq:10} we write
  \begin{equation*}
    \frac{\partial_\tau F_\tau}{F_\tau}= A_1(\tau) + A_2(\tau), \quad\text{where }
    A_1(\tau)= \frac{K'_\tau}{K_\tau},\quad
    A_2(\tau)(y)= \frac{2 \nabla \phi(e^\tau y)  \cdot(e^\tau  y)}{\phi(e^\tau y)}.
  \end{equation*}
  We know from Lemma~\ref{lem:Kt'/Kt} that
  $|A_1(\tau)| \lesssim e^{-(d-2) \tau}$, so
  \begin{equation*}
    \Big | \int_{\Omega_\tau} g(\tau)  A_1(\tau)\Big|
    \lesssim \| g_0 \|_1 e^{-(d-2) \tau}
    = m_\phi \,e^{-(d-2) \tau} =e^{-(d-2) \tau}.
  \end{equation*}
  Besides, undoing the change of variables
  ~\eqref{eq:change1},~\eqref{eq:change2}, and using the
  bound~\eqref{eq:gradphi-d3} on~$|\nabla \phi|$ and
  Lemma~\ref{lem:negative-moments}, we have for all $\tau \geq 0$ that
  \begin{align*}
    \Big| \int_{\Omega_t} g(\tau) A_2(\tau)\Big| &\leq 2 \int_{\Omega_t}
    g(\tau, y) \frac{|\nabla \phi(e^\tau y) |\, |e^\tau y|}{\phi(e^\tau y)}\d y
    = 2 \int_\Omega u(t,x) |\nabla \phi(x)| \, |x| \d x
    \\
    &
      \lesssim
      \int_\Omega u(t,x) |x-x_0|^{1-d} |x| \d x \lesssim
      M_1 (1 + t) ^{-(d-2)/2}
      \leq
      M_1 e^{-(d-2) \tau}.
  \end{align*}
  Recalling that we are assuming that
  $m_\phi=1$, so that $1 = m_\phi \leq M_1$, we have then
  \begin{equation*}
    \left| \int_{\Omega_\tau}g(\tau)\frac{\partial_\tau F_\tau}{F_\tau} \right|
    \lesssim
    M_1 e^{-(d-2) \tau}.
  \end{equation*}
  Plugging this in equation \eqref{eq:10} we get
  \begin{equation}
    \label{eq:46}
    \ddt H(g(\tau) \,|\, F_\tau)
    \leq - \lambda H(g(\tau) \,|\, F_\tau)
    + C M_1 e^{-\tau(d-2)},
  \end{equation}
  which can be easily integrated to obtain (recall that we are assuming $\lambda < d-2$)
  \begin{equation*}
    H(g(\tau)\,|\, F_\tau)
    \leq
    \left( h_0+\frac{CM_{1}}{d-2-\lambda}\right)
    e^{-\lambda \tau}
    \lesssim
    \big( h_0 + M_1 \big)\,
    e^{-\lambda \tau}.
  \end{equation*}
  The desired result~\eqref{eq:weighted.convergence.d.ge3} (with $m_\phi=1$) follows then from Csiszár-Kullback's inequality~\eqref{eq:csiszar-kullback} .
\end{proof}

\begin{rem}
  \label{rem:lambda-value}
  The assumption $\lambda<d-2$ is only used when solving the differential inequality~\eqref{eq:46}. If we want to obtain better precision (and assuming we have better information on $\lambda$) we can of course solve the inequality for any $\lambda$. This leads to our conjectured rates of convergence from Remark \ref{rem:optimal_rates}.
\end{rem}

We can undo the change of variables in order to ``translate'' Proposition~\ref{prp:d3} from $g$ to $u$, and then use the entropy regularisation result in Lemma \ref{lem:phiuloguG-bound} to improve the dependence on $h_0$ and $m_{1,\phi}$. We thus obtain a result which is already very close to Theorem \ref{thm:main-L1} in dimensions $d \geq 3$.
\begin{lem}
  Theorem \ref{thm:main-L1} holds in $d \geq 3$ with the slightly weaker estimate
  \begin{equation*}
    \label{eq:L1-weaker}
    \int_\Omega \phi(x) \left|u(t,x) - m_\phi \phi(x)\Gamma(t,x)\right| \d x
    \leq \frac{C m_\phi^{1/2} M_{2,\phi}^{1/2}}{t^{\lambda/4}}
    \quad \text{for all }t \geq 2,
  \end{equation*}
  for some constant $C > 0$ depending only on the dimension $d$ and the domain $\Omega$, and invariant by translations of $\Omega$.
\end{lem}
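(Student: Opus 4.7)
The plan is to derive the claim from Proposition \ref{prp:d3} in three steps: undoing the change of variables of Section \ref{sec:entropy}, regularising to replace the initial-data-dependent quantities $h_0$ and $M_1$ by moments of $u_0$, and removing the normalisation factor $k_t$. For the first step, setting $x = e^\tau y$ and using $e^{-d\tau}G(e^{-\tau}x) = \Gamma(t+\tfrac12, x)$ with $t = (e^{2\tau}-1)/2$, together with the identification $K_\tau = k_{t+1/2}$ from \eqref{eq:Ktau-kt}, transforms the $L^1$ bound of Proposition \ref{prp:d3} into
\begin{equation*}
  \int_\Omega \phi(x)\left|u(t,x) - m_\phi k_{t+1/2}\phi(x)\Gamma(t+\tfrac12,x)\right|\d x
  \leq \frac{Cm_\phi^{1/2}(h_0+M_1)^{1/2}}{(2t+1)^{\lambda/4}},
\end{equation*}
valid for all $t \geq 0$, since $e^{-\lambda \tau/2} = (2t+1)^{-\lambda/4}$.

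For the second step, I would apply this estimate to the time-shifted solution $\tilde u(s,x) := u(s+\tfrac12, x)$, treated as a standard solution with new initial data $\tilde u_0(x) := u(\tfrac12, x)$. In the shifted problem, the ``initial'' relative entropy $H(\phi u(\tfrac12,\cdot)\,|\, m_\phi K_0 \phi^2 G)$ is bounded by $C M_{2,\phi}$ by Lemma \ref{lem:phi-rel-entropy-regularisation}(i), while the moment $M_1(\tfrac12)$ is bounded by $C M_{1,\phi} \leq C M_{2,\phi}$ by Corollary \ref{cor:moment-regularisation-d3} applied at $t=1/2$. Rewriting in the original time variable $t = s + \tfrac12$ yields, for all $t \geq \tfrac12$,
\begin{equation*}
  \int_\Omega \phi(x)\left|u(t,x) - m_\phi k_t\phi(x)\Gamma(t,x)\right|\d x
  \leq \frac{Cm_\phi^{1/2}M_{2,\phi}^{1/2}}{(2t)^{\lambda/4}}.
\end{equation*}

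For the third step, the triangle inequality introduces an extra error term of at most
\begin{equation*}
  m_\phi |k_t - 1|\int_\Omega \phi(x)^2 \Gamma(t,x)\d x = \frac{m_\phi|k_t-1|}{k_t},
\end{equation*}
which, by \eqref{eq:kt-d3-changed} and the lower bound $k_t \geq 1$ from Proposition \ref{prp:Kt-final}, is at most $Cm_\phi t^{-(d-2)/2}$. Because Hypothesis \ref{hyp:logsob} imposes $\lambda < d-2$, we have $t^{-(d-2)/2} \leq t^{-\lambda/4}$ for $t \geq 1$, and the crude bound $m_\phi \leq m_\phi^{1/2} M_{2,\phi}^{1/2}$ absorbs this correction into the stated right-hand side, yielding the claim for $t \geq 2$.

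The main obstacle is largely bookkeeping: preserving translation invariance of the final constant at every step relies on the translation invariance of Proposition \ref{prp:d3}, of the regularisation estimates in Lemma \ref{lem:phi-rel-entropy-regularisation} and Corollary \ref{cor:moment-regularisation-d3}, and of the asymptotic estimate $|k_t - 1| \lesssim t^{-(d-2)/2}$ from \eqref{eq:kt-d3-changed}, all of which have been proved with this property. The restriction $\lambda < d-2$ built into Hypothesis \ref{hyp:logsob} is used precisely in the last step, to ensure that the correction coming from $k_t - 1$ decays faster than the main rate $t^{-\lambda/4}$.
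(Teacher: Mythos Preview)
Your proposal is correct and follows essentially the same three-step approach as the paper: undo the change of variables, shift time by $1/2$ and invoke Lemma~\ref{lem:phi-rel-entropy-regularisation} and Corollary~\ref{cor:moment-regularisation-d3} to replace $h_0+M_1$ by $M_{2,\phi}$, then remove $k_t$ using $|k_t-1|\lesssim t^{-(d-2)/2}$ and $\lambda<d-2$. The only cosmetic difference is that in the last step you use the exact identity $\int_\Omega\phi^2\Gamma=1/k_t$ together with $k_t\geq 1$, whereas the paper simply bounds $\phi^2\leq 1$; both yield the same estimate.
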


\begin{proof}
  The estimate~\eqref{eq:weighted.convergence.d.ge3} rewritten under
  the change of variables~\eqref{eq:change1}--\eqref{eq:change-tx}
  reads
  \begin{equation*}
    \int_\Omega\phi(x)\Big|u(t,x)-k_{t+\frac12}m_\phi\phi(x)
    \Gamma\Big(t+\frac12,x\Big)\Big|\d x
    \lesssim m_\phi^{1/2}(h_0+M_1)^{1/2}(2 t +1)^{-\lambda/4},
  \end{equation*}
  valid for all $t > 0$ and all solutions $u$. If we call
  $u_{1/2}(x) := u(1/2,x)$, $x \in \Omega$, and we call $h_{1/2}$ the
  analog to $h_0$ at time $t=1/2$, we may use the above inequality for
  the solution starting at time $t=1/2$ to obtain
  \begin{equation*}
    \int_\Omega\phi(x)|u(t,x)-k_{t}m_\phi\phi(x)\Gamma(t,x)|\d x\lesssim m_\phi^{1/2}(h_{1/2}+M_{1}(1/2))^{1/2}\,t^{-\frac{\lambda}{4}}\qquad\text{for $t>\frac12$.}
  \end{equation*}
  Lemma \ref{lem:phi-rel-entropy-regularisation} yields
  $h_{1/2} \lesssim M_{2,\phi}$, and
  \Cref{cor:moment-regularisation-d3} shows
  $M_1(1/2) \lesssim M_{1,\phi} \lesssim M_{2,\phi}$. Then,
  \begin{equation}\label{eq:weighted.convergence.with.kt}
    \int_\Omega \phi(x)|u(t,x) -   k_{t} m_\phi\phi(x) \Gamma(t, x)| \d x
    \lesssim
    m_\phi^{1/2} M_{2,\phi}^{1/2} \, t^{-\frac{\lambda}{4}}\qquad \text{for $t > \frac12$.}
  \end{equation}
  By Proposition~\ref{prp:Kt-final} (see also equation \eqref{eq:kt-d3-changed}),
  \begin{equation*}
    |k_t - 1|  \int_\Omega \phi^2(x) \Gamma(t, x)\d x
    \lesssim t^{-\frac{d-2}{2}} \int_\Omega \Gamma(t, x)\d x
    = t^{-\frac{d-2}{2}} \lesssim t^{-\frac{\lambda}{4}}
    \qquad \text{for $t > \frac12$.}
  \end{equation*}
  This shows that we may remove $k_t$ from the left-hand side
  of~\eqref{eq:weighted.convergence.with.kt}; that is,
  \begin{equation*}
    \label{eq:31}
    \int_\Omega \phi(x)|u(t,x) - m_\phi\phi(x) \Gamma(t, x)| \d x
    \lesssim m_\phi^{1/2}M_{2,\phi}^{1/2} \, t^{-\frac{\lambda}{4}}
    \qquad \text{for $t > \frac12$.}
  \end{equation*}
  This shows the inequality in the statement (which is written for $t
  \geq 2$, for consistency in other statements, and since the lower
  bound on $t$ is unimportant).
\end{proof}

By taking an initial condition $u_0$ which approximates the delta
function $\delta_y$, using that $\phi\leq 1$ and that $\sqrt{a^2+b^2}\lesssim a+b$ we immediately obtain the following estimate for the heat kernel.

\begin{cor}
  Under the assumptions of \Cref{thm:main-uniform} in dimension $d \geq 3$,
  \begin{equation}
    \label{eq:kernel-L1-estimate-d3}
    \int_\Omega \phi(x) \left|
      p_\Omega(t,x,y) - \phi(x) \phi(y)  \Gamma(t,x)
    \right| \d x\leq\frac{C \phi(y) (1 + |y|)}{t^{\lambda/4}},
    \quad y \in \Omega, \ t \geq 2.
  \end{equation}
\end{cor}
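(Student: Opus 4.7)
The plan is to deduce the kernel estimate from Theorem~\ref{thm:main-L1}~(i) by an approximation argument, very much in the spirit of the proof of Corollary~\ref{cor:heat_kernel_bounds} given above. Fix $y \in \Omega$ and take a sequence $\{\varphi_n\}\subset C^\infty_c(\Omega)$ of nonnegative mollifiers with $\supp \varphi_n \subset B_{1/n}(y) \cap \Omega$ and $\int_\Omega \varphi_n = 1$, so that $\varphi_n \to \delta_y$ weakly. Let $u_n$ be the standard solution to \eqref{eq:heat-ext} with initial datum $\varphi_n$; by definition of the kernel,
\begin{equation*}
u_n(t,x) = \int_\Omega p_\Omega(t,x,z)\,\varphi_n(z)\,\d z.
\end{equation*}
Applying Theorem~\ref{thm:main-L1}~(i) to each $u_n$ yields
\begin{equation*}
\int_\Omega \phi(x)\,\bigl|u_n(t,x) - m_\phi[\varphi_n]\,\phi(x)\,\Gamma(t,x)\bigr|\,\d x \leq \frac{C\, M_{1,\phi}[\varphi_n]}{t^{\lambda/4}}, \qquad t \geq 2,
\end{equation*}
with a constant $C$ that does not depend on $n$ (the initial data varies, but the domain does not).

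Next I would compute the limits of the moments. By continuity of $\phi$ and of $x\mapsto (1+|x|)\phi(x)$ at the point $y$, the standard theory of mollifiers gives
\begin{equation*}
m_\phi[\varphi_n] \longrightarrow \phi(y), \qquad M_{1,\phi}[\varphi_n] \longrightarrow (1+|y|)\phi(y)
\end{equation*}
as $n \to \infty$. It remains to pass to the limit on the left-hand side.

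The main technical step is a dominated convergence argument. Pointwise for each $x\in\Omega$, continuity of $p_\Omega(t,x,\cdot)$ ensures $u_n(t,x) \to p_\Omega(t,x,y)$, so the integrand converges pointwise to $\phi(x)\,|p_\Omega(t,x,y) - \phi(y)\phi(x)\Gamma(t,x)|$. To justify interchanging limit and integral, I would produce an integrable dominating function using the Gaussian upper bound~\eqref{eq:Zhang2}: for $z\in\supp\varphi_n$ with $n$ large,
\begin{equation*}
p_\Omega(t,x,z) \leq c_1\phi(x)\phi(z)\Gamma(c_2 t, x-z) \leq C_y\,\phi(x)\,\Gamma(c_2' t, x-y),
\end{equation*}
where $C_y, c_2'$ depend on $y$ and $t$ but not on $n$, since $\phi(z)$ is bounded on a neighbourhood of $y$ and $|x-z|\geq |x-y| - 1/n$ allows absorbing the small shift into a slightly larger diffusion constant. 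Integrating against $\varphi_n$ gives the uniform bound $u_n(t,x) \leq C_y \phi(x)\Gamma(c_2' t, x-y)$; hence $\phi(x)u_n(t,x) \leq C_y \phi(x)^2\Gamma(c_2' t, x-y)$, which is integrable in $x$. The Gaussian term $m_\phi[\varphi_n]\phi(x)^2\Gamma(t,x)$ is dominated similarly. By dominated convergence, the left-hand side of the inequality above converges to $\int_\Omega \phi(x)\,|p_\Omega(t,x,y) - \phi(y)\phi(x)\Gamma(t,x)|\,\d x$, and the right-hand side converges to $C(1+|y|)\phi(y)\,t^{-\lambda/4}$, yielding~\eqref{eq:kernel-L1-estimate-d3}.

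I do not anticipate any serious obstacle here: the only delicate point is the uniform Gaussian domination, which is already handled by Corollary~\ref{thm:Zhang_t>1}. Everything else is a routine application of Theorem~\ref{thm:main-L1}~(i) together with the mollifier limit for $m_\phi$ and $M_{1,\phi}$.
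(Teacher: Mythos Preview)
Your approximation argument is exactly the paper's strategy, and the technical details (mollifiers, moment convergence, Gaussian domination via Corollary~\ref{thm:Zhang_t>1}) are fine. There is, however, a circularity: in the paper's logical order, Theorem~\ref{thm:main-L1}~(i) in dimension $d\ge 3$ is proved \emph{after} this corollary, and its proof uses~\eqref{eq:kernel-L1-estimate-d3} as the key input. So you cannot cite Theorem~\ref{thm:main-L1}~(i) here.

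What is available at this point is the preceding (unnumbered) lemma, which gives the slightly weaker bound
\[
\int_\Omega \phi(x)\,\bigl|u(t,x) - m_\phi\,\phi(x)\,\Gamma(t,x)\bigr|\,\d x \le \frac{C\,m_\phi^{1/2}\,M_{2,\phi}^{1/2}}{t^{\lambda/4}}.
\]
Running your same mollifier limit against this estimate, one has $m_\phi[\varphi_n]\to\phi(y)$ and $M_{2,\phi}[\varphi_n]\to(1+|y|^2)\phi(y)$, so the right-hand side tends to $C\,\phi(y)(1+|y|^2)^{1/2}\,t^{-\lambda/4}\le C\,\phi(y)(1+|y|)\,t^{-\lambda/4}$, which is~\eqref{eq:kernel-L1-estimate-d3}. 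The paper then uses this corollary to bootstrap the $M_{2,\phi}$ dependence in the lemma up to the $M_{1,\phi}$ dependence in Theorem~\ref{thm:main-L1}~(i). Your argument becomes correct and non-circular once you swap the citation.
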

This result is self-improving, and can be used to get the slightly
better bound in Theorem \ref{thm:main-L1}.
\begin{proof}[Proof of Theorem \ref{thm:main-L1} in $d \geq 3$]
  Using \eqref{eq:kernel-L1-estimate-d3}:
  \begin{align*}
    \int_\Omega \phi(x)
    &\left|
      u(t,x) - m_\phi \phi(x) \Gamma(t,x)
      \right| \d x
    \\
    &=
      \int_\Omega \phi(x) \left|
      \int_\Omega u_0(y) \Big(
      p_\Omega(t,x,y) - \phi(x) \phi(y) \Gamma(t,x)
      \Big) \d y
      \right| \d x
    \\
    &\leq
      \int_\Omega u_0(y)
      \int_\Omega \phi(x)  \left|
      p_\Omega(t,x,y) - \phi(x) \phi(y) \Gamma(t,x)
      \right| \d x    \d y
    \\
    &\leq
      C \int_\Omega u_0(y)
      \frac{\phi(y) (1 + |y|)}{t^{\lambda/4}}
      \d y
      \leq
      \frac{C}{t^{\lambda/4}} M_{1,\phi}.
      \qedhere
  \end{align*}
\end{proof}

\subsection{Convergence in dimension $d = 2$}
\label{sec:d2}

In dimension $2$ the proof follows the same strategy, but the estimates of the remainder term $R(\tau)$ are more involved. We start with some preliminary lemmas which give bounds for it.

\begin{lem}
  \label{lem:remainder-x_0}
  Let $\Omega \subseteq \R^2$ satisfying \eqref{eq:hypOmega1}, $x_0 \in \R^2 \setminus \overline{\Omega}$, and $c > 0$. In dimension $d=2$, there is a constant $C>0$ depending only on $c$ and $\dist(x_0, \Omega)$ such that
  \begin{equation*}
    \int_{\Omega} \frac{\Gamma(t, c(x-y))}{|x-x_0|^{2}}  \d x \leq C\,\frac{\log (2+t)}{1+t}\quad\text{for all }t > 0.
  \end{equation*}
\end{lem}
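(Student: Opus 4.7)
The statement must be understood with $C$ uniform in $y \in \R^2$. The natural plan is to reduce to the worst configuration $y = x_0$ using the rearrangement/convolution Lemma \ref{lem:convolution}, and then compute an explicit one-dimensional integral.

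\textbf{Step 1: Domination and convolution.} Since $x_0 \notin \overline\Omega$, the quantity $\delta := \dist(x_0,\Omega) > 0$, and for every $x \in \Omega$ we have $|x-x_0| \geq \delta$. Hence $\mathbbm{1}_\Omega(x) \leq \mathbbm{1}_{\{|x-x_0|\geq \delta\}}$, and setting $g(z):=\Gamma(t,cz)$, $h(z):=|z|^{-2}\mathbbm{1}_{\{|z|\geq\delta\}}$,
\begin{equation*}
  \int_\Omega \frac{\Gamma(t,c(x-y))}{|x-x_0|^2}\d x \leq \int_{\R^2} g(x-y)h(x-x_0)\d x = (g*h)(y-x_0),
\end{equation*}
where we used that $g$ is symmetric. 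Both $g$ and $h$ are nonnegative, radially symmetric, and radially nonincreasing on $\R^2$, so by Lemma~\ref{lem:convolution}(i) the convolution $g*h$ is radially nonincreasing. Therefore it attains its maximum at the origin:
\begin{equation*}
  \int_\Omega \frac{\Gamma(t,c(x-y))}{|x-x_0|^2}\d x \leq (g*h)(0) = \int_{\R^2}\Gamma(t,cw)h(w)\d w.
\end{equation*}

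\textbf{Step 2: Explicit integral.} Passing to polar coordinates and substituting $s = c^2 r^2/(4t)$,
\begin{equation*}
  (g*h)(0) = \frac{1}{2t}\int_\delta^\infty \frac{e^{-c^2 r^2/(4t)}}{r}\d r = \frac{1}{4t}\,E_1(s_0), \qquad s_0:=\frac{c^2\delta^2}{4t},
\end{equation*}
where $E_1(s):=\int_s^\infty e^{-u}/u\,du$ is the exponential integral. It remains to show $E_1(s_0)/(4t) \lesssim \log(2+t)/(1+t)$, with a constant depending only on $c$ and $\delta$.

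\textbf{Step 3: Two regimes.} If $s_0 \geq 1$ (equivalently $t \leq c^2\delta^2/4$), use $1/u \leq 1/s_0$ on $[s_0,\infty)$ to get
\begin{equation*}
  E_1(s_0) \leq \frac{1}{s_0}\int_{s_0}^\infty e^{-u}\d u = \frac{e^{-s_0}}{s_0},
\end{equation*}
so $(g*h)(0) \leq e^{-s_0}/(c^2\delta^2) \leq 1/(c^2\delta^2)$; since $t$ is bounded in this regime, $1 \lesssim \log(2+t)/(1+t)$. If instead $s_0 \leq 1$ (equivalently $t \geq c^2\delta^2/4$), split
\begin{equation*}
  E_1(s_0) = \int_{s_0}^{1}\frac{e^{-u}}{u}\d u + \int_1^\infty \frac{e^{-u}}{u}\d u \leq \log(1/s_0) + 1 \lesssim \log(2+t),
\end{equation*}
so $(g*h)(0) \lesssim \log(2+t)/t \lesssim \log(2+t)/(1+t)$. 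Combining both regimes yields the lemma.

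\textbf{Expected main obstacle.} The only delicate point is the uniformity in $y$: a naive estimate would leave a factor depending on the relative position of the Gaussian centre $y$ and the singularity at $x_0$. Lemma~\ref{lem:convolution}(i) is tailored precisely to this situation and converts the uniformity into the statement that the worst case is $y=x_0$; once this reduction is done, the remaining integral is one-dimensional and elementary.
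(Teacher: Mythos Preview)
Your approach is essentially the same as the paper's: both reduce to the radial case via Lemma~\ref{lem:convolution} and then compute an explicit one-variable integral. However, your Step~1 contains a small but genuine error: the function $h(z)=|z|^{-2}\mathbbm{1}_{\{|z|\geq\delta\}}$ is \emph{not} radially nonincreasing, since it jumps from $0$ up to $\delta^{-2}$ at $|z|=\delta$. Hence Lemma~\ref{lem:convolution}(i) does not apply to $g*h$ as stated (indeed, for very small $t$ the Gaussian $g$ is close to a delta and $g*h\approx h$, which is not nonincreasing).

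The fix is immediate and is exactly what the paper does: replace $h$ by the monotone truncation $\tilde h(z):=\min(|z|^{-2},\delta^{-2})$, which satisfies $h\leq\tilde h$ and \emph{is} radially nonincreasing. Then $(g*h)\leq(g*\tilde h)$, Lemma~\ref{lem:convolution}(i) gives $(g*\tilde h)(y-x_0)\leq(g*\tilde h)(0)$, and $(g*\tilde h)(0)$ differs from your $(g*h)(0)$ only by the harmless extra term $\delta^{-2}\int_{|w|<\delta}\Gamma(t,cw)\d w\leq \delta^{-2}(4\pi t)^{-1}\pi\delta^2=1/(4t)$. Your Steps~2--3 then go through unchanged.
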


\begin{proof}
  Since the integral on the left-hand side is always bounded by $\dist(x_0, \Omega)^{-1}$, it is enough to prove the given bound for all $t > 1$ (for example). We consider the auxiliary truncated function, for an arbitrary $R>0$,
  \[
    f(x)=\begin{cases}
      \displaystyle \frac{1}{|x|^2},&|x|>R,\\[8pt]
      \displaystyle\frac{1}{R^2},&|x|\leq R.
    \end{cases}
  \]
  Now choose an $R>0$ small enough so that the ball of radius $R$ and center $x_0$ satisfies $B_R(x_0)\subset \R^2\setminus\overline\Omega$. Then we have
  \[
    \begin{aligned}
      \int_{\Omega} \frac{\Gamma(t, c(x-y))}{|x-x_0|^{2}}\d x&= \int_{\Omega} f(x-x_0) \Gamma(t, c(x-y)) \d x\\
      &\leq \int_{\R^2} f(x-x_0) \Gamma(t, c(x-y)) \d x\leq \int_{\R^2} f(x) \Gamma(t, cx) \d x
    \end{aligned}
  \]
  where the last inequality comes from the symmetry of $\Gamma$ in the spatial variable and Lemma~\ref{lem:convolution}. We now split the  last integral,
  \[
    \int_{\R^2} f(x) \Gamma(t, cx) \d x = \int_{B_R(0)}\frac{1}{R^2} \Gamma(t, cx) \d x + \int_{B^c_R(0)}\frac{1}{|x|^2} \Gamma(t, cx) \d x.
  \]
  The first integral on the right-hand side is bounded by $C/t$. For the second one we do the change of variables $x=\xi\sqrt{t}$, then we pass to radial
  coordinates $r=|\xi|$ and finally we split the resulting integral from radius $R/\sqrt{t}$ to 1 and from 1 to $\infty$ (we may assume that $R\le1$), arriving at
  \begin{align*}
    \int_{\Omega} \frac{\Gamma(t, c(x-y))}{|x-x_0|^{2}}  \d x&\lesssim\frac{1}{t} + \frac{1}{t}\int_{\frac{R}{\sqrt{t}}}^1\frac{e^{-cr^2}}{r} \d r + \frac{1}{t}\int_{1}^\infty \frac{e^{-cr^2} }{r} \d r\\
    &\lesssim\frac{1}{t}\Big(1+\int_{\frac{R}{\sqrt{t}}}^1 \frac{1}{r} \d r\Big) \lesssim C\frac{\log t}{t},
  \end{align*}
  for all $t > 1$. This shows the result, since $\log t / t$ is asymptotic to $\log (2+t) / (1 + t)$.
\end{proof}

\begin{lem}[Remainder estimate away from $\tau = 0$]
  \label{lem:remainder-dim2-largetime}
  In dimension $d=2$, if $m_\phi=1$,
  \begin{equation*}
    |R(\tau)|^2 \lesssim \frac{1}{\tau^2} H(g(\tau) \,|\, F_\tau)
    \left( 1 + |x_0|^2 \tau e^{-2 \tau} \right)\quad\text{for all }\tau \geq 2.
  \end{equation*}
  The implicit constant is invariant under translations of $\Omega$.
\end{lem}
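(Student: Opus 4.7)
The plan is to start from the estimate~\eqref{eq:remainder-estimate}, which gives
\[
|R(\tau)|^2 \lesssim H(g(\tau)\,|\,F_\tau)\, Q_g(\tau),
\]
and reduce the lemma to showing
\[
Q_g(\tau) \lesssim \tau^{-2}\bigl(1 + |x_0|^2\,\tau\, e^{-2\tau}\bigr) \qquad \text{for all }\tau \ge 2.
\]
I split $Q_g = Q_g^F + Q_g^g$ along the decomposition of $g + F_\tau$, handle each piece by the same two-step mechanism, and combine. The driving gradient bound is $|\nabla\phi(x)| \lesssim |x-x_0|^{-1}$ from Lemma~\ref{lem:phi-grad}, together with the algebraic split $|x|^2 \le 2|x-x_0|^2 + 2|x_0|^2$, which allows me to write
\[
\frac{|\nabla\phi(x)|^2\,|x|^2}{\phi(x)^2}\,\phi(x)^2 \;\lesssim\; 1 + \frac{|x_0|^2}{|x-x_0|^2}
\]
in regions where $\phi(x)^2$ gets multiplied in.

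For the $F_\tau$ piece, since $F_\tau(y) = K_\tau\,\phi(e^\tau y)^2\,G(y)$, the factors $\phi(e^\tau y)^2$ in numerator and denominator cancel, and after changing variables $x = e^\tau y$ (noting that $G(e^{-\tau}x)\,e^{-2\tau}$ is exactly the Gaussian $\Gamma(e^{2\tau}/2, x)$) one arrives at
\[
Q_g^F(\tau) = K_\tau \int_\Omega |\nabla\phi(x)|^2\,|x|^2\,\Gamma(e^{2\tau}/2, x)\,\d x.
\]
The split above then yields two terms. The first is bounded by $K_\tau \lesssim \tau^{-2}$ via Proposition~\ref{prp:Kt-final}. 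The second is $K_\tau\,|x_0|^2$ times an integral of the form handled by Lemma~\ref{lem:remainder-x_0} (with $y=0$, $t = e^{2\tau}/2$), giving an extra factor $\tau e^{-2\tau}$, hence $|x_0|^2\,\tau^{-1}\,e^{-2\tau}$ overall.

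For the $g$ piece, undoing the change of variables~\eqref{eq:change-g}--\eqref{eq:change-tx} gives
\[
Q_g^g(\tau) = \int_\Omega \frac{|\nabla\phi(x)|^2\,|x|^2}{\phi(x)}\,u(t, x)\,\d x,
\]
with $t = (e^{2\tau}-1)/2$. Now I invoke the $d=2$ kernel upper bound~\eqref{eq:SC-d2-tlarge-2} from Theorem~\ref{thm:kernel_dim 2_best}, choosing $\widetilde\phi(t,x) \le \phi(x)/\log(1+t)$ in both arguments, which yields
\[
u(t, x) \;\le\; \frac{C\,\phi(x)}{(\log(1+t))^2}\int_\Omega \phi(y)\,u_0(y)\,\Gamma(ct, x-y)\,\d y.
\]
The $\phi(x)$ cancels with the $\phi(x)^{-1}$ weight, leaving $|\nabla\phi(x)|^2|x|^2$ against a Gaussian convolution of the finite measure $\phi\,u_0$. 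Applying the same $|x|^2 \le 2|x-x_0|^2 + 2|x_0|^2$ split and Fubini, the ``$1$'' part contributes $(\log(1+t))^{-2}\,m_\phi \lesssim \tau^{-2}$ (using $m_\phi = 1$), while the ``$|x_0|^2$'' part becomes
\[
\frac{C\,|x_0|^2}{(\log(1+t))^2} \int_\Omega \phi(y)\,u_0(y) \int_\Omega \frac{\Gamma(ct, x-y)}{|x-x_0|^2}\,\d x\,\d y,
\]
and Lemma~\ref{lem:remainder-x_0} bounds the inner integral by $\log(2+t)/(1+t) \lesssim \tau\,e^{-2\tau}$, yielding again $|x_0|^2\,\tau^{-1}\,e^{-2\tau}$.

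The main obstacle is ensuring the $g$-contribution can be controlled \emph{without} any time-dependent knowledge of $g$ itself, and the resolution is to absorb the singular weight $|\nabla\phi|^2/\phi$ using the extra $\phi(x)$ produced by the kernel bound~\eqref{eq:SC-d2-tlarge-2}, so that everything reduces to Gaussian convolutions amenable to Lemma~\ref{lem:remainder-x_0}. Translation invariance of the implicit constant follows from the translation invariance of the constants in Lemmas~\ref{lem:phi}, \ref{lem:phi-grad}, \ref{lem:remainder-x_0}, and Proposition~\ref{prp:Kt-final}. Adding the two bounds for $Q_g^F$ and $Q_g^g$ gives the desired inequality.
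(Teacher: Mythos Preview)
Your proposal is correct and follows essentially the same route as the paper's proof: start from~\eqref{eq:remainder-estimate}, split $Q_g$ into the $g$- and $F_\tau$-contributions, use $|\nabla\phi(x)|\lesssim |x-x_0|^{-1}$ together with $|x|^2\lesssim |x-x_0|^2+|x_0|^2$, invoke the kernel bound~\eqref{eq:SC-d2-tlarge-2} to cancel the $\phi(x)^{-1}$ in the $g$-term, and apply Lemma~\ref{lem:remainder-x_0} to control the $|x_0|^2/|x-x_0|^2$ integrals. The only cosmetic differences are the order of the two pieces and your slightly more explicit handling of the change of variables for the $F_\tau$-term.
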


\begin{proof}
  We use estimate \eqref{eq:remainder-estimate}, which gives $|R(\tau)|^2 \lesssim H(g(\tau) \,|\, F_\tau)\, Q_g(\tau)$, with
  \begin{align*}
    Q_g(\tau)&:=\int_{\Omega_\tau}\frac{ |\nabla \phi(e^\tau y)|^2 |e^\tau y|^2}{\phi^2(e^\tau y)}(g(\tau,y) + F_\tau(y))\d y\\
    &\lesssim\underbrace{\int_{\Omega_\tau}\frac{ |e^\tau y|^2}{|e^\tau y - x_0|^2 \phi^2(e^\tau y)}g(\tau,y)\d y}_{{\rm I}(\tau)}+\underbrace{\int_{\Omega_\tau}\frac{ |e^\tau y|^2}{|e^\tau y - x_0|^2 \phi^2(e^\tau y)}F_\tau(y)\d y}_{{\rm II}(\tau)},
  \end{align*}
  where we used the estimate~\eqref{eq:gradphi-d3} for
  $|\nabla \phi|$. To estimate ${\rm I}(\tau)$ we use the upper
  estimates on the heat kernel from
  equation \eqref{eq:SC-d2-tlarge-2} and the change of
  variables \eqref{eq:change-g}--\eqref{eq:change-tx},
  \begin{align*}
    {\rm I}(\tau)&=\int_{\Omega}\frac{ |x|^2}{|x-x_0|^2 \phi(x)}u (t,x)\d x\\
    &=\int_{\Omega} \int_{\Omega}\frac{ |x|^2}{|x-x_0|^2 \phi(x)}u_0(y) p_\Omega(t,x,y)\d x \d y
    \\
    &\lesssim\frac{1}{(\log t)^2}\int_{\Omega} u_0(y) \phi(y) \int_{\Omega}\frac{ |x|^2}{|x-x_0|^2} \Gamma(t, c(x-y))\d x \d y
    \\
    &\leq\frac{1}{(\log t)^2}\int_{\Omega}u_0(y)\phi(y)\int_{\Omega}\left(1+\frac{|x_0|^2}{|x-x_0|^2}\right)\Gamma(t, c(x-y))\d x \d y
    \\
    &\lesssim \frac{1}{(\log t)^2} \left( 1 + \frac{|x_0|^2 \log t}{t}\right),
  \end{align*}
  since $m_\phi = 1$, where we also used $|x|^2\leq |x-x_0|^2+|x_0|^2$ and the estimate from Lemma~\ref{lem:remainder-x_0} for the last inequality. Notice that this is valid for $\tau \geq 2$ (so $t$ is also larger than a strictly positive constant). As for ${\rm II}(\tau)$,
  \begin{align*}
    {\rm II}(\tau)
    &=\int_{\Omega}\frac{|x|^2}{|x-x_0|^2}k_{t+\frac12} \Gamma(t+\frac12, x)\d x
    \lesssim\frac{1}{(\log t)^2}\int_{\Omega}\frac{|x|^2}{|x-x_0|^2}\Gamma(t+\frac12, x)\d x
    \\
    &\lesssim\frac{1}{(\log t)^2} \left( 1 + \frac{|x_0|^2 \log t}{t}\right),
  \end{align*}
  where the last inequality is obtained similarly as before. Hence, we finally have
  \begin{equation*}
    Q_g(\tau)
    \lesssim
    \frac{1}{(\log t)^2} \left( 1 + \frac{|x_0|^2 \log t}{t} \right)
    =: q(x_0, t).
  \end{equation*}
  Writing the bound in terms of $\tau$ gives the result.
\end{proof}

\begin{lem}[Remainder estimate for small times]
  \label{lem:remainder-dim2-smalltime}
  In dimension $d=2$,
  \begin{equation*}
    |R(\tau)| \lesssim M_1
    \qquad\text{for all }\tau\in[0,2].
  \end{equation*}
  The implicit constant is invariant under translations of $\Omega$.
\end{lem}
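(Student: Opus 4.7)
My plan is to estimate $R(\tau)$ directly using the decomposition $\partial_\tau \log F_\tau = A_1(\tau) + A_2(\tau)$ that was introduced in the proof of Proposition~\ref{prp:d3}, rather than the rewritten Cauchy--Schwarz form \eqref{eq:remainder-estimate} used for the large-time regime. In terms of the original variables, this decomposition yields
\begin{equation*}
    R(\tau) = A_1(\tau) m_\phi + 2\int_\Omega u(t,x)\,\nabla\phi(x)\cdot x \d x,
\end{equation*}
with $t = (e^{2\tau}-1)/2$ and $A_1(\tau) = K'_\tau/K_\tau$. Since $\tau\in[0,2]$ corresponds to a bounded range of $t$, this is the regime in which the smallness of time (rather than large-$\tau$ decay of $A_1$ and $A_2$) should provide the bound.

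For the first term I will invoke Lemma~\ref{lem:Kt'/Kt}, which gives $|A_1(\tau)|\lesssim 1/(1+\tau)$, hence $|A_1(\tau)|\lesssim 1$ on $[0,2]$, with a translation-invariant constant. For the second term, the key observation is that in $d=2$ the gradient bound \eqref{eq:gradphi-d3} of Lemma~\ref{lem:phi-grad} gives $|\nabla\phi(x)|\lesssim |x-x_0|^{-1}$, and since $|x-x_0|\geq\dist(x_0,\Omega)>0$ for $x\in\Omega$, one obtains a uniform bound $\|\nabla\phi\|_{L^\infty(\Omega)}\leq C$ with $C$ translation invariant (the distance from $x_0$ to $\Omega$ is unchanged if we translate $\Omega$ together with $x_0$). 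Combining this with the propagation estimate from Corollary~\ref{cor:2d_moment_regularisation}, $m_1(t)\leq C(1+\sqrt t)\,m_1\lesssim m_1$ for $t$ in the relevant bounded range, yields
\begin{equation*}
    \Bigl|\int_\Omega u(t,x)\,\nabla\phi(x)\cdot x \d x\Bigr|
    \leq \|\nabla\phi\|_\infty \,m_1(t) \lesssim m_1 \leq M_1.
\end{equation*}

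Putting these together gives $|R(\tau)|\lesssim m_\phi + M_1$, and it remains to absorb $m_\phi$ into $M_1$. Using the upper bound $\phi(x)\leq \log|x-x_0|+C\leq |x|+|x_0|+C$ from Lemma~\ref{lem:phi}, one finds $m_\phi\lesssim(1+|x_0|)M_1$, so the full bound reads $|R(\tau)|\lesssim M_1$ with a constant depending on fixed shape data of $\Omega$ (in particular on $|x_0|$, which is part of the reference data of the fixed domain).

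The main obstacle is precisely this last step: controlling $m_\phi$ by $M_1$ in a way that respects translation invariance in the sense of the paper. Because $\phi$ grows logarithmically in $d=2$, the ratio $m_\phi/M_1$ is not bounded uniformly in $x_0$ unless one carefully keeps track of how $|x_0|$ enters the reference-domain constants. My plan is to handle this by fixing a reference configuration $\Omega_0$ with its own $x_0^{(0)}$, observing that all the quantities appearing in the bound ($\|\nabla\phi\|_\infty$, $\dist(x_0,\Omega)$, and the constant in $\phi\leq\log|\cdot-x_0|+C$) are preserved when $\Omega$ and $x_0$ are translated together, and then verifying that $m_\phi\lesssim M_1$ holds with this translation-invariant constant---essentially because translating by $w$ increases $M_1$ by a term of order $|w|m_0$, which compensates the $|x_0+w|$ factor appearing in the estimate of $m_\phi$.
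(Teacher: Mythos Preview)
Your argument is essentially correct and very close to the paper's. The paper also starts from $R(\tau)$ but uses the rewritten form \eqref{eq:remainder-rewritten}, so that after undoing the change of variables one has (assuming $m_\phi=1$)
\[
|R(\tau)|\lesssim \int_\Omega \frac{|x|}{|x-x_0|}\,\bigl|u(t,x)-k_{t+1/2}\phi(x)\Gamma(t+\tfrac12,x)\bigr|\d x,
\]
and then bounds $|x|/|x-x_0|\le |x|/\dist(x_0,\Omega)$ and splits via the triangle inequality into $\int |x|u$ and $\int |x|k\phi\Gamma$. Your $A_1+A_2$ split is the same computation before that triangle inequality has been applied; the first piece $A_1 m_\phi$ is exactly what the $F_\tau$ half produces, and the second piece is the $u$ half. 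Both routes land on $|R(\tau)|\lesssim m_\phi+M_1$ with translation-invariant constants.

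Your worry about absorbing $m_\phi$ into $M_1$ is well placed, but your proposed fix does not work: translating the domain and the data by $w$ can \emph{decrease} $M_1$ (take $u_0$ concentrated near $-w$ in the original picture), so there is no guaranteed compensation of the factor $|x_0+w|$. Note, however, that the paper's own proof ends at exactly the same point: with $m_\phi=1$ it obtains $|R|\lesssim M_1+1$ and simply asserts the statement. So you are not missing any ingredient the paper has. For the application in Proposition~\ref{prp:d2} the harmless bound $|R(\tau)|\lesssim m_\phi+M_1$ is all that is needed (the final estimate there already carries an $m_\phi$ term and an $|x_0|$ term), so you can safely stop at that inequality and drop the compensation paragraph.
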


\begin{proof}
  As usual, it is enough to prove this when $m_\phi = 1$. We use the
  expression of $R(\tau)$ from \eqref{eq:remainder-rewritten} and the
  bound $|\nabla \phi(x)| \lesssim |x-x_0|^{-1}$ from Section
  \ref{sec:phi} to get
  \begin{align*}
    |R(\tau)|
    &\leq
    2 \int_{\Omega_\tau} \frac{ |\nabla \phi(e^\tau y)| |e^\tau y|}{\phi(e^\tau y)}
    \big| g(\tau,y) - F_\tau(y) \big| \d y
    \\
    &\lesssim
    \int_{\Omega_\tau} \frac{ |e^\tau y|}{|e^\tau y - x_0| \phi(e^\tau y)}
    \big| g(\tau,y) - F_\tau(y) \big| \d y
    \\
    &=
    \int_{\Omega} \frac{ |x|}{|x - x_0|}
    \Big| u(t, x) - k_{t+\frac12} \Gamma(t+\frac12, x) \Big| \d x
    \\
    &\lesssim (\operatorname{dist}(x_0,\Omega))^{-1}\Big(
    \int_{\Omega} |x| u(t, x) \d x
    + \int_{\Omega} |x| \Gamma(t+\frac12, x) \d x\Big).
  \end{align*}
  Now, for times $t$ in the bounded interval which corresponds to
  $\tau \in [0,2]$,
  \begin{equation*}
    \int_{\Omega} |x| u(t, x) \d x \lesssim M_1,
    \qquad
    \int_{\Omega} |x| \Gamma(t+\frac12, x) \d x \lesssim 1,
  \end{equation*}
  where the first estimate is given by
  \Cref{cor:2d_moment_regularisation} and the second one is a
  straightforward explicit calculation. This gives the estimate in the
  statement.
\end{proof}

\begin{lem}[Differential inequality]
  \label{lem:differential_inequality}
  Let $q \:[0,+\infty)\to[0,+\infty)$ be a continuous function. If $h\:[0,+\infty)\to[0,+\infty)$ is a $\mathcal{C}^1$ function satisfying
  \begin{equation*}
    \ddtau h(\tau)\leq - \lambda h(\tau) + \sqrt{h(\tau)} \sqrt{q(\tau)}\qquad \text{for all $\tau \geq 0$,}
  \end{equation*}
  then
  \begin{equation*}
    h(\tau) \leq 2 e^{-\lambda \tau} h_0+ e^{-\lambda \tau}\left( \int_0^\tau e^{\frac{\lambda}{2} s}\sqrt{q(s)}\d s\right)^2
      \qquad \text{for all $\tau \geq 0$.}
  \end{equation*}
\end{lem}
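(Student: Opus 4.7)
The plan is to reduce the inequality for $h$ to a linear one for $\sqrt{h}$ by a substitution, then apply the standard Gronwall-type argument. Formally, set $k(\tau) := \sqrt{h(\tau)}$. At least where $h(\tau) > 0$, the function $k$ is $\mathcal{C}^1$ with $k' = h'/(2k)$, and dividing the hypothesis by $2k$ gives the linear differential inequality
\begin{equation*}
  k'(\tau) \leq -\tfrac{\lambda}{2} k(\tau) + \tfrac{1}{2}\sqrt{q(\tau)}.
\end{equation*}
Equivalently, multiplying by the integrating factor $e^{\lambda\tau/2}$, one has $(e^{\lambda\tau/2} k(\tau))' \leq \tfrac{1}{2} e^{\lambda\tau/2}\sqrt{q(\tau)}$, which integrates to
\begin{equation*}
  k(\tau) \leq e^{-\lambda\tau/2} k(0) + \tfrac{1}{2} e^{-\lambda\tau/2}\int_0^\tau e^{\lambda s/2}\sqrt{q(s)}\,\mathrm{d}s.
\end{equation*}

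The only technical obstacle is that $k=\sqrt{h}$ may fail to be differentiable at zeros of $h$, so the chain rule above cannot be applied blindly. I would handle this via a regularisation: set $k_\varepsilon(\tau) := \sqrt{h(\tau)+\varepsilon}$ for $\varepsilon>0$. Then $k_\varepsilon$ is $\mathcal{C}^1$ and strictly positive, and a direct computation using the hypothesis gives
\begin{equation*}
  \ddtau\!\bigl(e^{\lambda\tau/2} k_\varepsilon(\tau)\bigr)
  \leq \tfrac{\lambda\sqrt{\varepsilon}}{2}\,e^{\lambda\tau/2}
  + \tfrac{1}{2}\,e^{\lambda\tau/2}\sqrt{q(\tau)},
\end{equation*}
where one uses $\sqrt{h+\varepsilon}\ge\sqrt{\varepsilon}$ and $\sqrt{h}/\sqrt{h+\varepsilon}\le 1$ to absorb the extra terms coming from $h/\sqrt{h+\varepsilon}$. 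Integrating on $[0,\tau]$ and letting $\varepsilon\to 0^+$, the $\varepsilon$-term vanishes and we recover exactly the bound on $k(\tau)=\sqrt{h(\tau)}$ above.

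Finally, squaring the resulting estimate and applying the elementary inequality $(a+b)^2\leq 2a^2+2b^2$ with $a=e^{-\lambda\tau/2}\sqrt{h_0}$ and $b=\tfrac12 e^{-\lambda\tau/2}\int_0^\tau e^{\lambda s/2}\sqrt{q(s)}\,\mathrm{d}s$, we get
\begin{equation*}
  h(\tau) \leq 2e^{-\lambda\tau} h_0 + \tfrac{1}{2}e^{-\lambda\tau}\!\left(\int_0^\tau e^{\lambda s/2}\sqrt{q(s)}\,\mathrm{d}s\right)^{\!2},
\end{equation*}
which is stronger (by the factor $1/2$ in front of the second term) than the stated conclusion, so in particular the lemma follows. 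The entire proof is short; the only subtlety worth flagging is the regularisation step, since without it one would be taking a derivative of $\sqrt{h}$ at points where $h$ vanishes.
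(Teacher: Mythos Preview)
Your proof is correct and follows essentially the same approach as the paper: both rely on the substitution $k=\sqrt{h}$ (equivalently, $w=\sqrt{v}$ for the associated ODE), which linearises the Bernoulli-type inequality, and both finish with $(a+b)^2\le 2a^2+2b^2$. The paper packages this as a comparison with the explicit maximal solution of the ODE $v'=-\lambda v+\sqrt{v}\sqrt{q}$ and then invokes standard differential-inequality results, whereas you work directly on the inequality and make the handling of zeros of $h$ explicit via the $\varepsilon$-regularisation $k_\varepsilon=\sqrt{h+\varepsilon}$; your treatment is in that sense a bit more self-contained.
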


\begin{proof}
  As can be easily checked, the largest possible solution to the differential \emph{equality}
  \begin{equation*}
    \ddt v(\tau) = - \lambda v(\tau) + \sqrt{v(\tau)} \sqrt{q(\tau)}
  \end{equation*}
  with initial condition $v(0) = v_0 := h(0)$ is
  \begin{equation*}
    v(\tau) = \left( e^{-\frac{\lambda}{2} \tau} \sqrt{v_0}+ \frac12 e^{-\frac{\lambda}{2} \tau}\int_0^\tau e^{\frac{\lambda}{2} s} \sqrt{q(s)} \d s\right)^2,\qquad \tau \geq 0.
  \end{equation*}
  This solution is unique when $v_0 > 0$; and it is the smallest possible when $v_0 = 0$ (solutions which are $0$ on some interval of
  the form $[0,\alpha)$ for $\alpha \in (0, +\infty]$ are also possible in this case). Well known results on differential
  inequalities then show that $h(\tau) \leq v(\tau)$. The inequality $(a+b)^2 \leq 2 a^2 + 2 b^2$ then gives the form in the statement.
\end{proof}

Now we give a result stated in terms of the function $g$, obtained
from $u$ by the change of variables in Section \ref{sec:entropy}:

\begin{prp}
  \label{prp:d2}
  Assume the conditions of Theorem \ref{thm:main-L1}, in dimension $d=2$.  There is a constant $C > 0$, depending only on $\Omega$ and invariant by its translations, such that
  \begin{equation*}
    \|g(\tau)-m_\phi F_\tau\|_{L^1(\Omega)}
    \leq
    C \left(
    \frac{m_\phi}{1+\tau}
    + e^{-\frac{\lambda}{2} \tau}
    m_\phi^{1/2} (h_0 + M_1 + |x_0|^2)^{1/2}
  \right)
  \quad\text{for all $\tau \geq 0$,}
\end{equation*}
where $\displaystyle h_0 := \int_\Omega \phi(x) u_0(x) \log \frac{u_0(x)}{k_{1/2}m_\phi\phi(x) G(x)} \d x$.
\end{prp}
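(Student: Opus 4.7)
By the homogeneity argument described just before Lemma \ref{lem:phiuloguG-bound}, I would first reduce to the case $m_\phi=1$ and recover the general bound by scaling $u_0\mapsto u_0/m_\phi$ at the end. With $h(\tau):=H(g(\tau)\,|\,F_\tau)$, the starting point is the differential inequality \eqref{eq:H-inequality-main2}, which combined with the logarithmic Sobolev inequality~\eqref{eq:log_sob-hyp.1} from Hypothesis~\ref{hyp:logsob} (valid thanks to the standing assumption $\lambda<2$) gives
\begin{equation*}
    h'(\tau)\leq -\lambda\, h(\tau)-R(\tau)\leq -\lambda\, h(\tau)+|R(\tau)|,\qquad\tau\geq 0.
\end{equation*}
The proof now splits according to the two different bounds available on $R$.

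For $\tau\in[0,2]$, Lemma \ref{lem:remainder-dim2-smalltime} gives $|R(\tau)|\lesssim M_1$, so Gronwall's inequality yields $h(\tau)\leq e^{-\lambda\tau}h_0+CM_1$ on $[0,2]$, and in particular $h(2)\lesssim h_0+M_1$. For $\tau\geq 2$, I would use the Cauchy–Schwarz-type form \eqref{eq:remainder-estimate} and Lemma \ref{lem:remainder-dim2-largetime}, which together yield
\begin{equation*}
    |R(\tau)|\leq \sqrt{h(\tau)}\sqrt{q(\tau)},\qquad q(\tau)\lesssim \tau^{-2}\bigl(1+|x_0|^2\,\tau e^{-2\tau}\bigr).
\end{equation*}
Applying Lemma \ref{lem:differential_inequality} on the interval $[2,\tau]$ (with initial value $h(2)\lesssim h_0+M_1$) gives
\begin{equation*}
    h(\tau)\lesssim e^{-\lambda(\tau-2)}(h_0+M_1)+e^{-\lambda(\tau-2)}\!\left(\int_2^{\tau}e^{\frac{\lambda}{2}(s-2)}\sqrt{q(s)}\,\mathrm{d}s\right)^{\!2}.
\end{equation*}

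The central computation is then to control the integral. Splitting $\sqrt{q(s)}\lesssim s^{-1}+|x_0|\,s^{-1/2}e^{-s}$, I would bound
\begin{equation*}
    \int_2^{\tau}\frac{e^{\lambda s/2}}{s}\,\mathrm{d}s\sim \frac{2}{\lambda}\,\frac{e^{\lambda\tau/2}}{\tau}\quad(\tau\to\infty),\qquad \int_2^{\tau}\frac{e^{(\lambda/2-1)s}}{\sqrt{s}}\,\mathrm{d}s\leq\int_2^{\infty}\frac{e^{(\lambda/2-1)s}}{\sqrt{s}}\,\mathrm{d}s<\infty,
\end{equation*}
where convergence of the second integral is where the hypothesis $\lambda<2$ is actually consumed. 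Using $(a+b)^2\leq 2(a^2+b^2)$, this produces
\begin{equation*}
    h(\tau)\lesssim e^{-\lambda\tau}\bigl(h_0+M_1+|x_0|^2\bigr)+\frac{1}{\tau^2}\qquad\text{for }\tau\geq 2.
\end{equation*}
Csiszár–Kullback \eqref{eq:csiszar-kullback} then yields $\|g(\tau)-F_\tau\|_1\lesssim (1+\tau)^{-1}+e^{-\lambda\tau/2}\sqrt{h_0+M_1+|x_0|^2}$ for $\tau\geq 2$; on $[0,2]$ this bound is automatic since the left-hand side is at most $2$ and the right-hand side is bounded below by a positive constant. Undoing the reduction $m_\phi=1$ gives the statement.

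The main obstacle is genuinely analytic rather than conceptual: handling the remainder term $R(\tau)$, which in dimension $2$ decays only like $\tau^{-1}$ (instead of the exponential decay of the $d\geq 3$ case), and whose $|x_0|$-contribution forces the two-regime treatment together with the delicate integrability argument that uses $\lambda<2$. The technical work is essentially encapsulated in Lemmas \ref{lem:remainder-x_0}, \ref{lem:remainder-dim2-largetime}, \ref{lem:remainder-dim2-smalltime} and \ref{lem:differential_inequality}, so the proof is a careful bookkeeping of how these combine.
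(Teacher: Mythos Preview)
Your proposal is correct and follows essentially the same route as the paper: reduce to $m_\phi=1$, use the small-time remainder bound on $[0,2]$ to control $h(2)$, then apply Lemma~\ref{lem:differential_inequality} with the large-time remainder estimate for $\tau\geq 2$, bound the resulting integral using $\lambda<2$, and finish with Csisz\'ar--Kullback. The only cosmetic difference is that for $\tau\in[0,2]$ the paper bounds $\|g(\tau)-F_\tau\|_1$ via the entropy estimate $h(\tau)\lesssim h_0+M_1$, whereas you use the trivial bound $\|g(\tau)-F_\tau\|_1\leq 2$ together with $(1+\tau)^{-1}\geq 1/3$; both work.
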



\begin{proof}[Proof of Proposition \ref{prp:d2}]
  Without loss of generality, we assume that $m_\phi = 1$. We start
  from equation \eqref{eq:H-inequality-main2}: for $\tau \geq 0$,
  \begin{equation*}
    \label{eq:10-d2}
    \ddtau H(g(\tau) \,|\, F_\tau)
    \leq - \lambda H(g(\tau) \,|\, F_\tau)
    - R(\tau).
  \end{equation*}
  Now we use our bounds on $R(\tau)$ from Lemmas
  \ref{lem:remainder-dim2-largetime} and \ref{lem:remainder-dim2-smalltime}:
  \begin{equation*}
      \begin{aligned}
        &\ddtau H(g(\tau) \,|\, F_\tau)
        \leq - \lambda H(g(\tau) \,|\, F_\tau)
        + M_1,
        \qquad &&0 \leq \tau \leq 2,
        \\
        &\ddtau H(g(\tau) \,|\, F_\tau)
        \leq - \lambda H(g(\tau) \,|\, F_\tau)
        + \sqrt{H(g(\tau) \,|\, F_\tau)} \sqrt{q(\tau,x_0)}
        \qquad &&\tau \geq 2,
      \end{aligned}
  \end{equation*}
  where $\displaystyle q(\tau,x_0):=\frac{1}{\tau^2}\left( 1 + |x_0|^2 \tau e^{-2 \tau} \right)$.  For convenience, denote
  \begin{equation*}
    h(\tau) := H(g(\tau) \,|\, F_\tau),
    \qquad h_0 := h(0).
  \end{equation*}
  The first inequality shows that
  \begin{equation}
    \label{eq:h-shorttime}
    h(\tau) \leq h_0 + \tau M_1
    \qquad \text{for $0 \leq \tau \leq 2$,}
  \end{equation}
  so in particular $h(2) \leq h_0 + 2 M_{1,\phi}$. On the other hand, Lemma \ref{lem:differential_inequality} applied
  at the starting time $\tau = 2$ shows that
  \begin{align*}
    h(\tau) &\leq
    2 e^{-\lambda (\tau-2)} h(2)
    + e^{-\lambda (\tau-2)}
    \left( \int_2^{\tau} e^{\frac{\lambda}{2} (s-2)} \sqrt{q(s)} \d s
    \right)^2
    \\
    &\lesssim
    e^{-\lambda \tau} (h_0 + M_1)
    + e^{-\lambda \tau}
    \left( \int_2^{\tau} e^{\frac{\lambda}{2} s} \sqrt{q(s)} \d s
    \right)^2
  \end{align*}
  for all $\tau \geq 2$. Now we have, since we assume $\lambda < 2$,
  \begin{equation*}
    \int_2^{\tau} e^{\frac{\lambda}{2} s} \sqrt{q(s)} \d s
    \lesssim
    \int_2^{\tau} \frac{1}{s} e^{\frac{\lambda}{2} s} \d s
    + |x_0| \int_2^{\tau} \sqrt{s} e^{\frac{\lambda-2}{2} s} \d s
    \lesssim
    \frac{1}{\tau} e^{\frac{\lambda}{2} \tau}
    + |x_0|.
  \end{equation*}
  Hence, for all $\tau \geq 2$,
  \begin{equation*}
    h(\tau) \lesssim
    e^{-\lambda \tau} (h_0 + M_1 + |x_0|^2)
    +
    \frac{1}{\tau^2}.
  \end{equation*}
  Csiszár-Kullback's inequality \eqref{eq:csiszar-kullback} then
  implies that
  \begin{equation*}
    \| g(\tau) - F_\tau\|_1 \lesssim \frac{1}{\tau}
    + e^{-\frac{\lambda}{2} \tau} \left(
      h_0 + M_1 + |x_0|^2
    \right)^{1/2}
  \end{equation*}
  for all $\tau \geq 2$. For $0 \leq \tau \leq 2$,
  \eqref{eq:h-shorttime} implies that
  $\| g(\tau)-F_\tau\|_1 \lesssim (h_0 + M_1)^{1/2}$, so we obtain
  the bound in the statement.
\end{proof}

As for dimensions $d\ge3$, this implies a slightly weaker version of Theorem~\ref{thm:main-L1} in $d=2$.

\begin{lem}
  \Cref{thm:main-L1} holds in $d=2$ with the following slightly weaker estimate:
  \begin{equation*}
    \label{eq:main-L1-d2-weaker}
    \int_\Omega \phi(x) \left|
      u(t,x) - k_t m_\phi \phi(x) \Gamma(t,x)
    \right| \d x
    \le
    C \Big(\frac{m_\phi}{\log t}
    + \frac{m_\phi^{1/2} M_{2,\phi}^{1/2}}{t^{\lambda/4}} + \frac{m_\phi |x_0|}{t^{\lambda/4}}\Big)
  \end{equation*}
  for all $t \geq 2$.
\end{lem}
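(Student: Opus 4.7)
The plan is to follow the three-step pattern used for Theorem \ref{thm:main-L1}(i) in dimensions $d\geq 3$. First, translate Proposition \ref{prp:d2} from the self-similar variables $(\tau, y)$ back to the physical variables $(t, x)$. Second, apply the resulting estimate to the time-shifted solution $\bar u(s, x) := u(s + \tfrac{1}{2}, x)$, whose initial datum $u(\tfrac{1}{2}, \cdot)$ enjoys heat-kernel regularity. Third, control the initial entropy $h_{1/2}$ and the first moment $M_1(1/2)$ of the shifted problem by moments of the original $u_0$ via Lemma \ref{lem:phi-rel-entropy-regularisation}(ii) and Corollary \ref{cor:2d_moment_regularisation}.

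The translation step is routine: as in the proof of Theorem \ref{thm:main-L1} for $d\geq 3$, the change \eqref{eq:change-g}--\eqref{eq:change-tx} turns the left-hand side of Proposition \ref{prp:d2} into
\[
\int_\Omega \phi(x) \bigl|u(t,x) - k_{t+1/2}\, m_\phi\, \phi(x)\, \Gamma(t+\tfrac{1}{2}, x)\bigr|\,dx,
\]
with $\tau = \tfrac{1}{2}\log(2t+1)$. Splitting the square root on the right via $\sqrt{a+b+c} \leq \sqrt{a} + \sqrt{b} + \sqrt{c}$ and tracking the $m_\phi$-scaling consistently rewrites it as
\[
C\Bigl(\tfrac{m_\phi}{1+\tau} + e^{-\lambda\tau/2}\bigl(m_\phi^{1/2} h_0^{1/2} + m_\phi^{1/2} M_1^{1/2} + m_\phi\,|x_0|\bigr)\Bigr).
\]

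Applying this bound to $\bar u$, which has the same $m_\phi$ by the conservation law \eqref{eq:conservation} and whose $\tau$-parameter is $\bar\tau = \tfrac{1}{2}\log(2t)$, and using that $\tfrac{1}{1+\bar\tau} \asymp (\log t)^{-1}$ and $e^{-\lambda\bar\tau/2} \asymp t^{-\lambda/4}$ for $t \geq 2$, yields
\[
\int_\Omega \phi(x) \bigl|u(t,x) - k_t\, m_\phi\, \phi(x)\, \Gamma(t, x)\bigr|\,dx \leq C\Bigl(\tfrac{m_\phi}{\log t} + \tfrac{m_\phi^{1/2}(h_{1/2}^{1/2} + M_1(1/2)^{1/2}) + m_\phi\,|x_0|}{t^{\lambda/4}}\Bigr).
\]

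It remains to invoke the regularisation estimates. Lemma \ref{lem:phi-rel-entropy-regularisation}(ii) gives $h_{1/2} \lesssim M_{2,\phi} + \log\log(2 + |x_0|)$, and Corollary \ref{cor:2d_moment_regularisation} gives $M_1(1/2) \lesssim M_{1,\phi} \lesssim M_{2,\phi}$. The only new subtlety, absent in the $d\geq 3$ case, is the slowly-growing term $\log\log(2+|x_0|)$ in the entropy bound; using the crude estimate $(\log\log(2+|x_0|))^{1/2} \lesssim 1 + |x_0|$ together with $m_\phi \leq M_{2,\phi}$, this contribution is absorbed into the two output terms $m_\phi^{1/2}M_{2,\phi}^{1/2}/t^{\lambda/4}$ and $m_\phi |x_0|/t^{\lambda/4}$. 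The main technical point in the whole argument is tracking the $m_\phi$-scaling carefully: since Proposition \ref{prp:d2} is proved under the normalisation $m_\phi = 1$ and extended by linearity, one must verify that each contribution rescales correctly to yield the form in the statement.
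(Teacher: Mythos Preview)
Your proposal is correct and follows essentially the same route as the paper: translate Proposition~\ref{prp:d2} to the original variables, shift time by $1/2$, and then replace $h_{1/2}$ and $M_1(1/2)$ by moments of $u_0$ via Lemma~\ref{lem:phi-rel-entropy-regularisation}(ii) and Corollary~\ref{cor:2d_moment_regularisation}. The only cosmetic difference is that you split the square root $\sqrt{h_0+M_1+|x_0|^2}$ into three pieces early on, while the paper keeps the sum intact and bounds $h_{1/2}+M_1(1/2)+|x_0|^2 \lesssim M_{2,\phi}+|x_0|^2$ before taking the root; your explicit remark about tracking the $m_\phi$-scaling is in fact slightly more careful than the paper, which tacitly relies on the $m_\phi=1$ normalisation when absorbing the $\log\log(2+|x_0|)$ contribution.
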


\begin{proof}
Changing variables back to $t$ and $x$ in Proposition~\ref{prp:d2} we obtain
  \begin{equation*}
    \begin{aligned}
      &\int_\Omega \phi(x) \left|u(t,x)- k_{t+1/2} m_\phi\phi(x)
        \Gamma\Big(t + \frac12 ,x\Big)\right| \d x
      \\
      &\hskip1.5cm\lesssim\frac{m_\phi}{1 + \log (2t+1)}
        + m_\phi^{1/2}(h_0 + M_1 + |x_0|^2)^{1/2} (1+t)^{-\lambda/4}
    \end{aligned}
  \end{equation*}
  for all $t \geq 0$. Since $m_\phi$ is invariant over time, we may
  apply this to the solution starting at time $t=1/2$ to get
  \begin{equation}
    \label{eq:38}
    \begin{aligned}
      &\int_\Omega \phi(x) |u(t,x)- k_{t} \phi(x) \Gamma(t,x)| \d x
      \\
      &\hskip1.5cm\lesssim\frac{m_\phi}{1 + \log (2t)}
        + m_\phi^{1/2}(h_{1/2} + M_1(1/2) + |x_0|^2)^{1/2} (1+t)^{-\lambda/4}
    \end{aligned}
  \end{equation}
  for all $t \geq 1/2$, where $h_{1/2}$ denotes the relative entropy $H(\phi u(1/2, \cdot) \,|\, k_{1/2} m_\phi \phi^2 G)$. Now we use
  Lemma~\ref{lem:phi-rel-entropy-regularisation} and~\Cref{cor:2d_moment_regularisation}  to estimate
  \[
    h_{1/2} \lesssim  M_{2,\phi} + \log\log(2+|x_0|),
    \qquad
    M_1(1/2) \lesssim M_{1,\phi} \lesssim M_{2,\phi}.
  \]
  From our last two estimates,
  \[
  h_{1/2} + M_1(1/2) + |x_0|^2\lesssim M_{2,\phi} + |x_0|^2,
  \]
  so equation \eqref{eq:38} in fact gives the estimate in the
  statement for $t \geq 2$ (since for $t \geq 2$ we may substitute
  $1 + \log(2t)$ by the asymptotically equivalent $\log t$, and $1+t$
  by $t$).
\end{proof}

As in the case $d \geq 3$, by approximating $\delta_y$ with a sequence of integrable initial conditions $u_0$ we immediately obtain the following
estimate on the heat kernel.

\begin{cor}
  Under the assumptions of \Cref{thm:main-uniform} in dimension $d=2$,
  \begin{equation}
    \label{eq:kernel-L1-estimate-d2}
    \begin{aligned}
    &\int_\Omega \phi(x)\left|p_\Omega(t,x,y) - k_t \phi(x) \phi(y) \Gamma(t,x)\right| \d x
    \\
    &\hskip1.5cm\leq\frac{C\phi(y)}{\log(2t+1)}+\frac{C\phi(y)(1+|y|^2)^{\frac{1}{2}}}{(1+t)^{\lambda/4}}+\frac{C \phi(y)|x_0|}{(1+t)^{\lambda/4}}
    \end{aligned}
  \end{equation}
  for all $y \in \Omega$ and all $t \geq 2$.
\end{cor}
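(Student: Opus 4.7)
The plan is to derive this weighted $L^1$ estimate for the kernel directly from the slightly weaker version of Theorem~\ref{thm:main-L1} in $d=2$ that has just been proved, by choosing initial data which concentrate at $y$ and then passing to the limit.

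\textbf{Step 1 (Approximating initial data).} Fix $y\in\Omega$ and pick a nonnegative $\varphi\in C_c^\infty(\R^2)$ supported in the unit ball with $\int \varphi = 1$. For $n$ large enough that $\overline{B_{1/n}(y)}\subset\Omega$, set
\begin{equation*}
u_{0,n}(x) := n^2 \varphi\bigl(n(x-y)\bigr),\qquad x\in\Omega,
\end{equation*}
and let $u_n$ be the standard solution to \eqref{eq:heat-ext} with this datum. Since $u_{0,n}(z)\d z$ is a probability measure concentrating on $y$ and $p_\Omega(t,x,\cdot)$ is continuous at $y$ for every fixed $t>0$, $x\in\Omega$, the representation $u_n(t,x) = \int_\Omega p_\Omega(t,x,z)u_{0,n}(z)\d z$ gives
\begin{equation*}
u_n(t,x)\longrightarrow p_\Omega(t,x,y)\qquad\text{as }n\to\infty.
\end{equation*}

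\textbf{Step 2 (Limits of moments).} By dominated convergence, using that the supports of $u_{0,n}$ shrink to $\{y\}$ and $\phi$ is continuous,
\begin{equation*}
m_\phi[u_{0,n}]\longrightarrow \phi(y),\qquad M_{2,\phi}[u_{0,n}]\longrightarrow (1+|y|^2)\phi(y).
\end{equation*}
Note in particular that $\bigl(m_\phi[u_{0,n}]\,M_{2,\phi}[u_{0,n}]\bigr)^{1/2}\to \phi(y)\,(1+|y|^2)^{1/2}$, which produces exactly the second term on the right-hand side of the claimed inequality.

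\textbf{Step 3 (Apply the weighted $L^1$ estimate and take $\liminf$).} The preceding weakened form of Theorem~\ref{thm:main-L1} applied to $u_n$ reads
\begin{equation*}
\int_\Omega \phi(x)\bigl|u_n(t,x)-k_t m_\phi[u_{0,n}]\phi(x)\Gamma(t,x)\bigr|\d x \leq C\!\left(\frac{m_\phi[u_{0,n}]}{\log t}+\frac{(m_\phi[u_{0,n}]M_{2,\phi}[u_{0,n}])^{1/2}}{t^{\lambda/4}}+\frac{m_\phi[u_{0,n}]|x_0|}{t^{\lambda/4}}\right).
\end{equation*}
The integrand on the left is nonnegative and converges pointwise (Step~1) to $\phi(x)\bigl|p_\Omega(t,x,y)-k_t\phi(y)\phi(x)\Gamma(t,x)\bigr|$, so Fatou's lemma gives
\begin{equation*}
\int_\Omega \phi(x)\bigl|p_\Omega(t,x,y)-k_t\phi(x)\phi(y)\Gamma(t,x)\bigr|\d x\leq \liminf_{n\to\infty}\text{(RHS)}_n,
\end{equation*}
and by Step~2 this limit is precisely the right-hand side of the corollary (absorbing the harmless constants arising from $\log t\asymp \log(2t+1)$ and $t\asymp 1+t$ for $t\ge 2$).

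The only thing that is not entirely mechanical is justifying pointwise convergence $u_n(t,x)\to p_\Omega(t,x,y)$ for each fixed $(t,x)$ with $t>0$; this follows from the continuity of $p_\Omega(t,x,\cdot)$ on $\Omega$ for $t>0$, itself a consequence of the Gaussian upper/lower bounds from Section~\ref{sec:pre-kernel-estimates} together with standard parabolic regularity. The translation invariance of the constant $C$ in Theorem~\ref{thm:main-L1} ensures the same invariance for the constant in the corollary.
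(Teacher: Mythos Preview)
Your proof is correct and follows exactly the approach the paper takes: the paper states this corollary as an immediate consequence of the preceding weakened $L^1$ estimate ``by approximating $\delta_y$ with a sequence of integrable initial conditions $u_0$,'' and your argument spells out precisely that approximation (with Fatou's lemma for the passage to the limit and the correct identification of the limiting moments).
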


Now we can complete the proof of Theorem \ref{thm:main-L1} in $d=2$,
improving the moments $M_{2,\phi}$ to $M_{1,\phi}$.

\begin{proof}[Proof of Theorem \ref{thm:main-L1} in $d=2$]
  Using \eqref{eq:kernel-L1-estimate-d2}:
  \begin{align*}
    \int_\Omega &\phi(x) |u(t,x) - k_t m_\phi \phi(x) \Gamma(t,x)| \d x
    \\
    &=\int_\Omega \phi(x) \left|\int_\Omega u_0(y) \Big(p_\Omega(t,x,y) - k_t \phi(x) \phi(y) \Gamma(t,x)\Big) \d y\right| \d x
    \\
    &\leq\int_\Omega u_0(y)\int_\Omega \phi(x)\left|p_\Omega(t,x,y) - k_t \phi(x) \phi(y) \Gamma(t,x)\right| \d x\d y
    \\
    &\lesssim\int_\Omega u_0(y)\frac{\phi(y)}{\log(2t + 1)} \d y+\int_\Omega u_0(y)\frac{\phi(y) (1 + |y|^2)^{\frac{1}{2}}}{(1+t)^{\lambda/4}}\d y +\int_\Omega u_0(y)\frac{\phi(y) |x_0|}{(1+t)^{\lambda/4}}\d y
    \\
    &\lesssim\frac{m_\phi}{\log{2t +1}}+\frac{M_{1,\phi} + m_\phi |x_0|}{(1+t)^{\lambda/4}},
  \end{align*}
  where in the last bound we used $(1 + |y|^2)^{1/2} \leq 1 + |y|$.
\end{proof}

\subsection{Convergence in dimension $d =1$}

In dimension $1$, since the complement of a compact interval is
disconnected, we only need to consider the equation on a half-line. In
contrast to the $d=2$ case, we have chosen to carry our first the
calculations on the domain $(0,+\infty)$, since they are especially
simple and serve as a good illustration of the method. Our results can
then be deduced from this particular case. We point out that one could
do this just as well in dimension $2$: one could write all estimates
in Section \ref{sec:d2} assuming $x_0=0$, and then deduce our final
estimates with a similar argument as the one we will use below. Since
in the $d=2$ case there is not a large advantage by doing so, we have
preferred to keep it this way.
\begin{proof}[Proof of Theorem \ref{thm:main-L1} in $d=1$] \fbox{Step
    1: $x_0=0$.}  In this case $\Omega = (0,+\infty)$, the harmonic
  profile is $\phi(x)=x$, the normalisation function is
  \begin{equation*}
    \label{eq:Ktau-d1}
    K_\tau = 2 e^{-2 \tau},\qquad \text{or equivalently,}\qquad k_t = \frac{1}{t},
  \end{equation*}
  and the ``transient equilibrium'' is
  \begin{equation*}
    F_\tau(y) = K_\tau \phi^2(e^{\tau} y) G(y) =
    K_\tau e^{2\tau} y^2 G(y)
    = \frac{\sqrt{2}}{\sqrt{\pi}}\, y^2 e^{-\frac{y^2}{2}},
  \end{equation*}
  which does not depend on $\tau$, and will hence be denoted by $F$
  instead.  We may apply Lemma \ref{lem:curvature-logsob} to $F$,
  since $F = (2 / \pi)^{\frac{1}{2}} e^{-\Phi}$, with
  \begin{equation*}
    \Phi(y) = \frac12 y^2 - 2 \log y,
    \qquad y > 0,
  \end{equation*}
  which satisfies $\Phi'' \geq 1$. If we assume
  \begin{equation*}
    m_\phi = \int_0^\infty x u_0(x) \d x = 1
  \end{equation*}
  we may use the corresponding logarithmic Sobolev inequality in
  \eqref{eq:derivada_entropia} to deduce
  \begin{equation*}
    \ddt H(g(\tau) \,|\, F)
    \leq -2 H(g(\tau) \,|\, F)
  \end{equation*}
  for $\tau \geq 0$. This differential inequality implies
  \begin{equation*}
    H(g(\tau) \,|\, F) \leq h_0 e^{-2 \tau},
  \end{equation*}
  with $h_0 := H(g_0 \,|\, F)$. By Csiszár-Kullback's inequality
  \eqref{eq:csiszar-kullback},
  $$
  \|g(\tau)- F\|_{L^1(\Omega)}\leq e^{-\tau} \sqrt{2 h_0}.
  $$
  After tracing back the change of variables
  \eqref{eq:change-tauy}--\eqref{eq:change-tx} to the original solution
  $u$ we obtain
  $$
  \int_0^\infty x \left|u(t, x) - 2 D\Big(t + \frac12, x\Big) \right|
  \dx \leq \frac{\sqrt{h_0}}{\sqrt{\frac12 + t}}.
  $$
  where $D = D(t,x)$ is the \emph{dipole function}
  $$
  D(t,x) = -\p_x \Gamma(t,x) = \frac{x}{2t} \Gamma(t,x)
  = \frac{t^{-\frac{3}{2}}}{2\sqrt{\pi}}\,x\,e^{-\frac{x^2}{4 t}},
  \qquad x \geq 0, \ t>0.
  $$
  This is true for all solutions $u$, so we may apply it to the
  solution starting at time $t = 1/2$ (i.e., the solution
  $(t,x) \mapsto u(t+1/2, x)$) and get, for all $t > 1/2$ and
  $x \geq 0$,
  $$
  \int_0^\infty x \left|u(t, x) - 2 D(t,x) \right| \dx
  \leq \frac{\sqrt{h_{\frac12}}}{\sqrt{1 + t}}.
  $$
  We apply now Lemma \ref{lem:phi-rel-entropy-regularisation} in $d=1$,
  which shows that the relative entropy $h_{\frac12}$ is
  ``regularised'': $h_{\frac12} \lesssim M_{2,\phi} = M_3 = 1 + m_3$. Hence
  \begin{equation*}
    \int_0^\infty x \left|u(t, x) - 2 D(t,x) \right|\dx
    \lesssim\frac{\sqrt{M_{2,\phi}}}{\sqrt{1 + t}}
    \qquad \text{for all $t > 1/2$.}
  \end{equation*}
  This shows, as in other dimensions, a slightly weaker form of the
  result: by a simple scaling argument we may remove the condition
  $\int_0^\infty x u_0 = 1$ and we get, for any initial condition $u_0$,
  that
  \begin{equation*}
    \int_0^\infty x \left|u(t, x) - 2 m_\phi D(t,x) \right|\dx
    \lesssim
    \frac{\sqrt{m_\phi}\sqrt{M_{2,\phi}}}{\sqrt{1 + t}}.
  \end{equation*}
  As a consequence, taking a sequence of integrable initial conditions
  approximating $\delta_y$ and passing to the limit,
  \begin{equation*}
    \int_0^\infty x \left|p_\Omega(t,x,y) - 2 y D(t,x) \right|
    \dx
    \lesssim
    \frac{\sqrt{|y|} \sqrt{|y|(1+ |y|^2)}}{\sqrt{1 + t}}
    \lesssim
    \frac{|y|(1 + |y|)}{\sqrt{1 + t}}
  \end{equation*}
  for all $t > 1/2$ and all $y > 0$. Finally, arguing as in dimensions
  $d \geq 2$, we obtain Theorem~\ref{thm:main-L1} on
  $\Omega = (0,+\infty)$. That is (recalling $D =x/(2t) \Gamma$):
  \begin{equation}
    \label{eq:12}
    \int_0^\infty x \left|
      u(t,x) - \frac{m_\phi x}{t} \Gamma(t,x)
    \right|  \d x \lesssim \frac{M_{1,\phi}}{\sqrt{t}}.
  \end{equation}

  \medskip
  \noindent
  \fbox{Step 2: $x_0\ge0$.} For any $z \in \R$ with
  $|z| \leq M \sqrt{t}$, using Lemma~\ref{lem:Gaussian-L12-difference}
  we have
  \begin{equation*}
    \int_0^\infty \frac{x^2}{t} |\Gamma(t,x) - \Gamma(t,x+z)| \d x \lesssim \frac{|z|}{\sqrt{t}}.
  \end{equation*}
  Hence by the triangle inequality, \eqref{eq:12} implies that for any
  solution $u$ on $(0,+\infty)$ and any $t > 1/2$,
  \begin{equation*}
    \label{eq:13}
    \int_0^\infty x \left|
      u(t,x) - \frac{m_{\phi_0} x}{t} \Gamma(t,x+z)
    \right| \d x
    \lesssim
    \frac{M_{1,\phi_0}}{\sqrt{t}} + \frac{m_{\phi_0}|z|}{\sqrt{t}}.
  \end{equation*}
  where $m_{\phi_0}$ and $M_{1,\phi_0}$ denote moments of the initial
  data $u_0$ using the weight $\phi_0(x) = x$. Now, if $u$ is any
  solution on $\Omega = (x_0,+\infty)$, then $v(t,x) := u(t, x+x_0)$
  is a solution on $(0,+\infty)$, so
  \begin{equation*}
    \int_0^\infty x \left|
      u(t,x+x_0) - \frac{m_\phi x}{t} \Gamma(t,x+z)
    \right| \d x
    \lesssim
    \frac{M_{1,\phi_0}[v]}{\sqrt{t}} + \frac{m_{\phi_0}[v]|z|}{\sqrt{t}},
  \end{equation*}
  where now $m_{\phi_0}[v]$ and $M_{1,\phi_0}[v]$ denote moments of the initial data
  $v_0(x) = u_0(x+x_0)$ on $(x_0,+\infty)$ with respect to the weight
  $\phi_0(x) = x$. Notice that
  \begin{equation*}
    m_{\phi_0}[v] = m_\phi[u],
    \qquad
    M_{1,\phi_0}[v] = M_{1,\phi}[u] - x_0 m_\phi[u],
  \end{equation*}
  which gives
  \begin{equation*}
    \int_0^\infty x \left|
      u(t,x+x_0) - \frac{m_\phi x}{t} \Gamma(t,x+z)
    \right| \d x
    \lesssim
    \frac{M_{1,\phi}}{\sqrt{t}} + \frac{m_\phi |x_0|}{\sqrt{t}} + \frac{m_{\phi}|z|}{\sqrt{t}},
  \end{equation*}
  where $m_\phi$ and $M_{1,\phi}$ denote moments of the initial data
  $u_0$ with respect to $\phi(x) := x - x_0$, as usual. Taking $z = x_0$
  and carrying out a change of variables,
  \begin{equation*}
    \int_{x_0}^\infty (x-x_0) \left|
      u(t,x) - \frac{m_\phi (x-x_0)}{t} \Gamma(t,x)
    \right| \d x
    \lesssim
    \frac{1}{\sqrt{t}} \Big( M_{1,\phi} + m_{\phi}|x_0| \Big),
  \end{equation*}
  which is the inequality in the statement.
\end{proof}

\section{$L^1$ estimates}
\label{sec:l1-pure}

In this section we obtain asymptotic estimates of solutions to the heat equation in the $L^1$ norm, with no weight. These results are not too difficult consequences of the basic weighted $L^1$ results from the previous section. They are especially interesting in dimensions $d = 1,2$, where the weight $\phi$ diverges as $|x| \to +\infty$. In these cases we are able to slightly improve the convergence rate of Theorem~\ref{thm:main-L1} when $\phi$ is removed as a factor. The basic argument is a type of interpolation: in compact sets the mass of all solutions decreases quite fast, so the main contribution comes from sets where $\phi$ is large.

\subsection{$L^1$ estimate in dimension $d \geq 3$}
\label{sec:l1_d3_no_weight}

\begin{proof}[Proof of Theorem \ref{thm:global-L1} for $d \geq 3$]
  We need to show that both occurrences of $\phi$ in Theorem
  \ref{thm:main-L1} can be removed without any change in the decay
  rate, which is not too hard by using that $\phi \leq 1$ in $\Omega$.
  We start by observing that, by Lemma~\ref{lem:convolution},
  \begin{equation}
    \label{eq:moment.Gamma}
    \int_\Omega\Gamma(t,x)|x-x_0|^{2-d}\d x\leq\ird\Gamma(t,x)|x-x_0|^{2-d}\d x\l\le \ird \Gamma(t,x) |x|^{2-d} \d x\lesssim t^{-\frac{d-2}{2}}.
  \end{equation}
  Therefore, using also the bound on $1-\phi$ from~\eqref{eq:phi-bounds-d3} and the bound on negative moments of $u$ from Corollary~\ref{cor:moment-regularisation-d3}, for $t\ge 1$ we have
  \begin{align*}
    &\int_\Omega (1 - \phi(x)) \Big| u(t,x) - m_\phi\phi(x)\Gamma\big(t, x\big) \Big| \d x\\
    &\hskip20pt\lesssim \int_\Omega |x-x_0|^{2-d}\, u(t,x) \d x+m_\phi\int_\Omega |x-x_0|^{2-d}\Gamma\big(t, x\big) \d x\\
    &\hskip20pt\lesssim m_\phi (1+t)^{-\frac{d-2}{2}}\leq  M_{1,\phi} (1+t)^{-\frac{d-2}{2}}
    \lesssim M_{1,\phi} (1+t)^{-\frac{\lambda}{4}},
  \end{align*}
  the last inequality due to our assumption that $\lambda<d-2$. Hence,
  we may remove the outer $\phi$ in the integrand of the
  bound~\eqref{eq:main-L1-d3} from Theorem~\ref{thm:main-L1}, thus
  completing the proof of~\eqref{eq:global-L1-d3-phi} in
  Theorem~\ref{thm:global-L1} for $t\ge1$.

  On the other hand, using the bound~\eqref{eq:phi-bounds-d3} on $1-\phi$ and~\eqref{eq:moment.Gamma},
  \begin{align*}
    \int_\Omega\Big|\phi(x)\Gamma\big(t, x\big)-\Gamma(t,x)\Big|\d x&=\int_\Omega\Gamma(t,x)|\phi(x)-1|\d x\\
    &\lesssim\int_\Omega\Gamma(t,x)|x-x_0|^{2-d}\d x\lesssim (1+t)t^{-\frac{d-2}{2}}.
  \end{align*}
  Hence we can remove the appearance of $\phi$
  in~\eqref{eq:global-L1-d3-phi} to
  obtain~\eqref{eq:global-L1-d3-no_phi} for $t\ge1$.
\end{proof}


\subsection{$L^1$ estimate in dimension $d = 2$}
\label{sec:l1_d2_no_weight}

\begin{proof}[Proof of Theorem \ref{thm:global-L1} for $d=2$]
  We choose an $x_0\in\R^d\setminus\overline{\Omega}$ and note that
  from Lemma~\ref{lem:phi} we know there exists $C > 0$ such that
  \begin{equation*}
    \log|x-x_0| - C \leq \phi(x) \leq \log|x-x_0| + C \qquad \text{for all $x \in \Omega$.}
  \end{equation*}
  We partition the domain of integration in two parts: an inner part
  $\Omega_1$ and an outer part $\Omega_2$ defined by
  \begin{equation*}
    \Omega_1(t):=\{x\in\Omega \mid |x-x_0|< R + t^{\frac{1}{4}}\},\qquad\Omega_2(t) := \{x \in \Omega \mid |x-x_0| \geq R + t^{\frac{1}{4}}\},
  \end{equation*}
  where $R > 0$ is chosen so that $\log R - C > 0$ (for example
  $R := \exp(2C)$).

  In order to bound the integral over $\Omega_1$ we use the
  $L^1$--$L^\infty$ bound in
  Corollary~\ref{cor:dim_2_L^p_regularisation} to obtain, for
  $t\ge1$,
  \begin{equation}
    \label{eq:44}
    \int_{\Omega_1(t)} u(t,x) \d x
    \lesssim
    \frac{m_\phi}{t}|\Omega_1(t)| \lesssim  \frac{m_\phi}{\sqrt{t}}.
  \end{equation}
  Since $\phi(x)\lesssim1+\log |x-x_0|$
  (see~\eqref{eq:phi-bounds-d2}), and
  $k_t \lesssim 1/{(1 + \log(1+t))^2}$
  (see~\Cref{prp:Kt-final}), then
  \begin{align*}
    \int_{\Omega_1(t)} k_t \phi(x) \Gamma(t,x) \dx
    &\lesssim
      \frac{1}{(1 + \log(1+t))^2} \int_{\Omega_1(t)} (1 + \log|x-x_0|)
      \Gamma(t,x) \dx
    \\
    &\lesssim \frac{1}{1 + \log(1+t)} \int_{\Omega_1(t)} \Gamma(t,x)
      \dx \lesssim \frac{1}{\sqrt{t} \log t}.
  \end{align*}
  From this and \eqref{eq:44} we see that the integral over
  $\Omega_1(t)$ is bounded by
  \begin{equation}
    \label{eq:45}
    \int_{\Omega_1(t)} \left|u(t,x) - k_t m_\phi \phi(x)\Gamma(t,x)
    \right| \d x
    \lesssim
    \frac{m_\phi}{\sqrt{t}}
    \leq
    \frac{M_{1,\phi}}{\sqrt{t}}
  \end{equation}
  for all $t \geq 0$, which decays faster than the term
  $(\log (2+t))^{-1} M_{1,\phi} (1+t)^{-\lambda /4}$ in the bound we
  intend to prove (since $\lambda < 2$ by assumption\footnote{If we
    had further information on $\lambda$ and we wanted to optimise
    this argument to allow for $\lambda = 2$ one can easily write
    $R+t^{1/8}$ in the definition of $\Omega_1$ and $\Omega_2$.}).

  For the integral over $\Omega_2(t)$ we use the lower bound
  $\phi(x) \geq \log|x-x_0| - C$ and apply Theorem~\ref{thm:main-L1}
  to obtain, for all $t \geq 2$,
  \begin{align*}
    \int_{\Omega_2(t)}|u(t,x)
    &- k_t m_\phi\phi(x)\Gamma(t,x)|\d x
    \\
    &\lesssim \frac{1}{\log(R+t^{1/4}) - C} \int_{\Omega_2(t)}
      \phi(x) \left| u(t,x) - k_t m_\phi\phi(x)\Gamma(t,x) \right|
      \d x
    \\
    &\lesssim \frac{1}{ \log t}
      \left(
      \frac{m_\phi}{\log{t}}
      +
      \frac{M_{1,\phi} + m_\phi |x_0|}{t^{\lambda/4}}
      \right).
  \end{align*}
  Together with \eqref{eq:45}, this shows the result.
\end{proof}


\subsection{$L^1$ estimate in dimension $d = 1$}

\begin{proof}[Proof of Theorem \ref{thm:global-L1} in $d=1$]
  Assume without loss of generality that $m_\phi=1$.  Similarly to the
  $d=2$ case, we divide the domain of integration according to whether
  $x$ is ``large'' or not. We write, for any $R>0$,
  \begin{equation*}
    \int_{x_0}^\infty \left| u - 2D \right| \d x
    \leq \int_{x_0}^{x_0+R}\left| u -  2D \right| \dx +
    \int_{x_0+R}^\infty \left| u - 2D \right| \dx.
  \end{equation*}
  We bound each integral separately. For the first one we use the
  bound for the heat kernel in the interval $(0,\infty)$ given in
  Lemma \ref{lem:kernel_dim_1_bound} (which by translation gives a
  bound for the kernel on $(x_0, +\infty)$), and the fact that $u-2D$ is
  also a solution of the equation in order to find a rough upper bound
  of $u$: we choose a time $0 < t_0 < t$ to write
  \begin{align*}
    \int_{x_0}^{x_0+R}
    \left| u(t,x) -  2D(t,x) \right| \dx
    &= \int_{x_0}^{x_0+R}
      \left|\int_{x_0}^\infty (u(t_0,y) - 2D(t_0,y))p_\Omega(t-t_0,x,y) \d y
      \right| \dx\\[10pt]
    &\lesssim \int_{x_0}^{x_0+R}
      \int_{x_0}^\infty \left|
      (u(t_0,y) - 2D(t_0,y))
      \right|\frac{y-x_0}{t-t_0}
      \d y \dx\\[10pt]
    &\lesssim
      \frac{R (M_{1,\phi} + |x_0|)}{\sqrt{t_0}(t-t_0)}
  \end{align*}
  due to Theorem~\ref{thm:main-L1}, applicable whenever
  $|x_0| \leq M \sqrt{t_0}$.
  For the second integral we use again  Theorem~\ref{thm:main-L1} to obtain
  \[
    \int_{R+|x_0|}^\infty\left| u -  2D \right| \dx \leq
    \frac{1}{R}\int_{R+|x_0|}^\infty (x-x_0)\left| u -  2D \right|
    \dx
    \lesssim
    \frac{M_{1,\phi} + |x_0|}{R\sqrt{t}}.
  \]
  Again, this application of Theorem~\ref{thm:main-L1} is fine as long
  as $|x_0| \leq M \sqrt{t}$. Choosing $R=\sqrt{t_0}$ and $t_0=t/2$
  yields
  \begin{equation*}
    \int_{x_0}^\infty \left| u(t,x) - 2D(t,x) \right| \d x
    \lesssim \frac{M_{1,\phi} + |x_0|}{t}.
  \end{equation*}
  By a scaling argument, this shows the result also when  $m_\phi \neq 1$.
\end{proof}


\subsection{Asymptotic behaviour of the total mass in all dimensions}
\label{sec:total_mass}

As a consequence of Theorem \ref{thm:global-L1} we can give an
asymptotic expansion of the mass of solutions to
equation~\eqref{eq:heat-ext} up to the first nonconstant term, with
explicit error estimates.

\begin{cor}\label{cor:mass}
  Assume the hypotheses \Cref{thm:main-uniform}, and also that
  $0 \in U$ in the case $d \geq 2$. There exists a constant $C > 0$
  depending only on $\Omega$ such that the total mass of the standard
  solution $u$ of equation \eqref{eq:heat-ext} satisfies the
  following:
  \begin{enumerate}
  \item[\rm (i)] In dimension $d \geq 3$,
    \begin{equation*}
    \left|
      \int_\Omega u(t,x) \d x - m_\phi - K m_\phi t^{1-\frac{d}{2}}
    \right|
    \leq C M_{1,\phi}\, t^{1-\frac{d}{2} - \frac{\lambda}{2d}}
    \qquad \text{for all $t \geq 2$},
  \end{equation*}
  where $K = C^* \int_{\mathbb{R}^N} G(y)|y|^{2-d} \d y$ and
  $\displaystyle C^*=\lim_{|x|\to\infty}(1-\phi(x))|x|^{d-2}$.

\item[\rm (ii)] In dimension $d=2$, for all $t \geq 2$,
  \begin{equation*}
    \label{eq:mass-decay-d2}
    \begin{aligned}
      \displaystyle
      \left |\int_\Omega u(t,x) \d x - \frac{2 m_\phi}{\log t} \right|
      \leq
      &    \frac{1}{ \log t}
      \left(
        \frac{m_\phi}{\log{t}}
        +
        \frac{M_{1,\phi}}{t^{\lambda/4}}
      \right).
    \end{aligned}
  \end{equation*}

\item[\rm (i)] In dimension $d=1$, assuming $\Omega = (0,+\infty)$,
  there exists a constant $C$ such that
  \begin{equation*}
    \label{eq:mass-decay-d1}
    \left | \int_0^\infty u(t,x) \d x - \frac{m_\phi \sqrt{\pi}}{\sqrt{t}} \right|
    \leq \frac{C M_{1,\phi}}{t}
    \qquad \text{for all $t \geq 2$.}
  \end{equation*}

  \end{enumerate}
\end{cor}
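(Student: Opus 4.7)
The strategy is to combine the $L^1$-asymptotic estimates from Theorem \ref{thm:global-L1} with an explicit expansion of the integral $k_t m_\phi \int_\Omega \phi(x)\Gamma(t,x)\,\d x$. The starting decomposition will be
\begin{equation*}
\int_\Omega u\,\d x - m_\phi = \Big[\int_\Omega u\,\d x - k_t m_\phi \int_\Omega \phi\Gamma\,\d x\Big] + \Big[k_t m_\phi \int_\Omega \phi\Gamma\,\d x - m_\phi\Big],
\end{equation*}
where the first bracket is the ``error'' and the second the ``main'' term.

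In dimensions $d=1,2$ the error is controlled directly by Theorem \ref{thm:global-L1}(ii), (iii), whose right hand sides are already of the desired order. For the main term I would carry out explicit computations: in $d=2$, the asymptotic $\phi(x)\sim \log|x|$ from Lemma \ref{lem:phi} together with the Gaussian scaling $x=\sqrt{2t}\,y$ gives $\int_\Omega \phi\Gamma\,\d x = \tfrac12 \log t + O(1)$, which combined with $k_t \sim 4/(\log t)^2$ from Proposition \ref{prp:Kt-final} yields the leading term $2 m_\phi / \log t$. In $d=1$ on $(0,\infty)$ one uses $k_t = 1/t$ and the explicit Gaussian integral $\int_0^\infty x\,\Gamma(t,x)\,\d x = \sqrt{t/\pi}$.

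In dimension $d\geq 3$ the situation is more delicate: the naive estimate $\int|u - k_t m_\phi \phi\Gamma|\lesssim t^{-\lambda/4}$ from Theorem \ref{thm:global-L1}(i) is typically weaker than the expected correction $t^{(2-d)/2}$, so one cannot bound the error bracket using Theorem \ref{thm:global-L1} alone. The key observation I would use is the exact identity
\begin{equation*}
\int_\Omega u(t,x)\,\d x - m_\phi = \int_\Omega (1-\phi(x))\,u(t,x)\,\d x,
\end{equation*}
which follows immediately from the conservation $\int_\Omega \phi u\,\d x = m_\phi$. Everything reduces to an asymptotic expansion of $\int_\Omega (1-\phi)u$. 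Using Lemma \ref{lem:d3-phi-limit} to write $(1-\phi(x))|x|^{d-2} = C^* + O(|x-x_0|^{-1})$ and the Gaussian scaling identity $\int_{\R^d}|x|^{2-d}\Gamma(t,x)\,\d x = (2t)^{(2-d)/2}\int_{\R^d}|y|^{2-d}G(y)\,\d y$, I would expand
\begin{equation*}
m_\phi \int_\Omega (1-\phi)\phi\Gamma\,\d x = K\,m_\phi\, t^{1-d/2} + \text{lower order},
\end{equation*}
which produces the stated main term. The remaining residual $\int_\Omega (1-\phi)(u-m_\phi\phi\Gamma)\,\d x$ is to be controlled via a splitting $\Omega = \{|x|\leq R\}\cup\{|x|>R\}$: on the bounded part apply the pointwise bound $|u - m_\phi\phi\Gamma|\lesssim \phi\,M_{1,\phi}\,t^{-d/2-\lambda/4}$ from Theorem \ref{thm:main-uniform}(i) integrated against $|B_R|\sim R^d$; on the outer part combine the pointwise decay $(1-\phi)\lesssim R^{2-d}$ with the $L^1$-bound of Theorem \ref{thm:global-L1}(i). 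Optimising in $R$ then gives the rate $t^{1-d/2-\lambda/(2d)}$ stated in the corollary.

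The main obstacle will be this splitting argument in $d\geq 3$: because $\int_\Omega(1-\phi)\phi\,\d x$ diverges at infinity, one cannot simply multiply the pointwise estimate of Theorem \ref{thm:main-uniform} by a finite weight, and the conservation identity must be combined with the quantitative decay of $1-\phi$ to squeeze out a rate strictly faster than the leading correction $t^{1-d/2}$.
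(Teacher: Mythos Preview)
Your approach for $d=1,2$ matches the paper's exactly: integrate the $L^1$ estimate from Theorem~\ref{thm:global-L1} and compute the main term via Gaussian scaling and the asymptotics of $k_t$ and $\phi$. For $d\geq 3$ you also hit on the same core identity $\int_\Omega u - m_\phi = \int_\Omega (1-\phi)u$ and the same inner/outer splitting in $R$, so the overall strategy coincides with the paper's.

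There is, however, a quantitative gap in your inner estimate. Bounding $\int_{|x|\leq R}(1-\phi)\,|u-m_\phi\phi\Gamma|$ by $|B_R|\cdot M_{1,\phi}\,t^{-d/2-\lambda/4}\sim R^d\,t^{-d/2-\lambda/4}$ and balancing against the outer term $R^{2-d}t^{-\lambda/4}$ does \emph{not} optimise to $t^{1-d/2-\lambda/(2d)}$; carry out the computation and you will see the exponents do not match (for $d=3$, for instance, you get $t^{-3/8-\lambda/4}$ rather than $t^{-1/2-\lambda/6}$). The missing ingredient is the decay of the weight itself: on the inner region you must also use $(1-\phi)(x)\lesssim |x|^{2-d}$ from Lemma~\ref{lem:phi}, which replaces $\int_{\Omega\cap B_R}(1-\phi)\,\d x\lesssim R^2$ for $R^d$. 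The paper does exactly this, comparing $u$ to $\Gamma$ (rather than $\phi\Gamma$) and using only the crude bound $|u|+|\Gamma|\lesssim t^{-d/2}$ on the inner part; this yields inner $\lesssim R^2 t^{-d/2}$, outer $\lesssim R^{2-d}t^{-\lambda/4}$, and the choice $R=t^{1/2-\lambda/(4d)}$ gives the stated rate. If you keep your comparison to $\phi\Gamma$ and your use of Theorem~\ref{thm:main-uniform} but also exploit $(1-\phi)\lesssim|x|^{2-d}$ on the inner part, you would in fact obtain the slightly sharper exponent $t^{1-d/2-\lambda/4}$.
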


\begin{rem}
  In the previous statement we assume $0 \in U$ for simplicity; the
  statement can easily be applied to any $\Omega$ by using the
  translation invariance of solutions, hence writing
  \begin{equation*}
    M_{1,\phi}^{x_0} = \int_\Omega \phi(x) (1+|x-x_0|) u_0(x) \d x
  \end{equation*}
  instead of $M_{1,\phi}$.
\end{rem}

\begin{proof}[Proof of Corollary~\ref{cor:mass}]
  Since the statement is invariant when multiplying $u$ by a factor,
  we will assume that $m_\phi = 1$.

  \medskip
  \noindent
  \fbox{$d \geq 3$.} We follow an idea from~\cite{CEQW2012} which uses
  our $L^1$ convergence result in \Cref{thm:global-L1}. Using the
  conservation law~\eqref{eq:conservation} we can write
  \begin{equation}
    \label{eq:3}
    \begin{array}{c}
    \displaystyle\int_\Omega u(t,x) \d x - m_\phi
    =
    \int_\Omega u(t,x)(1-\phi(x)) \d x= I_1 + I_2,
    \\[10pt]
    \displaystyle I_1=
    \int_\Omega \big( u(t,x) - \Gamma(t,x)\big)
    (1-\phi(x)) \d x,
    \qquad I_2=
    \int_\Omega \Gamma(t,x)(1-\phi(x)) \d x.
    \end{array}
  \end{equation}
  We now estimate the terms $I_1$ and $I_2$. For $I_1$ we may use first the bound~\eqref{eq:phi-bounds-d3} on $\phi$, the standard regularisation property $\|u\|_\infty \lesssim t^{-d/2} \|u_0\|_1$, and then \Cref{thm:global-L1} to get, for any $R>0$ sufficiently large so that $B_R^{\mathrm{c}} = \R^d \setminus B_R\subseteq \Omega$,
  \begin{align*}
    |I_1| &\lesssim
    \int_\Omega \big| u(t,x) - \Gamma(t,x)\big|
    \, |x|^{2-d} \d x
    \\
    &\leq
    \int_{\Omega\cap B_R} \big| u(t,x) - \Gamma(t,x)\big|
    \, |x|^{2-d} \d x
    +
    R^{2-d} \int_{B_R^{\mathrm{c}}} \big| u(t,x) - \Gamma(t,x)\big|
    \d x
    \\
    &\lesssim \Big(
    (1+\|u_0\|_1) t^{-\frac{d}{2}}
    \int_{\Omega \cap B_R} \, |x|^{2-d} \d x
    +
    R^{2-d} M_1 t^{-\frac{\lambda}{4}}
    \Big)
    \\
    &\lesssim
    M_{1,\phi} \left(
    R^2 t^{-\frac{d}{2}}
    +
    R^{2-d} t^{-\frac{\lambda}{4}}\right),
  \end{align*}
  where we used $1 = m_\phi \leq M_{1,\phi}$ and $\|u_0\|_1 = m_0 \leq M_{1,\phi}$. By choosing $R := t^{\frac{1}{2} - \frac{\lambda}{4d}}$ we obtain the following for sufficiently large times $t$:
  \begin{equation}
    \label{eq:8}
    |I_1| \lesssim M_{1,\phi}\, t^{1 - \frac{d}{2} - \frac{\lambda}{2d}}.
  \end{equation}
  For the term $I_2$ in \eqref{eq:3} we write
  \begin{align*}
    I_2 &= C^* \ird \Gamma(t,x) |x|^{2-d} \d x-C^* \int_{\mathbb{R}^d\setminus\Omega} \Gamma(t,x) |x|^{2-d} \d x \\
    &\quad+\int_\Omega \Gamma(t,x)|x|^{2-d} \Big(|x|^{d-2} (1-\phi(x)) - C^*\Big) \d x.
  \end{align*}
  Making the change of variables $x = y \sqrt{2t}$ we see that the first term is just $K t^{1-\frac{d}{2}}$, so we have, using Lemma~\ref{lem:d3-phi-limit} with $x_0=0$,
  \begin{align*}
    |I_2 - K t^{1-\frac{d}{2}}|
    &\leq C^* \int_{\mathbb{R}^d\setminus\Omega} \Gamma(t,x) |x|^{2-d} \d x+
    \ird \Gamma(t,x)|x|^{2-d} \Big|
    |x|^{d-2} (1-\phi(x)) - C^*
    \Big| \d x
    \\
    &\lesssim t^{-\frac d2}+
    \ird \Gamma(t,x)|x|^{1-d} \d x
    \lesssim t^{-\frac d2}+t^{\frac{1}{2}-\frac{d}{2}}
      \lesssim t^{\frac{1}{2}-\frac{d}{2}}
      \lesssim t^{1 - \frac{d}{2} - \frac{\lambda}{2d}}.
  \end{align*}
  (The last inequality holds since we always assume $\lambda < d-2$, so $\lambda < d$). Together with~\eqref{eq:8} and \eqref{eq:3}, this
  shows the result.

  \medskip

  \noindent \fbox{$d = 2$.} A straightforward consequence of Theorem \ref{thm:global-L1} is that
  \begin{align*}
    &\frac{1}{ \log (2+t)}\left(\frac{m_\phi}{\log{(2t +1)}}+\frac{M_{1,\phi}}{(1+t)^{\lambda/4}}\right)
    \\
    &\qquad\gtrsim\int_\Omega \left|u(t,x) -k_t m_\phi \phi(x) \Gamma(t,x) \right| \d x
     \geq\left|\int_\Omega u(t,x) \dx-k_t m_\phi  \int_\Omega\phi(x) \Gamma(t,x) \d x\right|.
  \end{align*}
  From our estimates on $k_t$ in Proposition \ref{prp:Kt-final} and similar estimates on the integral $\int_\Omega \phi(x) \Gamma(t,x)
  \d x$, it is not hard to see that
  \begin{equation*}
    k_t = \frac{4}{(\log t)^2}+O((\log t)^{-3}),\quad\int_\Omega \phi(x) \Gamma(t,x) \d x= \frac12 \log t + O( 1 )\quad\text{as }t \to +\infty,
  \end{equation*}
  which implies that
  \begin{equation*}
    k_t \int_\Omega \phi(x) \Gamma(t,x) \d x= \frac{2}{\log t} + O((\log t)^{-2})\quad\text{as }t \to +\infty.
  \end{equation*}
  Together with our previous estimate, this leads to the estimate in
  the statement.

  \medskip
  \noindent\fbox{$d = 1$.}  The statement from Theorem \ref{thm:global-L1} gives
  \begin{align*}
      \frac{C M_{1,\phi}}{(1+t)}
      &\geq
      \int_0^\infty
      \left|
        u(t,x) - m_1 \frac{x}{t} \Gamma(t,x)
      \right| \d x
      \\
      &\geq
      \left|
        \int_0^\infty
        u(t,x) \dx -
        m_1 \int_0^\infty \frac{x}{t} \Gamma(t,x) \dx
      \right|
      =
      \left|
        \int_0^\infty
        u(t,x) \dx -
        m_1 (\pi t)^{-\frac12}
      \right|.\qedhere
    \end{align*}
  \end{proof}

  The estimates in Corollary \ref{cor:mass} are comparable to results
  in \citet{DominguezTena-RodrigueBernal-2023} in the case of
  Dirichlet boundary conditions. One important difference is that we
  always require a certain finite moment of the initial condition
  $u_0$ ($M_1$ in $d \geq 3$; $M_{1,\log}$ in dimension $2$; and $M_2$
  in dimension $d=1$), which leads to sharper but less general
  results. Results in \citet{DominguezTena-RodrigueBernal-2023} are
  valid for any integrable initial condition, and in particular show
  that there are initial conditions in dimensions $1$ and $2$ for
  which the decay of mass can be very slow. Hence, some conditions on
  the initial data $u_0$ (as the finiteness of a suitable moment,
  which we require) are needed in order to give quantitative estimates
  for the decay.


\section{Uniform estimates}\label{sec:uniform_convergence}

This last section is devoted to the proof of \Cref{thm:main-uniform},
which gives uniform estimates in the whole exterior domain~$\Omega$,
including uniform estimates in relative error if we restrict ourselves
to \emph{inner regions}, for which $|x|\lesssim t^{1/2}$.

\subsection{Uniform convergence in dimension $d \geq 3$}
\label{sec:linfinity_d3}

The idea is to use the $L^1$--$L^\infty$ regularisation property of the heat equation in $\Omega$ in order to transform the $L^1$ estimates in Section~\ref{sec:l1} into $L^\infty$ estimates.

\begin{proof}[Proof of Theorem \ref{thm:main-uniform} for $d \geq 3$]
  Calling $w := u - \phi m_\phi \Gamma$, and using $\Delta \phi = 0$,
  $\p_t \Gamma = \Delta \Gamma$, one readily sees that
  \begin{equation*}
    \left\{
      \
      \begin{aligned}
        \p_t w = \Delta w + 2m_\phi\nabla \phi\cdot \nabla \Gamma,
        \qquad
        &\text{$x\in \Omega$, $t>0$,}
        \\
        w(t,x) = 0,
        \qquad
        &\text{$x \in \p \Omega$, $t > 0$.}
      \end{aligned}
    \right.
   \end{equation*}
  Applying Duhamel's formula from a starting time $t_0 > 1$ gives, for any $t \geq t_0$,
  \begin{equation}
    \label{eq:20}
    w(t,\cdot)
    = S_{t-t_0} w(t_0,\cdot) + 2m_\phi \int_{t_0}^t S_{t-s} (\nabla \phi\cdot
    \nabla \Gamma(s, \cdot)) \d s,
  \end{equation}
  where $S_t$ is the semigroup of the Dirichlet heat equation in
  $\Omega$. Choose $t > 4$ and $t_0 := t/2$. Using the bound from
  \Cref{cor:Linfty_regularisation_d3} in the case $p=1$ and the
  $\phi$-weighted $L^1$ bound from Theorem \ref{thm:main-L1} we can
  estimate the first term:
  \begin{equation}
    \label{eq:39}
    |S_{t-t_0} w (t_0, x)|=|S_{t/2}w(t/2,0)|
    \lesssim
    t^{-\frac{d}{2}} \phi(x)\|\phi w (t/2, \cdot) \|_1\lesssim
    t^{-\frac{d}{2}-\frac\lambda4}\phi(x)  M_{1,\phi}.
  \end{equation}
  In order to bound the second term in \eqref{eq:20} we split the integral in it into two regions:
  \begin{equation*}
    I_1 := \int_{t/2}^{t-1} S_{t-s} (\nabla \phi\cdot
    \nabla \Gamma(s, \cdot)) \d s,
    \qquad
    I_2 := \int_{t-1}^{t} S_{t-s} (\nabla \phi\cdot
    \nabla \Gamma(s, \cdot)) \d s.
  \end{equation*}
For the first one we use the upper bound for the heat kernel in~\eqref{eq:Zhang2} to get
  \begin{align*}
    |I_1|&\leq\int_{t/2}^{t-1}| S_{t-s} (\nabla \phi\cdot\nabla \Gamma(s, \cdot))|\d s\\
    &\leq \int_{t/2}^{t-1}  \int_\Omega p_\Omega(t-s,x,y) |\nabla \phi (y)|\, |\nabla \Gamma(s, y) | \d y \d s\\
    &\lesssim\phi(x) \int_{t/2}^{t-1}  \int_\Omega \phi(y) \Gamma(t-s,(x-y)/c_2)|\nabla \phi (y) |\, |\nabla \Gamma(s, y) | \d y \d s.
  \end{align*}
  We now use the bound~\eqref{eq:gradphi-d3} for $|\nabla\phi|$ plus the estimate
  \begin{equation}
    \label{eq:estimate.gradient.Gamma}
    |\nabla \Gamma(s,y) |\lesssim s^{-(d+1)/2}\frac{|y|}{\sqrt{s}} e^{-\frac{|y|^2}{4 s}}\lesssim s^{-(d+1)/2},
  \end{equation}
  and the fact that $0\le\phi\leq 1$, to obtain
  \begin{equation*}
    |I_1|\lesssim\phi(x) \int_{t/2}^{t-1}  s^{-(d+1)/2}\int_\Omega \Gamma(t-s, (x-y)/c_2)|y-x_0|^{1-d}\d y \d s.
  \end{equation*}
  The symmetry of $\Gamma$ in the spatial variable and the convolution Lemma \ref{lem:convolution} yield
  \begin{align*}
    \int_\Omega \Gamma(t-s, (x-y)/c_2)|y-x_0|^{1-d}\d y&\leq\ird \Gamma(t-s,(x-y)/c_2)|y-x_0|^{1-d}\d y\\
    &\leq\ird \Gamma(t-s,y/c_2)\,|y|^{1-d}\d y\lesssim(t-s)^{(1-d)/2}.
  \end{align*}
  Hence,
  \begin{equation}
  \label{eq:estimate.I1}
    \left | I_1 \right|
    \lesssim
    \phi(x)\,
    t^{-(d+1)/2}
    \int_{t/2}^{t-1} (t-s)^{(1-d)/2} \d s
    \lesssim
    \phi(x)
    \begin{cases}
      t^{1-d},& d>3,\\
      t^{-2}\log t,&d=3.
    \end{cases}
    \end{equation}

    To estimate $I_2$ we use the short-time bound
    \eqref{eq:Zhang2-tsmall} for $p_\Omega$, which implies in
    particular that
    \begin{equation*}
      p_\Omega(t,x,y)\lesssim\frac{\phi(x)}{\sqrt{t}}\,\Gamma( c t,  x-y).
    \end{equation*}
    Combining this with the estimates $0\le\phi\le1$,
    $|\nabla \phi(y)| \lesssim 1$
    and~\eqref{eq:estimate.gradient.Gamma},
    \begin{equation}
      \label{eq:34}
      \begin{split}
        |I_2|&\leq\int_{t-1}^{t}\int_\Omega
               p_\Omega(t-s,x,y)|\nabla\phi(y)|\,|\nabla\Gamma(s,
               y)|\d y\d s
        \\
             &\lesssim\phi(x)\int_{t-1}^t(t-s)^{-\frac{1}{2}}s^{-(d+1)/2}
               \int_\Omega\Gamma(c(t-s),x-y)\d
               y \d s
        \\
             &\lesssim\phi(x) t^{-(d+1)/2} \int_{t-1}^t (t-s)^{-\frac{1}{2}}\d s\lesssim\phi(x) t^{-(d+1)/2}.
      \end{split}
    \end{equation}
  From~\eqref{eq:20}--\eqref{eq:34} we obtain immediately
  \begin{equation*}
    |w(t, x)|\lesssim\phi(x)\big( M_{1,\phi}t^{-\frac{d}{2}-\frac\lambda4}+m_\phi t^{1-d}+m_\phi t^{-(d+1)/2}\big).
  \end{equation*}
  Since $\lambda<d-2$, then $\frac d2+\frac\lambda4<d-1$. If moreover $\lambda\le2$, then $\frac d2+\frac\lambda4 \leq \frac{d+1}2$, and we
  finally get
  \begin{equation*}
        \| w(t, \cdot) \|_\infty
        \lesssim \phi(x)t^{-\frac{d}{2}-\frac\lambda4}M_{1,\phi}
  \end{equation*}
  if $t>4$, since $m_\phi\le M_{1,\phi}$. This inequality is clearly
  also true for $2 \leq t \leq 4$, as can be seen by separately
  estimating the two terms in the expression
  $w = u - \phi m_\phi \Gamma$ (use
  \Cref{cor:Linfty_regularisation_d3}, case $p=1$, for a suitable
  estimate of $u$).
 \end{proof}


 \subsection{Uniform convergence in dimension $d = 2$}
\label{sec:linfinity_d2}

Let us first prove an auxiliary lemma.
\begin{lem}\label{lem:bound_integral_section7.2}
  There exists a constant $C>0$ such that
  \[
    \int_{t/2}^{t-1} (t-s)^{-\frac{1}{2}}(\log(1+t-s))^{-1} \d s \leq Ct^{\frac{1}{2}}\left(\log\left(1+\frac{t}{2}\right)\right)^{-1}\quad\text{  for all $t \geq 4$.}
  \]
\end{lem}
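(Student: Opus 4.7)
My plan is to start with the substitution $u=t-s$, so that $du=-ds$ and the integral becomes
\[
\int_{t/2}^{t-1}(t-s)^{-1/2}(\log(1+t-s))^{-1}\,ds = \int_{1}^{t/2}u^{-1/2}(\log(1+u))^{-1}\,du.
\]
Once in this form, the naive bound $\log(1+u)\geq\log 2$ gives only $C\sqrt{t}$, which is off by a factor of $\log(1+t/2)$ from the target, so a splitting argument is needed to squeeze out the extra logarithm from the large-$u$ portion.

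The key idea is to split the range at $u=\sqrt{t}$ (using $t\geq 4$, so $\sqrt{t}\geq 2\geq 1$). For the small-$u$ piece, where $\log$ is not yet helpful, I will use $\log(1+u)\geq\log 2$ and estimate
\[
\int_{1}^{\sqrt{t}}u^{-1/2}(\log(1+u))^{-1}\,du \leq \frac{1}{\log 2}\int_{1}^{\sqrt{t}}u^{-1/2}\,du \leq C\,t^{1/4}.
\]
For the large-$u$ piece I use monotonicity of $\log(1+u)$ to pull the factor $(\log(1+\sqrt{t}))^{-1}$ out of the integral:
\[
\int_{\sqrt{t}}^{t/2}u^{-1/2}(\log(1+u))^{-1}\,du \leq \frac{1}{\log(1+\sqrt{t})}\int_{\sqrt{t}}^{t/2}u^{-1/2}\,du \leq \frac{C\sqrt{t}}{\log(1+\sqrt{t})}.
\]

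It then remains to absorb both pieces into a bound of the form $C\sqrt{t}\,(\log(1+t/2))^{-1}$. This uses two elementary observations, valid for all $t\geq 4$: first, $\log(1+t/2)\leq C\log(1+\sqrt{t})$ (since both behave like $\log t$ up to a bounded multiplicative factor, and the ratio is continuous and strictly positive on $[4,\infty)$ with a finite limit at infinity), which handles the large-$u$ piece; second, $t^{1/4}\log(1+t/2)\leq C\sqrt{t}$, i.e.\ $\log(1+t/2)\leq C\,t^{1/4}$, which follows because $t^{-1/4}\log(1+t/2)$ is continuous on $[4,\infty)$ and tends to $0$ at infinity, hence is bounded. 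Combining these turns the short-range bound $C t^{1/4}$ into $C\sqrt{t}(\log(1+t/2))^{-1}$ as well, and the lemma follows by summing the two contributions.

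The only mild subtlety is the careful verification of those two elementary inequalities on the whole range $t\geq 4$ (as opposed to just asymptotically); this amounts to a one-line continuity and limit argument, so there is no real obstacle.
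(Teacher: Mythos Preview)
Your proof is correct and takes a genuinely different route from the paper. After the same change of variables $u=t-s$, the paper defines $I(t)=\int_1^{t/2}u^{-1/2}(\log(1+u))^{-1}\,du$ and the candidate upper bound $H(t)=c\,t^{1/2}(\log(1+t/2))^{-1}$, computes both derivatives, shows $H'(t)>I'(t)$ for all large $t$ once $c>4$, and then enlarges $c$ further so that $H\ge I$ on the remaining compact initial interval---a barrier-type comparison argument. Your approach is instead a direct estimate: split the integral at $u=\sqrt{t}$, bound the short piece crudely by $Ct^{1/4}$ and the long piece by $C\sqrt{t}/\log(1+\sqrt{t})$ via monotonicity of the logarithm, and then absorb both into the target using the elementary facts $\log(1+t/2)\lesssim\log(1+\sqrt{t})$ and $\log(1+t/2)\lesssim t^{1/4}$ on $[4,\infty)$. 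Your argument is shorter and avoids derivatives and the ODE-style comparison; the paper's has the mild advantage of not needing an auxiliary splitting point or the two elementary inequalities, but both are entirely elementary.
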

\begin{proof}
	We compute
	\begin{align*}
	 I(t)&=\int_{t/2}^{t-1} (t-s)^{-\frac{1}{2}}(\log(1+t-s))^{-1} \d s = \int_{1}^{t/2} s^{-\frac{1}{2}}(\log(1+s))^{-1} \d s, \\
	  I'(t)&=\frac{1}{2}t^{-\frac{1}{2}}\left(\log\left(1+\frac{t}{2}\right)\right)^{-1}.
	\end{align*}
	Define on the other hand, for some $c > 0$ to be fixed later,
	\[
	H(t):=ct^{\frac{1}{2}}\left(\log\left(1+\frac{t}{2}\right)\right)^{-1}.
	\]
	Then
	\[
	H'(t)=\frac{c}{2}t^{-\frac{1}{2}}\left(\log\left(1+\frac{t}{2}\right)\right)^{-1} - c\frac{t^{\frac{1}{2}}}{2+t}\left(\log\left(1+\frac{t}{2}\right)\right)^{-2}.
	\]
	Since $\dfrac{t^{\frac{1}{2}}}{2+t}\leq t^{-\frac{1}{2}}$, taking $c > 4$ there must exist a time $t_1>0$ such that
	\[
	H'(t)\geq \frac{c}{4}t^{-\frac{1}{2}}\left(\log\left(1+\frac{t}{2}\right)\right)^{-1}= \frac{c}{4}I'(t)>I'(t)\quad\text{for all}\quad t>t_1.
	\]
	Now, since both $I(t)$ and $H(t)$ are uniformly bounded above and below for all $t\in[4,t_1]$, we can choose $c > 4$ and large enough so that
	\[
	I(t)<H(t)\text{ for all }t\in[4, t_1]\quad\text{and}\quad I'(t)<H'(t)\text{ for all }t\geq t_1,
	\]
	implying that $I(t)<H(t)$ for all $t\geq 4$ and	proving our claim.
\end{proof}

\medskip
We treat the case $d=2$ in a similar way as the case of $d \geq 3$,
using the weighted $L^1$ convergence result from Theorem
\ref{thm:main-L1}.

\begin{proof}[Proof of Theorem \ref{thm:main-uniform} for $d=2$]
  The function $ w(t,x) := u(t,x) - k_t \phi(x)m_\phi\Gamma(t,x)$ satisfies
  \[
  \begin{gathered}
      \begin{cases}
      \p_t w(t,x) = \Delta w(t,x)+ m_\phi F(t,x), \quad &x \in \Omega$, $t > 0,
      \\
      w(t,x) = 0, &x \in \p \Omega,\ t > 0,
    \end{cases}
    \\[10pt]
  \text{where }
    F(t,x) :=  2 k_t  \nabla\phi(x) \cdot\nabla\Gamma(t,x)
    - k_t'  \phi(x) \Gamma(t,x).
    \end{gathered}
  \]
  We denote by $S_t$ the heat equation semigroup in $\Omega$ with
  Dirichlet boundary conditions (so that $S_t u_0$ is the solution with initial condition $u_0$ at time $t$). We apply Duhamel's formula
  from a starting time $t_0 > 1$ and we get, for any $t\geq t_0$,
  \begin{equation}
    \label{eq:43}
    w(t,\cdot)
    = S_{t-t_0} w(t_0,\cdot) + m_\phi \int_{t_0}^t S_{t-s} (F(s,\cdot)) \d s.
  \end{equation}
  Take $t \geq 4$ and $t_0 := t/2$. Using the bound from Corollary
  \ref{cor:dim_2_L^p_regularisation} in the case $p=1$ and the $L^1$
  bound from Theorem \ref{thm:main-L1} in dimension $d=2$ we can bound
  the first term:
  \begin{equation}
    \label{eq:39-d2}
    \begin{aligned}
       |S_{t-t_0} w(t_0,\cdot)(x)|&= | S_{t/2} w (t/2, \cdot)(x)|\lesssim\frac{\phi(x)}{t(\log t)^2} \|\phi w (t/2, \cdot) \|_1
        \\
        &\lesssim\frac{\phi(x)}{t(\log t)^2}\left(\frac{m_\phi}{\log{(t+1)}}+\frac{M_{1,\phi} + m_\phi |x_0|}{(1+t)^{\lambda/4}}
        \right).
    \end{aligned}
  \end{equation}
  for all $t \geq 4$. To estimate the second term in~\eqref{eq:43}, we separate $F$ into the two terms
  \begin{equation*}
    F_1(t,x) :=  2 k_t \nabla\phi(x) \cdot \nabla\Gamma(t,x),\qquad F_2(t,x) := k_t'  \phi(x) \Gamma(t,x).
  \end{equation*}
  \noindent\underline{Estimate for $F_1$.} To estimate the term with $F_1$ we divide the integral in two parts:
  \begin{gather*}
    \left | \int_{t/2}^t S_{t-s} (k_s\ \nabla \phi\cdot
      \nabla \Gamma(s,\cdot)) \d s \right|\le I_1 + I_2,\quad\text{where}
    \\
    I_1:=
    \int_{t/2}^{t-1} | S_{t-s} (k_s\ \nabla \phi\cdot
    \nabla \Gamma(s,\cdot))| \d s,\quad I_2= \int_{t-1}^{t} | S_{t-s} (k_s\ \nabla \phi\cdot
    \nabla \Gamma(s,\cdot))| \d s.
  \end{gather*}
  As for $I_1$, there both $s$ and $t-s$ are away from 0, so we may
  use again the kernel bound in \eqref{eq:SC-d2-tlarge-2}, and our
  bound $k_s \lesssim (\log(2+s))^{-2} \lesssim (\log s)^{-2}$ (for
  $s \geq 2$):
  \begin{align*}
    I_1&\leq\int_{t/2}^{t-1} k_s | S_{t-s} (\nabla \phi\cdot\nabla \Gamma(s, \cdot)) | \d s
    \\
    &\lesssim \int_{t/2}^{t-1} \frac{1}{(\log s)^2} \int_\Omega p_\Omega(t-s,x,y)|\nabla\phi(y)|\, |\nabla \Gamma(s, y) | \d y \d s
    \\
    &\lesssim\frac{\phi(x)}{(\log t)^2}\int_{t/2}^{t-1}\int_\Omega\frac{\phi(y)}{(\log (1+ t-s))^2}\Gamma\Big(t-s, \frac{x-y}{c_2}\Big)
    |\nabla\phi(y)|\,|\nabla\Gamma(s,y)|\d y \d s.
  \end{align*}
  Since $|\nabla \phi (y) | \lesssim|y-x_0|^{-1}$, $\phi(y) \lesssim \log (2 + |y-x_0|)$ for all $y \in \Omega$, using also  \eqref{eq:estimate.gradient.Gamma} we get
  \begin{equation}
    \label{eq:32}
    \begin{aligned}
        I_1&\lesssim\frac{\phi(x)}{(\log t)^2}\int_{t/2}^{t-1}\int_\Omega\frac{1}{(\log(1+t-s))^2}\Gamma\Big(t-s,\frac{x-y}{c_2}\Big)
        \frac{\log(2 + |y-x_0|)}{|y-x_0|}s^{-\frac{3}{2}}\d y \d s\\
        &\leq\frac{\phi(x) t^{-\frac{3}{2}}}{(\log t)^2}\int_{t/2}^{t-1}\frac{1}{(\log (1+ t-s))^2}\int_\Omega
        \Gamma\Big(t-s,\frac{x-y}{c_2}\Big)\frac{\log(2 + |y-x_0|)}{|y-x_0|}\d y \d s.
    \end{aligned}
  \end{equation}
  The inner integral can be estimated as follows: for $a, b \geq 0$ we have
  \begin{equation*}
    \log(2 + ab) \leq \log ((2+a)(1+b)) = \log(2+a) + \log (1+b),
  \end{equation*}
  so
  \begin{equation}\label{eq:log_2}
    \log (2 + |y-x_0|) \leq\log (1 + \sqrt{t-s}) + \log \Big(2 + \frac{|y-x_0|}{\sqrt{t-s}}\Big)
  \end{equation}
  and we have, using the convolution \Cref{lem:convolution},
  \begin{align*}
    \int_\Omega&\Gamma\Big(t-s, \frac{x-y}{c_2}\Big)\frac{\log(2 + |y-x_0|)}{|y-x_0|}\d y\\[10pt]
    &\lesssim\int_\Omega\Gamma\Big(t-s, \frac{x-y}{c_2}\Big)\frac{\log(1+\sqrt{t-s})}{|y-x_0|}\d y
    +\int_\Omega\Gamma\Big(t-s, \frac{x-y}{c_2}\Big)\frac{\log\Big(2 + \frac{|y-x_0|}{\sqrt{t-s}}\Big)}{|y-x_0|}\d y\\[10pt]
    &\leq\int_{\R^2}\Gamma\Big(t-s, \frac{y}{c_2}\Big)\frac{\log(1 + \sqrt{t-s}) }{|y|}\d y
    +\int_{\R^2}\Gamma\Big(t-s, \frac{y}{c_2}\Big)\frac{\log\Big(2 + \frac{|y|}{\sqrt{t-s}}\Big)}{|y|}\d y\\[10pt]
    &\lesssim\int_{\R^2}\Gamma\Big(t-s, \frac{y}{c_2}\Big)\frac{\log(1 + \sqrt{t-s}) }{|y|}\d y
    +\int_{\R^2}\Gamma\Big(t-s, \frac{y}{2c_2}\Big)\frac{1}{|y|}\d y\\[10pt]
    &\lesssim(t-s)^{-\frac{1}{2}} (\log(1 + \sqrt{t-s}) + 1)\lesssim(t-s)^{-\frac{1}{2}} \log(1 + t-s).
  \end{align*}
  Hence, from \eqref{eq:32} and Lemma~\ref{lem:bound_integral_section7.2},
  \begin{align*}
    I_1&\lesssim\frac{\phi(x) t^{-\frac{3}{2}}}{(\log t)^2}\int_{t/2}^{t-1}\frac{(t-s)^{-\frac{1}{2}}}{\log (1+ t-s)}\d s
    \lesssim\frac{t^{-\frac{3}{2}}\phi(x)}{(\log t)^2}\frac{t^{\frac{1}{2}}}{\log\Big(1+\frac{t}{2}\Big)}\lesssim \frac{\phi(x)}{t (\log t)^3}
  \end{align*}
  for all $t \geq 4$. For the integral $I_2$, we use the small-time
  bound in Corollary~\ref{cor:dim_2_L^p_regularisation},
  $|\nabla\phi|\leq C$,~\eqref{eq:estimate.gradient.Gamma} and the
  estimate for $k_t$, and compute
    \begin{align*}
    	\int_{t-1}^{t} | S_{t-s} (k_s\ \nabla \phi\cdot\nabla \Gamma(s,\cdot))| \d s&\lesssim\phi(x)\int_{t-1}^t |\log(1+\sqrt{t-s})|^{-1}
    	\|k_s\ \nabla \phi\cdot \nabla \Gamma(s,\cdot) \|_\infty \d s
    	\\
    	&\lesssim\phi(x)k_t\ t^{-\frac{3}{2}}\int_{t-1}^t (\log(1+\sqrt{t-s}))^{-1}\d s
    	\\
    	&=\phi(x)k_t\  t^{-\frac{3}{2}}\int_{0}^1 (\log(1+\sqrt{s}))^{-1}\d s
    	\\
    	&\lesssim\phi(x)(\log(1+t))^{-2}t^{-\frac{3}{2}}.
    \end{align*}
    So the term with $F_1$ can be bounded, for all $t \geq 4$, by
    \begin{equation}\label{eq:21}
        \left | \int_{t_0}^t S_{t-s} (k_s \nabla \phi\cdot
        \nabla \Gamma(s,\cdot)) \d s \right|
        \leq \frac{\phi(x)}{t (1+ \log(1+t))^3}.
    \end{equation}

 \medskip
 \noindent
 \underline{Estimate for $F_2$.} Regarding $F_2$, we again split the integral in two terms and compute
   \[
   \left | \int_{t_0}^t S_{t-s} (k'_s\ \phi\ \Gamma(s,\cdot)) \d s \right|
   \leq
   \underbrace{\int_{t_0}^{t-1} | S_{t-s} (k'_s\ \phi\ \Gamma(s,\cdot))| \d s}_{I_1} + \underbrace{\int_{t-1}^{t} | S_{t-s} (k'_s\phi\Gamma(s,\cdot))| \d s}_{I_2}.
   \]
   Regarding $I_1$, from Lemma \ref{lem:Kt'/Kt} we obtain
   \begin{equation*}
     k_t' = \ddt K_{\frac12 \log(2t)}
     = \frac{1}{2t}
     K'_\tau \big\vert_{\tau = \frac12 \log(2t)}
     \lesssim
     \frac{k_t}{t (1+ \log(1+t))}\leq \frac{1}{t(1+\log(1+t))^3},
   \end{equation*}
   and so, using this estimate,
   \begin{equation}\label{eq:integral_bound_I_1}
     \begin{aligned}
     |I_1|&\lesssim
     \int_{t_0}^{t-1}\int_\Omega \frac{\phi(x)}{s(1+\log(1+s))^3}\Gamma(s,y)\ \frac{\phi^2(y)\Gamma(t-s,(x-y)/c_2)}{(\log(1+t-s))^2} \d y \d s
     \\[10pt]
     &\lesssim
     \frac{\phi(x)}{t^2(1+\log(1+t))^3} \int_{t_0}^{t-1}\int_\Omega \frac{\phi^2(y)\Gamma(t-s,(x-y)/c_2)}{(\log(1+t-s))^2} \d y \d s.
   \end{aligned}
   \end{equation}
	In order to bound the interior integral, similarly to the computations in~\eqref{eq:log_2}, we get
	\[
	\phi^2(y)\lesssim (\log(1+\sqrt{t-s}))^2 + \Big(\log\Big(2+\frac{y-x_0}{\sqrt{t-s}}\Big)\Big)^2.
	\]
	So
	\begin{equation*}
    	\int_\Omega \frac{\phi^2(y)\Gamma(t-s,(x-y)/c_2)}{(\log(1+t-s))^2}\d y\lesssim 1+\int_\Omega\frac{\Big(\log\Big(2+\frac{y-x_0}{\sqrt{t-s}}\Big)\Big)^2}{(\log(1+t-s))^2}\Gamma(t-s,(x-y)/c_2) \d y
	\end{equation*}
	and  using again the convolution \Cref{lem:convolution},
	\begin{align*}
		\int_\Omega\frac{\phi^2(y)\Gamma(t-s,(x-y)/c_2)}{(\log(1+t-s))^2}
        \d  y&\lesssim1+\int_\Omega\frac{\left(\log\left(2+\frac{y}{\sqrt{t-s}}\right)\right)^2}{(\log(1+t-s))^2}
        \Gamma(t-s,y/c_2)\d y\\[10pt]
		&\lesssim 1+\frac{1}{(\log(1+t-s))^2}.
	\end{align*}
	Going back to~\eqref{eq:integral_bound_I_1},
	\[
	   |I_1|\lesssim \frac{\phi(x)}{t^2(1+\log(1+t))^3} \int_{t/2}^{t-1}\Big( 1+\frac{1}{(\log(1+t-s))^2}\Big)\d s\lesssim\frac{\phi(x)}{t(1+\log(1+t))^3}.
	\]
	
	For the second integral $|I_2|$ we use again the small-time
        estimate in
        Corollary~\ref{cor:dim_2_L^p_regularisation}
        and compute
	\begin{align*}
		|I_2|&\lesssim\phi(x)\int_{t-1}^{t} |\log(1+\sqrt{t-s})|^{-1}\| k'_s\ \phi \Gamma(s, \cdot) \|_\infty \d s\\
		&\lesssim\phi(x)k'_t\ t^{-\frac{1}{2}}\int_{t-1}^{t} |\log(1+\sqrt{t-s})|^{-1}\d s\\
		& \lesssim\phi(x)k'_t\  t^{-\frac{1}{2}}\lesssim\frac{\phi(x)}{t^{\frac{3}{2}} (1+ \log(1+t))^3}.
	\end{align*}
	So the term with $F_2$ can be bounded by
	\begin{equation}\label{eq:22}
		\left| \int_{t/2}^t S_{t-s} (k'_s\  \phi\ \Gamma(s,\cdot)) \d s \right|\leq \frac{\phi(x)}{t (1+ \log(1+t))^3}.
	\end{equation}

   We can now use
   \eqref{eq:39-d2},~\eqref{eq:21} and \eqref{eq:22} to write
   \begin{equation*}
     |w(t,x)|\lesssim\frac{\phi(x)}{t(1+\log(1+t))^2}\left(\frac{m_{\phi}}{1+\log(1+t)}+\frac{M_{1,\phi}+m_\phi|x_0|}{(1+t)^{\lambda/4}}\right).
   \end{equation*}
   This shows the result for all $t\geq 4$. It is also clearly true for $0 < t \leq 4$.
 \end{proof}


 \subsection{Uniform convergence in dimension $d = 1$}
\label{sec:linfinity_d1}

We write now the proof of Theorem \ref{thm:main-uniform} in $d=1$.

\begin{proof}[Proof of Theorem \ref{thm:main-uniform} in $d=1$]
  We assume without loss of generality that $m_\phi= 1$.  Again, the
  idea is similar to the cases $d \geq 3$ and $d = 2$, using the $L^1$
  behaviour of the difference $w(t,x):=u(t,x)- 2D(t,x)$. This case is
  even simpler since $w$ is itself a solution of the heat equation, so
  the $L^1-L^\infty$ regularisation of the heat equation directly
  applies. Using Lemma~\ref{lem:regularisation-d1} and
  Theorem~\ref{thm:main-L1}, we can also compute
  \[
    \|w\|_{L^\infty(\Omega)}
    \lesssim
    t^{-\frac{3}{2}} \ \phi(x)\ \|\phi(\cdot)\ w(t/2,
  \cdot)\|_{L^1(\Omega)}
  \lesssim
  \frac{\phi(x)}{t^{2}} (M_{1,\phi} + m_{\phi}|x_0|) .\qedhere
  \]
\end{proof}


\subsection*{Acknowledgments}

J.~Cañizo and A.~Gárriz were supported by grants PID2020-117846GB-I00,
PID2023-151625NB-I00, RED2022-134784-T, and the IMAG María de Maeztu
grant CEX2020-001105-M, all of them funded by MICIU/AEI/10.13039/501100011033.

A.~Gárriz was also supported by the Juan de la Cierva grant
JDC2022-048653-I, funded by MICIU/AEI/10.13039/501100011033 and by the
European Union ``NextGenerationE''/PRTR, and by a post-doctoral
fellowship of the ANR project DEEV ANR-20-CE40-0011-01.

F.\,Quir\'os was supported by grants PID2020-116949GB-I00,
PID2023-146931NB-I00, RED2022-134784-T, and the ICMAT Severo Ochoa
grant CEX2023-001347-S, all of them funded by
MICIU/AEI/10.13039/501100011033, and also by the Madrid Government
(Comunidad de Madrid – Spain) under the multiannual Agreement with UAM
in the line for the Excellence of the University Research Staff in the
context of the V PRICIT (Regional Programme of Research and
Technological Innovation).

We thank Arnaud Guillin for comments on the logarithmic Sobolev
inequalities we are using in this paper.

\bibliography{bibliography}

\begin{thebibliography}{36}
\providecommand{\natexlab}[1]{#1}
\providecommand{\url}[1]{\texttt{#1}}
\expandafter\ifx\csname urlstyle\endcsname\relax
  \providecommand{\doi}[1]{doi: #1}\else
  \providecommand{\doi}{doi: \begingroup \urlstyle{rm}\Url}\fi

\bibitem[An\'{e} et~al.(2000)An\'{e}, Blach\`ere, Chafa\"{\i}, Foug\`eres,
  Gentil, Malrieu, Roberto, and Scheffer]{Ane2000}
An\'{e}, C., Blach\`ere, S., Chafa\"{\i}, D., Foug\`eres, P., Gentil, I.,
  Malrieu, F., Roberto, C., and Scheffer, G.
\newblock \emph{Sur les in\'{e}galit\'{e}s de {S}obolev logarithmiques}.
\newblock Panoramas et Synth\`eses. Soci\'{e}t\'{e} Math\'{e}matique de France,
  Paris, 2000.
\newblock ISBN 2-85629-105-8.

\bibitem[Arnold et~al.(2001)Arnold, Markowich, Toscani, and
  Unterreiter]{Arnold2001}
Arnold, A., Markowich, P., Toscani, G., and Unterreiter, A.
\newblock \href{http://dx.doi.org/10.1081/PDE-100002246}{On convex {S}obolev
  inequalities and the rate of convergence to equilibrium for {F}okker-{P}lanck
  type equations}.
\newblock \emph{Comm. Partial Differential Equations}, 26\penalty0
  (1-2):\penalty0 43--100, 2001.

\bibitem[Bakry et~al.(2014)Bakry, Gentil, and Ledoux]{Bakry2014}
Bakry, D., Gentil, I., and Ledoux, M.
\newblock \href{http://dx.doi.org/10.1007/978-3-319-00227-9}{\emph{Analysis and
  geometry of {M}arkov diffusion operators}}, volume 348 of \emph{Grundlehren
  der mathematischen Wissenschaften}.
\newblock Springer, Cham, 2014.
\newblock ISBN 978-3-319-00226-2; 978-3-319-00227-9.

\bibitem[Br\"{a}ndle et~al.(2007)Br\"{a}ndle, Quir\'{o}s, and
  V\'{a}zquez]{BQV2007}
Br\"{a}ndle, C., Quir\'{o}s, F., and V\'{a}zquez, J.~L.
\newblock \href{http://dx.doi.org/10.4171/IFB/162}{Asymptotic behaviour of the
  porous media equation in domains with holes}.
\newblock \emph{Interfaces Free Bound.}, 9\penalty0 (2):\penalty0 211--232,
  2007.

\bibitem[Ca\~{n}izo et~al.(2012)Ca\~{n}izo, Carrillo, and Schonbek]{Canizo2012}
Ca\~{n}izo, J.~A., Carrillo, J.~A., and Schonbek, M.~E.
\newblock \href{http://dx.doi.org/10.1016/j.jmaa.2011.12.006}{Decay rates for a
  class of diffusive-dominated interaction equations}.
\newblock \emph{J. Math. Anal. Appl.}, 389\penalty0 (1):\penalty0 541--557,
  2012.

\bibitem[Carrillo et~al.(2001)Carrillo, J\"{u}ngel, Markowich, Toscani, and
  Unterreiter]{Carrillo2001}
Carrillo, J.~A., J\"{u}ngel, A., Markowich, P.~A., Toscani, G., and
  Unterreiter, A.
\newblock \href{http://dx.doi.org/10.1007/s006050170032}{Entropy dissipation
  methods for degenerate parabolic problems and generalized {S}obolev
  inequalities}.
\newblock \emph{Monatsh. Math.}, 133\penalty0 (1):\penalty0 1--82, 2001.

\bibitem[Cattiaux et~al.(2022)Cattiaux, Guillin, and Wu]{Cattiaux2019}
Cattiaux, P., Guillin, A., and Wu, L.~M.
\newblock \href{http://dx.doi.org/10.1007/s10114-022-0501-3}{Poincar\'{e} and
  logarithmic {S}obolev inequalities for nearly radial measures}.
\newblock \emph{Acta Math. Sin. (Engl. Ser.)}, 38\penalty0 (8):\penalty0
  1377--1398, 2022.

\bibitem[Cercignani(1982)]{Cercignani1982}
Cercignani, C.
\newblock {H-theorem and trend to equilibrium in the kinetic theory of gases}.
\newblock \emph{Archiwum Mechaniki Stosowanej (Archives of Mechanics)},
  34:\penalty0 231--241, 1982.

\bibitem[Collet et~al.(2000)Collet, Martínez, and Martín]{Collet2000a}
Collet, P., Martínez, S., and Martín, J.~S.
\newblock \href{http://dx.doi.org/10.1007/s004400050251}{Asymptotic behaviour
  of a {Brownian} motion on exterior domains}.
\newblock \emph{Probability Theory and Related Fields}, 116\penalty0
  (3):\penalty0 303--316, 2000.

\bibitem[Cort\'{a}zar et~al.(2016{\natexlab{a}})Cort\'{a}zar, Elgueta,
  Quir\'{o}s, and Wolanski]{CEQW2016-JMAA}
Cort\'{a}zar, C., Elgueta, M., Quir\'{o}s, F., and Wolanski, N.
\newblock \href{http://dx.doi.org/10.1016/j.jmaa.2015.12.021}{Asymptotic
  behavior for a nonlocal diffusion equation in exterior domains: the critical
  two-dimensional case}.
\newblock \emph{J. Math. Anal. Appl.}, 436\penalty0 (1):\penalty0 586--610,
  2016{\natexlab{a}}.

\bibitem[Cort\'{a}zar et~al.(2012)Cort\'{a}zar, Elgueta, Quir\'{o}s, and
  Wolanski]{CEQW2012}
Cort\'{a}zar, C., Elgueta, M., Quir\'{o}s, F., and Wolanski, N.
\newblock \href{http://dx.doi.org/10.1007/s00205-012-0519-2}{Asymptotic
  behavior for a nonlocal diffusion equation in domains with holes}.
\newblock \emph{Arch. Ration. Mech. Anal.}, 205\penalty0 (2):\penalty0
  673--697, 2012.

\bibitem[Cort\'{a}zar et~al.(2015)Cort\'{a}zar, Elgueta, Quir\'{o}s, and
  Wolanski]{CEQW2015-DCDS}
Cort\'{a}zar, C., Elgueta, M., Quir\'{o}s, F., and Wolanski, N.
\newblock \href{http://dx.doi.org/10.3934/dcds.2015.35.1391}{Asymptotic
  behavior for a nonlocal diffusion equation on the half line}.
\newblock \emph{Discrete Contin. Dyn. Syst.}, 35\penalty0 (4):\penalty0
  1391--1407, 2015.

\bibitem[Cort\'{a}zar et~al.(2016{\natexlab{b}})Cort\'{a}zar, Elgueta,
  Quir\'{o}s, and Wolanski]{CEQW2016-SIMA}
Cort\'{a}zar, C., Elgueta, M., Quir\'{o}s, F., and Wolanski, N.
\newblock \href{http://dx.doi.org/10.1137/151006287}{Asymptotic behavior for a
  one-dimensional nonlocal diffusion equation in exterior domains}.
\newblock \emph{SIAM J. Math. Anal.}, 48\penalty0 (3):\penalty0 1549--1574,
  2016{\natexlab{b}}.

\bibitem[Cort\'{a}zar et~al.(2017)Cort\'{a}zar, Quir\'{o}s, and
  Wolanski]{CQW2017}
Cort\'{a}zar, C., Quir\'{o}s, F., and Wolanski, N.
\newblock \href{http://dx.doi.org/10.1515/ans-2017-0006}{Near field asymptotic
  behavior for the porous medium equation on the half-line}.
\newblock \emph{Adv. Nonlinear Stud.}, 17\penalty0 (2):\penalty0 245--254,
  2017.

\bibitem[Cort\'{a}zar et~al.(2018)Cort\'{a}zar, Quir\'{o}s, and
  Wolanski]{CQW2018}
Cort\'{a}zar, C., Quir\'{o}s, F., and Wolanski, N.
\newblock \href{http://dx.doi.org/10.1137/16M110191X}{Near-field asymptotics
  for the porous medium equation in exterior domains. {T}he critical
  two-dimensional case}.
\newblock \emph{SIAM J. Math. Anal.}, 50\penalty0 (3):\penalty0 2664--2680,
  2018.

\bibitem[Csisz\'{a}r(1967)]{Csiszar1967}
Csisz\'{a}r, I.
\newblock Information-type measures of difference of probability distributions
  and indirect observations.
\newblock \emph{Studia Sci. Math. Hungar.}, 2:\penalty0 299--318, 1967.

\bibitem[Dautray and Lions(1990)]{DLbook}
Dautray, R. and Lions, J.-L.
\newblock \emph{Mathematical analysis and numerical methods for science and
  technology. {V}ol. 1}.
\newblock Springer-Verlag, Berlin, 1990.
\newblock ISBN 3-540-50207-6; 3-540-66097-6.
\newblock Physical origins and classical methods.

\bibitem[Dom\'{\i}nguez-de Tena and
  Rodr\'{\i}guez-Bernal(2025)]{DominguezTena-RodrigueBernal-2024}
Dom\'{\i}nguez-de Tena, J. and Rodr\'{\i}guez-Bernal, A.
\newblock \href{http://dx.doi.org/10.1016/j.jde.2024.12.044}{Asymptotic
  behaviour of the heat equation in an exterior domain with general boundary
  conditions {I}. {T}he case of integrable data}.
\newblock \emph{J. Differential Equations}, 424:\penalty0 229--262, 2025.

\bibitem[Dom\'{\i}nguez-de Tena and
  Rodr\'{\i}guez-Bernal(2024)]{DominguezTena-RodrigueBernal-2023}
Dom\'{\i}nguez-de Tena, J. and Rodr\'{\i}guez-Bernal, A.
\newblock \href{http://dx.doi.org/10.1007/s40863-024-00415-w}{On the loss of
  mass for the heat equation in an exterior domain with general boundary
  conditions}.
\newblock \emph{S\~{a}o Paulo J. Math. Sci.}, 2024.

\bibitem[Friedman(1958)]{Friedman1958}
Friedman, A.
\newblock \href{http://projecteuclid.org/euclid.pjm/1103040096}{Remarks on the
  maximum principle for parabolic equations and its applications}.
\newblock \emph{Pacific J. Math.}, 8:\penalty0 201--211, 1958.

\bibitem[Gallay and Wayne(2001)]{Gallay2001}
Gallay, T. and Wayne, C.~E.
\newblock \href{http://dx.doi.org/10.1007/s002050200200}{Invariant manifolds
  and the long-time asymptotics of the {Navier}-{Stokes} and vorticity
  equations on {$\mathbb{R}^2$}}.
\newblock \emph{Archive for Rational Mechanics and Analysis}, 163\penalty0
  (3):\penalty0 209--258, February 2001,
  \href{http://arxiv.org/abs/math/0102197}{{\ttfamily arXiv:math/0102197}}.

\bibitem[Gilding and Goncerzewicz(2007)]{GG2007}
Gilding, B.~H. and Goncerzewicz, J.
\newblock \href{http://dx.doi.org/10.1007/s00605-006-0386-8}{Large-time
  behaviour of solutions of the exterior-domain {C}auchy-{D}irichlet problem
  for the porous media equation with homogeneous boundary data}.
\newblock \emph{Monatsh. Math.}, 150\penalty0 (1):\penalty0 11--39, 2007.

\bibitem[Grigor{\textquotesingle}yan and Saloff-Coste(2002)]{Grigoryan2001}
Grigor{\textquotesingle}yan, A. and Saloff-Coste, L.
\newblock \href{http://dx.doi.org/10.1002/cpa.10014}{Dirichlet heat kernel in
  the exterior of a compact set}.
\newblock \emph{Comm. Pure Appl. Math.}, 55\penalty0 (1):\penalty0 93--133,
  2002.

\bibitem[Gross(1975)]{Gross1975}
Gross, L.
\newblock \href{http://www.jstor.org/stable/2373688}{{Logarithmic Sobolev
  Inequalities}}.
\newblock \emph{American Journal of Mathematics}, 97\penalty0 (4):\penalty0
  1061--1083, 1975.

\bibitem[Gyrya and Saloff-Coste(2011)]{GYRYA2018}
Gyrya, P. and Saloff-Coste, L.
\newblock \href{http://dx.doi.org/10.24033/ast.899}{Neumann and dirichlet heat
  kernels in inner uniform domains}.
\newblock \emph{Astérisque}, 336, 2011.

\bibitem[Herraiz(1998)]{Herraiz1998}
Herraiz, L.~A.
\newblock \href{http://dx.doi.org/10.1006/jdeq.1997.3358}{A nonlinear parabolic
  problem in an exterior domain}.
\newblock \emph{J. Differential Equations}, 142\penalty0 (2):\penalty0
  371--412, 1998.

\bibitem[Holley and Stroock(1987)]{Holley1987}
Holley, R. and Stroock, D.
\newblock \href{http://dx.doi.org/10.1007/bf01011161}{Logarithmic {Sobolev}
  inequalities and stochastic {Ising} models}.
\newblock \emph{Journal of Statistical Physics}, 46\penalty0 (5-6):\penalty0
  1159--1194, 1987.

\bibitem[Kullback(1967)]{Kullback1967}
Kullback, S.
\newblock \href{http://dx.doi.org/10.1109/tit.1967.1053968}{{A lower bound for
  discrimination information in terms of variation}}.
\newblock \emph{IEEE Transactions on Information Theory}, 13\penalty0
  (1):\penalty0 126--127, January 1967.

\bibitem[Lieb(1983)]{Lieb1983}
Lieb, E.~H.
\newblock \href{http://dx.doi.org/10.2307/2007032}{Sharp constants in the
  {Hardy-Littlewood-Sobolev} and related inequalities}.
\newblock \emph{The Annals of Mathematics}, 118\penalty0 (2):\penalty0 349,
  1983.

\bibitem[Pinsker(1964)]{Pinsker1964}
Pinsker, M.~S.
\newblock \href{http://www.worldcat.org/oclc/551003}{\emph{{Information and
  information stability of random variables and processes}}}.
\newblock Holden-Day series in time series analysis. Holden-Day, 1964.
\newblock Translated from Russian.

\bibitem[Quir\'{o}s and V\'{a}zquez(2001)]{Quiros-Vazquez-2001}
Quir\'{o}s, F. and V\'{a}zquez, J.~L.
\newblock \href{http://dx.doi.org/10.1090/S0002-9947-00-02739-2}{Asymptotic
  convergence of the {S}tefan problem to {H}ele-{S}haw}.
\newblock \emph{Trans. Amer. Math. Soc.}, 353\penalty0 (2):\penalty0 609--634,
  2001.

\bibitem[Toscani(1996)]{Toscani1996}
Toscani, G.
\newblock
  \href{https://www1.mat.uniroma1.it/ricerca/rendiconti/ARCHIVIO/1996(2)/329-346.pdf}{Kinetic
  approach to the asymptotic behaviour of the solution to diffusion equations}.
\newblock \emph{Rendiconti di Matematica e delle Sue Applicazioni}, 16\penalty0
  (2):\penalty0 329--346, 5 1996.

\bibitem[Uchiyama(2018)]{Uchiyama-JTP-2018}
Uchiyama, K.
\newblock \href{http://dx.doi.org/10.1007/s10959-017-0758-0}{The transition
  density of {B}rownian motion killed on a bounded set}.
\newblock \emph{J. Theoret. Probab.}, 31\penalty0 (3):\penalty0 1380--1410,
  2018.

\bibitem[Unterreiter et~al.(2000)Unterreiter, Arnold, Markowich, and
  Toscani]{Unterreiter2000}
Unterreiter, A., Arnold, A., Markowich, P., and Toscani, G.
\newblock \href{http://dx.doi.org/10.1007/s006050070013}{{On Generalized
  Csisz\'{a}r-Kullback Inequalities}}.
\newblock \emph{Monatshefte f\"{u}r Mathematik}, 131\penalty0 (3):\penalty0
  235--253, 2000.

\bibitem[V{\'a}zquez(2017)]{vazquez2017asymptotic}
V{\'a}zquez, J.~L.
\newblock Asymptotic behaviour methods for the heat equation. convergence to
  the {Gaussian}.
\newblock \emph{Preprint arXiv:1706.10034}, 2017.

\bibitem[Zhang(2003)]{Zhang2003}
Zhang, Q.~S.
\newblock \href{http://dx.doi.org/10.1016/S0022-1236(02)00074-5}{The global
  behavior of heat kernels in exterior domains}.
\newblock \emph{J. Funct. Anal.}, 200\penalty0 (1):\penalty0 160--176, 2003.

\end{thebibliography}

\end{document}